%% file: main_jmlr.tex
\documentclass[twoside,11pt,a4paper]{article}

\input{preamble_jmlr}

\input{commands_jmlr}

\input{theorems_jmlr}

\ShortHeadings
  {Boundary-layer asymptotics for Gaussian-smoothed singular measures}
  {Brosse and Dalalyan}

\firstpageno{1}

\title{Boundary-layer asymptotics for Gaussian-smoothed singular measures}

\author{%
\name Nicolas Brosse \email nicolas.brosse@ensae.fr \\
\addr CREST, ENSAE Paris, Institut Polytechnique de Paris \\
\AND
\name Arnak Dalalyan \email arnak.dalalyan@ensae.fr \\
\addr CREST, ENSAE Paris, Institut Polytechnique de Paris and
Mohamed bin Zayed University of Artificial Intelligence
}

\editor{Action Editor}

\begin{document}

\maketitle

\begin{abstract}
We study the small-noise asymptotics of Euclidean heat regularizations of probability measures
supported on manifolds with corners. Near a boundary or corner stratum, the relevant regime is a
conical boundary layer in which the observation point approaches the stratum at the same scale as
the Gaussian smoothing parameter. After rescaling this layer, the support is replaced to leading
order by its inward tangent cone. We prove a two-term expansion for the heat-regularized density in
this regime. The leading coefficient is the Gaussian mass of the linearized cone, weighted by the
density on the support and by the adapted corner Jacobian; the first correction records the
variation of the density, the Jacobian, and the quadratic geometry of the embedding. A localization
argument then yields the corresponding expansion for the full heat regularization, with the nonlocal
contribution exponentially small. From this density expansion we derive logarithmic asymptotics and
uniform expansions for the score, the log-Hessian, and the scale derivative of the score. These
formulas show how lower-dimensional support, boundary faces, corners, and curvature are encoded in
the singular differential structure of small-noise Gaussian regularizations.
\end{abstract}

\begin{keywords}
Gaussian smoothing, singular measures, manifolds with corners, tangent cones, score functions, heat
regularization
\end{keywords}


\renewcommand\thepart{}
\renewcommand\partname{}

\doparttoc
\faketableofcontents

\part{}

\input{sections/introduction}
\input{sections/preliminaries}
\input{sections/formal_main}
\input{sections/part_cases}
\input{sections/proof_sketch}
\input{sections/conclusion}


\acks{This project has received funding from the European Research Council (ERC) under the European
Union's Horizon Europe research and innovation programme (grant agreement No.~101201229).}

\bibliography{bibliography}


\newpage
\appendix

\renewcommand\contentsname{}

\part{Appendix}
\parttoc

\newpage

\input{sections/appendices}

\end{document}

%% file: preamble_jmlr.tex
\usepackage[preprint]{jmlr2e}

\renewenvironment{proof}[1][Proof]{\par\noindent{\bfseries #1\ }}{\hfill\BlackBox\\[2mm]}

\usepackage[T1]{fontenc}
\usepackage[english]{babel}
\usepackage{lmodern}
\usepackage{enumitem}

\usepackage{amsmath}
\usepackage{mathrsfs}
\usepackage{dsfont}
\usepackage{nicefrac}

\usepackage{cleveref}

\usepackage{xcolor}
\usepackage{graphicx}
\usepackage{afterpage}

\usepackage{tikz}
\usetikzlibrary{
  arrows.meta,
  calc,
  decorations.pathreplacing,
  shadings
}

\definecolor{BLMcolor}{RGB}{230,238,247}
\definecolor{BLLayercolor}{RGB}{210,229,255}
\definecolor{BLScolor}{RGB}{38,88,160}
\definecolor{BLTcolor}{RGB}{35,120,80}
\definecolor{BLCcolor}{RGB}{190,100,35}
\definecolor{BLNcolor}{RGB}{130,70,160}
\definecolor{BLPointcolor}{RGB}{25,25,25}
\definecolor{BLObscolor}{RGB}{190,45,45}

\tikzset{
  bl-lab/.style={font=\footnotesize},
  bl-smalllab/.style={font=\scriptsize},
  bl-manifold/.style={draw=black!55, fill=BLMcolor, line width=0.55pt},
  bl-layer/.style={draw=none, fill=BLLayercolor, opacity=0.72},
  bl-stratum/.style={draw=BLScolor, line width=1.25pt},
  bl-vec/.style={-{Latex[length=2.1mm,width=1.4mm]}, line width=0.75pt},
  bl-dash/.style={densely dashed, line width=0.45pt},
  bl-dot/.style={circle, fill=BLPointcolor, inner sep=1.55pt},
  bl-ydot/.style={circle, fill=BLObscolor, inner sep=1.75pt},
  bl-panel-title/.style={font=\footnotesize\bfseries},
  bl-whitebox/.style={fill=white, fill opacity=0.86, text opacity=1, inner sep=1.5pt}
}

\usepackage{booktabs}
\usepackage{tabularx}
\usepackage{longtable}

\usepackage{tocloft}
\usepackage[toc,page]{appendix}
\usepackage{minitoc}


%% file: commands_jmlr.tex
\newcommand{\dd}{\mathrm{d}}

\newcommand{\Diff}{\mathrm{D}}
\newcommand{\tr}{\operatorname{tr}}

\newcommand{\dist}{\operatorname{dist}}

\newcommand{\vol}{\mathrm{vol}}

\newcommand{\bfA}{\mathbf{A}}
\newcommand{\bfB}{\mathbf{B}}
\newcommand{\bfC}{\mathbf{C}}
\newcommand{\bfH}{\mathbf{H}}
\newcommand{\bfI}{\mathbf{I}}
\newcommand{\bfL}{\mathbf{L}}
\newcommand{\bfN}{\mathbf{N}}
\newcommand{\bfP}{\mathbf{P}}
\newcommand{\bfQ}{\mathbf{Q}}

\newcommand{\bfS}{\mathbf{S}}

\newcommand{\score}{\boldsymbol{s}_\sigma}
\newcommand{\scorezero}{\boldsymbol{s}_0}
\newcommand{\Hess}{\mathbf{H}_\sigma}
\newcommand{\Hesszero}{\mathbf{H}_0}
\newcommand{\Vel}{\dot{\boldsymbol{s}}_\sigma}

\newcommand{\bsh}{\boldsymbol{h}}

\newcommand{\Man}{\mathcal{M}}
\newcommand{\Strat}{\mathcal{S}}
\newcommand{\Compct}{\mathcal{K}}

\newcommand*{\standingtag}{%
  \ifcase\value{enumi}\or M\or Ch\or D\or P\fi}

\newcommand{\AssMeas}{\Cref{ass:standing}~\ref{ass:standing:measure}}
\newcommand{\AssChart}{\Cref{ass:standing}~\ref{ass:standing:chart}}
\newcommand{\AssDens}{\Cref{ass:standing}~\ref{ass:standing:density}}

%% file: theorems_jmlr.tex
\usepackage{aliascnt}

\newaliascnt{assumption}{theorem}
\newtheorem{assumption}[assumption]{Assumption}
\aliascntresetthe{assumption}

\crefname{assumption}{assumption}{assumptions}
\Crefname{assumption}{Assumption}{Assumptions}

\makeatletter

\@ifundefined{theproposition}{}{}
\@ifundefined{c@proposition}{}{\let\c@proposition\relax}
\newaliascnt{proposition}{theorem}
\newtheorem{proposition}[proposition]{Proposition}
\aliascntresetthe{proposition}

\@ifundefined{theremark}{}{}
\@ifundefined{c@remark}{}{\let\c@remark\relax}
\newaliascnt{remark}{theorem}
\newtheorem{remark}[remark]{Remark}
\aliascntresetthe{remark}

\@ifundefined{thelemma}{}{}
\@ifundefined{c@lemma}{}{\let\c@lemma\relax}
\newaliascnt{lemma}{theorem}
\newtheorem{lemma}[lemma]{Lemma}
\aliascntresetthe{lemma}

\@ifundefined{thedefinition}{}{}
\@ifundefined{c@definition}{}{\let\c@definition\relax}
\newaliascnt{definition}{theorem}
\newtheorem{definition}[definition]{Definition}
\aliascntresetthe{definition}

\@ifundefined{thecorollary}{}{}
\@ifundefined{c@corollary}{}{\let\c@corollary\relax}
\newaliascnt{corollary}{theorem}
\newtheorem{corollary}[corollary]{Corollary}
\aliascntresetthe{corollary}

\makeatother

\makeatletter
\newenvironment{jmlrunnumbered}[2][]{%
  \par\vskip\baselineskip\noindent
  {\bfseries #2\if\relax\detokenize{#1}\relax\else\ (#1)\fi.}%
  \ \itshape
}{%
  \par\vskip\baselineskip
}
\makeatother

\newenvironment{theorem*}[1][]{%
  \begin{jmlrunnumbered}[#1]{Theorem}%
}{%
  \end{jmlrunnumbered}%
}

\newenvironment{proposition*}[1][]{%
  \begin{jmlrunnumbered}[#1]{Proposition}%
}{%
  \end{jmlrunnumbered}%
}

\newenvironment{lemma*}[1][]{%
  \begin{jmlrunnumbered}[#1]{Lemma}%
}{%
  \end{jmlrunnumbered}%
}

\newenvironment{corollary*}[1][]{%
  \begin{jmlrunnumbered}[#1]{Corollary}%
}{%
  \end{jmlrunnumbered}%
}

\newenvironment{definition*}[1][]{%
  \begin{jmlrunnumbered}[#1]{Definition}%
}{%
  \end{jmlrunnumbered}%
}

\newenvironment{assumption*}[1][]{%
  \begin{jmlrunnumbered}[#1]{Assumption}%
}{%
  \end{jmlrunnumbered}%
}

%% file: sections/introduction.tex

\section{Introduction}
\label{sec:introduction}

Let \(q\) be a Borel probability measure on \(\mathbb R^d\). For \(t>0\), its Euclidean heat
regularization is the density \( q_t(y) = \int_{ \mathbb R^d} {(4\pi t)^{-d/2}} \exp\!\big(-
{\|y-x\|^2}/{4t}\big) q(\dd x)\), \(y \in\mathbb R^d\). Equivalently, setting \(\sigma =\sqrt{2t}\),
we write the same regularization in the Gaussian scale parameter as
\begin{equation*}
  p_\sigma(y)=q_{\sigma^2/2}(y)= (\phi_\sigma\star q)(y)
  =\int_{\mathbb R^d} \phi_\sigma(y-x)\,q(\dd x),
\end{equation*}
where \(\phi_\sigma\) is the centered Gaussian density with covariance \(\sigma^2 \bfI_d\). For
every \(\sigma>0\), the density \(p_\sigma\) is smooth and strictly positive, even when \(q\) is
singular with respect to the Lebesgue measure. This paper aims to study how the local geometry of
\(q\) is encoded in the small-noise asymptotics of \(p_\sigma\), \(\log p_\sigma\), \(\nabla \log
p_\sigma\), \(\nabla^2 \log p_\sigma\), as well as in the scale derivative of the score,
\(\partial_\sigma\nabla \log p_\sigma\). The paper focuses on measures \(q\) supported on a manifold
\(\Man\subset \mathbb R^d\) that may be lower-dimensional and may have boundary or corners, as
illustrated in \Cref{fig:manifold-examples}.

To be more precise, we study the small-\(\sigma\) asymptotics of \(p_\sigma(y_\sigma)\), \(\log
p_\sigma(y_\sigma)\), \(\nabla_y\log p_\sigma(y_\sigma)\), \(\nabla_y^2\log p_\sigma(y_\sigma)\),
and \((\partial_\sigma\nabla_y\log p_\sigma)(y_\sigma)\), where the observation point \(y_\sigma\)
may itself depend on \(\sigma\). A useful way to keep the main scaling in mind is to write
\(y_\sigma=x+\sigma b\), with \(b\) bounded: the observation point then approaches \(x\) at the same
spatial scale as the Gaussian smoothing.
The expansions obtained below are locally uniform in the base point \(x\) and in the rescaled
displacement \(b\).

The geometry of the support determines the form of these expansions. If \(\Man\) is full-dimensional
and \(x\) lies in its smooth interior, as in the ball example in \Cref{fig:manifold-examples}, then
the usual regular small-noise expansion is recovered. By contrast, near a lower-dimensional support,
a boundary, or a corner, the score \(\nabla_y\log p_\sigma(y)\) and its derivatives with respect to
\(y\) and \(\sigma\) may contain negative powers of \(\sigma\). The aim of the paper is to identify
these singular terms and express their leading coefficients in terms of the local geometry of the
support.

\subsection{Motivation and guiding context}
\label{subsec:motivation-and-guiding-context}

The central theme of this paper is that the small-noise behavior of Gaussian regularizations is
governed by local geometry. When the support of \(q\) is smooth and full-dimensional, classical
heat-kernel arguments lead to regular asymptotic expansions. In contrast, near lower-dimensional,
boundary, or corner structure, singular scales appear. As we shall show, these singular effects are
controlled by a tangent-cone model obtained by zooming in on the support at scale \(\sigma\) near
the relevant stratum. This geometric principle governs the leading asymptotics of the density, its
logarithm, the score and its derivatives.

\begin{figure}[t]
  \centering
  \input{figures/manifold_examples}
  \vspace*{-20pt}
  \caption{\small Examples of manifolds embedded in \(\mathbb R^3\). The closed ball is
  full-dimensional, with \(m=d=3\), and has a smooth boundary. The warped disk and warped square are
  two-dimensional supports, with \(m=2\) and \(d=3\). The disk has boundary but no corners, while
  the square has both boundary and corners.}
  \label{fig:manifold-examples}
\end{figure}

\paragraph{Heat equation perspective.}

For every \(\sigma>0\), the Gaussian-scale regularization \(p_\sigma=q_{\sigma^2/2}\) is smooth and
strictly positive. Hence its logarithm is smooth, and we can define the score, the log-Hessian and
the scale-derivative of the score pointwise by
\begin{equation*}
  \score(y)=\nabla_y\log p_\sigma(y), \qquad \Hess(y)=\nabla_y^2\log p_\sigma(y), \qquad \Vel(y)
  = \partial_\sigma \score(y).
\end{equation*}
Exact heat identities tie together these quantities. Recall that \(\partial_t q_t=\Delta q_t\),
which implies that \(\partial_t\log q_t=\Delta\log q_t+\|\nabla\log q_t\|^2\). In the noise scale
\(\sigma\), since \(p_\sigma=q_{\sigma^2/2}\), this leads to \(\partial_\sigma
p_\sigma(y)=\sigma\Delta p_\sigma(y)\) and
\begin{equation*}
  \partial_\sigma\log p_\sigma(y) = \sigma\Delta\log p_\sigma(y)
  + \sigma\|\nabla\log p_\sigma(y)\|^2 = \sigma\bigl(\tr \Hess (y)+\|\score(y)\|^2\bigr),
  \qquad y\in\mathbb R^d .
\end{equation*}
Differentiating the last identity in \(y\), we obtain
\begin{equation*}
  \Vel(y) = \sigma\bigl( \nabla(\tr \Hess (y)) + 2\Hess (y)\score(y) \bigr),
  \qquad y\in\mathbb R^d .
\end{equation*}
Thus the scale derivative of the score is not an auxiliary object introduced for technical reasons.
It is the scale-variation of the same logarithmic field, and it is coupled exactly to the score, the
log-Hessian, and the next spatial derivative of the log-density.

\begin{figure*}[t]
  \centering
  \input{figures/heat_density}
  \vspace*{-10pt}

  \caption{\small Heat regularizations of three probability distributions in \(\mathbb R^2\). The
  columns show \(p_{\sigma}=\phi_{\sigma}\star q\) for \(\sigma\in\{0.70-0.09k;k=0,1,\ldots,5\}\),
  followed by a visualization of the measure \(q\). First row: standard Gaussian. Second row:
  uniform distribution on \(\{\sqrt{|x_1|}+\sqrt{|x_2|}\leqslant 1.5\}\). Third row: uniform
  arclength measure on \(\{\sqrt{|x_1|}+\sqrt{|x_2|}=1.5\}\).}

  \label{fig:heat-density}
\end{figure*}

The singular powers of \(\sigma\) in this paper should be read against these identities. If \(q\)
has a smooth positive density in \(\mathbb R^d\), then \(\score\), \(\Hess \), and \(\Vel \) remain
regular as \(\sigma\downarrow 0\). By contrast, when the mass of \(q\) is concentrated on a
lower-dimensional set, or when the observation point is close to a boundary or corner stratum, the
regularized density changes across spatial layers of thickness \(O(\sigma)\). Derivatives transverse
to such layers may therefore produce inverse powers of \(\sigma\). In particular, the score may be
of order \(\sigma^{-1}\), the log-Hessian of order \(\sigma^{-2}\), and the fixed-\(y\) scale
derivative of the score of order \(\sigma^{-2}\). The purpose of the tangent-cone expansion is to
identify these singular terms geometrically. \Cref{fig:heat-density} illustrates this boundary-layer
phenomenon: as \(\sigma\downarrow0\), spatial variations of \(p_\sigma\) become increasingly
pronounced near the boundary of the support of \(q\).

\paragraph{Denoising interpretation.}

Let \(X\) and \(Z\) be independent random vectors such that
\begin{equation*}
  X\sim q, \qquad Z\sim\mathcal N(0,\bfI_d), \qquad Y_\sigma=X+\sigma Z.
\end{equation*}
Then \(Y_\sigma\), the noisy signal, has density \(p_\sigma\), and the exact Tweedie identities
\citep{robbins1956empirical,efron2011tweedie,vincent2011connection} lead
to
\begin{equation*}
  \score(y) = \frac{\mathbb E[X\,|\,Y_\sigma=y]-y}{\sigma^2}, \qquad \Hess (y)
  = \frac{\operatorname{Cov}(X\,|\,Y_\sigma=y)}{\sigma^4} - \frac1{\sigma^2}\bfI_d .
\end{equation*}
These identities imply that
\begin{equation*}
  \mathbb E[X\,|\,Y_\sigma=y] = y+\sigma^2\score(y), \qquad \Diff_y\mathbb E[X\,|\,Y_\sigma=y] = \bfI_d+\sigma^2\Hess (y).
\end{equation*}
Thus the score determines the denoising displacement, while the log-Hessian determines the local
linearization of the denoiser.

\paragraph{Transport interpretation.}

The same objects also arise from the canonical transport representation of the heat path. In heat
time, write \(\widetilde{\boldsymbol{s}}_t(y)=\nabla_y\log q_t(y)\) and
\(\widetilde\bfH_t(y)=\nabla_y^2\log q_t(y)\). Since \(q_t\widetilde{\boldsymbol{s}}_t=\nabla q_t\),
the heat equation may be written as the continuity equation
\begin{equation}\label{eq:intro-heat-velocity}
  \partial_t q_t+\nabla\cdot(\widetilde{\boldsymbol{v}} _t q_t)=0,
  \qquad \widetilde{\boldsymbol{v}} _t=-\widetilde{\boldsymbol{s}}_t .
\end{equation}
Hence \(\nabla_y \widetilde{\boldsymbol{v}}_t=-\widetilde\bfH_t\) and \(\partial_t
\widetilde{\boldsymbol{v}}_t =-\partial_t\widetilde{\boldsymbol{s}}_t\). If the same path is
parameterized by the noise level \(\sigma\), then
\begin{equation}\label{eq:intro-sigma-velocity}
  \partial_\sigma p_\sigma + \nabla\cdot(\boldsymbol{v}_\sigma p_\sigma) = 0,
  \qquad \boldsymbol{v}_\sigma=-\sigma \score ,
\end{equation}
and therefore \(\nabla_y \boldsymbol{v}_\sigma= -\sigma \Hess \), and \(\partial_\sigma
\boldsymbol{v}_\sigma= - \score-\sigma\Vel\). Thus the score controls the canonical heat-path
velocity, the log-Hessian controls its spatial linearization, and the scale derivative of the score
controls its variation along the path.

\paragraph{Connection with generative modeling.}

Gaussian regularization also underlies several constructions in modern generative modeling. In
denoising score matching, diffusion models, and score-based generative modeling, one learns the
score of a noisy data distribution \citep{hyvarinen2005scorematching,vincent2011connection,
sohl2015nonequilibrium,ho2020ddpm,song2021scorebased, karras2022edm}. In consistency-type and
flow-matching methods, one learns maps or vector fields along paths of probability distributions
\citep{song2023consistency,chen2018neuralode,grathwohl2019ffjord, lipman2023flowmatching,tong2023conditional,
liu2023flowstraight,lipman2024flowmatchingguide, albergo2025stochasticinterpolants,tong2024sf2m}.
For the heat-regularized law considered here, the population denoising score-matching target at
noise level \(\sigma\) is exactly
\begin{equation*}
  \mathbb E\bigg[ \frac{X-Y_\sigma}{\sigma^2} \,\Big|\, Y_\sigma=y \bigg] = \score(y).
\end{equation*}
Likewise, for the heat path, the canonical velocity fields are precisely those in
\cref{eq:intro-heat-velocity,eq:intro-sigma-velocity}. Thus the score, the log-Hessian, and the
scale derivative of the score are not only natural analytic objects. They are also the ideal
population fields that denoising and transport-based methods learn, differentiate, approximate, or
use as velocities.

A related line of work studies score-based generative models under the manifold hypothesis, where
the data law is concentrated on a lower-dimensional set. There, convergence analyses and structural
results describe how the score behaves near the support, with the leading normal attraction
\(\score(y)\approx-(y-\pi(y))/\sigma^2\) as \(\sigma\downarrow0\)
\citep{debortoli2022manifold,pidstrigach2022manifolds,chen2023scoremanifold}. Relative to these
antecedents, the present contribution is a \emph{uniform two-term} expansion, valid down to the
support, that resolves the next-order corrections and, crucially, the modifications produced by
boundaries, corners, and curvature.

These connections motivate studying the local small-noise form of the population-level fields that
denoising and transport-based methods aim to learn, differentiate, or approximate. Near
lower-dimensional support, boundaries, or corners, these fields are not governed at leading order by
a smooth Euclidean density expansion. They are governed instead by the Gaussian mass of the inward
tangent cone and by the derivatives of its logarithm.

\subsection{Relation to classical heat-kernel asymptotics}
\label{subsec:introduction-heat-kernel-asymptotics}

The structure of our expansion is close in spirit to classical short-time heat-kernel asymptotics.
On smooth manifolds, short-time behavior is local and governed by geometry seen at scale \(\sqrt t\)
\citep{MinakshisundaramPleijel1949,McKeanSinger1967, Varadhan1967}. Near smooth boundaries, one sees
boundary-layer variables of the form \(r/\sqrt t\), where \(r\) is the distance to the boundary,
with the half-space as the local model \citep{Seeley1969Trace,Seeley1969Resolvent,Greiner1971,
Grubb1996,Gilkey1995,Grieser2004}. In singular geometries, corners, edges, cusps, and other local
models modify the expansion \citep{vanDenBergSrisatkunarajah1988,
vanDenBergSrisatkunarajah1990,vanDenBerg1998Cusps}. The setting here is different. We do not study
the intrinsic heat kernel of a manifold, nor the heat kernel of a domain with boundary conditions.
We study the Euclidean Gaussian regularization of a measure \(q\), possibly singular with respect to
Lebesgue measure. Nevertheless, the same locality principle holds: small-noise asymptotics are
governed by the geometry seen at the \(\sigma\)-scale.

There is also a connection with kernel methods on manifolds. In diffusion maps and related
constructions, ambient Gaussian kernels recover intrinsic diffusion operators after suitable
normalization \citep{BerardBessonGallot1994,CoifmanLafon2006, BelkinNiyogi2008}. The present work is
complementary: it focuses on pointwise small-noise asymptotics of the density and its logarithmic
derivatives, rather than on operator convergence.

The leading boundary behavior is also classical in nonparametric statistics. In kernel density
estimation, the bias near the edge of the support is governed by a half-space model on a boundary
layer of width comparable to the bandwidth, and boundary kernels are designed to correct it
\citep{wandjones1995,jones1993boundary}. This is exactly our leading half-space factor \(\Phi_{\rm
N}\) in the codimension-one, full-dimensional case \(c=1\), \(k=0\); see \Cref{sec:part_cases}.
The generalization developed here --- to ambient codimension \(k\), boundary faces, and corners of
arbitrary codimension \(c\), together with the uniform control of the logarithmic derivatives ---
appears to be new.

\subsection{Organization of the paper}
\label{subsec:introduction-proof-strategy}

The rest of the paper is organized as follows. \Cref{sec:preliminaries-on-manifolds} recalls the
local geometric notions used throughout the paper: manifolds with corners, strata, adapted tangent
and normal frames, tubular coordinates, volume measure, inward tangent cones, and the Jacobian
factors associated with local corner charts. \Cref{sec:main_results} states the main boundary-layer
asymptotic results. After introducing the standing global and local assumptions, it defines the
boundary-layer coordinates, the linearized cone coefficient, and the first correction coefficient;
it then gives the corresponding expansions for the heat-regularized density, its logarithm, the
score, the log-Hessian, and the scale derivative of the score. \Cref{sec:part_cases} specializes the
general formulas to three representative cases: smooth full-dimensional densities, smooth embedded
manifolds without boundary, and a full-dimensional support with boundary. \Cref{sec:sketch_proofs}
explains the main ideas of the proof, including localization of the kernel, rescaling to the conical
layer, Taylor expansion of the chart and exponent, control of the far-field contribution, and
differentiation of the logarithmic expansion. The appendices contain the complete technical
arguments.

\paragraph{Notation.}
We write \(\|\cdot\|\) and \(\langle\cdot,\cdot\rangle\) for the Euclidean norm and inner product.
If \(0\leqslant c\leqslant m\), we denote the standard corner quadrant by \(\mathbb H_c^m = \mathbb
R^{m-c} \times[0,\infty)^c\). Throughout, \(\mathbb N=\{0,1,2, \ldots\}\). For \(A\subset\mathbb
R^d\), $\overline{A}$ is the closure of \(A\). The notation \(A\Subset B\) means that \(A\) is
compactly contained in \(B\), that is, \(\overline A\) is compact and \(\overline A\subset B\). For
\(R>0\) and \(a\in\mathbb R^p\), \(\mathbb B_R^p(a)\) denotes the open Euclidean ball of radius
\(R\) and center \(a\) in \(\mathbb R^p\); when the dimension is clear and \(a=0_p\), we simply
write \(\mathbb B_R\). Throughout the paper, \(d\) is the ambient dimension, \(m\) is the dimension
of the support manifold, and \(k=d-m\) is its ambient codimension. The letter \(c\) denotes the
codimension of the active stratum inside the support. The measure \(\dd\vol_{\Man}\) denotes the
\(m\)-dimensional volume measure on \(\Man\) induced by the Euclidean metric. We write \(\mathsf
O(p)\) for the orthogonal group and \(\mathsf{GL}(p)\) for the group of invertible \(p\times p\)
matrices. For two matrices or vectors \(\bfA\) and \(\bfB\), we denote by \([\,\bfA\ \bfB\,]\) and
\([\,\bfA;\, \bfB\,]\) their horizontal and vertical concatenation, respectively, provided that
their dimensions are compatible. For a differentiable map \(F:\mathbb R^n\to\mathbb R^p\),
\(v\mapsto F(v)\), we write \(\Diff F(v)\) for its Jacobian, the \(p\times n\) matrix with entries
\((\Diff F(v))_{ij}=\partial F_i/\partial v_j(v)\). Thus, for \(w\in\mathbb R^n\), \((\Diff
F(v))[w]=\Diff F(v)w\in\mathbb R^p\), and its \(i\)-th component is \(\langle\nabla
F_i(v),w\rangle\). For scalar-valued \(f\), we write \(\nabla f=(\Diff f)^\top\). More generally,
for vector-valued \(F\), \((\Diff F)^\top\) is the transpose Jacobian; equivalently, \(\big((\Diff
F(v))[w]\big)^\top=w^\top(\Diff F(v))^\top\). The same convention applies to distinguished blocks of
variables: \(\Diff_aF\) denotes the Jacobian with respect to \(a\), and, for scalar-valued \(f\),
\(\nabla_af=(\Diff_af)^\top\). Higher differentials \(\Diff^jF(v)\) are viewed as multilinear maps;
in particular, for scalar \(f\), \(\nabla^2 f=\Diff(\nabla f)\). Finally, a function defined on a
relatively open subset of a quadrant is said to be \(C^\ell\) if it is the restriction of a
Euclidean \(C^\ell\) function in a neighborhood of each point.

%% file: figures/manifold_examples.tex

\definecolor{sphereblue}{RGB}{65,135,210}
\definecolor{diskgreen}{RGB}{88,145,62}
\definecolor{squarepurple}{RGB}{108,66,170}
\definecolor{triangorange}{RGB}{220,125,30}

\tikzset{
  axis/.style={-Latex,line width=.6pt,black},
  labeltext/.style={font=\fontsize{9}{10}\selectfont},
  titletext/.style={font=\bfseries\fontsize{9.0}{10.0}\selectfont,align=center},
  point/.style={circle,fill=black,inner sep=1.15pt},
  surfline/.style={line width=.45pt,opacity=.45},
  borderline/.style={line width=1.05pt},
}

\newcommand{\figaxes}[1]{%
  \begin{scope}[shift={#1},scale=.55]
    \draw[axis] (0,0)--(0,1.15) node[above=-1pt] {$z$};
    \draw[axis] (0,0)--(1.05,0) node[right=-1pt] {$y$};
    \draw[axis] (0,0)--(-.63,-.72) node[below left=-2pt] {$x$};
  \end{scope}}

\newcommand{\paneltextA}[1]{%
  \node[labeltext,anchor=north west,align=left,text width=4.45cm] at #1 {\emph{Interior point} (like $x$):\\[-1pt]
  neighborhood $\simeq \mathbb R^3$.\\[-1pt]
  \emph{Boundary point} (like $y$):\\[-1pt]
  neighborhood $\simeq \mathbb R^2\times[0,\infty)$.\\[-1pt]
  \emph{No corner points.}};}

\newcommand{\paneltextB}[1]{%
  \node[labeltext,anchor=north west,align=left,text width=4.45cm] at #1 {\emph{Interior point} (like $x$):\\[-1pt]
  neighborhood $\simeq \mathbb R^2$.\\[-1pt]
  \emph{Boundary point} (like $y$):\\[-1pt]
  neighborhood $\simeq \{u_2\ge 0\}$\\[-1pt]
  \emph{No corner points.}};}

\newcommand{\paneltextC}[1]{%
  \node[labeltext,anchor=north west,align=left,text width=4.45cm] at #1 {\emph{Interior point} (like $x$):\\[-1pt]
  neighborhood $\simeq \mathbb R^2$.\\[-1pt]
  \emph{Boundary point} (like $y$):\\[-1pt]
  neighborhood $\simeq \{u_2\ge 0\}$.\\[-1pt]
  \emph{Corner point} (like $z$):\\[-1pt]
  neighborhood $\simeq [0,\infty)^2$.};}

\resizebox{\textwidth}{!}{%
\begin{tikzpicture}[x=1cm,y=1cm]

\coordinate (P1) at (0,0);
\coordinate (P2) at (6.2,0);
\coordinate (P3) at (12.4,0);

\node[titletext] at ($(P1)+(2.15,5.45)$) {(a) Closed ball $\overline B^3$\\[-1pt](full-dimensional: $m=d=3$)};
\node[titletext] at ($(P2)+(2.15,5.45)$) {(b) Warped disk $D^2$\\[-1pt](2D manifold with boundary)};
\node[titletext] at ($(P3)+(2.15,5.45)$) {(c) Warped square\\[-1pt](2D manifold with corners)};

\figaxes{($(P1)+(.22,2.55)$)}
\figaxes{($(P2)+(.22,2.55)$)}
\figaxes{($(P3)+(.22,2.55)$)}

\begin{scope}[opacity=.13]
\fill[black] ($(P1)+(2.05,2.08)$) ellipse (1.05 and .16);
\fill[black] ($(P2)+(2.35,2.08)$) ellipse (1.25 and .16);
\fill[black] ($(P3)+(2.38,2.05)$) ellipse (1.20 and .17);
\end{scope}

\begin{scope}[shift={($(P1)+(2.05,3.42)$)}]
  \shade[ball color=sphereblue!55!white,opacity=.84] (0,0) circle (1.10);
  \draw[sphereblue!45!black,line width=.55pt,opacity=.42]
    (-.86,-.16) .. controls (-.35,-.38) and (.35,-.38) .. (.86,-.16);
  \draw[sphereblue!45!black,line width=.45pt,dashed,opacity=.50]
    (-.86,-.16) .. controls (-.35,.05) and (.35,.05) .. (.86,-.16);
  \draw[surfline,sphereblue!40!black]
    (0,-1.05) .. controls (-.28,-.35) and (-.28,.35) .. (0,1.05);
  \draw[surfline,sphereblue!40!black]
    (0,-1.05) .. controls (.28,-.35) and (.28,.35) .. (0,1.05);
  \draw[borderline,sphereblue!55!black] (0,0) circle (1.10);
  \node[point,label={[labeltext]right:$y$}] at (.95,.52) {};
  \node[point,label={[labeltext]right:$x$}] at (-.08,-.06) {};
\end{scope}
\paneltextA{($(P1)+(.72,1.55)$)}

\begin{scope}[shift={($(P2)+(2.15,3.42)$)}]
  \path[shade,left color=diskgreen!38!white,right color=diskgreen!65!white,opacity=.78,draw=diskgreen!55!black,borderline]
    (-1.40,-.25) .. controls (-1.10,.82) and (-.32,1.12) .. (.38,.62)
    .. controls (1.18,.05) and (1.38,.38) .. (1.52,-.06)
    .. controls (1.25,-.56) and (.54,-.72) .. (-.06,-.50)
    .. controls (-.80,-.18) and (-1.10,-.88) .. (-1.40,-.25) -- cycle;
  \clip (-1.40,-.25) .. controls (-1.10,.82) and (-.32,1.12) .. (.38,.62)
    .. controls (1.18,.05) and (1.38,.38) .. (1.52,-.06)
    .. controls (1.25,-.56) and (.54,-.72) .. (-.06,-.50)
    .. controls (-.80,-.18) and (-1.10,-.88) .. (-1.40,-.25) -- cycle;
  \foreach \t in {-1.0,-.6,-.2,.2,.6,1.0}{
    \draw[surfline,diskgreen!35!black] (-1.25,\t*.35) .. controls (-.55,\t*.78+.28) and (.55,\t*.42-.18) .. (1.38,\t*.14-.05);
    \draw[surfline,diskgreen!35!black] (\t,-.62) .. controls (\t*.32,-.10) and (\t*.45,.45) .. (\t*.60,.87);
  }
  \node[point,label={[labeltext]left:$y$}] at (1.50,-.06) {};
  \node[point,label={[labeltext]right:$x$}] at (.15,.03) {};
\end{scope}
\paneltextB{($(P2)+(.70,1.55)$)}

\begin{scope}[shift={($(P3)+(2.25,3.35)$)}]
  \path[shade,left color=squarepurple!28!white,right color=squarepurple!56!white,opacity=.76,draw=squarepurple!55!black,borderline]
    (-1.18,-.55) .. controls (-.95,.08) and (-1.03,.62) .. (-.87,.92)
    .. controls (-.26,.78) and (.50,.96) .. (1.05,1.15)
    .. controls (1.26,.47) and (1.20,-.12) .. (1.12,-.72)
    .. controls (.40,-.62) and (-.27,-.81) .. (-1.18,-.55) -- cycle;
  \clip (-1.18,-.55) .. controls (-.95,.08) and (-1.03,.62) .. (-.87,.92)
    .. controls (-.26,.78) and (.50,.96) .. (1.05,1.15)
    .. controls (1.26,.47) and (1.20,-.12) .. (1.12,-.72)
    .. controls (.40,-.62) and (-.27,-.81) .. (-1.18,-.55) -- cycle;
  \foreach \t in {-0.8,-0.4,0,0.4,0.8}{
    \draw[surfline,squarepurple!40!black] (-1.03,\t+.05) .. controls (-.25,\t+.22) and (.45,\t-.02) .. (1.17,\t*.85+.08);
    \draw[surfline,squarepurple!40!black] (\t,-.70) .. controls (\t*.75,-.04) and (\t*.90,.55) .. (\t*.68,1.07);
  }
\node[point,label={[labeltext]below left:$z$}] at (1.0,1.11) {};
\node[point,label={[labeltext]left:$y$}] at (1.14,.02) {};
\node[point,label={[labeltext]left:$x$}] at (.10,-.05) {};
\end{scope}
\paneltextC{($(P3)+(.68,1.55)$)}

\end{tikzpicture}%
}

%% file: figures/heat_density.tex
%

\begin{tikzpicture}
  \node[inner sep=0pt] at (0,0) {%
    \includegraphics[width=0.95\textwidth]{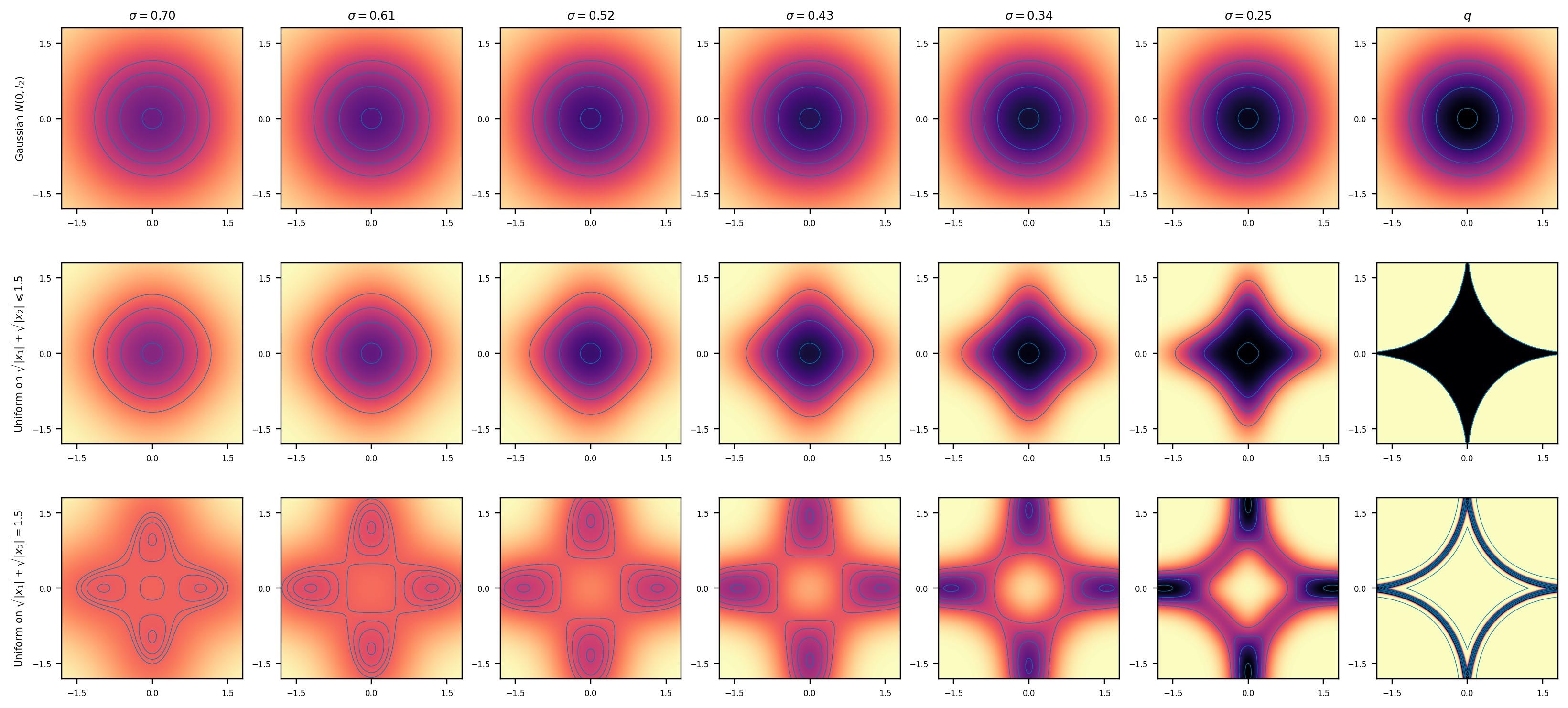}%
  };
\end{tikzpicture}

%% file: sections/preliminaries.tex
\begin{figure*}[t]
	\centering
	\input{figures/strata_warped_square_input_layout}
	\vspace{-20pt}
	\caption{\small Strata of a two-dimensional manifold with corners \(\Man\subset\mathbb R^3\). Here
		\(m=2\), \(d=3\), and \(k=d-m=1\). The interior, boundary curves, and corner vertices have
		codimensions \(c=0,1,2\) inside \(\Man\), respectively.}
	\label{fig:strata-warped-square}
\end{figure*}

\section{Preliminaries on manifolds}
\label{sec:preliminaries-on-manifolds}

We use only local geometric notions, all inherited from the ambient Euclidean space \(\mathbb R^d\).
This section recalls the few notions from the geometry of manifolds with boundary and corners that
are needed in the paper. Typical examples of manifolds we have in mind include smooth data manifolds
such as curves and surfaces, supports constrained to a half-space or a bounded domain, and
piecewise-smooth sets with edges or corners such as rectangles, cubes, and simplices. Our
assumptions allow the support of the measure to exhibit all of these local geometries. The notions
introduced below are used only to describe the local structure seen by the Gaussian smoothing at
small scales. Readers already familiar with manifolds with boundary or corners may skip this section
on first reading. Readers seeking a more detailed background may consult \citep{lee2013smooth} for
smooth manifolds and embedded submanifolds and \citep{francisStaiteJoyce2024corners} for manifolds
with corners.

\paragraph{Manifolds with corners.}

An \(m\)-dimensional embedded manifold with corners \(\Man\subset\mathbb R^d\) is a set which, near
each of its points, can be described by a sufficiently differentiable parametrization of a Euclidean
quadrant. More precisely, for every point \(x\in\Man\), there exist an integer
\(c_x\in\{0,\ldots,m\}\), a relatively open set \(\,\mathcal U_x\subset\mathbb H_{c_x}^m\)
containing \(0\), and a \(C^\ell\) map \(\Phi_x:\mathcal U_x\to\mathbb R^d\) (called corner chart),
such that \(\Phi_x(0)=x\), \(\Diff\Phi_x\) has rank \(m\) at every point, and \(\Phi_x\) is a
homeomorphism from \(\,\mathcal U_x\) onto a neighborhood of \(x\) in \(\Man\). The integer \(c_x\)
indicates the number of active boundary directions at \(x\). Thus \(c_x=0\) at an ordinary interior
point, \(c_x=1\) at a boundary point, and \(c_x\geqslant2\) at a corner point where several boundary
faces meet. Typical examples include smooth surfaces such as spheres, manifolds with boundary such
as disks, and manifolds with corners such as polytopes embedded in \(\mathbb R^3\), see
\Cref{fig:manifold-examples}.

\paragraph{Strata and codimension.}

A stratum is a smooth piece of \(\Man\) on which the same number of boundary coordinates are active.
Thus, in the notation above, if \(\Strat\) is a codimension-\(c\) stratum and \(x\in \Strat\), then
the number of active boundary coordinates at \(x\) is \(c_x=c\). We choose local corner coordinates
adapted to \(\Strat\), represented by a corner chart \(\Phi_x:\mathcal U_x\subset \mathbb H_c^m \to
\Man \subset \mathbb R^d\), \( \Phi_x(0)=x \). Here ``adapted to \(\Strat\)'' means that the stratum
\(\Strat\) is represented in the coordinate domain by setting the last \(c\) coordinates equal to
zero. More precisely, locally near \(x\),
\begin{equation*}
  \Strat\cap \Phi_x(\,\mathcal U_x)
  = \Phi_x\!\left(\,\mathcal U_x\cap \bigl(\mathbb R^{m-c}\times\{0\}^c\bigr)\right).
\end{equation*}
Equivalently, writing \(\xi=(\xi_{\mathcal S},\xi_{\mathcal C})\) with \(\xi_{\mathcal S}\in\mathbb
R^{m-c}\) and \( \xi_{\mathcal C}\in [0,\infty)^c\), the stratum is described in the coordinate
domain by \(\bigl\{\xi\in \mathcal U_x : \xi_{\mathcal C}=0\bigr\}\). Thus a codimension-\(c\)
stratum \(\Strat\subset \Man\) has dimension \(m-c\). In this paper, \(k=d-m\) denotes the ambient
codimension of the support \(\Man\subset\mathbb R^d\), whereas \(c\) denotes the codimension of the
active stratum inside \(\Man\). Hence the ambient codimension of \(\Strat\) is \(d-(m-c)=k+c\), see
\Cref{fig:strata-warped-square}.

\paragraph{Tangent spaces and normal spaces.}

For \(x\in \Man\), choose a corner chart \(\Phi_x:\mathcal U_x \to \Man\) centered at \(x\), so that
\(\Phi_x(0)=x\). The tangent space is defined by
\begin{equation*}
  \mathcal T_x\Man = \operatorname{Im}\Diff\Phi_x(0) \subset \mathbb R^d .
\end{equation*}
Equivalently, \(\mathcal T_x\Man\) is the linear subspace spanned by the columns of
\(\Diff\Phi_x(0)\). This subspace does not depend on the chosen local parametrization: changing
coordinates only changes the basis used to describe the same image space. At boundary or corner
points, \(\mathcal T_x\Man\) is still a linear space, and therefore does not retain the one-sided
constraints defining the boundary or corner. These constraints are encoded instead by the inward
tangent cone defined below.

A stratum \(\Strat\) containing \(x\) has its own tangent space \(\mathcal T_x\Strat\subset \mathcal
T_x\Man\), of dimension \(m-c\). The directions in \(\mathcal T_x\Man\) that are orthogonal to
\(\mathcal T_x\Strat\) form the \(c\)-dimensional linear space \(\mathcal C_x = \mathcal T_x\Man\cap
(\mathcal T_x\Strat)^\perp\). It consists of directions that remain tangent to \(\Man\), but are
transverse to the stratum \(\Strat\) within \(\Man\). These directions encode boundary or corner
effects. The ambient normal space of \(\Man\) at \(x\) is \(\mathcal N_x\Man = (\mathcal
T_x\Man)^\perp \subset \mathbb R^d\). It has dimension \(k=d-m\). These directions encode the fact
that the support may be lower-dimensional in the ambient space. Thus, at a point \(x\in \Strat\),
the relevant orthogonal decomposition is
\begin{equation*}
  \mathbb R^d =\mathcal T_x\Strat \oplus \mathcal C_x \oplus \mathcal N_x\Man .
\end{equation*}
In the sequel, \(\bfS(x)\in\mathbb R^{d\times(m-c)}\), \(\bfC(x) \in\mathbb R^{d\times c}\) and
\(\bfN(x)\in\mathbb R^{d \times k}\) refer to orthonormal matrices whose columns span \(\mathcal
T_x\Strat\), \(\mathcal C_x\) and \(\mathcal N_x\Man\), respectively.

\begin{figure*}[t]
  \centering
  \input{figures/tangent_normal_decomposition_input}
  \vspace*{-20pt}
  \caption{\small Tangent and normal directions near a boundary stratum. Here \(\Man\subset\mathbb
  R^3\) is a two-dimensional surface with boundary, \(\Strat\) is a one-dimensional boundary
  stratum, and \(x\in\Strat\). The tangent space \(\mathcal T_x\Man\) splits into the direction
  \(\mathcal T_x\Strat\) tangent to the stratum and the transverse direction \(\mathcal C_x\) inside
  the support. The remaining direction \(\mathcal N_x\Man\) is normal to the support in the ambient
  space.}
  \label{fig:tangent-normal-decomposition}
\end{figure*}

\paragraph{Tubular coordinates near a stratum.}

Let \(\Compct_{\Strat}\) be a compact subset of a smooth stratum \(\Strat\). After restricting to a
sufficiently small neighborhood of \(\Compct_{\Strat}\), the nearest-point projection onto
\(\Strat\), denoted by \(\pi\), is well-defined and smooth. Thus every point \(y\) in this
neighborhood can be written uniquely in the form \(y=x+\bfC(x)u(y)+\bfN(x)\eta(y)\), where
\(x=\pi(y)\in\Strat\), \(u(y)\in\mathbb R^c\), and \(\eta(y)\in\mathbb R^k\). The vector
\(\bfC(x)u(y)\) is tangent to \(\Man\) but transverse to \(\Strat\), while \(\bfN(x)\eta(y)\) is
normal to \(\Man\) in the ambient space. We collect these transverse coordinates as
\begin{equation*}
  \nu(y)=(u(y),\eta(y)).
\end{equation*}
The boundary-layer scaling studies points whose transverse displacement from the stratum is of order
\(\sigma\). We therefore write such points as
\begin{equation*}
  y = y_\sigma(a,x) = x+\sigma \bfC(x)a_{\mathcal C}+\sigma \bfN(x)a_{\mathcal N},
  \qquad a=(a_{\mathcal C},a_{\mathcal N})\in\mathbb R^c\times \mathbb R^k.
\end{equation*}
Here \(a_{\mathcal C}\) and \(a_{\mathcal N}\) are dimensionless coordinates. The coordinate
\(a_{\mathcal C}\) measures displacement from the stratum inside the support, in units of
\(\sigma\), whereas \(a_{\mathcal N}\) measures ambient-normal displacement from \(\Man\), again in
units of \(\sigma\). The variable \(x\) indicates the location along the stratum, whereas \(a\)
refers to the rescaled transverse position.

\begin{table}[p]
  \centering
  \small
  \renewcommand{\arraystretch}{1.12}
  \begin{tabularx}{\textwidth}
{@{}p{0.11\textwidth}p{0.41\textwidth}X@{}}
\toprule
Notation & Object & Meaning \\
\midrule

\(\Man\)
& Carrying manifold
& \(m\)-dimensional manifold with corners in \(\mathbb R^d\) on which
\(q\) is concentrated; near \(\Compct_{\Strat}\) it agrees locally
with \(\operatorname{supp}(q)\). \\

\(\Strat\)
& Active stratum
& Smooth piece of \(\Man\) with \(c\) active boundary
coordinates. \\

\(k\)
& Ambient codimension \(d-m\)
& Codimension of \(\Man\) in \(\mathbb R^d\). \\

\(c\)
& Stratum codimension
& Codim.\ of \(\Strat\) inside \(\Man\);
\(\dim(\Strat)=m-c\). \\

\(\mathbb H_c^m\)
& Local quadrant
& \(\mathbb R^{m-c}\times[0,\infty)^c\). \\

\(\Phi_x\)
& Local parametrization or corner chart
& Map from a local quadrant $\mathcal U_x\subset \mathbb H_c^m$
into \(\Man\), centered at \(x\). \\

\(\xi\)
& Corner coordinates \((\xi_{\mathcal S},\xi_{\mathcal C})\)
& Coordinates split into stratum \(\xi_{\mathcal{S}}\in
\mathbb R^{m-c}\) and corner \(\xi_{\mathcal{C}}\in
[0,\infty)^{c}\) variables. \\

\(\mathcal T_x\Man\)
& Tangent space
& Linear tangent space to \(\Man\) at \(x\). \\

\(\mathcal T_x\Strat\)
& Stratum tangent space
& Tangent space to the stratum at \(x\). \\

\(\mathcal C_x\)
& Tangent-transverse space \(\mathcal T_x\Man\cap(\mathcal T_x\Strat)^\perp\)
& Directions tangent to \(\Man\),  transverse to
\(\Strat\). \\

\(\mathcal N_x\Man\)
& Ambient normal space \((\mathcal T_x\Man)^\perp\)
& Directions normal to \(\Man\) in \(\mathbb R^d\). \\

\(\bfS(x)\)
& \(\mathcal T_x\Strat\)-frame
& Matrix with orthonormal columns spanning \(\mathcal T_x\Strat\). \\

\(\bfC(x)\)
& \(\mathcal C_x\)-frame
& Matrix with orthonormal columns spanning \(\mathcal C_x\). \\

\(\bfN(x)\)
& \(\mathcal N_x\Man\)-frame
& Matrix with orthonormal columns spanning \(\mathcal N_x\Man\). \\

\(\bfP_{\mathcal{N}}(x)\)
& Normal projector \(\bfN(x)\bfN(x)^\top\)
& Orthogonal projection onto \(\mathcal N_x\Man\). \\

\(\pi(y)\)
& Base-point projection
& Projection of \(y\) onto the stratum \(\Strat\). \\

\(u(y)\)
& Tangent-transverse coordinate
& Coordinate in the \(\mathcal C_x\) directions. \\

\(\eta(y)\)
& Ambient-normal coordinate
& Coordinate in the \(\mathcal N_x\Man\) directions. \\

\(\nu(y)\)
& Transverse coordinate \((u(y),\eta(y))\)
& Transverse displacement from the stratum. \\
\(a\)
& Rescaled transverse coordinate \((a_{\mathcal C},a_{\mathcal N})\)
& Transverse displacement from the stratum measured
in units of
\(\sigma\). \\

\(y_\sigma(a,x)\)
& Boundary-layer point
& Observation point
\(x+\sigma \bfC(x)a_{\mathcal C}+\sigma \bfN(x)a_{\mathcal N}\). \\

\(\dd\vol_{\Man}\)
& Volume measure
& \(m\)-dim.\ Euclidean volume measure on \(\Man\). \\

\(J_x\)
& Volume Jacobian
& Jacobian in local integration over \(\Man\). \\

\(\mathcal T_x^+\Man\)
& Inward tangent cone
& Cone \(\Diff\Phi_x(0)\mathbb H_c^m\), retaining boundary and
corner constraints. \\

\(\bfL(x)\)
& Adapted differential
& Matrix from \(\mathsf{GL}(m)\) defined by \(\Diff\Phi_x(0) =
\left[\bfS\ \bfC\right](x)\bfL(x)\). \\

\(\mathrm{II}_x\)
& Second fundamental form
& Quadratic normal bending of \(\Man\) in \(\mathbb R^d\). \\

\(\bsh_{\Man}(x)\)
& Mean-curvature vector
& Unnormalized trace of \(\mathrm{II}_x\). \\

\(\nabla_{\Man} f(x)\)
& Manifold gradient
& Gradient of \(f\) along \(\Man\), viewed in
\(\mathcal T_x\Man\). \\

\bottomrule
  \end{tabularx}
  \caption{ Summary of the local geometric framework: manifolds, strata, tangent and normal spaces,
  boundary-layer coordinates, tangent cones, and curvature quantities. }
  \label{tab:notation-boundary-layer}
\end{table}

\paragraph{Volume measure on the support.}

The measure \(\dd\vol_{\Man}\) is the \(m\)-dimensional volume measure on \(\Man\) induced by the
Euclidean metric of \(\mathbb R^d\). In a corner chart \(\Phi_x:\mathcal U_x\to\Man\), integration
with respect to this measure is given by the change-of-variables formula
\begin{equation*}
  \int_{\Phi_x(\,\mathcal U_x)} f(x')\,\dd\vol_{\Man}(x')
  = \int_{\mathcal U_x} f(\Phi_x(\xi))\,J_x(\xi)\,\dd\xi .
\end{equation*}
Here \(\dd\xi\) denotes Lebesgue measure on the coordinate domain \(\mathcal U_x\), and
\begin{equation*}
  J_x(\xi) = \sqrt{\det\!\bigl(\Diff\Phi_x(\xi)^\top \Diff\Phi_x(\xi)\bigr)}
\end{equation*}
is the volume Jacobian of the chart \(\Phi_x\). Thus writing \(q(\dd x)=\rho(x)\,\dd\vol_{\Man}(x)\)
means that \(q\) has density \(\rho\) with respect to the natural \(m\)-dimensional volume measure
on the support \(\Man\), not necessarily with respect to the Lebesgue measure in \(\mathbb R^d\).

\paragraph{Inward tangent cone.}

At a boundary or corner point, the linear tangent space \(\mathcal T_x\Man\) does not retain the
one-sided constraints that define the local quadrant. The inward tangent cone retains these
constraints. Let \(x\in\Strat\), where \(\Strat\) is a codimension-\(c\) stratum, and let
\(\Phi_x:\mathcal U_x\to\Man\) be a corner chart adapted to \(\Strat\). The inward tangent cone at
\(x\) is defined by
\begin{equation*}
  \mathcal T_x^+\Man = \Diff\Phi_x(0)\,\mathbb H_c^m \subset \mathcal T_x\Man .
\end{equation*}
This cone is independent of the chosen adapted local parametrization. When \(c=0\), the quadrant is
\(\mathbb R^m\), and \(\mathcal T_x^+\Man=\mathcal T_x\Man\). When \(c=1\), the cone is a half-space
in \(\mathcal T_x\Man\). When \(c\geqslant2\), it is a corner cone. This cone is the first-order
object seen when one zooms in on the support near \(x\). Indeed, for bounded \(\zeta\in\mathbb
H_c^m\) and small \(\sigma\), Taylor expansion gives
\vspace{-3pt}
\begin{equation*}
  \Phi_x(\sigma\zeta) = x + \overbrace{\sigma \Diff\Phi_x (0)\,\zeta}^{\in \mathcal T_x^+\Man}
  + \,O(\sigma^2).
\end{equation*}
Thus, after subtracting \(x\) and rescaling by \(\sigma^{-1}\), the local support converges to the
linearized inward cone \(\Diff\Phi_x(0)\,\mathbb H_c^m\).

\paragraph{The matrix \(\bfL(x)\) and its Jacobian.}

In the main expansion, we express the differential \(\Diff\Phi_x(0)\) in an adapted orthonormal
frame of \(\mathcal T_x\Man\). More precisely, since \( [\,\bfS(x)\ \bfC(x)\,] \in\mathbb R^{d
\times m}\) is an orthonormal matrix whose first \(m-c\) columns span \(\mathcal T_x\Strat\) and
whose last \(c\) columns span \(\mathcal C_x\), there is an invertible matrix \(\bfL(x)\in
\mathsf{GL}(m)\) such that \(\Diff \Phi_x(0)=[\,\bfS(x)\ \bfC(x)\,] \bfL(x)\).

In adapted tangent coordinates, the linearized inward cone is \(\bfL(x)\mathbb H_c^m\).
Equivalently, in ambient coordinates, the same cone is \(\mathcal T_x^+\Man=[\,\bfS(x) \
\bfC(x)\,]\,\bfL(x)\mathbb H_c^m\). We can write \(\bfL(x) = [\,\bfL_{\mathcal
S}(x);\,\bfL_{\mathcal C}(x)\,]\) where \(\bfL_{\mathcal S}(x)\) contains the first \((m-c)\) rows
of \(\bfL(x)\) and \(\bfL_{\mathcal C}(x)\) contains the last \(c\) rows of the same matrix, so that
\([\,\bfS(x) \ \bfC(x)\,]\,\bfL(x) = \bfS(x)\bfL_{\mathcal S} (x) + \bfC(x) \bfL_{\mathcal C}(x)\).

The determinant factor \(|\det \bfL(x)|\) is the leading-order Jacobian converting corner
coordinates into orthonormal tangent coordinates on \(\Man\). Since \( \Diff\Phi_x(0) = [\,\bfS(x)\
\bfC(x)\,]\bfL(x)\) and \([\,\bfS(x)\ \bfC(x)\,]\) has orthonormal columns, the volume Jacobian
satisfies
\begin{equation*}
  J_x(0) = \sqrt{\det\!\bigl(\Diff\Phi_x(0)^\top \Diff\Phi_x(0)\bigr)} = |\det \bfL(x)|.
\end{equation*}
Hence, for bounded \(\zeta\),
\begin{equation*}
  J_x(\sigma\zeta)\,\sigma^m\,\dd\zeta = \sigma^m \bigl(|\det \bfL(x)|+O(\sigma)\bigr)\,\dd\zeta .
\end{equation*}

\paragraph{Curvature terms used in the smooth no-boundary case.}

In the special case \(c=0\), the inward tangent cone coincides with the tangent space, \(\mathcal
T_x^+\Man=\mathcal T_x\Man\). Thus the leading local model at \(x\) is the tangent plane \(\mathcal
T_x\Man\), and the first correction to this linear approximation is encoded by the second
fundamental form, a symmetric bilinear map \(\mathrm{II}_x: \mathcal T_x\Man\times\mathcal T_x \Man
\to \mathcal N_x\Man\). It measures the curvature of \(\Man\) in the ambient normal directions. For
a general adapted chart, the normal component of the quadratic Taylor term is described by
\(\mathrm{II}_x\). In suitable tangent coordinates centered at \(x\), for example geodesic
coordinates on \(\Man\), this gives the simpler expansion
\begin{equation}\label{eq:II}
  \Phi_x(\xi) = x+\xi + \frac12\mathrm{II}_x(\xi,\xi) + O(\|\xi\|^3),
  \qquad \xi\in\mathcal T_x^+\Man\subset\mathbb R^d .
\end{equation}
Thus the tangent-plane approximation is \(x+\xi\), while \((1/2)\mathrm{II}_x(\xi,\xi)\) is the
first normal curvature correction. Equivalently, after choosing orthonormal bases of \(\mathcal
T_x\Man\) and \(\mathcal N_x\Man\), the second fundamental form is represented by coefficients
\(\mathrm{II}^{\alpha}_{ij}(x)\), with tangent indices \(i,j\in\{1,\ldots,m\}\) and normal index
\(\alpha\in\{1,\ldots,k\}\). Thus it may be viewed as a collection of \(k\) symmetric Hessian
matrices \(\mathrm{II}^{1}(x),\ldots,\mathrm{II}^{k}(x)\). With this convention for \(\mathrm{II}\),
and denoting by \(n_1(x),\dots,n_k(x)\) an orthonormal basis of \(\mathcal N_x\Man\), we define
\begin{equation}\label{eq:HM}
  \bsh_{\Man}(x) = \sum_{j=1}^k \operatorname{tr} \bigl(\mathrm{II}^{j}(x)\bigr) n_j (x).
\end{equation}
With the normal frame matrix \(\bfN(x)=[\,n_1(x)\ \cdots\ n_k(x)\,]\), the projection onto the
ambient normal space is \(\bfP_{\mathcal N}(x) = \bfN(x)\bfN(x)^\top\).

Finally, for a smooth function \(f\) on \(\Man\), \(\nabla_{\Man}f(x)\) denotes the gradient along
the manifold, with respect to the Riemannian metric inherited from \(\mathbb R^d\). It is the unique
vector in \(\mathcal T_x\Man\) whose inner product with any tangent direction \(\tau\in\mathcal
T_x\Man\) gives the directional derivative of \(f\) at \(x\) along \(\tau\). We view it as an
ambient vector through the inclusion \(\mathcal T_x\Man\subset\mathbb R^d\).

%% file: figures/strata_warped_square_input_layout.tex

\definecolor{sphereblue}{RGB}{65,135,210}
\definecolor{diskgreen}{RGB}{88,145,62}
\definecolor{squarepurple}{RGB}{108,66,170}
\definecolor{triangorange}{RGB}{220,125,30}
\definecolor{stratumred}{RGB}{190,45,45}

\tikzset{
  axis/.style={-Latex,line width=.6pt,black},
  labeltext/.style={font=\fontsize{7.2}{8.3}\selectfont},
  titletext/.style={font=\bfseries\fontsize{9.0}{10.0}\selectfont,align=center},
  point/.style={circle,fill=black,inner sep=1.15pt},
  surfline/.style={line width=.45pt,opacity=.45},
  borderline/.style={line width=1.05pt},
  callout/.style={font=\fontsize{7.2}{8.3}\selectfont,align=left},
  calloutbox/.style={rounded corners=4pt,draw=black!35,line width=.5pt,fill=black!2},
}

\resizebox{\textwidth}{!}{%
\begin{tikzpicture}[x=1cm,y=1cm]

\node[calloutbox,callout,text width=3.25cm,anchor=north west] at (-.8,4.65)
  {\centering
   \(d=3,\  m=2,\)\\[2pt] \(k=d-m=1.\)\\[2pt]
   \raggedright
   Here \(k\) is the ambient codimension of \(\Man\).};

\begin{scope}[shift={(5.0,2.65)},scale=1.15]

  \fill[black,opacity=.13] (.08,-1.38) ellipse (1.55 and .20);

  \path[shade,left color=squarepurple!28!white,
        right color=squarepurple!56!white,
        opacity=.77,draw=squarepurple!55!black,borderline]
    (-1.55,-.72) .. controls (-1.28,.03) and (-1.32,.76) .. (-1.08,1.14)
    .. controls (-.30,.92) and (.70,1.13) .. (1.36,1.42)
    .. controls (1.62,.55) and (1.53,-.20) .. (1.42,-.92)
    .. controls (.55,-.78) and (-.35,-1.02) .. (-1.55,-.72) -- cycle;

  \begin{scope}
  \clip
    (-1.55,-.72) .. controls (-1.28,.03) and (-1.32,.76) .. (-1.08,1.14)
    .. controls (-.30,.92) and (.70,1.13) .. (1.36,1.42)
    .. controls (1.62,.55) and (1.53,-.20) .. (1.42,-.92)
    .. controls (.55,-.78) and (-.35,-1.02) .. (-1.55,-.72) -- cycle;

  \foreach \t in {-0.8,-0.4,0,0.4,0.8}{
    \draw[surfline,squarepurple!40!black]
      (-1.35,\t+.02) .. controls (-.30,\t+.25) and (.62,\t-.02) .. (1.52,\t*.85+.08);
    \draw[surfline,squarepurple!40!black]
      (\t,-.92) .. controls (\t*.72,-.13) and (\t*.90,.62) .. (\t*.68,1.32);
  }

  \fill[sphereblue!35,opacity=.22]
    (-.70,-.35) .. controls (-.45,.25) and (.35,.45) .. (.92,.28)
    .. controls (.74,-.20) and (.18,-.50) .. (-.70,-.35) -- cycle;
  \end{scope}

  \draw[triangorange!85!black,line width=1.35pt]
    (-1.55,-.72) .. controls (-1.28,.03) and (-1.32,.76) .. (-1.08,1.14);
  \draw[triangorange!85!black,line width=1.35pt]
    (-1.08,1.14) .. controls (-.30,.92) and (.70,1.13) .. (1.36,1.42);
  \draw[triangorange!85!black,line width=1.35pt]
    (1.36,1.42) .. controls (1.62,.55) and (1.53,-.20) .. (1.42,-.92);
  \draw[triangorange!85!black,line width=1.35pt]
    (1.42,-.92) .. controls (.55,-.78) and (-.35,-1.02) .. (-1.55,-.72);

  \foreach \p in {(-1.55,-.72),(-1.08,1.14),(1.36,1.42),(1.42,-.92)}{
    \fill[stratumred] \p circle (2.3pt);
  }

  \node[point,label={[labeltext]right:$x$}] (intpt) at (.10,-.06) {};
  \node[point,label={[labeltext]right:$y$}] (bdpt) at (1.50,.12) {};
  \node[point,label={[labeltext]above right:$w$}] (corpt) at (1.36,1.42) {};

\end{scope}

\draw[-Latex,line width=.6pt,sphereblue!65!black] (5.1,2.55) -- (2.75,1.55);
\node[calloutbox,callout,text width=3.25cm,anchor=east] at (2.65,1.55)
  {\textbf{Interior stratum}\\
   \(c=0\) active boundary coordinates\\
   \(\dim \Strat=m-c=2\)\\
   \(\mathrm{codim}_{\mathbb R^3}(\Strat)=k+c=1\).};

\draw[-Latex,line width=.6pt,triangorange!85!black] (6.73,2.80) -- (9.05,1.55);
\node[calloutbox,callout,text width=3.25cm,anchor=west] at (9.15,1.55)
  {\textbf{Boundary stratum}\\
   \(c=1\) active boundary coordinate\\
   \(\dim \Strat=m-c=1\)\\
   \(\mathrm{codim}_{\mathbb R^3}(\Strat)=k+c=2\).};

\draw[-Latex,line width=.6pt,stratumred!85!black] (6.56,4.28) -- (9.05,4.45);
\node[calloutbox,callout,text width=3.25cm,anchor=north west] at (9.15,4.85)
  {\textbf{Corner stratum}\\
   \(c=2\) active boundary coordinates\\
   \(\dim \Strat=m-c=0\)\\
   \(\mathrm{codim}_{\mathbb R^3}(\Strat)=k+c=3\).};

\node[calloutbox,callout,text width=12.2cm,anchor=north west] at (-.2,.45)
  {\textbf{Local corner coordinates.} In a chart
   \(\xi=(\xi_{\mathcal S},\xi_{\mathcal C})
   \in\mathbb R^{m-c}\times[0,\infty)^c\), the
   codimension-\(c\) stratum through the origin is
   \(\{\xi_{\mathcal C}=0\}
   =\mathbb R^{m-c}\times\{0\}^c\). Thus \(c\) counts
   active boundary coordinates inside \(\Man\), while
   \(k=d-m\) counts the ambient normal directions of
   \(\Man\subset\mathbb R^d\).};


\begin{scope}[shift={(0.0,-2.05)}]
  \node[titletext] at (1.4,.95) {\(c=0\)};
  \fill[sphereblue!12] (-.1,0) rectangle (2.9,.75);
  \draw[black!25,densely dashed,line width=.45pt]
    (-.1,0) rectangle (2.9,.75);
  \node[labeltext] at (1.4,-.3) {\(\mathbb R^2\)};
\end{scope}

\begin{scope}[shift={(4.7,-2.05)}]
  \node[titletext] at (1.4,.95) {\(c=1\)};
  \fill[triangorange!20,opacity=.75] (-.1,0) rectangle (2.9,.75);
  \draw[triangorange!85!black,line width=.9pt]
    (-.1,0) -- (2.9,0);
  \draw[black!25,densely dashed,line width=.45pt]
    (-.1,0) -- (-.1,.75) -- (2.9,.75) -- (2.9,0);
  \node[labeltext] at (1.4,-.3) {\(\mathbb R\times[0,\infty)\)};
\end{scope}

\begin{scope}[shift={(9.4,-2.05)}]
  \node[titletext] at (1.4,.95) {\(c=2\)};
  \fill[stratumred!18,opacity=.75] (-.1,0) rectangle (2.9,.75);
  \draw[stratumred!75!black,line width=.9pt]
    (-.1,0) -- (2.9,0);
  \draw[stratumred!75!black,line width=.9pt]
    (-.1,0) -- (-.1,.75);
  \fill[stratumred] (-.1,0) circle (2pt);
  \draw[black!25,densely dashed,line width=.45pt]
    (-.1,.75) -- (2.9,.75) -- (2.9,0);
  \node[labeltext] at (1.4,-.3) {\([0,\infty)^2\)};
\end{scope}

\end{tikzpicture}%
}

%% file: figures/tangent_normal_decomposition_input.tex
%

\providecommand{\Man}{\mathcal{M}}
\providecommand{\Strat}{\mathcal{S}}

\definecolor{sphereblue}{RGB}{65,135,210}
\definecolor{diskgreen}{RGB}{88,145,62}
\definecolor{squarepurple}{RGB}{108,66,170}
\definecolor{triangorange}{RGB}{220,125,30}
\definecolor{stratumred}{RGB}{190,45,45}
\definecolor{normaldark}{RGB}{105,55,55}

\tikzset{
  tnaxis/.style={-Latex,line width=.6pt,black},
  tnlabel/.style={font=\fontsize{7.4}{8.5}\selectfont},
  tntitle/.style={font=\bfseries\fontsize{9.2}{10.2}\selectfont,align=center},
  tnpoint/.style={circle,fill=black,inner sep=1.25pt},
  tnsurfline/.style={line width=.45pt,opacity=.45},
  tnborder/.style={line width=1.05pt},
  tnvec/.style={-Latex,line width=1.05pt},
  tnbox/.style={rounded corners=4pt,draw=black!35,line width=.5pt,fill=black!2},
}

\resizebox{\textwidth}{!}{%
\begin{tikzpicture}[x=1cm,y=1cm]


\begin{scope}[shift={(2.55,1.25)},scale=.55]
  \draw[tnaxis] (0,0)--(0,1.15) node[above=-1pt] {$z$};
  \draw[tnaxis] (0,0)--(1.05,0) node[right=-1pt] {$y$};
  \draw[tnaxis] (0,0)--(-.63,-.72) node[below left=-2pt] {$x$};
\end{scope}

\node[tnbox,tnlabel,anchor=north west,align=left,text width=3.65cm]
  at (.10,4.25)
  {\(\Man\subset\mathbb R^3\) is a surface with boundary. 
   Here \(m=2\), \(c=1\), and \(k=1\).};

\begin{scope}[shift={(5.25,2.55)},scale=1.20]

  \fill[black,opacity=.13] (.88,-1.38) ellipse (1.55 and .20);

  \path[shade,left color=squarepurple!28!white,right color=squarepurple!56!white,
        opacity=.77,draw=squarepurple!55!black,tnborder]
    (-0.75,-.72) .. controls (-0.48,.03) and (-0.52,.76) .. (-0.28,1.14)
    .. controls (.50,.92) and (1.50,1.13) .. (2.16,1.42)
    .. controls (2.42,.55) and (2.33,-.20) .. (2.22,-.92)
    .. controls (1.35,-.78) and (0.45,-1.02) .. (-0.75,-.72) -- cycle;

  \begin{scope}
  \clip
    (-0.75,-.72) .. controls (-0.48,.03) and (-0.52,.76) .. (-0.28,1.14)
    .. controls (.50,.92) and (1.50,1.13) .. (2.16,1.42)
    .. controls (2.42,.55) and (2.33,-.20) .. (2.22,-.92)
    .. controls (1.35,-.78) and (0.45,-1.02) .. (-0.75,-.72) -- cycle;

  \foreach \t in {-0.8,-0.4,0,0.4,0.8}{
    \draw[tnsurfline,squarepurple!40!black]
      (-0.55,\t+.02) .. controls (.60,\t+.25) and (1.42,\t-.02) .. (2.32,\t*.85+.08);
    \draw[tnsurfline,squarepurple!40!black]
      (\t+.8,-.92) .. controls (\t*.72+0.8,-.13) and (\t*.90+0.8,.62) .. (\t*.68+.8,1.32);
  }
  \end{scope}

  \draw[triangorange!85!black,line width=1.55pt]
    (2.22,-.92) .. controls (1.35,-.78) and (.45,-1.02) .. (-0.75,-.72);
  \node[tnlabel,triangorange!70!black,anchor=north]
    at (-.5,-.88) {$\Strat$};

  \coordinate (xpt) at (.92,-.86);

  \fill[sphereblue!14,opacity=.70,draw=sphereblue!55!black,line width=.45pt]
    ($(xpt)+(-.86,.05)$)
    -- ($(xpt)+(.82,-.11)$)
    -- ($(xpt)+(.48,.76)$)
    -- ($(xpt)+(-1.20,.92)$)
    -- cycle;
  \node[tnlabel,sphereblue!65!black,anchor=south]
    at ($(xpt)+(-.75,.78)$) {$\mathcal T_x\Man$};

  \draw[tnvec,sphereblue!75!black]
    (xpt) -- ($(xpt)+(1.12,-.10)$)
    node[pos=.72,below=2pt,tnlabel] {$\mathcal T_x\Strat$};

  \draw[tnvec,diskgreen!65!black]
    (xpt) -- ($(xpt)+(-.38,.92)$)
    node[pos=.70,left=2pt,tnlabel] {$\mathcal C_x$};

  \draw[tnvec,normaldark!90!black]
    (xpt) -- ($(xpt)+(.58,1.20)$)
    node[pos=.78,right=2pt,tnlabel] {$\mathcal N_x\Man$};

  \node[tnpoint,label={[tnlabel]below right:$x$}] at (xpt) {};

  \node[tnlabel,anchor=west,text=squarepurple!55!black]
    at (-0.65,1.28) {$\Man$};

\end{scope}

\node[tnbox,tnlabel,anchor=north west,align=left,text width=4.85cm]
  at (9.,4.25)
  {At \(x\in\Strat\), the ambient space decomposes as 
   \(\mathbb R^3=\mathcal T_x\Strat\oplus\mathcal C_x\oplus\mathcal N_x\Man\).\\[2pt]
   \textcolor{sphereblue!75!black}{\(\mathcal T_x\Strat\)}: tangent to the stratum.\\[-1pt]
   \textcolor{diskgreen!65!black}{\(\mathcal C_x\)}: tangent to \(\Man\), transverse to \(\Strat\).\\[-1pt]
   \textcolor{normaldark!90!black}{\(\mathcal N_x\Man\)}: normal to \(\Man\) in \(\mathbb R^3\).};


\end{tikzpicture}%
}

%% file: sections/formal_main.tex

\section{Main results on boundary-layer tangent-cone asymptotics}
\label{sec:main_results}

The goal of this section is to state the main results of the paper. These results provide
expansions, near a fixed stratum of the \(m\)-dimensional manifold \(\Man\) carrying \(q\), for the
heat-regularized density \(p_\sigma\) and for the related logarithmic quantities. We begin by
stating the conditions on the probability distribution \(q\) and on the local geometric structure of
its support near a compact subset of a stratum \(\Strat\).

\subsection{Standing assumptions}
\label{subsec:standing-assumptions}

\begin{assumption}[Standing measure and local corner hypothesis]
  \label{ass:standing}
  Let \(r\in\mathbb N\). Let \(q\) be a Borel probability measure on
  \(\mathbb R^d\). Let \(\Man\subset\mathbb R^d\) be an embedded
  \(m\)-dimensional manifold with corners. Let
  \(\Strat\subset\Man\) be a codimension-\(c\) stratum and let
  \(\Compct_{\Strat}\subset\Strat\) be compact. We assume the
  following.
  \begin{enumerate}[before={\renewcommand{\theenumi}{\textup{(\standingtag)}}%
    \renewcommand{\labelenumi}{\theenumi}}, itemsep=0pt, topsep=2pt]
    \item \label{ass:standing:measure} The measure \(q\) is concentrated on \(\Man\) and is
    represented by an \(m\)-dimensional density \(\rho\) with respect to the induced volume measure
    on \(\Man\):
    \begin{equation*}
      q(\dd x)=\rho(x)\,\dd\vol_{\Man}(x),
      \qquad x\in\Man .
    \end{equation*}
    In particular, \(\int_{\Man}\rho(x)\,\dd\vol_{\Man}(x)=1 \).

    \item \label{ass:standing:chart} There is an open set \(\mathcal V\subset\mathbb R^{m-c}\), a
    positive number \(\varepsilon>0\), and an open contractible set
    \(\Theta_{\mathrm{out}}\Subset\mathcal V\) such that, for a \(C^{r+1}\) corner chart
    \(\Phi:\mathcal V\times[0,\varepsilon)^c\to\Man\), 
    which  is a homeomorphism onto a relatively open subset of \(\Man\) with
    \(\operatorname{rank}(\Diff\Phi)=m\) at every point, one has
    \begin{equation*}
      \Compct_{\Strat}
      \subset
      \Phi(\Theta_{\mathrm{out}}\times\{0\})
      =
      \Strat_{\mathrm{out}}
      \subset
      \Strat .
    \end{equation*}
    The image of \(\Theta_{\mathrm{out}}\times\{0\}\) parametrizes the part of the stratum under
    consideration. We set
    \[
      \varphi(\theta)=\Phi(\theta,0),
      \quad\text{and}\quad 
      \Theta_{\Compct_{\Strat}}
      =
      \{\theta\in\Theta_{\mathrm{out}}:
      \Phi(\theta,0)\in\Compct_{\Strat}\} = \varphi^{-1}(\Compct_\Strat).
    \]

    \item \label{ass:standing:density} The density \(\rho\) in item~\ref{ass:standing:measure} is such that
    \(\rho\circ\Phi:
      \mathcal V\times[0,\varepsilon)^c\to[0,\infty)\)
    is of class \(C^r\).

    \item \label{ass:standing:positivity} There exists \(\rho_*>0\) such that
    \(
      \rho(\Phi(\theta,0))\geqslant \rho_*\), for every 
      \(\theta\in\Theta_{\Compct_{\Strat}}\) .
  \end{enumerate}
\end{assumption}

The density \(\rho\) is a priori defined only \(\dd\vol_{\Man}\)-almost everywhere on \(\Man\). The
local regularity condition in item~\ref{ass:standing:density} fixes a \(C^r\) representative in the
chosen corner chart. When \(\Compct_{\Strat}\) lies in a boundary or corner stratum, the values
\(\rho(\Phi(\theta,0))\) in item~\ref{ass:standing:positivity} are understood through the trace of
this representative on the boundary faces of the chart.

The four items play different roles. The chart hypothesis \ref{ass:standing:chart} is purely
geometric and is used to construct adapted frames, tubular coordinates, and translated corner
charts. The measure-representation \ref{ass:standing:measure} and density-regularity hypotheses
\ref{ass:standing:density} enter through the local amplitude \(\mathcal A=\rho J\), the coefficients
\(\mathsf C_0\) and \(\mathsf C_1\), and the local kernel expansion. The positivity hypothesis
\ref{ass:standing:positivity} is used to pass from the density expansion to the logarithmic
expansion, through the uniform lower bound on \(\mathsf C_0\). The measure item
\ref{ass:standing:measure}, but not the local density regularity \ref{ass:standing:density}, is used
to control the far-field contribution to the full heat regularization \(p_\sigma\): what enters
there is that \(q\) is a probability measure on \(\Man\). Near \(\Compct_{\Strat}\), the positivity
hypothesis \ref{ass:standing:positivity} makes \(\Man\) agree locally with the support of \(q\).

The integer \(r\) measures the smoothness of the density in local corner coordinates and is the
regularity that limits the order of the asymptotic expansions. Throughout the main results we assume
one additional derivative on the corner chart, namely \(\Phi\in C^{r+1}\). This one-derivative
surplus is needed because the geometric quantities constructed from the chart (adapted
frames, Jacobian factors, and tubular coordinates) depend on derivatives of \(\Phi\).

The contractibility of \(\Theta_{\mathrm{out}}\) will be used only to choose adapted frames along
the stratum piece \(\Strat_{\mathrm{out}}\). The standing assumption above is sufficient for
constructing the local coordinate package, the localized kernel, and the tangent-cone expansion.

\begin{remark}[Locality and globalization]
  \label{rem:globalization}
  \Cref{ass:standing}~\ref{ass:standing:chart} covers \(\Compct_{\Strat}\) by a single corner chart
  over a contractible \(\Theta_{\mathrm{out}}\). This is a local hypothesis: it does not, for
  instance, cover an entire closed boundary circle by one chart. Accordingly, all expansions below
  are stated locally and hold locally uniformly over \(\Compct_{\Strat}\). No generality is lost. Indeed, on the one hand, one easily checks that the coefficients of the expansions 
  are intrinsic (see \Cref{rem:invariance} in the Appendix). On the other hand, a general compact subset of a
  stratum is handled by covering it with finitely many adapted corner charts, applying the results
  on each, and using that on overlaps the intrinsic coefficients agree; the finitely many uniform
  constants are then combined by taking maxima. We therefore state everything for a single chart and
  regard the globalization as routine.
\end{remark}

\subsection{Boundary-layer coordinates}

Near the stratum \(\Strat\), two transverse structures have to be kept separate. The first consists
of the directions in \(\mathcal C_x=\mathcal T_x\Man\cap (\mathcal T_x\Strat)^\perp\), which are
tangent to \(\Man\) but transverse to the stratum. These directions carry the boundary or corner
structure of the support. The second consists of the ambient normal directions \(\mathcal
N_x\Man\subset\mathbb R^d\), which encode the possible lower-dimensionality of the support. As shown
later (see \Cref{lem:ad_frame_comp_strat}), \Cref{ass:standing}~\ref{ass:standing:chart} allows us
to choose matrix-valued \(C^{r}\) mappings \(\bfS\), \(\bfC\), and \(\bfN\), defined on
\(\Strat_{\mathrm{out}}\), such that, for every \(x\in\Strat_{\mathrm{out}}\), the matrices
\(\bfS(x)\), \(\bfC(x)\), and \(\bfN(x)\) have dimensions \(d\times(m-c)\), \(d\times c\), and
\(d\times k\), respectively, and have orthonormal columns. The columns of these matrices are
orthonormal bases of \(\mathcal T_x\Strat\), \(\mathcal C_x\), and \(\mathcal N_x\Man=\mathcal
T_x\Man^\perp\), respectively. In particular, the concatenated matrix \(\bfQ(x)=[\,\bfS(x)\ \bfC(x)\
\bfN(x)\,]\) belongs to \(\mathsf O(d)\).

Thus, for \(x\in\Strat\), we write boundary-layer points as
\begin{equation}\label{eq:ysigma}
  y_\sigma = y_\sigma(a,x) = x+\sigma \bfC(x)a_{\mathcal C}+\sigma \bfN(x)a_{\mathcal N},
  \qquad a=(a_{\mathcal C},a_{\mathcal N})\in\mathbb R^c\times\mathbb R^k .
\end{equation}
Keeping \(a\) bounded is equivalent to looking in an \(O(\sigma)\)-neighborhood of the active
stratum, see \Cref{fig:boundary-layer-coordinates}. The limiting model is obtained by zooming in on
this \(O(\sigma)\)-neighborhood and rescaling distances by \(\sigma^{-1}\). In the smooth interior
of \(\Man\), this rescaling replaces the support by its tangent plane. Near a boundary or a corner,
the same rescaling leads to a limiting model that retains the one-sided constraints, replacing the
tangent plane by the inward tangent cone at \(\Strat\). Throughout, \(\pi(y)\) denotes the
orthogonal projection of \(y\) onto \(\Strat\). By \Cref{ass:standing}~\ref{ass:standing:chart}, and
the compactness of \(\Compct_{\Strat}\), after shrinking the tubular neighborhood if necessary, we
fix an open set \(\mathcal U\) on which the projection \(\pi:\mathcal U\to\Strat_{\mathrm{out}}\) is
well-defined and single-valued. Accordingly, for \(A>0\) and \(\sigma_0>0\) small enough, we denote
this boundary-layer regime by
\begin{equation}\label{eq:boundary-layer-regime}
  \mathcal Y_{A,\Compct_{\Strat},\sigma_0}
  = \big\{(y,\sigma)\in\mathcal U\times(0,\sigma_0]: \pi(y)\in \Compct_{\Strat},\
  \|y-\pi(y)\|\leqslant A\sigma\big\}.
\end{equation}

\begin{figure}[ht]
  \centering
  \vspace*{-15pt}
  \begin{tikzpicture}[scale=1.0]

  \coordinate (A) at (-3.30,-0.55);
  \coordinate (B) at ( 3.30,-0.55);
  \coordinate (C) at ( 4.05, 1.15);
  \coordinate (D) at (-2.55, 1.15);
  \coordinate (A1) at (-2.98, 0.18);
  \coordinate (B1) at ( 3.62, 0.18);

  \filldraw[bl-manifold] (A)--(B)--(C)--(D)--cycle;
  \fill[bl-layer] (A)--(B)--(B1)--(A1)--cycle;
  \draw[bl-stratum] (A)--(B);

  \coordinate (x)  at (-0.35,-0.55);
  \coordinate (xc) at ( 0.50, 0.18);
  \coordinate (y)  at ( 0.50, 1.58);

  \draw[bl-dash,BLCcolor!85!black] (x)--(xc);
  \draw[bl-dash,BLNcolor!85!black] (xc)--(y);

  \draw[bl-vec,BLTcolor] (x)--++(1.75,0)
      node[pos=1.0,below=5pt,bl-lab] {$\mathcal T_x\Strat$};
  \draw[bl-vec,BLCcolor] (x)--(xc)
      node[pos=0.52,above left=2pt,bl-lab] {$\sigma \bfC(x)a_{\mathcal C}$};
  \draw[bl-vec,BLNcolor] (xc)--(y)
      node[pos=0.55,right=3pt,bl-lab] {$\sigma \bfN(x)a_{\mathcal N}$};

  \node[bl-dot,label={[bl-lab,below=5pt]$x=\pi(y)\in \Strat$}] at (x) {};
  \node[bl-dot] at (xc) {};
  \node[bl-lab,below right=2pt] at (xc) {$x+\sigma \bfC(x)a_{\mathcal C}$};
  \node[bl-ydot,label={[bl-lab,above right=2pt]$y=y_\sigma(a,x)$}] at (y) {};

  \draw[decorate,decoration={brace,amplitude=4pt},line width=0.55pt]
      ($(B)-(6.35,0)$)--($(B1)-(6.35,0)$)
      node[midway,left=5pt,bl-lab] {$O(\sigma)$ layer};

  \node[bl-lab] at (-1.95,0.70) {$\Man$};
  \node[bl-lab,text=BLScolor] at (-3.05,-0.92) {$\Strat$};
  \node[bl-lab,align=left] at (2.70,1.65) {$a=(a_{\mathcal C},a_{\mathcal N})$};

  \end{tikzpicture}
  \vspace*{-8pt}
  \caption{\small Boundary-layer coordinates near \(\Strat\). The point \(x=\pi(y)\in\Strat\)
  records the location along the stratum. Coordinates \(a_{\mathcal C}\) and \(a_{\mathcal N}\)
  record transverse displacements: \(a_{\mathcal C}\) lies in the directions tangent to \(\Man\) but
  transverse to \(\Strat\), \(a_{\mathcal N}\) lies in the ambient-normal directions.}
  \label{fig:boundary-layer-coordinates}
  \vspace*{-15pt}
\end{figure}

To describe the coefficients of the expansion, let us recall that
\(p_\sigma(y)=\int_\Man\phi_\sigma(y-x') \,q(\dd x')\) and that the Gaussian kernel \(\phi_\sigma\)
contains the exponential of \(-\|y-x'\|^2/(2\sigma^2)\). For \(x=\pi(y)\in\Compct_{\Strat}\), thanks
to \Cref{ass:standing}~\ref{ass:standing:chart}, there is a unique
\(\theta\in\Theta_{\mathrm{out}}\) such that \(x=\Phi(\theta,0)\). To expand quantities of the form
\(F(x')\) appearing from the integration over \(x'\) from a local neighborhood of \(x\) in \(\Man\),
we use the translated corner chart
\begin{equation*}
  \Phi_x: \underbrace{ (\Theta_{\mathrm{out}}-\theta)\times[0,\varepsilon)^c }_{\mathcal U_x}
  \to \Man, \quad \Phi_x(\xi) = \Phi(\theta+\xi_{\mathcal S},\xi_{\mathcal C}),
  \quad \xi=(\xi_{\mathcal S},\xi_{\mathcal C}) \in\mathcal U_x\subset \mathbb H_c^m .
\end{equation*}
The two coordinate systems constructed above are summarized in \Cref{fig:obs-int-coord}. The
displacement of a point \(x'=\Phi_x(\xi)\in\Man\) from \(x\) can be expressed in the
frame \(\bfQ(x)\) by
\begin{equation}\label{eq:gh}
  \bfQ(x)^\top\bigl(\Phi_x(\xi)-x\bigr) = \bigl(g_x(\xi),h_x(\xi)\bigr)
  \in\mathbb R^m\times\mathbb R^k .
\end{equation}
Here \(g_x\) is the tangent component and \(h_x\) is the ambient normal component. At first order,
\(g_x\) is governed by the matrix \(\bfL\), while \(h_x\) has no linear part.

\begin{figure}[ht]
  \centering
  \begin{tikzpicture}[scale=1.0]

  \begin{scope}[shift={(-3.95,0)}]
   \node[bl-panel-title] at (0,1.75) {observation point};

   \coordinate (SA) at (-2.00,-0.75);
   \coordinate (SB) at ( 2.00,-0.75);
   \coordinate (MA) at (-1.70,-0.75);
   \coordinate (MB) at ( 1.85,-0.75);
   \coordinate (MC) at ( 2.25,0.35);
   \coordinate (MD) at (-1.25,0.55);
   \filldraw[bl-manifold] (MA)--(MB)--(MC)--(MD)--cycle;
   \draw[bl-stratum] (SA)--(SB);

   \coordinate (x) at (-0.30,-0.75);
   \coordinate (xu) at (0.45,0.00);
   \coordinate (y) at (0.45,1.20);
   \draw[bl-dash,BLCcolor!80!black] (x)--(xu);
   \draw[bl-dash,BLNcolor!80!black] (xu)--(y);
      \draw[bl-vec,BLCcolor] (x)--(xu)
      node[pos=0.52,above left=1pt,bl-smalllab] {$\sigma\bfC(x)a_{\mathcal C}$};
      \draw[bl-vec,BLNcolor] (xu)--(y)
      node[pos=0.55,right=2pt,bl-smalllab] {$\sigma\bfN(x)a_{\mathcal N}$};

   \node[bl-dot,label={[bl-smalllab,below]$x=\pi(y)$}] at (x) {};
   \node[bl-ydot,label={[bl-smalllab,right]$y$}] at (y) {};
   \node[bl-smalllab,text=BLScolor] at (-1.85,-1.00) {$\Strat$};
   \node[bl-smalllab] at (2.15,0.52) {$\Man$};

      \node[bl-lab,align=center] at (0,-1.68)
      {\(y=y_\sigma(a,x)\), \(x=\pi(y)\)\\[2pt]
      \(a=\sigma^{-1}\nu(y)\)};
  \end{scope}

  \draw[black!20,line width=0.5pt] (0,-2.02)--(0,2.02);

  \begin{scope}[shift={(3.95,0)}]
   \node[bl-panel-title] at (0,1.75) {integration variable};

   \coordinate (Oxi) at (-2.05,-1.10);
            \coordinate (Oxl) at (-2.05-1.35,-1.10);
   \fill[black!6] (Oxl)--++(2.50,0)--++(0,1.00)--++(-2.50,0)--cycle;
   \draw[bl-vec,black!65] (-2.05-1.35,-1.10)--(-2.05+1.35,-1.10)
   node[pos=0.95,below=1pt,bl-smalllab] {$\xi_{\mathcal S}$};
   \draw[bl-vec,black!65] (Oxi)--++(0,1.15)
   node[pos=0.95,left=1pt,bl-smalllab] {$\xi_{\mathcal C}\geqslant 0$};
   \node[bl-dot,label={[bl-smalllab,below left=1pt]$0$}] at (Oxi) {};
   \coordinate (xi) at (-1.45,-0.26);
   \node[
    bl-dot,
    label={[bl-smalllab,above left=1pt,xshift=30pt,bl-whitebox]$\xi=(\xi_{\mathcal S},\xi_{\mathcal C})$}
   ] at (xi) {};
   \node[bl-smalllab] at (-1.40,-1.55) {$\mathbb H_c^m$};

   \draw[bl-vec,black!65] (-0.28,-0.46)--(0.60,-0.46)
   node[midway,below=2pt,bl-smalllab] {$\Phi_x$};

   \coordinate (A) at (0.95,-0.85);
   \coordinate (B) at (2.95,-0.85);
   \coordinate (C) at (3.35,0.35);
   \coordinate (D) at (1.35,0.65);
   \filldraw[bl-manifold] (A)--(B)--(C)--(D)--cycle;
   \draw[bl-stratum] (A)--(B);

   \coordinate (xR) at (1.55,-0.85);
   \coordinate (XR) at (2.28,0.05);
   \draw[bl-dash,black!45] (xR)--(XR);
      \node[bl-dot,label={[bl-smalllab, below]
      $x=\Phi(\theta,0)$}] at (xR) {};
   \node[bl-dot] at (XR) {};
   \node[bl-smalllab, right=1pt] at (XR) {$\Phi_x(\xi)$};
   \node[bl-smalllab] at (3.33,0.72) {$\Man$};

   \node[bl-lab,align=center] at (1.55,-1.68)
   {$\Phi_x(\xi)=\Phi(\theta+\xi_{\mathcal S},
            \xi_{\mathcal C})$\\[2pt]
   $\xi_{\mathcal C}\in[0,\infty)^c$};
  \end{scope}
  \end{tikzpicture}
  \vspace*{-5pt}
  \caption{\small Two coordinate systems used near the same stratum. The observation point is
  written in tubular coordinates relative to $\Strat$, whereas the integration variable in the
  heat-kernel integral is written in the moving corner chart \(\Phi_x(\xi)
  =\Phi(\theta+\xi_{\mathcal S},\xi_{\mathcal C})\). }
  \vspace*{-20pt}
  \label{fig:obs-int-coord}
\end{figure}

\subsection{The linearized cone coefficient}

Given that \([\,\bfS(x)\;\bfC(x)\,]\) is an adapted orthonormal tangent frame in \(\mathcal
T_x\Strat\oplus \mathcal C_x\), there is a matrix \(\bfL(x)=[\,\bfL_{\mathcal S}(x);\ \bfL_{\mathcal
C}(x)\,] \in\mathsf{GL}(m)\) such that \(\Diff\Phi_x(0)=\bfS(x)\bfL_{\mathcal S}(x)
+\bfC(x)\bfL_{\mathcal C}(x)\). In adapted tangent coordinates, the linearized inward cone is
\(\bfL(x)\mathbb H_c^m\). Thus, replacing the observation point \(y\) by \(y_\sigma(a,x)\) given by
\eqref{eq:ysigma}, yields the limiting quadratic exponent (\textit{i.e.}, the limit when
\(\sigma\downarrow0\) of \(\|y-x'\|^2/(2\sigma^2)\))
\begin{equation}\label{eq:Psi0}
  \Psi(\zeta;a,x) = \frac12 \left[ \|\bfL_{\mathcal S}(x)\,\zeta\|^2
  + \|a_{\mathcal C}-\bfL_{\mathcal C}(x)\,\zeta\|^2 + \|a_{\mathcal N}\|^2 \right].
\end{equation}
Indeed, in adapted coordinates, the linearized support has stratum component \(\bfL_{\mathcal
S}(x)\,\zeta\), transverse tangent component \(\bfL_{\mathcal C}(x)\,\zeta\), and no ambient-normal
component. The observation point has corresponding transverse coordinates \(a_{\mathcal C}\) and
\(a_{\mathcal N}\).

At leading order, the density \(\rho\) is replaced by \(\rho(x)\), the volume element contributes
the Jacobian factor \(|\det \bfL(x)|\), and the Gaussian factor contributes
\(e^{-\Psi(\zeta;a,x)}\), leading to \(p_\sigma\bigl(y_\sigma(a,x)\bigr)\approx
\sigma^{-k}(2\pi)^{-d/2}\mathsf C_0(a,x)\) with
\begin{equation}\label{eq:C0}
  \mathsf C_0(a,x) = \rho(x)|\det \bfL(x)| \int_{\mathbb H_c^m} e^{-\Psi(\zeta;a,x)} \,\dd\zeta .
\end{equation}
This coefficient is the central geometric quantity. The factor \(\rho(x)\) records the density of
\(q\) along the support, \(|\det \bfL(x)|\) is the leading volume Jacobian in adapted tangent
coordinates, and the integral is the Gaussian mass of the linearized inward cone as seen from the
rescaled observation point \(a\). The dependence on \(a_{\mathcal C}\) captures boundary and corner
effects, while the dependence on \(a_{\mathcal N}\) captures the ambient codimension of the support.

Although \(\mathsf C_0\) is written through the chart datum \(\bfL(x)\) and the frames
\(\bfC(x),\bfN(x)\), the coefficient it defines is intrinsic --- independent of the adapted corner
chart and equivariant under rotations of the adapted frames --- and the same holds for the
correction coefficient \(\mathsf C_1\) introduced below. This chart-independence, which underlies the
globalization of \Cref{rem:globalization}, is established in \Cref{rem:invariance}.

\begin{theorem}[First-order expansion for the heat regularization]
  \label{thm:first_coef_kernel}
  Let \(A>0\) and assume \Cref{ass:standing} with \(r\geqslant 1\). Then there exists \(\sigma_0 >
  0\) such that, for every \((y,\sigma)\in\mathcal Y_{A,\Compct_{\Strat},\sigma_0}\), \(x=\pi(y)\),
  \(a = a(y,\sigma)\), it holds that
  \begin{align*}
    p_\sigma(y) &= \sigma^{-k}(2\pi)^{-d/2} \left[ \mathsf C_0(a,x)
    + \sigma\mathscr E_0(a,x,\sigma) \right], \\
    \log p_\sigma(y) &= - k\log\sigma - (d/2)\log (2\pi) + \mathsf L_0(a,x)
    + \sigma\mathscr E_{0, \log} (a,x,\sigma),
  \end{align*}
  where \(\mathsf L_0(a,x)=\log \mathsf C_0(a,x)\), and \(\mathscr E_0(a,x,\sigma)\), \(\mathscr
  E_{0,\log}(a,x,\sigma)\) have all mixed derivatives up to order \(r-1\), in the variables \((a,
  x,\sigma)\), uniformly bounded for \(x\in\Compct_{\Strat}\), \(\|a\|\leqslant A\), and
  \(0<\sigma\leqslant\sigma_0\). Derivatives in \(x\) are computed in the stratum coordinate
  \(\theta\), after writing \(x=\Phi(\theta,0)\); throughout, the precise meaning of these mixed
  derivatives in \((a,x,\sigma)\) --- over the closed ball in \(a\), and by pullback in \(\theta\)
  --- is fixed in \Cref{rem:parameter_variable_derivatives}.
\end{theorem}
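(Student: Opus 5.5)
We split \(p_\sigma(y)\) into a local piece, integrated over the chart image \(\Phi_x(\,\mathcal U_x)\) cut off at a fixed small radius, and a far-field remainder. We then show that the local piece, after rescaling \(\xi=\sigma\zeta\), equals \(\sigma^{-k}(2\pi)^{-d/2}\) times a smooth-in-\(\sigma\) integral whose value at \(\sigma=0\) is \(\mathsf C_0\). We use a fixed cutoff: choose \(\delta>0\) and a smooth \(\chi\) supported in \(\{\|\xi\|\leqslant 2\delta\}\), equal to \(1\) on \(\{\|\xi\|\leqslant\delta\}\), with \(2\delta\) small enough that \(\{\|\xi\|\leqslant 2\delta\}\cap\mathbb H^m_c\subset\mathcal U_x\) uniformly for \(\theta\in\Theta_{\Compct_\Strat}\). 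This is possible by compactness and \(\Theta_{\Compct_\Strat}\Subset\Theta_{\mathrm{out}}\).

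\textbf{Far field.} Since \(\Phi\) is a homeomorphism onto a relatively open set with injective differential, there is \(\kappa>0\) such that \(\|\Phi_x(\xi)-x\|\geqslant\kappa\) whenever \(\|\xi\|\geqslant\delta\) in the chart, and such that every \(x'\in\Man\) outside \(\Phi_x(\{\|\xi\|<\delta\})\) satisfies \(\|x'-x\|\geqslant\kappa\). This holds uniformly in \(x\in\Compct_\Strat\), by compactness and by the embedded (relatively open) chart image. Since \(\|y-x\|\leqslant A\sigma\), every such \(x'\) has \(\|y-x'\|\geqslant\kappa/2\) for small \(\sigma\). Because \(q\) is a probability measure, the far-field contribution, including the part weighted by \(1-\chi\), is at most \(\sigma^{-d}e^{-\kappa^2/(8\sigma^2)}\). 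The same bound, with polynomial losses, holds for its derivatives in \((a,\theta,\sigma)\), since \(y\) depends smoothly on \((a,\theta)\) through \(\varphi\), \(\bfC\), \(\bfN\) (which are \(C^r\)). The remainder is therefore \(O(\sigma^\infty)\) relative to \(\sigma^{-k}\) in every \(C^{r-1}\) norm.

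\textbf{Local piece.} In the chart,
\[
p^{\mathrm{loc}}_\sigma(y)=(2\pi)^{-d/2}\sigma^{-d}\int_{\mathbb H_c^m}\chi(\xi)\,\mathcal A_x(\xi)\,e^{-\|y-\Phi_x(\xi)\|^2/(2\sigma^2)}\,\dd\xi,\qquad \mathcal A_x=(\rho\circ\Phi_x)J_x .
\]
Substitute \(\xi=\sigma\zeta\), which gives the factor \(\sigma^m\) and hence \(\sigma^{-k}\). Using \eqref{eq:gh} and the orthogonality of \(\bfQ(x)\), write \(\|y-\Phi_x(\sigma\zeta)\|^2/(2\sigma^2)=\tfrac12\|(0,a_{\mathcal C},a_{\mathcal N})-\sigma^{-1}(g_x(\sigma\zeta),h_x(\sigma\zeta))\|^2\). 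Taylor's formula with integral remainder gives \(\sigma^{-1}g_x(\sigma\zeta)=\bfL(x)\zeta+\sigma G(\zeta,\sigma)\) and \(\sigma^{-1}h_x(\sigma\zeta)=\sigma H(\zeta,\sigma)\). Here \(G,H\) are \(C^{r-1}\) in \((\zeta,\theta,\sigma)\), quadratically bounded in \(\zeta\), and \(h_x\) has no linear part. Thus the exponent equals \(\Psi(\zeta;a,x)+\sigma R(\zeta,a,\theta,\sigma)\), with \(|R|\lesssim(1+\|\zeta\|)^3\) on the support \(\sigma\|\zeta\|\leqslant2\delta\). Similarly \(\mathcal A_x(\sigma\zeta)=\rho(x)|\det\bfL(x)|+\sigma B(\zeta,\theta,\sigma)\) with \(B\) of class \(C^{r-1}\), using \(\rho\circ\Phi\in C^r\) and \(J\in C^r\). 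Define \(\mathscr E_0\) as \(\sigma^{-1}\) times the difference between the resulting integral and \(\mathsf C_0\), plus the far-field remainder. The difference is \(\int (F(\sigma)-F(0))\), and we write it as \(\sigma\int_0^1\partial_s F(s\sigma)\,\dd s\) in the combined integrand.

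\textbf{Main obstacle: uniform derivative bounds.} The hard step is controlling all mixed derivatives of order \(\leqslant r-1\) of \(\mathscr E_0\) uniformly, because the integrand's dependence on \(\sigma\) is only \(C^{r-1}\) and the domain is unbounded after rescaling. The key input is a uniform Gaussian lower bound on the exponent. Since \(\bfL(x)\) is uniformly invertible and \(\|g_x(\xi)\|\geqslant c_0\|\xi\|\) for \(\|\xi\|\leqslant2\delta\), the full exponent is \(\geqslant c_1\|\zeta\|^2-C\) on the support. Hence every \((a,\theta,\sigma)\)-derivative of the integrand is bounded by a polynomial in \(\zeta\) times \(e^{-c_1\|\zeta\|^2}\). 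Derivatives falling on \(\chi(\sigma\zeta)\) are supported where \(\|\zeta\|\geqslant\delta/\sigma\) and are exponentially small. Dominated convergence then gives the claim for \(\mathscr E_0\).

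\textbf{Logarithm.} We need \(\mathsf C_0\geqslant c_*>0\) uniformly on \(\|a\|\leqslant A\), \(x\in\Compct_\Strat\). This follows from item~(P) together with \(\Psi\leqslant C(1+\|\zeta\|^2)\) and \(|\det\bfL|\) bounded below. Also, \(\mathsf C_0\) is \(C^{r-1}\) with bounded derivatives. Then
\[
\mathscr E_{0,\log}=\sigma^{-1}\log\bigl(1+\sigma\mathscr E_0/\mathsf C_0\bigr)=(\mathscr E_0/\mathsf C_0)\,\ell(\sigma\mathscr E_0/\mathsf C_0),\qquad \ell(t)=t^{-1}\log(1+t),
\]
where \(\ell\) is smooth near \(0\). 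The derivative bounds follow from the chain rule once \(\sigma_0\) is small enough that \(|\sigma\mathscr E_0/\mathsf C_0|\leqslant1/2\).
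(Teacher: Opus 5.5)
Your proof is correct. The global architecture matches the paper's: a transported fixed cutoff, an exponentially small far field, the rescaling $\xi=\sigma\zeta$, and the $\log(1+t)$ step. The local analysis, however, takes a genuinely different route.

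The paper's \Cref{lem:one_term_local_expansion} proceeds in three steps:
\begin{itemize}
\item it inserts the growing window $\vartheta_\sigma$, so that $|\Psi_\sigma-\Psi|\leqslant 1/2$ on its support;
\item it expands $e^{-(\Psi_\sigma-\Psi)}$ and $\mathcal A_x(\sigma\zeta)$ perturbatively there;
\item it controls the exact tail $\operatorname{Tail}_{\mathrm{ex}}$ and the model tail $\operatorname{Tail}_0$ outside the window separately (\Cref{lem:scaled_kern_gaus_class,lem:gaus_tail_beyond_window}).
\end{itemize}

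You instead keep the exact rescaled kernel and apply the fundamental theorem of calculus to $s\mapsto I_s(a,x)$ at fixed $(a,x)$. This works for two reasons. First, the identity $s^{-1}\Delta(\theta,s\zeta)=\int_0^1\Diff_\xi\Delta(\theta,ts\zeta)[\zeta]\,\dd t$ extends the integrand $C^r$-smoothly to $s=0$, with value $\mathcal A_x(0)e^{-\Psi}$. Second, $\partial_a^\alpha\partial_\theta^\beta\partial_s^{j+1}$ of the integrand, for $|\alpha|+|\beta|+j\leqslant r-1$, involves only $\theta$-derivatives of $\bfQ$ of order at most $r-1$, together with $\Diff_\xi^{j+2}M$ and $\Diff_\xi^{j+1}\mathcal A$. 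These are available under $\Phi\in C^{r+1}$ and $\rho\circ\Phi\in C^r$, so you land exactly on the order $r-1$.

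The only analytic input is then a polynomial-times-Gaussian majorant for $e^{-\Psi_s}$ on the cutoff support, uniform in $s\in[0,\sigma_0]$. This is the same chart-separation bound the paper needs anyway for its exact tail (\Cref{lem:chart_sep_exact_tail}). Your version via $\|g_x(\xi)\|\geqslant c_0\|\xi\|$ is fine, since $\delta$ is at your disposal.

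Your route removes the growing cutoff, the smallness requirement on $\Psi_\sigma-\Psi$, and all tail bookkeeping. In particular, your bound $|R|\leqslant C(1+\|\zeta\|)^3$ is not actually needed. The route also extends directly to \Cref{thm:2nd_coef_kernel} by a second-order Taylor expansion in $s$: since $\nabla\chi(0)=0$ and $\partial_s\Psi_s|_{s=0}=\Lambda_x/2$, one gets $\partial_sI_s|_{s=0}=\mathsf C_1$. What the paper's route buys is an explicit kernel-level expansion, namely the polynomial $\kappa_{1,x}$ against $e^{-\Psi}$, organized within the admissible-class calculus it reuses throughout.

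One small point should be added. In the far field, $\theta$-derivatives fall not only on $y_\sigma(a,\varphi(\theta))$ but also on the transported cutoff $\chi_{\varphi(\theta)}$. This is harmless. In the original chart, $\chi_{\varphi(\theta)}(x')=\chi\bigl(\Phi^{-1}(x')-(\theta,0_c)\bigr)$. Its $\theta$-derivatives are bounded and vanish where $\|\Phi^{-1}(x')-(\theta,0_c)\|<\delta$, so they live in your separated region; this is item~4 of \Cref{lem:transp_cutoff_reg_supp}. You should still state it explicitly.
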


\subsection{The first correction coefficient}

The full expansion has a first correction beyond this linearized cone model, expressed through the
coefficient \(\mathsf C_1\) defined below in \eqref{eq:def_C1}. As expected, its expression involves
higher-order differentials of the chart \(\Phi\), the density \(\rho\), and related quantities which
we now define precisely. The Jacobian of the local corner chart \(\Phi_x\) and the corresponding
amplitude are defined by
\begin{equation}\label{eq:defAx}
  J_x(\xi)
  = \sqrt{ \det\bigl( \Diff \Phi_x(\xi)^\top
  \Diff \Phi_x(\xi) \bigr)},\quad J_x(0) = |\det \bfL(x)|,
  \quad \mathcal A_x(\xi)=\rho\bigl(\Phi_x(\xi)\bigr) \,J_x(\xi).
\end{equation}
They both belong to the class \(C^{r}\). We also define the quadratic chart coefficients by
\begin{equation*}
  \mathfrak g_{2,x}[\zeta,\zeta] = \Diff^2 g_x(0)[\zeta,\zeta]\in \mathbb R^m,
  \qquad \mathfrak h_{2,x}[\zeta,\zeta] = \Diff^2 h_x(0)[\zeta,\zeta]\in \mathbb R^k,
\end{equation*}
where \(g_x\) and \(h_x\) are given by \eqref{eq:gh}. The coefficient \(\mathfrak g_{2,x}\) measures
the quadratic correction to the tangent component of the chart, while \(\mathfrak h_{2,x}\) measures
the first nonzero normal displacement of \(\Man\) away from its tangent space. Since \(g_x\) and
\(h_x\) are \(C^{r+1}\) in \(\xi\), these coefficients are \(C^{r-1}\) in the stratum coordinates.
The first correction to the rescaled Gaussian phase is encoded by
\begin{equation}\label{eq:def_Lambda}
  \Lambda_x(\zeta;a) = \langle \bfL(x)\zeta, \mathfrak g_{2,x}[\zeta,\zeta] \rangle
  - \langle a_{\mathcal C}, \Pi_{\mathcal C}\mathfrak g_{2,x}[\zeta,\zeta] \rangle
  - \langle a_{\mathcal N}, \mathfrak h_{2,x}[\zeta,\zeta] \rangle ,
\end{equation}
where \(\Pi_{\mathcal C}:\mathbb R^m\to\mathbb R^c\) is the projection onto the last \(c\) adapted
tangent coordinates. More precisely, after the rescaling \(\xi=\sigma\zeta\), the Gaussian phase
satisfies
\begin{equation*}
  \frac{\|y_\sigma(a,x)-\Phi_x(\sigma\zeta)\|^2}{2\sigma^2} = \Psi(\zeta;a,x)
  + \frac{\sigma}{2}\Lambda_x(\zeta;a) + O(\sigma^2).
\end{equation*}
Thus \(\Lambda_x\) records how the quadratic Taylor terms of the chart modify the leading
linearized-cone Gaussian kernel. The first correction coefficient is obtained by integrating this
phase correction, together with the first variation of the amplitude \(\mathcal
A_x=\rho\circ\Phi_x\,J_x\), over the limiting cone \(\mathbb H_c^m\). For \(x\in\Compct_{\Strat}\) and
\(a=(a_{\mathcal C},a_{\mathcal N})\in \mathbb R^c\times\mathbb R^k\), define
\begin{equation}\label{eq:def_C1}
  \mathsf C_1(a,x)
  = \int_{\mathbb H_c^m} \langle \nabla_\xi \mathcal A_x(0) ,\zeta\rangle e^{-\Psi(\zeta;a,x)}
  \,\dd\zeta
  - \frac12 \rho(x)\,|\det \bfL(x)| \int_{\mathbb H_c^m} e^{-\Psi(\zeta;a,x)} \Lambda_x(\zeta;a)
  \,\dd\zeta .
\end{equation}
The coefficient \(\mathsf C_1\) collects the corrections of order \(\sigma\) to the leading
tangent-cone integral. These corrections have two sources. First, the local parametrization is not
exactly equal to its linearization: \(\Phi_x(\sigma\zeta)\) differs from \(x+\sigma
\Diff\Phi_x(0)\zeta\) by a quadratic Taylor term, and this changes the Gaussian exponent. Second,
the amplitude in the local integral, namely the product of the density \(\rho\) and the volume
Jacobian \(J_x\), is not exactly equal to its value \(\rho(x)|\det \bfL(x)|\) at the stratum point.
Its first-order variation contributes to \(\mathsf C_1\) as well.

\begin{theorem}[Second-order expansion for the heat regularization]
  \label{thm:2nd_coef_kernel}
  Let \(A>0\) and assume \Cref{ass:standing} with \(r\geqslant 2\). Then there exists \(\sigma_0 >
  0\) such that, for every \((y,\sigma)\in\mathcal Y_{A,\Compct_{\Strat},\sigma_0}\), \(x=\pi(y)\),
  \(a = a(y,\sigma)\), it holds that
  \begin{align*}
    p_\sigma(y) &= \sigma^{-k} (2\pi)^{-d/2} \left[ \mathsf C_0(a,x) + \sigma\mathsf C_1(a,x)
    + \sigma^2\mathscr E_1 (a,x,\sigma) \right], \\
    \log p_\sigma(y) &= -k\log\sigma - (d/2) \log(2\pi) + \mathsf L_0(a,x) + \sigma\mathsf L_1(a,x)
    + \sigma^2\mathscr E_{1,\log} (a,x,\sigma),
  \end{align*}
  where \(\mathsf L_0(a,x)=\log \mathsf C_0(a,x)\), \(\mathsf L_1(a,x)=\mathsf C_1(a,x)/\mathsf
  C_0(a,x)\), and \(\mathscr E_1(a,x,\sigma)\), \(\mathscr E_{1,\log}(a,x,\sigma)\) have all mixed
  derivatives up to order \(r-2\), in the variables \(a\), \(x\), and \(\sigma\), uniformly bounded
  for \(x\in\Compct_{\Strat}\), \(\|a\|\leqslant A\), and \(0<\sigma\leqslant\sigma_0\). Derivatives
  in \(x\) are computed in the stratum coordinate \(\theta\), after writing \(x=\Phi(\theta,0)\).
\end{theorem}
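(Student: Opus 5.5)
The plan is to split \(p_\sigma(y)\) into a local part and a far-field part, rescale the local part to the conical layer, and Taylor-expand to one more order than in \Cref{thm:first_coef_kernel}. Fix \(\delta>0\) small and a cutoff \(\chi\in C^\infty_c\) with \(\chi\equiv1\) near \(0\) and support inside \(\mathcal U_x\), uniformly for \(x\in\Compct_{\Strat}\). Write \(p_\sigma(y)=I^{\rm loc}_\sigma(y)+I^{\rm far}_\sigma(y)\), where
\[
I^{\rm loc}_\sigma(y)=\int_{\mathcal U_x}\chi(\xi)\,\phi_\sigma\bigl(y-\Phi_x(\xi)\bigr)\,\mathcal A_x(\xi)\,\dd\xi .
\]
On the far region, every point \(x'\in\Man\) outside \(\Phi_x(\{\chi=1\})\) satisfies \(\|y-x'\|\geqslant c_0\delta\) when \(\|y-x\|\leqslant A\sigma\). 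This uses that \(\Phi\) is a homeomorphism onto a relatively open set, together with compactness. Since \(q\) is a probability measure, \(I^{\rm far}_\sigma\) and all its derivatives in \((y,\sigma)\) are \(O(e^{-c/\sigma^2})\). After the change of variables \(y=y_\sigma(a,x)\), derivatives in \((a,\theta,\sigma)\) pick up at most polynomial factors \(\sigma^{-j}\), so this part is absorbed into \(\sigma^{2+k}\mathscr E_1\).

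In the local part, substitute \(\xi=\sigma\zeta\), so that \(\dd\xi=\sigma^m\dd\zeta\), and use \(\bfQ(x)\in\mathsf O(d)\) to write
\[
\|y_\sigma-\Phi_x(\sigma\zeta)\|^2=\|(\sigma a_{\mathcal C}^{\rm pad}-g_x(\sigma\zeta),\ \sigma a_{\mathcal N}-h_x(\sigma\zeta))\|^2 .
\]
Taylor's formula with integral remainder gives \(g_x(\sigma\zeta)=\sigma\bfL\zeta+\tfrac{\sigma^2}{2}\mathfrak g_{2,x}[\zeta,\zeta]+\sigma^3R_g\) and \(h_x(\sigma\zeta)=\tfrac{\sigma^2}{2}\mathfrak h_{2,x}[\zeta,\zeta]+\sigma^3R_h\); here \(R_g,R_h\) are \(C^{r-2}\) in \((\zeta,\theta,\sigma)\) and bounded by \(\|\zeta\|^3\). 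This yields the phase \(\Psi+\tfrac{\sigma}{2}\Lambda_x+\sigma^2\Omega_\sigma(\zeta)\) stated before \eqref{eq:def_C1}. Similarly \(\mathcal A_x(\sigma\zeta)=\mathcal A_x(0)+\sigma\langle\nabla_\xi\mathcal A_x(0),\zeta\rangle+\sigma^2R_{\mathcal A}\). The prefactor \(\sigma^{m}\sigma^{-d}(2\pi)^{-d/2}=\sigma^{-k}(2\pi)^{-d/2}\) comes out. Expanding \(e^{-\sigma\Lambda/2-\sigma^2\Omega}=1-\tfrac{\sigma}{2}\Lambda+\sigma^2(\cdots)\) and collecting the order-\(\sigma^0\) and order-\(\sigma^1\) terms over \(\mathbb H_c^m\) gives exactly \(\mathsf C_0\) and \(\mathsf C_1\) from \eqref{eq:C0} and \eqref{eq:def_C1}. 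The truncation of \(\mathbb H_c^m\) to \(\sigma^{-1}\operatorname{supp}\chi\) costs only \(O(e^{-c/\sigma^2})\), since \(\Psi\geqslant\lambda_{\min}(\bfL^\top\bfL)\|\zeta\|^2/2-C A^2\) uniformly in \(\theta\).

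The main obstacle is the remainder. It must be \(O(\sigma^2)\) with all mixed derivatives of order up to \(r-2\) bounded uniformly in \((a,\theta,\sigma)\), and the Taylor expansion of the exponential is only valid where \(\sigma\|\zeta\|^3\) is small. To handle this, I would work on \(\|\zeta\|\leqslant\sigma^{-1}\delta\) and use the quadratic lower bound
\[
\|y_\sigma-\Phi_x(\sigma\zeta)\|^2/(2\sigma^2)\geqslant \kappa\|\zeta\|^2-C(1+A^2),
\]
valid for \(\delta\) small because \(\Phi\) is an immersion with \(\|g_x(\xi)\|\geqslant c\|\xi\|\). With this bound, every term in the remainder (written in exact integral form, \(e^{-X}-1+X=\int_0^1(1-s)X^2e^{-sX}\dd s\), and similarly for the next order) is dominated by \(\mathrm{poly}(\|\zeta\|)e^{-\kappa'\|\zeta\|^2}\). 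Derivatives in \(a\), \(\theta\), and \(\sigma\) can be taken under the integral. They only produce further polynomial factors, and the regularity count is limited by \(R_{\mathcal A}\in C^{r-2}\) and by \(\Omega_\sigma\), which involves \(\Diff^3\Phi\) and hence is \(C^{r-2}\) since \(\Phi\in C^{r+1}\). Derivatives in \(\sigma\) of \(\sigma\)-dependent remainders written as \(\int_0^1\) Taylor kernels remain bounded. Finally, the change of variables \((y,\sigma)\mapsto(a,x)\) is \(C^r\) by the tubular coordinates and \Cref{lem:ad_frame_comp_strat}.

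For the logarithmic expansion, \(\mathsf C_0(a,x)\geqslant\rho_*\,|\det\bfL|\int e^{-\Psi}>0\) uniformly on \(\|a\|\leqslant A\), \(\theta\in\Theta_{\Compct_{\Strat}}\), by \Cref{ass:standing}~\ref{ass:standing:positivity} and compactness. Hence for \(\sigma_0\) small the bracket stays bounded below. Writing \(\log(\mathsf C_0+\sigma\mathsf C_1+\sigma^2\mathscr E_1)=\log\mathsf C_0+\log(1+\sigma \mathsf L_1+\sigma^2\mathscr E_1/\mathsf C_0)\) and applying Taylor's formula for \(\log(1+u)\) with integral remainder gives \(\mathscr E_{1,\log}\). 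It inherits the \(r-2\) bounded mixed derivatives through the chain rule, since \(\mathsf C_0\) and \(\mathsf C_1\) are \(C^{r-1}\) in \((a,\theta)\).
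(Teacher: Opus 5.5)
Your proposal is correct. Its architecture matches the paper's: a cutoff split into local and far parts, the rescaling \(\xi=\sigma\zeta\), Taylor expansion of \(g_x,h_x\) and \(\mathcal A_x\), Gaussian domination, an \(O(e^{-c/\sigma^2})\) far field, and finally \(\log(1+u)\).

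The real difference is how you handle the fact that \(e^{-\Psi_\sigma}\) does not expand uniformly around \(e^{-\Psi}\).

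\emph{The paper's route.} It inserts a second, growing cutoff \(\vartheta_\sigma(\zeta)=\chi(\sigma^{1/2}\|\zeta\|^2)\). On its support \(|\Psi_\sigma-\Psi|\leqslant 1/2\), so the factor \(\int_0^1(1-t)e^{-t\delta_\sigma}\,\dd t\) is trivially bounded. It then needs separate lemmas (chart separation, and Gaussian domination of the exact cutoff kernel \(\bar\chi_R(\sigma\zeta)\mathscr K_\sigma\)). These show that the exact tail \(\operatorname{Tail}_{\rm ex}\) and the model tails beyond \(\|\zeta\|\sim\sigma^{-1/4}\) are \(O(e^{-c\sigma^{-1/2}})\).

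\emph{Your route.} You keep the exact integral remainder on the whole chart-scale region \(\sigma\|\zeta\|\leqslant\delta\). This works because \(e^{-\Psi}e^{-sX}=e^{-(1-s)\Psi-s\Psi_\sigma}\), and both \(\Psi\) and \(\Psi_\sigma\) satisfy quadratic lower bounds there. Hence the interpolated exponent is Gaussian-dominated for every \(s\in[0,1]\), and so are its parameter derivatives, which only add polynomial factors. This holds even though \(e^{-sX}\) alone is unbounded where \(\sigma\|\zeta\|^3\) is large. You should state this convexity step explicitly, since it is exactly what replaces \(\vartheta_\sigma\).

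\emph{Comparison.} Your route uses the same chart-separation estimate the paper needs anyway. It removes the intermediate scale and the separate exact tail, and gives truncation errors \(O(e^{-c/\sigma^2})\) instead of \(O(e^{-c\sigma^{-1/2}})\). The paper's route buys a modular admissible-class calculus: the perturbative expansion and the tail estimates are proved independently, and the same lemmas are reused for the one-term expansion.

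Two harmless slips:
\begin{itemize}
\item The lower bound for \(\Psi\) only holds with a coefficient strictly smaller than \(\lambda_{\min}(\bfL^\top\bfL)/2\), for instance \(/4\), because of the term in \(\Psi\) that is linear in \(\zeta\) through \(a_{\mathcal C}\).
\item The far term is absorbed into \(\sigma^{2-k}\mathscr E_1\), not \(\sigma^{2+k}\mathscr E_1\); exponential smallness makes this immaterial.
\end{itemize}
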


\subsection{Expansions of the score and its derivatives}

To derive expansions for the score, the log-Hessian and the scale-derivative of the score, we
differentiate the logarithmic conical-layer expansion obtained in
\Cref{thm:first_coef_kernel,thm:2nd_coef_kernel}. The singular powers of \(\sigma\) appear when one
returns from the boundary-layer variables \((x,a)\) to the original observation variable \(y\).
Indeed, by the tubular-coordinate construction \eqref{eq:ysigma}, the rescaled transverse coordinate
satisfies \(a(y,\sigma)=\sigma^{-1}\nu(y)\), where \(\nu(y)\) is the unscaled transverse coordinate
of \(y\) relative to the stratum. Thus \(\Diff_y a(y,\sigma)\) is of order \(\sigma^{-1}\), and
\(\partial_\sigma a(y, \sigma)=-\sigma^{-1}a(y,\sigma)\). Consequently, first derivatives in \(y\)
may produce one factor of \(\sigma^{-1 }\), while second derivatives may produce two.

The expansions of the score and its derivatives contain Jacobian factors originating from the change
of the variables \((a,x)\rightsquigarrow y_\sigma(a,x)\) given by \eqref{eq:ysigma}. Let
\(\theta(y)\) be the unique element of \(\Theta_{\rm out}\) such that
\begin{equation}\label{eq:theta-nu-def}
  (\theta(y),0_c)=\Phi^{-1}(\pi(y)).
\end{equation}
We also recall that \(\nu(y)\in \mathbb R^{c+k}\) is given by \( y = \pi(y) + [\,\bfC(\pi(y))\
\bfN(\pi (y))\,] \,\nu(y)\) so that \(a(y,\sigma)=\nu(y)/\sigma\). Here \(\theta(y)\) is the local
coordinate of the projection \(\pi(y)\) on the stratum and \(\nu(y)\) collects the unscaled tubular
coordinates transverse to the stratum. The singular powers of \(\sigma\) come from this change of
variables. Let
\begin{equation}\label{def:J1}
  J_\nu(y)=\Diff_y\nu(y)\in\mathbb R^{(c+k)\times d},
  \qquad J_\theta(y)=\Diff_y\theta(y)\in\mathbb R^{(m-c)\times d}.
\end{equation}
Define the second-derivative tensors \(\mathsf Q^\nu_y\) and \(\mathsf Q^\theta_y\) by
\begin{equation*}
  \mathsf Q^\nu_y[v] = \sum_{\mu=1}^{c+k}v_\mu\,\nabla_y^2\nu_\mu(y)
  \in\mathbb R^{d\times d},\quad\text{and}\quad \mathsf Q^\theta_y[w]
  = \sum_{i=1}^{m-c}w_i\,\nabla_y^2\theta_i(y)\in \mathbb R^{d\times d},
\end{equation*}
for \(v\in\mathbb R^{c+k}\) and \(w\in\mathbb R^{m-c}\). It is convenient to express all coefficient
fields in the rescaled variables \(y = y_\sigma(a,x)\) given by \eqref{eq:ysigma} on the set
\(\|a\|\leqslant A\), \(x\in\Compct_{\Strat}\), \( 0<\sigma\leqslant\sigma_0\), and define
\begin{equation}\label{def:J2}
  \mathbf J_\nu(a,x,\sigma)=J_\nu(y_\sigma(a,x)),
  \qquad \mathbf J_\theta(a,x,\sigma)=J_\theta(y_\sigma (a,x)).
\end{equation}
Likewise, for \(v\in\mathbb R^{c+k}\) and \(w\in\mathbb R^{m-c}\), set
\begin{equation}\label{def:Q2}
  \mathbf Q_\nu(a,x,\sigma)[v] = \mathsf Q^\nu_{y_\sigma(a,x)}[v],
  \qquad \mathbf Q_\theta(a,x,\sigma)[w] = \mathsf Q^\theta_{y_\sigma(a,x)}[w].
\end{equation}
Under \Cref{ass:standing}, and for the values of \(r\) imposed in the statements below, these
reconstructed coefficient fields have the finite differentiability required there.

To better explain the origin of different coefficients appearing in the expansions presented in the
theorems below, we recall a chain rule, further justified in \Cref{app:logarithmic_differentiation}.
If \( F^\sharp(y, \sigma) = F\bigl(a(y,\sigma),\pi(y), \sigma\bigr)\), then \( \nabla_yF^\sharp =
J_\theta(y)^\top\nabla_\theta F + (1/\sigma) J_\nu(y)^\top\nabla_aF. \) There is a similar but more
involved rule for the Hessian. Above, all derivatives of \(F\) on the right are evaluated at
\((a,x,\sigma)=\bigl(a(y,\sigma),\pi(y),\sigma\bigr)\). This identity makes the scale separation
explicit: \(a\)-derivatives carry powers of \(\sigma^{-1}\), while \(\theta\)-derivatives do not. We
state the first- and second-order expansions separately because they require different orders of
smoothness.

The density and logarithmic coefficient fields \(\mathsf C_0,\mathsf C_1,\mathsf L_0,\mathsf L_1\)
depend only on \((a,x)\) and are independent of \(\sigma\). By contrast, the differentiated
coefficient fields \(\mathsf S_i,\mathsf H_i,\dot{\mathsf S}_i\) appearing below are written after
reconstructing the ambient observation point \(y=y_\sigma(a,x)\), and may therefore retain a
bounded, admissible \(\sigma\)-dependence through the fields \(\mathbf J_\theta,\mathbf
J_\nu,\mathbf Q_\theta,\mathbf Q_\nu\) of \eqref{def:J2} and \eqref{def:Q2}. This dependence is harmless for the
asymptotic expansion, and the retained \(\sigma\)-dependence serves only to preserve the uniform
derivative counts. Under \Cref{ass:standing} with \(r\geqslant1\), the maps \(J_\nu,J_\theta\) are
continuous on \(\mathcal U\) and \(y_\sigma(a,x)\to x\) as \(\sigma\downarrow0\), so these
reconstructed fields always have limits; evaluating at \(\sigma=0\) yields the
\(\sigma\)-independent leading profiles. For instance, \(\mathsf S_0(a,x,0)=\mathbf
J_\nu(a,x,0)^\top\nabla_a\mathsf L_0(a,x)\), and similarly for \(\mathsf H_0\) and \(\dot{\mathsf
S}_0\).

\begin{theorem}[First-order expansions for the score and its derivatives]
  \label{thm:first-order-score}
  Let \(A>0\) and assume \Cref{ass:standing} with \(r\geqslant r_0\) (the value of \(r_0\) is
  specified below for each expansion). Then there exists \(\sigma_0 > 0\) such that, for every
  \((y,\sigma)\in\mathcal Y_{A,\Compct_{\Strat},\sigma_0}\), \(x=\pi(y)\), \(a = a(y,\sigma)\), it
  holds that
  \begin{align*}
    \score(y) &= \sigma^{-1}\big(\mathsf S_0(a,x,\sigma)
    + \sigma\mathscr R_{0,\score}(a,x,\sigma)\big), \qquad r_0=2 \\
    \Hess(y) &= \sigma^{-2} \big( \mathsf H_0(a,x,\sigma)
    + \sigma\mathscr R_{0, \Hess}(a,x,\sigma)\big), \qquad r_0=3 \\
    \Vel(y) &= \sigma^{-2} \big(\dot{\mathsf S}_0 (a,x,\sigma)
    + \sigma \mathscr R_{0,\Vel}(a,x,\sigma)\big), \qquad r_0=3.
  \end{align*}
  Here, the coefficients \(\mathsf S_0\), \(\mathsf H_0\) and \(\dot{\mathsf S}_0\) are explicitly
  given by the formulas below and have all mixed derivatives up to order \(r-r_0+1\) in the
  variables \(a\), \(x\), and \(\sigma\), uniformly bounded for \(x\in\Compct_{\Strat}\), \(\|a\|
  \leqslant A\), and \(0<\sigma\leqslant\sigma_0\):
  \begin{align*}
    \mathsf S_0 (a,x,\sigma) &= \mathbf J_\nu^\top (a,x,\sigma)\nabla_a \mathsf L_0(a,x), \\
    \mathsf H_0 (a,x,\sigma) &= \mathbf J_\nu^\top(a,x,\sigma)
    \Diff_a^2 \mathsf L_0(a,x) \,\mathbf J_\nu(a,x,\sigma), \\
    \dot{\mathsf S}_0(a,x,\sigma) &= -\mathsf S_0 (a,x, \sigma)
    - (\Diff_a\mathsf S_0(a,x,\sigma))[a].
  \end{align*}
  The functions \(\mathscr R_{0,\score}\), \(\mathscr R_{0,\Hess}\) and \(\mathscr R_{0,\Vel}\) have
  all mixed derivatives up to order \(r-r_0\) in the variables \(a\), \(x\), and \(\sigma\),
  uniformly bounded for \(x\in\Compct_{\Strat}\), \(\|a\| \leqslant A\), and
  \(0<\sigma\leqslant\sigma_0\). Derivatives in \(x\) are computed in the stratum coordinate
  \(\theta\), after writing \(x=\Phi(\theta,0)\).
\end{theorem}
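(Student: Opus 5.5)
The plan is to differentiate the logarithmic expansion of \Cref{thm:first_coef_kernel} in the ambient variable \(y\), using the chain rule for the boundary-layer change of variables \(y\mapsto(a(y,\sigma),\theta(y))\). Write \(\log p_\sigma(y)=G^\sharp(y,\sigma)\), where
\[
G(a,\theta,\sigma)=-k\log\sigma-(d/2)\log(2\pi)+\mathsf L_0(a,x)+\sigma\mathscr E_{0,\log}(a,x,\sigma)
\]
and \(a=\nu(y)/\sigma\). With \(r\geqslant 2\), \(\mathscr E_{0,\log}\) has \(r-1\geqslant1\) bounded mixed derivatives. \(\mathsf L_0=\log\mathsf C_0\) is \(C^{r}\) in \(a\) and \(C^{r-1}\) in \(\theta\) (through \(\bfL\), \(\rho\circ\varphi\) and the frames), and \(\mathsf C_0\) is bounded below by the positivity item \AssDens\ together with \(\|a\|\leqslant A\) and compactness. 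The one delicate point is that \(\mathscr E_{0,\log}\) is defined on the closed ball only with uniform derivative bounds, and I will use those bounds as stated in \Cref{rem:parameter_variable_derivatives}.

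\textbf{Score.} The chain rule gives
\[
\score(y)=J_\theta(y)^\top\nabla_\theta G+\sigma^{-1}J_\nu(y)^\top\nabla_aG .
\]
The leading term is \(\sigma^{-1}\mathbf J_\nu^\top\nabla_a\mathsf L_0=\sigma^{-1}\mathsf S_0\). The remainder is
\[
\mathscr R_{0,\score}=\mathbf J_\nu^\top\nabla_a\mathscr E_{0,\log}+\mathbf J_\theta^\top\bigl(\nabla_\theta\mathsf L_0+\sigma\nabla_\theta\mathscr E_{0,\log}\bigr),
\]
which loses one derivative relative to \(\mathscr E_{0,\log}\), giving \(r-2\) bounded derivatives. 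Here \(\mathbf J_\nu,\mathbf J_\theta\) are \(C^{r-1}\) as functions of \((a,x,\sigma)\), because \(\pi\) and the frames are \(C^r\) by \Cref{lem:ad_frame_comp_strat}. Differentiating them in \(a\) along \(y_\sigma(a,x)\) produces a factor \(\sigma\), so this causes no loss.

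\textbf{Hessian.} Differentiate once more. The chain rule yields
\[
\Hess=\sigma^{-2}\mathbf J_\nu^\top\Diff_a^2G\,\mathbf J_\nu+\sigma^{-1}\bigl(\mathbf Q_\nu[\nabla_aG]+\mathbf J_\nu^\top\Diff_a\nabla_\theta G\,\mathbf J_\theta+\mathbf J_\theta^\top\Diff_\theta\nabla_aG\,\mathbf J_\nu\bigr)+O(1),
\]
where the \(O(1)\) part collects \(\mathbf J_\theta^\top\Diff^2_\theta G\,\mathbf J_\theta\) and \(\mathbf Q_\theta[\nabla_\theta G]\). The leading coefficient is \(\mathsf H_0\). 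Everything else, including \(\sigma\Diff^2_a\mathscr E_{0,\log}\), is \(\sigma^{-2}\cdot\sigma\) times a bounded function. Two derivatives are lost here and the \(\mathbf Q\) tensors are \(C^{r-2}\), which explains \(r_0=3\).

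\textbf{Scale derivative.} Differentiate \(\score(y)\) in \(\sigma\) at fixed \(y\), using \(\partial_\sigma a=-a/\sigma\). Apply this to \(\sigma^{-1}\mathsf S_0(a,x,\sigma)\), noting that \(\mathsf S_0\) depends on \(\sigma\) explicitly only through \(\mathbf J_\nu(y)\). Since \(y\) is fixed, \(\mathbf J_\nu(y)\) does not move, so only \(\nabla_a\mathsf L_0(a,x)\) varies. This gives
\[
\partial_\sigma\bigl(\sigma^{-1}\mathsf S_0\bigr)=-\sigma^{-2}\bigl(\mathsf S_0+(\Diff_a\mathsf S_0)[a]\bigr),
\]
where \(\Diff_a\) acts only on the \(\mathsf L_0\) factor. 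That factor differs from the full \(\Diff_a\) of the reconstructed field by a term of size \(O(\sigma)\), which goes into the remainder.

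For the remainder, \(\partial_\sigma\) of \(\mathscr R_{0,\score}(a(y,\sigma),x,\sigma)\) equals \(-\sigma^{-1}\Diff_a\mathscr R_{0,\score}[a]+\partial_\sigma\mathscr R_{0,\score}\). Multiplied by \(1\), this is \(\sigma^{-2}\cdot\sigma\) times a bounded function with \(r-3\) derivatives.

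\textbf{Main obstacle.} The delicate part is the bookkeeping of derivative counts. One must check that the pulled-back fields \(\mathbf J,\mathbf Q\) and the coefficients keep uniform bounds on mixed derivatives of order \(r-r_0+1\), and that \(\sigma\)-differentiation of the reconstructed fields \(y_\sigma(a,x)\) never produces negative powers of \(\sigma\). I would handle this by writing every field as a \(C^j\) function of \(y_\sigma(a,x)=x+\sigma[\bfC\ \bfN](x)a\), which is jointly \(C^{r}\) in \((a,\theta,\sigma)\).
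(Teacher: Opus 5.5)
Your proposal is correct and takes the same route as the paper, which also applies the chain rules \eqref{eq:app_chain_rule_gradient}--\eqref{eq:app_chain_rule_hessian} to the first-order logarithmic expansion of \Cref{thm:first_coef_kernel} and uses \(\partial_\sigma a=-a/\sigma\) at fixed \(y\). Freezing \(J_\nu(y)\) and moving \(\sigma^{-1}(\Diff_a\mathbf J_\nu[a])^\top\nabla_a\mathsf L_0\) into the remainder reproduces exactly the paper's \(\partial_\sigma\mathsf S_0\) contribution to \(\mathscr R_{0,\Vel}\). Two small fixes: the lower bound on \(\mathsf C_0\) comes from \Cref{ass:standing}~\ref{ass:standing:positivity}, not the density-regularity item; and \(\mathsf L_0\in\mathfrak B_r\) jointly in \((a,\theta)\) by \Cref{lem:uniform_positivity_of_C0}, not merely \(C^{r-1}\) in \(\theta\), which is what cleanly gives the stated order \(r-r_0+1\) for \(\mathsf S_0\), \(\mathsf H_0\) and \(\dot{\mathsf S}_0\).
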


We remark that the most singular parts of the score and log-Hessian are determined entirely by the
Gaussian mass of the linearized inward tangent cone. Curvature of the support and variation of the
density enter only at lower orders.

\begin{theorem}[Second-order expansions for the score and its derivatives]
  \label{thm:2nd-order-score}
  Let \(A>0\) and assume \Cref{ass:standing} with \(r\geqslant r_0\) (the value of \(r_0\) is
  specified below for each expansion). Then there exists \(\sigma_0 > 0\) such that, for every
  \((y,\sigma)\in\mathcal Y_{A,\Compct_{\Strat},\sigma_0}\), \(x=\pi(y)\), \(a = a(y,\sigma)\), it
  holds that
  \begin{align*}
    \score(y) &= \sigma^{-1}\big(\mathsf S_0(a,x,\sigma) + \sigma \mathsf S_1(a,x,\sigma)
    + \sigma^2\mathscr R_{1,\score}(a,x,\sigma)\big), \qquad r_0=3 \\
    \Hess(y) &= \sigma^{-2} \big( \mathsf H_0(a,x,\sigma) + \sigma\mathsf H_1(a,x, \sigma)
    + \sigma^2\mathscr R_{1,\Hess}(a,x,\sigma)\big), \qquad r_0=4 \\
    \Vel(y) &= \sigma^{-2} \big(\dot{\mathsf S}_0 (a,x,\sigma)
    + \sigma \dot{\mathsf S}_1(a,x,\sigma) + \sigma^2\mathscr R_{1,\Vel}(a,x,\sigma)\big),
    \qquad r_0=4.
  \end{align*}
  Here, the coefficients \(\mathsf S_0\), \(\mathsf S_1\), \(\mathsf H_0\), \(\mathsf H_1\),
  \(\dot{\mathsf S}_0\) and \(\dot{\mathsf S}_1\) are explicitly given in
  \Cref{thm:first-order-score} or by the formulas below and have all mixed derivatives up to order
  \(r-r_0+1\) in the variables \(a\), \(x\), and \(\sigma\), uniformly bounded for
  \(x\in\Compct_{\Strat}\), \(\|a\| \leqslant A\), and \(0<\sigma\leqslant\sigma_0\).
  \begin{align*}
    \mathsf S_1 (a,x,\sigma)
    &= \mathbf J_{\theta}^\top (a,x,\sigma) \nabla_{\theta}\mathsf L_0 (a,x)
    + \mathbf J_\nu^\top(a,x,\sigma) \nabla_a\mathsf L_1(a,x), \\
    \mathsf H_1 (a,x,\sigma)
    &= \mathbf J_{\theta}^\top \Diff_{\theta a}^2\mathsf L_0\,\mathbf J_\nu
    + \mathbf J_\nu^\top \Diff_{a\theta}^2\mathsf L_0\, \mathbf J_{\theta}
    + \mathbf Q_\nu[\nabla_a\mathsf L_0]
    + \mathbf J_\nu^\top \Diff_a^2\mathsf L_1\,\mathbf J_\nu, \\
    \dot{\mathsf S}_1 (a,x,\sigma) &= \partial_\sigma{\mathsf
    S}_0(a,x,\sigma) - (\Diff_a\mathsf S_1(a,x,\sigma))[a].
  \end{align*}
  The functions \(\mathscr R_{1,\score}\), \(\mathscr R_{1,\Hess}\) and \(\mathscr R_{1,\Vel}\) have
  all mixed derivatives up to order \(r-r_0\) in the variables \(a\), \(x\), and \(\sigma\),
  uniformly bounded for \(x\in\Compct_{\Strat}\), \(\|a\| \leqslant A\), and
  \(0<\sigma\leqslant\sigma_0\). Derivatives in \(x\) are computed in the stratum coordinate
  \(\theta\), after writing \(x=\Phi(\theta,0)\).
\end{theorem}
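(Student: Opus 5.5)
The plan is to differentiate the logarithmic expansion of \Cref{thm:2nd_coef_kernel} through the change of variables \(y\mapsto(a(y,\sigma),\theta(y))\). Write
\[
\log p_\sigma(y)=-k\log\sigma-(d/2)\log(2\pi)+F\bigl(a(y,\sigma),\theta(y),\sigma\bigr),
\qquad F=\mathsf L_0+\sigma\mathsf L_1+\sigma^2\mathscr E_{1,\log},
\]
where \(\mathscr E_{1,\log}\) has mixed derivatives up to order \(r-2\) uniformly bounded. The constant terms are independent of \(y\). Hence \(\score=\nabla_yF^\sharp\) and \(\Hess=\nabla_y^2F^\sharp\), with \(F^\sharp(y,\sigma)=F(a(y,\sigma),\theta(y),\sigma)\).

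\textbf{Step 1: the score.} By the chain rule of \Cref{app:logarithmic_differentiation},
\[
\nabla_yF^\sharp=J_\theta^\top\nabla_\theta F+\sigma^{-1}J_\nu^\top\nabla_aF .
\]
Inserting \(F\) and sorting by powers of \(\sigma\):
\begin{itemize}
\item the \(\sigma^{-1}\) term is \(\mathbf J_\nu^\top\nabla_a\mathsf L_0=\mathsf S_0\);
\item the \(\sigma^0\) term is \(\mathbf J_\theta^\top\nabla_\theta\mathsf L_0+\mathbf J_\nu^\top\nabla_a\mathsf L_1=\mathsf S_1\);
\item everything else is \(\sigma\) times \(\mathbf J_\theta^\top\nabla_\theta\mathsf L_1+\mathbf J_\nu^\top\nabla_a\mathscr E_{1,\log}+\sigma\mathbf J_\theta^\top\nabla_\theta\mathscr E_{1,\log}\), which defines \(\mathscr R_{1,\score}\).
\end{itemize}
This consumes one derivative of \(\mathscr E_{1,\log}\), so \(r_0=3\) leaves \(r-3\) bounded derivatives. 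I also need \(\mathsf L_1=\mathsf C_1/\mathsf C_0\) to be \(C^{r-1}\) in \((a,\theta)\). This follows from \eqref{eq:def_C1}: the Gaussian integrals over \(\mathbb H_c^m\) depend smoothly on \(a\) by differentiation under the integral, and the coefficients are \(C^{r-1}\) in \(\theta\). It also uses the lower bound \(\mathsf C_0\geqslant c_*>0\) on \(\|a\|\leqslant A\), which comes from \AssDens{} and item (P).

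The \(\sigma\)-dependence of \(\mathbf J_\nu,\mathbf J_\theta\) enters through \(y_\sigma(a,x)\), and the maps \(J_\nu,J_\theta\) are \(C^{r}\) on \(\mathcal U\). Derivatives in \(\sigma\) and \(a\) of \(J(y_\sigma(a,x))\) produce factors \(\Diff J\cdot(\bfC a_{\mathcal C}+\bfN a_{\mathcal N})\) and \(\sigma\Diff J\,\bfC\), which are bounded. This yields the stated derivative counts.

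\textbf{Step 2: the log-Hessian.} Differentiate once more. The Hessian chain rule reads
\[
\nabla_y^2F^\sharp=\sigma^{-2}J_\nu^\top\Diff_a^2F\,J_\nu+\sigma^{-1}\bigl(J_\theta^\top\Diff^2_{\theta a}F\,J_\nu+J_\nu^\top\Diff^2_{a\theta}F\,J_\theta+\mathsf Q^\nu_y[\nabla_aF]\bigr)+J_\theta^\top\Diff_\theta^2F\,J_\theta+\mathsf Q^\theta_y[\nabla_\theta F].
\]
Collecting powers of \(\sigma\) after inserting \(F\) gives \(\mathsf H_0\) at order \(\sigma^{-2}\) and \(\mathsf H_1\) at order \(\sigma^{-1}\). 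The remainder involves up to two derivatives of \(\mathscr E_{1,\log}\) and second derivatives of \(\nu,\theta\), whence \(r_0=4\).

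\textbf{Step 3: the scale derivative.} Set \(\mathsf S(a,x,\sigma)=\mathsf S_0+\sigma\mathsf S_1+\sigma^2\mathscr R_{1,\score}\), so that \(\score(y)=\sigma^{-1}\mathsf S(a(y,\sigma),x,\sigma)\) with \(x=\pi(y)\) fixed when \(y\) is fixed. Since \(\partial_\sigma a=-\sigma^{-1}a\),
\[
\Vel=-\sigma^{-2}\mathsf S+\sigma^{-1}\bigl(\partial_\sigma\mathsf S-\sigma^{-1}\Diff_a\mathsf S[a]\bigr).
\]
Collecting powers of \(\sigma\):
\begin{itemize}
\item the \(\sigma^{-2}\) term is \(-\mathsf S_0-\Diff_a\mathsf S_0[a]=\dot{\mathsf S}_0\);
\item the \(\sigma^{-1}\) term is \(-\mathsf S_1+\partial_\sigma\mathsf S_0+\mathsf S_1-\Diff_a\mathsf S_1[a]=\dot{\mathsf S}_1\);
\item the remainder is \(\mathscr R_{1,\score}+\sigma\partial_\sigma\mathscr R_{1,\score}-\Diff_a\mathscr R_{1,\score}[a]\), plus \(\partial_\sigma\mathsf S_1\).
\end{itemize}
Note that \(\partial_\sigma\mathsf S_0\) is retained inside \(\dot{\mathsf S}_1\), which is bounded by the observation above. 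The remainder requires one more derivative of \(\mathscr R_{1,\score}\), consistent with \(r_0=4\).

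\textbf{Main obstacle.} The main difficulty is bookkeeping the uniform bounds on mixed derivatives in \((a,\theta,\sigma)\) for the remainders. In particular, differentiating \(\mathscr E_{1,\log}(a(y,\sigma),\theta(y),\sigma)\) must not create uncontrolled \(\sigma^{-1}\) factors. Such factors are only ever produced by \(a\)-derivatives, and each is compensated by the explicit \(\sigma^2\) in front of \(\mathscr E_{1,\log}\). I would check this carefully at order \(\sigma^{-2}\) in the Hessian, where two \(a\)-derivatives of \(\sigma^2\mathscr E_{1,\log}\) give an \(O(1)\) contribution that belongs in \(\mathscr R_{1,\Hess}\) after factoring \(\sigma^{-2}\cdot\sigma^2\).
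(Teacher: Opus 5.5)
Your proposal is correct and follows the paper's proof essentially step for step. You use the same decomposition $F=\mathsf L_0+\sigma\mathsf L_1+\sigma^2\mathscr E_{1,\log}$, the same gradient and Hessian chain rules sorted by powers of $\sigma$, and the same fixed-$y$ differentiation with $\partial_\sigma a=-\sigma^{-1}a$, and you arrive at identical coefficients and remainders. One small slip: $\theta$ and $\nu$ are only $C^{r}$ on $\mathcal U$, so $J_\theta,J_\nu$ are $C^{r-1}$ (and $\mathsf Q^\theta,\mathsf Q^\nu$ are $C^{r-2}$), not $C^{r}$; this is harmless, because these fields are never the limiting factor in the derivative count.
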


In the formula for \(\mathsf H_1\) above, the arguments \((a,x,\sigma)\) and \((a,x)\) are omitted
on the right-hand side in order to avoid an overly long display.

%% file: sections/part_cases.tex

\section{Particular cases}
\label{sec:part_cases}

We spell out three limiting cases of the boundary-layer expansions. These cases are useful because
they separate ordinary heat smoothing, the effects caused by a lower-dimensional support, and the
one-sided behavior near a smooth boundary. In the first case, the support is all of \(\mathbb R^d\).
There is no transverse coordinate and no singular behavior. In the second case, there is no boundary
coordinate, but the ambient normal coordinate remains. This produces the singular normal terms in
the score, in the log-Hessian, and in the scale derivative of the score. In the third case, the
support is a full-dimensional ellipsoid, so only the boundary coordinate is present.

\paragraph{Smooth positive density on \(\mathbb R^d\).}

Suppose first that \(q\) has a smooth strictly positive density \(\rho\) with respect to Lebesgue
measure on \(\mathbb R^d\). In the notation of the boundary-layer setup this corresponds to
\(\Man=\mathbb R^d\), \(m=d\), \(c=0\), and \(k=0\). Thus there is no boundary coordinate
\(a_{\mathcal C}\), no ambient-normal coordinate \(a_{\mathcal N}\), no cone effect, and no singular
prefactor \(\sigma^{-k}\). The heat regularization is the usual Euclidean heat semigroup,
\(p_\sigma=e^{(\sigma^2/2)\Delta}\rho\).

Hence the boundary-layer expansion is not needed in this setting. A direct Taylor expansion of the
heat semigroup gives, on compact subsets and assuming enough smoothness,
\begin{equation*}
  p_\sigma(y) = \rho(y) + \frac{\sigma^2}{2}\Delta\rho(y) + O(\sigma^4).
\end{equation*}
Since \(\rho\) is strictly positive and continuous, it is bounded away from zero on every compact
subset. Therefore
\begin{equation*}
  \log p_\sigma(y) = \log\rho(y) + \frac{\sigma^2}{2} \frac{\Delta\rho(y)}{\rho(y)} + O(\sigma^4).
\end{equation*}
Differentiating the same expansion gives, again on compact subsets and with the corresponding
smoothness assumptions,
\begin{equation*}
  \score(y) = \scorezero(y) + \frac{\sigma^2}{2} \nabla\!\left(\frac{\Delta\rho}{\rho}\right)(y)
  + O(\sigma^4), \quad \Hess(y) = \Hesszero(y)
  + \frac{\sigma^2}{2} \nabla^2\!\left(\frac{\Delta\rho}{\rho}\right)(y) + O(\sigma^4),
\end{equation*}
where \(\scorezero(y) = \nabla\log\rho(y)\) and \(\Hesszero(y) = \nabla^2\log\rho(y)\). The
fixed-\(y\) scale derivative of the score is regular:
\begin{equation*}
  \Vel(y) = \sigma \nabla\!\left(\frac{\Delta\rho}{\rho}\right)(y) + O(\sigma^3).
\end{equation*}
These formulas are consistent with the coefficients of \Cref{sec:main_results}. Since \(c=k=0\), the
transverse variable \(a\) is empty. Thus \(\mathsf S_0\), \(\mathsf H_0\), and \(\dot{\mathsf S}_0\)
vanish. In Euclidean coordinates one may take \(\bfL=\bfI_d\), and
\begin{equation*}
  \mathsf C_0(x) = (2\pi)^{d/2}\rho(x), \qquad \mathsf L_0(x) = \frac d2\log(2\pi) + \log\rho(x).
\end{equation*}
The prefactor \((2\pi)^{-d/2}\) in the general formula then gives
\(p_\sigma(y)=\rho(y)+O(\sigma^2)\). The order-\(\sigma\) coefficient vanishes by the symmetry of
the full Gaussian tangent plane.

\paragraph{Smooth embedded manifold without boundary.}

Assume next that \(\Man\subset\mathbb R^d\) is a smooth \(m\)-dimensional embedded manifold without
boundary or corners, and that \(q(\dd x)=\rho(x)\,\dd\vol_{\Man}(x)\), with \(\rho>0\) on the
compact set under consideration. A simple example is the unit circle \(\mathbb S^1=\{x\in\mathbb
R^2:\|x\|=1\}\). The only stratum is \(\Strat=\Man\). Thus \(c=0\), \(\mathbb H_0^m=\mathbb R^m\),
\(k=d-m\), and \(a=a_{\mathcal N}\in\mathbb R^k\). There is no boundary coordinate \(a_{\mathcal
C}\). Points in the \(O(\sigma)\) normal layer around \(\Man\) have the form
\begin{equation*}
  y_\sigma(a,x) = x + \sigma\bfN(x)a, \qquad x\in\Man, \quad \|a\|\leqslant A.
\end{equation*}
Since the inward tangent cone is the full tangent space, the leading Gaussian integral is taken over
all of \(\mathbb R^m\):
\begin{equation*}
  \int_{\mathbb R^m} \exp\!\left(-\frac12\|\bfL(x)\zeta\|^2\right) \,\dd\zeta
  = \frac{(2\pi)^{m/2}}{|\det \bfL(x)|}.
\end{equation*}
Thus the determinant in the leading Jacobian cancels with the determinant produced by the Gaussian
integral, and
\begin{equation*}
  \mathsf C_0(a,x) = (2\pi)^{m/2} \rho(x) e^{-\|a\|^2/2}.
\end{equation*}
Consequently, \(\mathsf L_0(a,x) = \frac m2\log(2\pi) + \log\rho(x) - \frac12\|a\|^2\). In
particular, \(\nabla_a\mathsf L_0=-a\) and \(\Diff_a^2\mathsf L_0=-\bfI_k\). These identities are
the source of the leading normal attraction in the score and of the leading negative normal block in
the log-Hessian.

The first correction has a simple geometric meaning. In normal geodesic coordinates on \(\Man\), the
odd first-order variation of \(\rho\) integrates to zero over the full tangent space. The even
quadratic normal displacement of \(\Man\), encoded by the second fundamental form \(\mathrm{II}\) in
\eqref{eq:II}, survives through its trace \(\bsh_{\Man}\) defined in \eqref{eq:HM}. Hence
\begin{equation*}
  \mathsf L_1(a,x) = \frac12 \langle \bfN(x)a,\bsh_{\Man}(x)\rangle.
\end{equation*}
Combining the preceding identities with \Cref{thm:2nd_coef_kernel} gives
\begin{equation*}
  p_\sigma\bigl(x+\sigma\bfN(x)a\bigr) = \frac{\rho(x)e^{-\|a\|^2/2}}{(2\pi\sigma^2)^{k/2}} \left[ 1
  + \frac{\sigma}{2} \langle \bfN(x)a,\bsh_{\Man}(x)\rangle + O(\sigma^2) \right].
\end{equation*}
Equivalently,
\begin{equation*}
  \log p_\sigma\bigl(x+\sigma\bfN(x)a\bigr) = - \frac k2\log(2\pi\sigma^2) + \log\rho(x)
  - \frac12\|a\|^2 + \frac{\sigma}{2} \langle \bfN(x)a,\bsh_{\Man}(x)\rangle + O(\sigma^2).
\end{equation*}

The differentiated expansions can now be read from \Cref{thm:2nd-order-score}. Since
\(\nabla_a\mathsf L_0=-a\), the leading score coefficient satisfies \(\mathsf
S_0(a,x,0)=-\bfN(x)a\). The next coefficient has two intrinsic contributions: the tangential score
of the density and the mean-curvature bias. Thus
\begin{equation*}
  \score\bigl(x+\sigma\bfN(x)a\bigr) = -\sigma^{-1}\bfN(x)a + \nabla_{\Man}\log\rho(x)
  + (1/2)\bsh_{\Man}(x) + O(\sigma).
\end{equation*}
The term \(-\sigma^{-1}\bfN(x)a\) is the geometric
denoising term: at distance \(\sigma\|a\|\) from \(\Man\), the score points normally back toward the
support. The constant term follows the density along \(\Man\) and includes the mean-curvature
correction caused by integrating over a curved submanifold.

For the log-Hessian, recall that \(\bfP_{\mathcal N}(x)=\bfN(x)\bfN(x)^\top\) is the orthogonal
projection onto \(\mathcal N_x\Man\). The coefficient of the most singular term is \(-\bfP_{\mathcal
N}(x)\); this term comes from the Hessian of the normal Gaussian confinement. The first curvature
correction is the symmetric bilinear form \(\mathsf B_{a,x}\) defined by
\begin{equation*}
  \mathsf B_{a,x}(\tau_1+n_1,\tau_2+n_2) = \langle \bfN(x)a, \mathrm{II}_x(\tau_1,\tau_2)\rangle,
\end{equation*}
where \(\tau_i\in\mathcal T_x\Man\) and \(n_i\in\mathcal N_x\Man\). Equivalently, \(\mathsf
B_{a,x}\) is the self-adjoint ambient matrix whose normal-normal and tangent-normal blocks vanish
and whose tangent-tangent quadratic form is given by the last display. With the convention for
\(\mathrm{II}\) used in \eqref{eq:II},
\begin{equation*}
  \Hess\bigl(x+\sigma\bfN(x)a\bigr) = -\sigma^{-2}\bfP_{\mathcal N}(x) + \sigma^{-1}\mathsf B_{a,x}
  + O(1).
\end{equation*}
In particular, when \(a=0\), the displayed \(\sigma^{-1}\) curvature term vanishes.

Finally, consider the fixed-\(y\) scale derivative of the score. In the normal layer, the unscaled
normal coordinate is \(\eta(y)=\sigma a\), and \(y-\pi(y)=\bfN(x)\eta(y)=\sigma\bfN(x)a\). Holding
\(y\) fixed while differentiating with respect to \(\sigma\) therefore differentiates both the
explicit factor \(\sigma^{-1}\) and the rescaled coordinate \(a=\eta(y)/\sigma\). This yields
\begin{equation*}
  \Vel(y) = 2\sigma^{-2}\bfN(x)a + O(1) = 2\sigma^{-3}\bigl(y-\pi(y)\bigr) + O(1),
  \qquad y=x+\sigma\bfN(x)a.
\end{equation*}

\paragraph{Uniform law on a full-dimensional ellipsoid.}

Let \(\bfA\in\mathbb R^{d\times d}\) be symmetric positive definite and set
\begin{equation*}
  \mathcal E_{\bfA} = \{z\in\mathbb R^d:z^\top\bfA z\leqslant 1\},
  \qquad \Sigma_{\bfA}=\partial\mathcal E_{\bfA}.
\end{equation*}
Assume that \(q\) is the uniform law on \(\mathcal E_{\bfA}\). Its Lebesgue density on the support
is the constant
\begin{equation*}
  \rho_{\bfA} = \frac{1}{\operatorname{vol}_d(\mathcal E_{\bfA})}
  = \frac{\sqrt{\det\bfA}}{\omega_d}, \qquad \omega_d=\frac{\pi^{d/2}}{\Gamma(d/2+1)}.
\end{equation*}
For \(x\in\Sigma_{\bfA}\), the inward unit normal is \(-\bfA x/\|\bfA x\|\). We therefore
parametrize the \(O(\sigma)\) boundary layer by
\begin{equation*}
  y=y_\sigma(a,x) = x-\sigma a\frac{\bfA x}{\|\bfA x\|}, \qquad x\in\Sigma_{\bfA},
\end{equation*}
where positive \(a\) points inside the ellipsoid. This matches the general corner-chart
convention, in which the orthant constraint \(\xi_{\mathcal C}\geqslant 0\) selects the inward
normal direction, so the tangent half-space retains the argument \(+a\) and the leading mass is
\(\Phi_{\rm N}(a)\) rather than \(\Phi_{\rm N}(-a)\); the sign of \(a_{\mathcal C}\) is otherwise a
frame choice. This is the case \(m=d\), \(c=1\), and \(k=0\).
Let
\begin{equation*}
  \phi_{\rm N}(a)=(2\pi)^{-1/2}e^{-a^2/2},
  \qquad \Phi_{\rm N}(a)=\int_{-\infty}^a\phi_{\rm N}(t)\,\dd t,
  \qquad \lambda(a)=\frac{\phi_{\rm N}(a)}{\Phi_{\rm N}(a)}.
\end{equation*}
The leading coefficient of \Cref{thm:first_coef_kernel} is
\begin{equation*}
  \mathsf C_0(a,x) = \rho_{\bfA}(2\pi)^{d/2}\Phi_{\rm N}(a), \qquad \mathsf L_0(a,x)
  = \log\rho_{\bfA} +\frac d2\log(2\pi) +\log\Phi_{\rm N}(a).
\end{equation*}
Hence, uniformly for bounded \(a\) and \(x\in\Sigma_{\bfA}\),
\begin{equation*}
  p_\sigma\bigl(y_\sigma(a,x)\bigr) = \rho_{\bfA}\Phi_{\rm N}(a)+O(\sigma),
  \qquad \log p_\sigma\bigl(y_\sigma(a,x)\bigr) = \log\rho_{\bfA}+\log\Phi_{\rm N}(a)+O(\sigma).
\end{equation*}
The first-order differentiated coefficients of \Cref{thm:first-order-score}, which here are
independent of \(\sigma\), are
\begin{equation*}
  \mathsf S_0(a,x) = -\lambda(a)\frac{\bfA x}{\|\bfA x\|}, \quad \mathsf H_0(a,x)
  = \lambda'(a) \frac{(\bfA x)(\bfA x)^\top}{\|\bfA x\|^2}, \quad \dot{\mathsf S}_0(a,x)
  = \bigl[\lambda(a)+a\lambda'(a)\bigr] \frac{\bfA x}{\|\bfA x\|},
\end{equation*}
where \(\lambda'(a)=-\lambda(a)\bigl(a+\lambda(a)\bigr)\). Since \(\lambda=\phi_{\rm N}/\Phi_{\rm
N}>0\) and \(\lambda'(a)=(\log\Phi_{\rm N})''(a)<0\) by the log-concavity of \(\Phi_{\rm N}\), the
rank-one Hessian \(\mathsf H_0\preceq 0\). Equivalently,
\begin{align*}
  \score\bigl(y_\sigma(a,x)\bigr) &= -\sigma^{-1}\lambda(a)\frac{\bfA x}{\|\bfA x\|} +O(1), \\
  \Hess\bigl(y_\sigma(a,x)\bigr)
  &= \sigma^{-2}\lambda'(a) \frac{(\bfA x)(\bfA x)^\top}{\|\bfA x\|^2} +O(\sigma^{-1}), \\
  \Vel\bigl(y_\sigma(a,x)\bigr)
  &= \sigma^{-2}\bigl[\lambda(a)+a\lambda'(a)\bigr] \frac{\bfA x}{\|\bfA x\|} +O(\sigma^{-1}).
\end{align*}

These formulas show that the leading boundary behavior is universal: after zooming at scale
\(\sigma\), the ellipsoid is replaced by its tangent half-space, the density is multiplied by the
Gaussian mass \(\Phi_{\rm N}(a)\), and the score becomes a singular inward normal attraction of size
\(\sigma^{-1}\lambda(a)\). The geometry of the ellipsoid appears only through the local outward
normal \(\bfA x/\|\bfA x\|\), with curvature contributing at lower order.

%% file: sections/proof_sketch.tex

\section{Sketch of the proofs of the main results}
\label{sec:sketch_proofs}

In this section, we explain the main steps of the proofs of the second-order expansions stated in
\Cref{thm:2nd_coef_kernel} and \Cref{thm:2nd-order-score}. The proofs of the first-order expansions
of \Cref{thm:first_coef_kernel} and \Cref{thm:first-order-score} follow the same lines. The detailed
proofs are given in the appendix.

To obtain the desired expansion of \(p_\sigma\) and its first- and second-order derivatives, we
decompose this density as the sum of two terms:
\begin{equation}\label{eq:loc-far}
  p_\sigma(y)
  = \underbrace{ \int_{\Man} \chi_{\pi(y)} (x')\, \phi_\sigma(y-x')\, q(\dd x')}_{p_{\textrm{loc},
  \sigma}(y)} + \underbrace{ \int_{\Man} \big( 1 -
  \chi_{\pi(y)} (x')\big)\, \phi_\sigma(y-x')\, q(\dd x') }_{p_{\textrm{far},\sigma}(y)}.
\end{equation}
Here, \(\chi_x(x')\) is a truncation function (introduced later in this section) such that the
integrand of \(p_{\textrm{loc},\sigma}\) vanishes when \(x'\) is far from \(x = \pi(y)\), whereas
the integrand of \(p_{\textrm{far}, \sigma}\) is concentrated on \(x'\) sufficiently far from \(x\).

Let the chart \(\Phi\) and the open set \(\Theta_{\mathrm{out}} \Subset \mathcal V\) be as in
\Cref{ass:standing}~\ref{ass:standing:chart}, so that \(\Compct_{\Strat}\subset \Strat_{\mathrm{out}} =
\Phi(\Theta_{\mathrm{out}} \times\{0\})\). Recall that  \(\varphi: \Theta_{\mathrm{out}}\to
\Strat_{\mathrm{out}}\) and  \(\Theta_{\Compct_{\Strat}} \subset \Theta_{ \mathrm{out}} \) are defined by
\begin{equation*}
  \varphi(\theta) = \Phi(\theta,0), \qquad \Theta_{\Compct_{\Strat}} = \varphi^{-1}(\Compct_{\Strat}).
\end{equation*}
Thus, \(\varphi\) parametrizes the part of the stratum we are interested in. Near each base point
\(x = \varphi(\theta)\) we use the translated corner chart
\begin{equation*}
  \Phi_x(\xi) = \Phi(\theta+\xi_{\mathcal S},\xi_{\mathcal C}),
  \qquad \xi=(\xi_{\mathcal S},\xi_{\mathcal C})\in\mathbb H_c^m .
\end{equation*}
This is the parametrization of the integration variable \(x'\) in the local component
\(p_{\textrm{loc},\sigma}\) of the target density. We show that for each \(x\),  \(\Phi_x\) is a
diffeomorphism defined on \(\mathbb H_c^m\cap \mathbb B_{4R}^m\), for some \(R>0\), onto an open
neighborhood of \(x\) in \(\Man\). In addition, \(\Phi_x\) is \(C^{r + 1}\).

We choose a \(C^\infty\) function \(\chi:\mathbb R_{\geqslant 0}\to [0,1]\) such that \(\chi(t) =
1\) if \(t\leqslant 1\) and \(\chi(t) = 0\) if \(t\geqslant 4\). Using this function, we define the
transported cutoff \(\chi_x: \Man \to [0,1]\) by
\begin{equation}\label{eq:chi_x}
  \chi_x(x') = \chi\bigg(\frac{\|\Phi_x^{-1}(x')\|^2}{R^2} \bigg),
  \qquad \text{if }\qquad x'\in \Phi_x(\mathbb H_c^m\cap \mathbb B_{4R}^m)
\end{equation}
and \(\chi_x(x') = 0\) for all the other \(x'\in\Man\). This zero extension is well defined because
the support of \(\chi\) is compactly contained in \(\mathbb B_{4R}^m\). In words, \(\chi_x\) is the
fixed cutoff \(\chi\), but transported to \(\Man\) through the corner chart centered at the moving
base point \(x\).

In the adapted frame at \(x\), \(x' = \Phi_x (\xi) \) and the measure \(\rho\,\dd\vol_{\Man}\)
written in the chart \(\Phi_x\) has density \(\mathcal A_x(\xi) = \rho\bigl( \Phi_x(\xi)
\bigr)\,J_x(\xi)\) defined by \eqref{eq:defAx}. Thus, \(\rho(x')\,\dd\vol_{\Man}(x') = \mathcal
A_x(\xi)\, \dd\xi\). After the change of variable \(x' = \Phi_x(\xi)\), we get
\begin{equation}\label{eq:loc2}
  p_{{\rm loc},\sigma}(y)
  = \int_{\mathbb H_c^m} \chi(\|\xi\|^2/R^2)\, \phi_\sigma\bigl(y-\Phi_x(\xi)\bigr)\,
  \mathcal A_x(\xi)\,\dd\xi, \qquad x=\pi(y), \qquad \sigma>0.
\end{equation}
To ease notation, we write \(\bar\chi_R(\xi) = \chi\big( \|\xi\|^2/R^2\big)\). Because
\(\operatorname{supp} \bar\chi_R\Subset \mathbb B_{4R}^m\), all quantities in the following integral
are evaluated where the chart is defined. Equivalently, the integral may be read as an integral over
\(\mathbb H_c^m\cap \mathbb B_{2R}^m\) because the cutoff vanishes outside the ball \(\mathbb
B_{2R}^m\). The cutoff is therefore supported strictly inside the uniform corner chart. This will
allow us to differentiate and rescale the local integral without encountering the edge of the chosen
coordinate patch.

The cutoff is equal to one on \(\|\xi\| \leqslant R\). We prove in the appendix that there exists
\(\delta_0>0\), independent of \(x\in\Compct_{\Strat}\), such that every point of \(\Man\) within
distance \(2\delta_0\) of \(x\) is represented in the region where the transported cutoff is
identically one. Hence every contribution surviving the factor \(1-\chi_x\) is uniformly separated
from the base point. This separation is the mechanism behind the exponential smallness of the far
term in \eqref{eq:loc-far}.

\subsection{Conical layer and scaled representation}
\label{subsec:rescaled_variables_scaled_exponent}

We now pass from the local coordinates to the scaled variables in which the boundary layer has a
nontrivial limit. The heat kernel has width \(\sigma\). Hence, near a codimension-\(c\) stratum
\(\Compct_{\Strat}\), the relevant observation points are those lying in a tubular neighborhood of
\(\Strat\) of size \(O(\sigma)\). In this regime the Gaussian sees both the ambient normal
directions and the inward corner constraint. Hence, as we will see, the limiting model is conical.

For \(A>0\) and \(\sigma_0\in(0,R/A)\), and writing \(\mathcal U\) for the tubular neighborhood on
which the projection \(\pi\) is single-valued, define the \(O(\sigma)\)-tubular layer around the
stratum, recalling the boundary-layer regime of \eqref{eq:boundary-layer-regime},
\begin{equation*}
  \mathcal Y_{A,\Compct_{\Strat},\sigma_0}
  = \Bigl\{ (y,\sigma)\in \mathcal U\times (0,\sigma_0]: \pi(y)\in
  \Compct_{\Strat};\ \|y-\pi(y)\|\leqslant A\sigma \Bigr\}.
\end{equation*}
The vector \(y-\pi(y)\) can be represented in an orthonormal basis \(\bfQ(x)\) adapted to the local
geometry of \(\Man\) near \(x\). This representation has a component that is normal to the tangent
space of \(\Man\) and another component within the tangent space but orthogonal to the tangent of
the stratum. After rescaling by \(\sigma\), these two components are denoted by \(a_{\mathcal
N}\in\mathbb R^k\) and \( a_{\mathcal C}\in\mathbb R^c\). We then denote
\begin{equation}\label{eq:a-y-sigma}
  a(y,\sigma) = \begin{bmatrix} a_{\mathcal C}(y,\sigma)\\ a_{\mathcal N}(y,\sigma) \end{bmatrix}
  \in \mathbb R^c\times \mathbb R^k, \quad\text{and have}\quad y- x
  = \sigma \bfQ(x)\begin{bmatrix} 0_{m-c}\\ a_{\mathcal C}\\ a_{\mathcal N} \end{bmatrix},
\end{equation}
with \(x=\pi(y)\). Thus the layer is precisely the region in which the rescaled displacement \(a\)
remains bounded as \(\sigma \downarrow 0\). The integration variable on \(\Man\) will be
parametrized by corner coordinates using the same basis \(\bfQ(x)\) by \(\xi=( \xi_{\mathcal S},
\xi_{\mathcal C})\in\mathbb H_c^m\), where \(\xi_{\mathcal C}\in[0,\infty)^c\). Recall that the
converse parametrization, that of an observation point in the layer by \((a,x,\sigma)\) with \(a =
(a_{\mathcal C},a_{\mathcal N})\in \mathbb R^c\times \mathbb R^k\), \(x\in \Compct_{\Strat}\),
\(\sigma \geqslant 0\), is given by \eqref{eq:ysigma}.

\subsection{Change of variable of integration}

We next rescale the integration variable \(\xi\) in the integral \eqref{eq:loc2} of the localized
kernel. We set \(\xi=\sigma\zeta\), where \(\zeta\in\mathbb H_c^m\). This change of variable aims to
stabilize the Gaussian term \(\phi_\sigma(y-\Phi_x (\xi)) \). We define the scaled exponent
\begin{equation}\label{eq:scaled_exp}
  \Psi_\sigma(\zeta;a,x)
  = \frac{\big\|y_\sigma(a,x) - \Phi_x (\sigma\zeta)\big\|^2}{2\sigma^2}, \text{ so that }
  \phi_{\sigma}\big(y-\Phi_x(\sigma\zeta)\big)
  = \frac{\exp\big(-\Psi_\sigma(\zeta;a,x)\big)}{ (2\pi\sigma^2)^{d/2}}.
\end{equation}
Using the notation introduced in \eqref{eq:defAx}, the change of variable formula leads to
\begin{equation}\label{eq:loc3}
  p_{{\rm loc},\sigma}(y)
  = \frac{\sigma^m}{(2\pi\sigma^2)^{d/2}} \int_{\mathbb H_c^m} \bar\chi_R\big(\sigma\zeta\big)\,
  e^{- \Psi_\sigma(\zeta;a,x)}\mathcal A_{x} (\sigma\zeta)\,\dd\zeta, \qquad k=d-m,
\end{equation}
where \(x = \pi(y)\) and \(a = a(y,\sigma)\) is given by \eqref{eq:a-y-sigma}. To be more precise, we
have used that
\begin{equation*}
  \phi_\sigma\bigl(y-\Phi_x(\sigma\zeta)\bigr) = (2\pi\sigma^2)^{-d/2} e^{-\Psi_\sigma(\zeta;a,x)},
  \qquad \dd \xi=\sigma^m\,\dd\zeta,
\end{equation*}
and therefore the prefactor is \( \sigma^{m-d} =\sigma^{-k}\). The factor
\(\bar\chi_R(\sigma\zeta)\) in \eqref{eq:loc2} keeps the whole scaled integral inside the original
uniform corner chart, since \(\bar\chi_R(\sigma\zeta)\neq 0\) implies that
\(\sigma\|\zeta\|\leqslant 2R\). Thus the rescaling introduces no new geometric domain; it only
magnifies the fixed local chart at scale \(\sigma\).

The remaining task is to expand the scaled kernel \(e^{-\Psi_\sigma(\zeta; a,x)}\mathcal
A_x(\sigma\zeta)\) uniformly for bounded \(a\) and \(x\in\Compct_{\Strat}\). The limiting object is
obtained by freezing the geometry at the base point \(x\). In the chart coordinates, this means
replacing \(\Phi_x(\sigma \zeta)\) by its first-order approximation \(x + \sigma [\,\bfS(x)\
\bfC(x)\,]\bfL(x)\zeta\). Equivalently, the support of the measure is approximated by the inward
tangent cone \(\mathcal T_x^+\Man = [\, \bfS(x)\ \bfC(x)\,]\bfL(x)\mathbb H_c^m\). This produces the
leading conical Gaussian profile. The first correction comes from two sources: the quadratic part of
the chart, which perturbs the exponent, and the first variation of the amplitude \(\mathcal A_x\).

The expansion is carried out at the level of kernels before integration in \(\zeta\). The smoothness
of the remainder terms is established on domains of the form
\begin{equation*}
  \mathcal D_{R,A,\Compct_{\Strat},\sigma_0}
  = \Bigl\{ (\zeta,a,x,\sigma): \zeta\in\mathbb H_c^m,\ \|a\|\leqslant A,\ x\in\Compct_{\Strat},\
  0<\sigma\leqslant\sigma_0,\ \sigma\|\zeta\|<R \Bigr\},
\end{equation*}
where \(R\in(0, \infty]\). The condition \(\sigma\|\zeta\| <R\) ensures that the unscaled variable
\(\xi=\sigma\zeta\) remains inside the fixed corner chart.

\subsection{Taylor expansion of the exponent and the chart}
\label{subsec:taylor_expansion_chart_exponent}

To simplify the scaled exponent, we represent the point \(x' = \Phi_x(\xi)\) in the basis
\(\bfQ(x)\) by the vector \(\Delta_x(\xi)\), which means that
\begin{equation*}
  \Delta_x(\xi) = \bfQ(x)^\top(\Phi_x(\xi) - x) = \bfQ(x)^\top(\Phi_x(\xi) - \Phi_x(0)).
\end{equation*}
Under \Cref{ass:standing}, \ref{ass:standing:chart} and~\ref{ass:standing:density}, this mapping is
\(C^{r}\) in \(x\) and \(C^{r+1}\) in \(\xi\). The lower degree of smoothness with respect to \(x\)
is explained by the fact that \(\bfQ\) encodes information on the tangent space of \(\Man\) at
\(x\), which is defined through the differential of \(\Phi\). The latter has finite regularity
\(r\). Since \(\Delta_x(0_m) = 0_d\), the Taylor expansion leads to \(\Delta_x(\sigma\zeta) = \sigma
\Diff_\xi \Delta_x(0)\zeta + (\frac12)\sigma^2\Diff^2 \Delta_x(0)[\zeta,\zeta] + O(\sigma^3)\),
provided that \(r\geqslant 2\). Using the notation introduced above, we have
\begin{align}
  2\Psi_\sigma(\zeta;a,x) &= \big\|x+\sigma \bfQ (x)[\, 0_{m-c};\,a\,] - \big(x
  + \bfQ(x)\Delta_x(\sigma\zeta) \big) \big\|^2/(\sigma^2) \notag \\
  &= \big\|[\, 0_{m-c};\,a\,] - \sigma^{-1}\Delta_x(\sigma \zeta)\big\|^2 \notag \\
  &= \big\|[\, 0_{m-c};\,a\,] -\Diff_\xi \Delta_x(0)\zeta
  - (\frac\sigma2)\Diff^2_\xi\Delta_x(0)[\zeta,\zeta] \big\|^2 + O(\sigma^2) \notag \\
  &= 2\Psi(\zeta;a,x) + \sigma \underbrace{\big(\Diff_\xi \Delta_x(0) \zeta
  - [\, 0_{m-c};\,a\,]\big)^\top \Diff^2_\xi \Delta_x(0)[\zeta,\zeta]}_{=\Lambda_x(\zeta;a)}
  + O(\sigma^2), \label{eq:expansion_Psi}
\end{align}
see \eqref{eq:Psi0} for the definition of the linearized exponent \(\Psi\) and \eqref{eq:def_Lambda}
for the definition of \(\Lambda_x\). Since \(x\mapsto \bfL(x)\) is \(C^r\), the coefficients of the
quadratic model exponent \(\Psi\) have parameter regularity \(C^r\). Moreover,
\(\Diff_\xi^2\Delta_x(0)\) has parameter regularity \(C^{r-1}\), and therefore the coefficients of
\(\Lambda_x(\zeta;a)\) have parameter regularity \(C^{r-1}\). The loss to order \(r-2\) occurs only
in the second-order Taylor remainder of the exponent.

Another factor present in the integrand of the local kernel \eqref{eq:loc3} that we have to expand
is the amplitude \(\mathcal A_x(0)\) defined by \eqref{eq:defAx}, which can also be written as
\begin{equation*}
  \mathcal A_x(0) = \rho(x)\,J_x(0) = \rho(x)\,|\det \bfL(x)|.
\end{equation*}
Since the amplitude takes real values, its differential is given by its gradient, that is
\(\Diff_\xi\mathcal A_x(0)[\xi] = \nabla_\xi \mathcal A_x(0)^\top\xi\). Because \(\rho>0\) on
\(\Compct_{\Strat}\) and \(\bfL(x)\in\mathsf{GL}(m)\), the quantity \(\mathcal
A_x(0)=\rho(x)|\det\bfL(x)|\) is uniformly bounded away from zero for \(x\in\Compct_{\Strat}\). By
\ref{ass:standing:chart} and~\ref{ass:standing:density} of \Cref{ass:standing}, the amplitude
\(\mathcal A_x\) has the finite regularity inherited from \(\rho\), the chart, and the volume
Jacobian. In particular, if this regularity is at least \(C^2\) in the chart variable, Taylor
expansion yields
\begin{equation}\label{eq:ampl_exp0}
  \mathcal A_x(\sigma\zeta) = \rho(x)J_x(0) + \sigma \,\nabla_\xi \mathcal A_x(0)^\top \zeta
  + O(\sigma^2).
\end{equation}
Combining \cref{eq:expansion_Psi,eq:ampl_exp0}, we arrive at
\begin{equation}\label{eq:exp_exp_A}
  e^{-\Psi_\sigma(\zeta;a,x)}\mathcal A_x(\sigma\zeta) = e^{-\Psi(\zeta;a,x)}\Big(\mathcal A_x(0)
  +\sigma \kappa_{1,x}(\zeta;a)\Big) + O(\sigma^2),
\end{equation}
where we have used the notation \(\kappa_{1,x}(\zeta;a) = \nabla_\xi \mathcal A_x(0)^\top \zeta -
(\frac12)\mathcal A_x(0)\Lambda_x(\zeta;a)\). Although \(\kappa_{1,x}\) contains the first
variations of both the amplitude and the exponent, the important point is that, for
fixed \(a\) and \(x\), it is a polynomial in \(\zeta\) of degree at most three.

\subsection[Proof of Theorem~\ref{thm:2nd_coef_kernel}: integration and logarithm]
{Proof of \Cref{thm:2nd_coef_kernel}: integration and passage to logarithm}
\label{sec:integration_and_local_coefficients}

The fixed local cutoff \(\bar\chi_R(\sigma\zeta)\) only guarantees that \(\sigma\|\zeta\|\leqslant
2R\), so it allows \(\|\zeta\|\) as large as order \(\sigma^{-1}\). On that whole region the Taylor
expansion of \(e^{- \Psi_\sigma}\) around \(e^{-\Psi}\) is not a uniform perturbative expansion. For
this reason we introduce a second cutoff, denoted \(\vartheta_\sigma\). It is defined by
\begin{equation*}
  \vartheta_\sigma(\zeta) = \chi(\sigma^{1/2}\|\zeta\|^2).
\end{equation*}
Here \(\chi\in C_c^\infty(\mathbb R_{\geqslant 0})\) is the same function as in \eqref{eq:chi_x}
satisfying \(0 \leqslant \chi(t)\leqslant 1\), \(\chi=1\) on \(t\leqslant 1\), and \(\chi=0\) on \(t
\geqslant 4\). Thus \(\vartheta_\sigma=1\) for \(\|\zeta\| \leqslant \sigma^{-1/4}\) and
\(\vartheta_\sigma=0\) for \(\|\zeta\| \geqslant 2\sigma^{-1/4}\). Since \(\sigma^{-1/4} \) tends to
infinity, the inner region exhausts the whole tangent-cone variable space as \(\sigma\downarrow 0\).
At the same time it grows slowly enough that the cubic exponent perturbation remains small.
Moreover, a suitable choice of \(\sigma_0\) ensures that
\begin{equation}\label{eq:supp}
  \operatorname{supp}\vartheta_\sigma \subset \{\|\zeta\|\leqslant 2\sigma^{-1/4}\}
  \subset \{\sigma\|\zeta\|\leqslant R\} \subset \big\{\zeta: \chi\big(\sigma^2\|\zeta\|^2/R^2\big)
  = 1\big\},
\end{equation}
for all \(\sigma\in(0,\sigma_0]\). We show in the appendix that the remainder term in the expansion
\begin{equation*}
  \vartheta_\sigma(\zeta)e^{-\Psi_\sigma(\zeta;a,x)}\mathcal A_x(\sigma\zeta)
  = \vartheta_\sigma(\zeta) e^{-\Psi(\zeta;a,x)}\Big(\mathcal A_x(0)
  +\sigma \kappa_{1,x}(\zeta;a)\Big) + O(\sigma^2),
\end{equation*}
derived from \eqref{eq:exp_exp_A}, is sufficiently smooth and has exponentially small
tails when \(\zeta\) becomes large. This is a key point: after the insertion of
\(\vartheta_\sigma\), the Taylor remainder can be integrated in \(\zeta\) uniformly in the
parameters. The role of \(\vartheta_\sigma\) is only temporary. In the next step, the model
coefficients are defined by integrating over the full cone \(\mathbb H_c^m\); the difference between
\(\mathbb H_c^m\) and the growing inner region is absorbed into exponentially small tails.

To ease notation, recall that \(\bar\chi_R(\xi) = \chi\big( \|\xi\|^2/R^2\big)\) and let us
introduce three tails, defined precisely in
\cref{eq:def_Tail_ex,eq:def_Tail_0,eq:def_Tail_0,eq:def_Tail_1}. The exact tail
\(\operatorname{Tail}_{\mathrm{ex}}(a,x,\sigma)\) is the contribution of the truncated kernel
outside the growing inner region \(\{\vartheta_\sigma=1\}\); the model tails
\(\operatorname{Tail}_{0}(a,x,\sigma)\) and \(\operatorname{Tail}_{1}(a,x,\sigma)\) are the parts of
the full-cone coefficients \(\mathsf C_0\) and \(\mathsf C_1\) that lie outside this region, namely
the integrands \(\mathcal A_x(0)e^{-\Psi}\) and \(e^{-\Psi}\kappa_{1,x}\) weighted by
\((1-\vartheta_\sigma)\). In view of \cref{eq:supp,eq:exp_exp_A}, the scaled local integral
\[
  I_\sigma(a,x)
  =
  \int_{\mathbb H_c^m}\bar\chi_R(\sigma\zeta)\,
  e^{-\Psi_\sigma(\zeta;a,x)}\mathcal A_x(\sigma\zeta)\,\dd\zeta,
  \qquad
  p_{{\rm loc},\sigma}(y)=\sigma^{-k}(2\pi)^{-d/2}I_\sigma(a,x),
\]
admits the exact decomposition
\begin{equation*}
  I_\sigma(a,x)
  = \mathsf C_0(a,x)+\sigma\mathsf C_1(a,x)+\sigma^2\mathscr E_{\mathrm{in}}(a,x,\sigma)
  + \operatorname{Tail}_{\mathrm{ex}} - \operatorname{Tail}_{0}
  - \sigma\operatorname{Tail}_{1}.
\end{equation*}
As proved in the appendix, the exact and model tails are exponentially small in the admissible
classes, even after multiplication by the powers of \(\sigma^{-1}\) needed to absorb them into the
\(\sigma^2\)-remainder. Hence they can be included in \(\sigma^2\mathscr E_{\rm loc}\). Multiplying by
\(\sigma^{-k}(2\pi)^{-d/2}\) gives
\begin{equation*}
  p_{{\rm loc},\sigma}(y) = \sigma^{-k}(2\pi)^{-d/2} \Bigl[ \mathsf C_0(a,x)
  + \sigma\mathsf C_1(a,x) + \sigma^2\mathscr E_{\rm loc}(a,x,\sigma) \Bigr],
\end{equation*}
for every \((y,\sigma)\in\mathcal Y_{A,\Compct_{\Strat}, \sigma_0}\), \(x=\pi(y)\), and
\(a=a(y,\sigma)\).

To assess the complementary contribution
\begin{equation*}
  p_{{\rm far},\sigma}(y)
  = \int_{\Man} \bigl(1-\chi_{\pi(y)}(x')\bigr) \phi_\sigma(y-x')\, \rho(x')\,\dd\vol_{\Man}(x'),
\end{equation*}
we use the fact that \(\chi_{\pi(y)}(x') = 1\) if \(\|\pi(y) -x'\| \leqslant 2\delta_0\). Therefore,
\begin{equation*}
  p_{{\rm far},\sigma}(y)
  \leqslant \int_{\{x':\|\pi(y)-x'\|>2\delta_0\}} \phi_\sigma(y-x')\, \rho(x')\,\dd\vol_{\Man}(x') .
\end{equation*}
Since \(\|y - \pi(y)\| \leqslant \sigma_0 A\), a careful choice \(\sigma_0\) implies that \(\|y-x'\|
\geqslant \delta_0\) whenever \(\|\pi(y) - x'\|> 2\delta_0\). Therefore, the far term is
exponentially small on the scale \(e^{-c/\sigma^2}\). The same reasoning remains valid after
differentiating. Derivatives may fall either on the Gaussian or on the transported cutoff.
Derivatives of the Gaussian produce only powers of \(\sigma^{-1}\), absorbed by the exponential
decay. Derivatives of the transported cutoff are bounded and supported in the same separated region.
This concludes the proof of the first claim of \Cref{thm:2nd_coef_kernel}. To obtain the second
claim, it suffices to check that the dominating term, \(\mathsf C_0\) is uniformly bounded away from
zero on \((x,a)\in \Compct_{\Strat} \times \overline{\mathbb B}_A\) and the other terms of the
expansion, \(\mathsf C_1\) and \(\mathscr E_1\), have the required finite regularity and are uniformly
bounded. Then, the desired result follows from the Taylor formula applied to the function \(t\mapsto
\log(1+t)\).

\subsection[Proof of Theorem~\ref{thm:2nd-order-score}: differentiating the log-expansion]
{Proof of \Cref{thm:2nd-order-score}: differentiating the log-expansion}
\label{ssec:diff_log_exp}

The starting point for the proof of the expansions of the score and its derivatives is the expansion
\begin{equation}\label{eq:exp_log}
  \log p_\sigma(y) = -k\log\sigma - (d/2) \log(2\pi) + \mathsf L_0(a,x) + \sigma\mathsf L_1(a,x)
  + \sigma^2\mathscr E_{1,\log} (a,x,\sigma),
\end{equation}
established in \Cref{thm:2nd_coef_kernel}, in which \(\mathscr E_{1,\log}\in\mathfrak B_{r-2}\) in
the boundary-layer variables \((a,x,\sigma)\). Thus its admissible derivatives in
\((a,x,\sigma)\) are uniformly bounded, while ambient \(y\)-derivatives are obtained only after
applying the conical-layer chain rules, which produce the explicit powers of \(\sigma^{-1}\)
appearing in the score and Hessian expansions. For the score expansion, \(r\geqslant3\) is
sufficient to differentiate the logarithmic expansion once in the ambient variable. For the Hessian
and the scale derivative of the score, one additional admissible derivative is needed, giving the
threshold \(r\geqslant4\), in agreement with \Cref{thm:2nd-order-score}. Since these germs
are expressed in the adapted variables \(x=\pi(y)\) and \(a=a(y,\sigma)\), we need to translate
derivatives in \(y\) into derivatives in the stratum variable \(x\) and in the rescaled transverse
coordinate \(a\). Recalling \(\theta(y),\nu(y)\) from \eqref{eq:theta-nu-def} and using
\(\varphi(\theta)=\Phi(\theta,0)\), define
\begin{equation*}
  \theta(y)=\varphi^{-1}(\pi(y))\in\Theta_{\Compct_{\Strat}},
  \qquad \nu(y)= [\,\bfC(x)\ \bfN(x) \,]^\top\big(y-\pi(y)\big) \in\mathbb R^{c+k}
\end{equation*}
so that \(a(y,\sigma)=\nu(y)/\sigma\). Here \(\theta(y)\) is the local coordinate of the projection
\(\pi(y)\) on the stratum and \(\nu(y)\) collects the unscaled tubular coordinates transverse to the
stratum. The maps \(\nu\) and \(\theta\) have the finite differentiability required on the relevant
compact region of the tubular neighborhood \(\mathcal Y_{A,\Compct_{\Strat},\sigma_0}\). Recall that
the corresponding differentials are denoted by \(J_\theta\) and \(J_\nu\), see \cref{def:J1,def:J2}.

The explicit terms \(-k\log\sigma\) and \(-(d/2) \log(2\pi)\) in \eqref{eq:exp_log} being
independent of \(y\), their derivatives vanish. By contrast, every derivative falling on the
rescaled transverse coordinate \(a(y,\sigma)\) produces one factor \(\sigma^{-1}\), while
derivatives falling on \(\pi(y)\), or equivalently on the stratum coordinate \(\theta(y)\), remain
uniformly bounded. To obtain the score expansion, we simply use the chain rule:
\begin{equation}\label{eq:chain1}
  \nabla_yF(a(y,\sigma),\pi(y),\sigma) = J_\theta(y)^\top\nabla_\theta F
  + (1/\sigma) J_\nu(y)^\top\nabla_aF
\end{equation}
that we apply to \(F = \mathsf L_0\) and \(F = \mathsf L_1\). This leads to the coefficients
\(\mathsf S_0\) and \(\mathsf S_1\). The gradient and the differential with respect to \(\theta\) on
the right-hand side of \eqref{eq:chain1} should be understood as those of the mapping
\(\theta\mapsto F(a,\varphi( \theta),\sigma)\), \textit{i.e.}, the function obtained by replacing
\(x\) by its parameterization \(\varphi (\theta)\).

For the scale derivative of the score, we start from the score expansion
\begin{equation*}
  \score(y) = \frac1\sigma{\mathsf S}_0(a,x,\sigma) + \mathsf S_1(a,x,\sigma)
  + \sigma\mathscr R_{1,\score}(a,x,\sigma)
\end{equation*}
and use the chain rule with the fact that \(\partial_\sigma a(y,\sigma) = -\sigma^{-1} a(y,
\sigma)\). Hence, for each score coefficient, \(\partial_\sigma \mathsf S_j(a(y,\sigma),x,\sigma) =
\partial_\sigma\mathsf S_j(a,x,\sigma) -\sigma^{-1} (\Diff_a\mathsf S_j(a,x,\sigma))[a]\).
Grouping powers of \(\sigma\) leads to the coefficients defined in \Cref{thm:2nd-order-score}.

%% file: sections/conclusion.tex

\section{Conclusion and perspectives}
\label{sec:conclusion}

We have proved boundary-layer asymptotic expansions for Gaussian regularizations of measures
supported on manifolds with corners. The main point of the analysis is that, near a boundary or
corner stratum, the small-noise limit is not governed by a smooth Euclidean density. It is governed
by the Gaussian mass of the inward tangent cone. This cone determines the leading density
coefficient, and its logarithmic derivatives determine the leading score, log-Hessian, and scale
derivative of the score.

One concrete consequence concerns Gaussian denoising. If \(Y_\sigma=X+\sigma Z\), with \(X\sim q\)
and an independent vector \(Z\sim\mathcal N(0,\bfI_d)\), then Tweedie's identities give
\begin{equation}\label{eq:tweedie2}
  \mathbb E[X\,|\,Y_\sigma=y] = y+\sigma^2\score(y), \quad \operatorname{Cov}(X\,|\,Y_\sigma=y)
  = \sigma^4\Hess(y)+\sigma^2\bfI_d .
\end{equation}
Thus the score determines the denoising displacement, while the log-Hessian determines both the
posterior covariance and the local linearization of the denoising map
\citep{robbins1956empirical,efron2011tweedie, vincent2011connection}. In the smooth
embedded-manifold case without boundary, for example, if \(y=x+\sigma\bfN(x)a\), our expansion gives
\begin{equation*}
  \mathbb E[X\,|\,Y_\sigma=y] = x + \sigma^2 \left( \nabla_{\Man}\log\rho(x)
  + \frac12\bsh_{\Man}(x) \right) + O(\sigma^3),
\end{equation*}
and \(\Diff_y \mathbb E[X\,|\,Y_\sigma=y] = \bfI_d+\sigma^2\Hess(y) = \bfP_{\mathcal T}(x) +
O(\sigma)\), where \(\bfP_{\mathcal T}(x)\) is the orthogonal projection onto \(\mathcal T_x\Man\).
Hence, at small noise, the population denoiser first collapses normally onto the support, then moves
along the support according to the intrinsic density and the mean curvature. Near a boundary or a
corner, the same role is played by the tangent-cone profile \(\mathsf C_0\): the denoiser is no
longer described by a linear normal projection alone, but by the conditional mean inside a
half-space or corner cone.

This interpretation also gives a geometric use of the score and the Hessian. In the boundaryless
case, the leading log-Hessian is \(-\sigma^{-2}\bfP_{\mathcal N}(x)\), so its large negative
eigenspace identifies the ambient normal space. Equivalently, the near-null directions of
\(\bfI_d+\sigma^2\Hess(y)\) distinguish normal directions, whereas the leading nonzero part
identifies the tangent space. This is closely related to recent uses of trained diffusion scores for
recovering normal bundles and intrinsic dimension \citep{stanczuk2024intrinsic}. The present results
refine this picture by showing what changes at boundaries and corners: additional singular
directions appear inside the support, controlled by derivatives of the Gaussian mass of the inward
tangent cone. Thus the same score or Hessian field can, in principle, distinguish an interior point
of a manifold from a boundary point or a corner point.

A second concrete application concerns score-based and diffusion generative models, where the
learned population target is a score field of a Gaussian-perturbed data law
\citep{vincent2011connection, sohl2015nonequilibrium,ho2020ddpm,song2021scorebased, karras2022edm}.
Along the heat path, the canonical velocity is \(-\sigma\score\), its spatial derivative is
\(-\sigma\Hess\), and its scale variation involves \(\Vel=\partial_\sigma\score\). Therefore, our
expansions give a local model for the stiffness and geometry of the reverse dynamics at small noise.
This is relevant for high-order samplers and Taylor-type discretizations, where derivatives of the
score, Jacobian-vector products, or scale derivatives of the learned field intervene
\citep{lu2022dpmsolver,lu2022dpmsolverpp,zhang2023deis,
dockhorn2022genie,tachibana2021quasitaylor,kim2024ctm}.

The Hessian expansion also gives a local interpretation of recent DDPM discretization assumptions.
In particular, \citet{arsenyan2025assessing} obtain fast Wasserstein discretization bounds under a
condition on the Gaussian-smoothed law which, through the second-order Tweedie identity
\eqref{eq:tweedie2}, is a covariance or log-Hessian control. Assumptions on the conditional
covariance are equivalent to assumptions on the Hessian of \(\log p_\sigma\). Our small-noise
expansion provides a geometric mechanism for such conditions: near a stratum,
\begin{equation*}
  \operatorname{Cov} (X\,|\,X+\sigma Z=y_\sigma(a,x))
  = \sigma^2 \bigl( \bfI_d+\mathsf H_0(a,x,\sigma) \bigr) + O(\sigma^3),
\end{equation*}
where the leading matrix is determined by the tangent cone. Hence, at least in the low-noise regime,
the Hessian conditions used to obtain fast DDPM discretization can be understood through the local
geometry of the data support.

Conversely, the expansion of \(\Vel=\partial_\sigma\score\) shows that regularity assumptions in the
noise variable cannot hold uniformly down to \(\sigma=0\) for genuinely lower-dimensional data. Some
DDIM analyses assume regular dependence of the score on the diffusion time or noise level to control
deterministic-sampler discretization errors \citep{chen2023restoration,yu2025wasserstein}. Our
results show that, in a boundary layer around a lower-dimensional manifold, the
\(\sigma\)-derivative of the score has order \(\sigma^{-2}\). Thus any Lipschitz-in-noise constant
must blow up as the terminal noise level tends to zero, especially near lower-dimensional supports,
boundaries, and corners. This gives a geometric explanation for low-noise Lipschitz singularities in
diffusion models \citep{yang2024lipschitz}.

The same Hessian expansion is relevant for inverse problems with diffusion priors. In second-order
Tweedie approximations, the posterior covariance is expressed through the Hessian of the
log-density, and recent methods use this information to improve posterior sampling or image
restoration \citep{boys2024tmpd,rout2024beyondfirstorder}. Our formulas show that, when the prior is
concentrated near a singular support, this covariance has a degenerate small-noise structure:
tangent directions, normal directions, and boundary-cone directions scale differently. Accounting
for this anisotropy may be important when diffusion priors are used for constrained or intrinsically
low-dimensional data.

Several directions remain open. A first one is to continue the expansion to higher orders. This
would require a systematic description of higher jets of the support, the density, and the volume
element, and would give more precise local models for high-order score derivatives. A second
direction is to analyze transition regimes where several strata interact, for example when the
observation point approaches a corner at a scale different from its distance to an adjacent boundary
face. A third direction is to weaken the geometric assumptions, allowing more general stratified
supports, nonsmooth densities, or mixtures of components of different dimensions.

Finally, the results suggest inverse geometric questions. The regularized density and its
logarithmic derivatives retain information about the tangent cone, the ambient codimension, the
active boundary constraints, the density on the support, and curvature corrections. It is therefore
natural to ask whether these objects can be reconstructed from small-noise samples or from learned
score fields. In this sense, the boundary-layer expansion provides not only an asymptotic
description of Gaussian smoothing, but also a possible bridge between singular geometric inference,
denoising, and the analysis of generative models.

%% file: sections/appendices.tex

{\scriptsize
\setlength{\tabcolsep}{3pt}
\renewcommand{\arraystretch}{0.92}
\setlength{\LTpre}{0pt}
\setlength{\LTpost}{0pt}
\setlength{\extrarowheight}{0pt}
\begin{longtable}{@{}p{0.25\textwidth}p{0.72\textwidth}@{}}
\caption{Complementary notation used in the appendix proofs.}
\label{tab:appendix-notation}\\
\toprule
Notation & Meaning in the appendix proofs \\
\midrule
\endfirsthead
\multicolumn{2}{@{}l}{\itshape Table~\ref{tab:appendix-notation} (continued)}\\
\toprule
Notation & Meaning in the appendix proofs \\
\midrule
\endhead
\midrule
\multicolumn{2}{r@{}}{\itshape continued on next page}\\
\endfoot
\bottomrule
\endlastfoot

\(\theta\), \(x=\varphi(\theta)\)
& Stratum coordinate and associated point of
\(\Strat_{\rm out}\). The chart is fixed in
\Cref{ass:standing}~\ref{ass:standing:chart}; derivatives in \(x\) are understood through the
pullback by \(\varphi\), as explained in
\Cref{rem:parameter_variable_derivatives}. \\

\(\zeta=(\zeta_{\mathcal S},\zeta_{\mathcal C})\),
\(\xi=\sigma\zeta\)
& Scaled integration variable in
\(\mathbb H_c^m=\mathbb R^{m-c}\times[0,\infty)^c\) and
corresponding local chart variable. The variable \(\zeta\) is
introduced in \Cref{subsec:admissible_class_definitions}, and the
scaled domains are defined in \eqref{eq:mathcal-D}. \\

\(\mathcal D_{R,A,\Compct_{\Strat},\sigma_0}\)
& Proof domain for functions of \((\zeta,a,x,\sigma)\):
\(\sigma\|\zeta\|<R\), \(\|a\|\leqslant A\),
\(x\in\Compct_{\Strat}\), and \(0<\sigma\leqslant\sigma_0\);
see \eqref{eq:mathcal-D}. \\

\(\mathcal D_{A,\Compct_{\Strat},\sigma_0}\)
& Parameter domain for functions of \((a,x,\sigma)\), with the
same bounds on \(a,x,\sigma\). It is defined immediately after
\eqref{eq:mathcal-D}. \\

\(\partial_{a,\theta,\sigma}^{\gamma}\)
& Mixed parameter derivative
\(\partial_a^\alpha\partial_\theta^\beta\partial_\sigma^j\),
where \(\gamma=(\alpha,\beta,j)\). This convention is introduced
in \Cref{subsec:admissible_class_definitions}, just before
\Cref{rem:parameter_variable_derivatives}. \\

\(\mathfrak B_\ell\), \(\mathfrak E_\ell^\omega\)
& Bounded admissible functions, respectively exponentially small
admissible functions, with derivatives up to order \(\ell\);
see \Cref{def:admissible_classes}. \\

\(\mathfrak P_\ell\), \(\mathfrak G_\ell\)
& Polynomial-growth functions in \(\zeta\), respectively functions
dominated by a polynomial times a canonical conical Gaussian, with
parameter derivatives up to order \(\ell\);
see \Cref{def:admissible_classes}. \\

\(\Gamma_{c_0}(\zeta;a)\)
& Canonical conical Gaussian
\(\exp\{-c_0(\|\zeta_{\mathcal S}\|^2+
\|a_{\mathcal C}-\zeta_{\mathcal C}\|^2+
\|a_{\mathcal N}\|^2)\}\). It is introduced just before
\eqref{eq:mathcal-D} and used in the definition of
\(\mathfrak G_\ell\) in \Cref{def:admissible_classes}. \\

\(\bar\chi_R\), \(\vartheta_\sigma\)
& Fixed local cutoff
\(\bar\chi_R(\xi)=\chi(\|\xi\|^2/R^2)\) and growing inner cutoff
in the scaled variable \(\zeta\). The growing cutoff is introduced
in \Cref{lem:admissibility_of_growing_cutoff}; the vector cutoff
\(\bar\chi_R\) is introduced before
\Cref{lem:gaus_tail_beyond_window} and used in
\Cref{lem:scaled_kern_gaus_class}. \\

\(\Psi(\zeta;a,x)\)
& Model exponent obtained from the linearized conical support;
see \eqref{eq:Psi1}. The same leading exponent is introduced in
the main text in \eqref{eq:Psi0}. \\

\(\Psi_\sigma(\zeta;a,x)\)
& Exact scaled exponent
\(\|y_\sigma(a,x)-M(\theta,\sigma\zeta)\|^2/(2\sigma^2)\),
expanded in \eqref{eq:exponent_exp}. \\

\(\mathcal A\)
& Local amplitude, equal to the density times the chart-volume
Jacobian; see \eqref{eq:defAx}. In stratum coordinates the
base-point form \(\mathcal A_x\) and the two-argument form satisfy
\(\mathcal A_x(\xi)=\mathcal A(\theta,\xi)\) for \(x=\varphi(\theta)\).
Its scaled Taylor expansion is \eqref{eq:ampl_exp}. \\

\(\mathscr K_\sigma\)
& Exact scaled local kernel
\(e^{-\Psi_\sigma(\zeta;a,x)}\mathcal A_x(\sigma\zeta)\).
It is introduced just before \Cref{lem:inner_exp_gaussian_rem},
and its inner expansion is \eqref{eq:exp_inner}. \\

\(I_\sigma(a,x)\)
& Scaled local integral after factoring out
\(\sigma^{-k}(2\pi)^{-d/2}\); see \eqref{eq:I_sigma}. \\

\(\mathsf C_0,\mathsf C_1\)
& Density coefficients obtained by integrating the leading and
first-order conical kernels. The leading coefficient is defined in
\eqref{eq:C0}, and the first correction is defined in
\eqref{eq:def_C1}. Their regularity is proved in
\Cref{lem:coeff_kernels_in_Br}. \\

\(\mathsf L_0,\mathsf L_1\)
& Log-density coefficients
\(\mathsf L_0=\log\mathsf C_0\) and
\(\mathsf L_1=\mathsf C_1/\mathsf C_0\). They are introduced in
\Cref{lem:uniform_positivity_of_C0}; the logarithmic expansion is
based on \eqref{eq:log_of_exp}. \\

\(\operatorname{Tail}_{\rm ex}\)
& Exact tail outside the growing inner region. It is introduced in
the tail decomposition preceding \Cref{lem:decomp_loc_integral},
in the same display as \eqref{eq:def_Tail_0} and
\eqref{eq:def_Tail_1}. \\

\(\operatorname{Tail}_0\), \(\operatorname{Tail}_1\)
& Model tails produced when truncated model integrals are replaced
by the full-cone coefficient integrals defining
\(\mathsf C_0\) and \(\mathsf C_1\); see
\eqref{eq:def_Tail_0} and \eqref{eq:def_Tail_1}. \\

\(\mathbf J_\theta,\mathbf J_\nu\)
& Reconstructed first-order chain-rule coefficients for the
coordinates \(\theta(y)\) and \(\nu(y)\), evaluated along
\(y=y_\sigma(a,x)\). The underlying Jacobians are defined in
\eqref{def:J1}, the reconstructed versions in \eqref{def:J2},
and the gradient chain rule is \eqref{eq:app_chain_rule_gradient}. \\

\(\mathbf Q_\theta,\mathbf Q_\nu\)
& Reconstructed second-order chain-rule coefficients coming from
the Hessians of \(\theta(y)\) and \(\nu(y)\), evaluated along
\(y=y_\sigma(a,x)\). They are introduced immediately after
\eqref{def:J2}, and enter the Hessian chain rule
\eqref{eq:app_chain_rule_hessian}; their admissibility is proved
in \Cref{lem:reconstructed_differential_coefficients_in_Br}. \\

\(\mathsf S_i,\mathsf H_i,\dot{\mathsf S}_i\)
& Coefficients in the score, Hessian, and scale-derivative
expansions, obtained by applying the chain rules to
\(\mathsf L_0\) and \(\mathsf L_1\). The first-order coefficients
\((i=0)\) are defined in \Cref{thm:first-order-score}, and the
second-order coefficients \((i=1)\) are defined in
\Cref{thm:2nd-order-score}. \\

\end{longtable}
}
\clearpage

\paragraph{Assumptions used in the appendix proofs.}
Throughout the appendix we indicate explicitly which items of \Cref{ass:standing} each statement
uses. Recall the roles of the four items: item~\ref{ass:standing:measure} gives the measure
representation \(q=\rho\,\dd\vol_{\Man}\) and the probability normalization; item
\ref{ass:standing:chart} is the local corner-chart hypothesis; item~\ref{ass:standing:density} is
the \(C^r\) regularity of the local density representative; and item~\ref{ass:standing:positivity}
is the lower bound on \(\rho\) along \(\Compct_{\Strat}\). Each lemma and corollary states, at its
outset, the precise items on which it depends.

\section{Geometric ingredients}
\label{app:geometric_preliminaries}

This appendix proves the coordinate package used in the proofs of the main results. The argument is
split into three steps: we first construct the adapted orthonormal frame along the stratum, then
introduce orthogonal tubular coordinates around the stratum, and finally choose a uniform translated
corner parametrization for the integration variable. Throughout this section, we use only the
geometric part of \Cref{ass:standing}, namely item~\ref{ass:standing:chart}, with its stated chart
regularity \(\Phi\in C^{r+1}\). Let us write
\begin{equation*}
  \varphi(\theta)=\Phi(\theta,0), \qquad \Theta_{\Compct_{\Strat}} = \varphi^{-1}(\Compct_{\Strat}).
\end{equation*}
Derivatives in the base point \(x\in\Compct_{\Strat}\) are always understood after the pullback
\(x=\varphi(\theta)\). Notation introduced throughout the appendix proofs is collected for reference
in \Cref{tab:appendix-notation}.

We shall use the following standard consequence of vector-bundle triviality. It is included here to
make the smooth completion of the adapted tangent frame explicit.

\begin{lemma}[Smooth orthogonal completion]
  \label{lem:smooth_orthogonal_completion}
  Let \(\mathcal B\) be a contractible paracompact \(C^s\) manifold and let \(E\subset \mathcal
  B\times\mathbb R^d\) be a \(C^s\) rank-\(m\) subbundle, \(0\leqslant m \leqslant d\). Assume that
  \(E\) is equipped with a \(C^s\) orthonormal frame \(q_1,\ldots,q_m\). Then there are \(C^s\) maps
  \(n_1,\ldots,n_{d-m}: \mathcal B \to \mathbb R^d\) such that, for every \(b\in\mathcal B\),
  \(q_1(b),\ldots,q_m(b), n_1(b),\ldots, n_{d-m} (b)\) is an orthonormal basis of \(\mathbb R^d\).
\end{lemma}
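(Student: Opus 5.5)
The plan is to pass to the orthogonal complement bundle and trivialize it using contractibility of the base. Consider the \(C^s\) map \(P:\mathcal B\to\mathbb R^{d\times d}\), \(P(b)=\bfI_d-\sum_{i=1}^m q_i(b)q_i(b)^\top\). Its value is the orthogonal projector onto \(E_b^\perp\), and \(P\) has constant rank \(d-m\). Hence \(E^\perp=\{(b,v):P(b)v=v\}\) is a \(C^s\) rank-\((d-m)\) subbundle of \(\mathcal B\times\mathbb R^d\). Local \(C^s\) frames exist as follows: near \(b_0\), pick a basis \(w_1,\dots,w_{d-m}\) of \(E_{b_0}^\perp\). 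The vectors \(P(b)w_j\) stay linearly independent for \(b\) near \(b_0\), so Gram–Schmidt, which is smooth on linearly independent tuples, gives a local \(C^s\) orthonormal frame of \(E^\perp\).

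The key step is global triviality of \(E^\perp\). A vector bundle over a contractible paracompact base is trivial, by homotopy invariance of pullbacks: \(E^\perp\cong H^*E^\perp|_{\mathcal B\times\{0\}}\) for a contraction \(H\), and this is the pullback of a bundle over a point. This gives a continuous trivialization. To upgrade it to \(C^s\), I would either invoke the standard fact that continuously isomorphic \(C^s\) bundles are \(C^s\)-isomorphic, or argue with transition functions into \(\mathsf O(d-m)\). In the second route, continuous triviality means the \(\mathsf O(d-m)\)-valued Čech cocycle of the local orthonormal frames is a continuous coboundary. Approximating the continuous coboundary data by \(C^s\) maps, and projecting back to \(\mathsf O(d-m)\) through the polar retraction of a neighborhood, yields \(C^s\) coboundary data. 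This smoothing is the main obstacle. I expect to cite it as a standard result, for instance the smooth classification of vector bundles or the smooth homotopy invariance obtained from a \(C^s\) connection and parallel transport along a \(C^s\) contraction, rather than reprove it. The cleanest route is probably a \(C^s\) metric connection on \(E^\perp\), namely the projected connection \(P\,\dd\), combined with a smooth contraction, which exists after smoothing the homotopy by Whitney approximation. Parallel transport of a frame at the base point along the curves \(t\mapsto H(b,t)\) then gives a global orthonormal frame. It is \(C^s\) up to the usual one-derivative loss issues, which should be handled by citing the \(C^s\) result directly.

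Given a global \(C^s\) frame \(e_1,\dots,e_{d-m}\) of \(E^\perp\), apply Gram–Schmidt pointwise to obtain orthonormal \(n_1,\dots,n_{d-m}\). Gram–Schmidt preserves \(C^s\) regularity because the norms appearing in it are nonvanishing. Then \(q_1(b),\dots,q_m(b)\) together with \(n_1(b),\dots,n_{d-m}(b)\) is an orthonormal basis of \(E_b\oplus E_b^\perp=\mathbb R^d\), as required.
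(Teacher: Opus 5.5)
Your proposal is correct and follows essentially the same route as the paper. Both pass to the orthogonal complement \(E^\perp\), get its triviality from contractibility and paracompactness of the base (continuous triviality first, then an upgrade to \(C^s\) by a cited standard result), and finish with fiberwise Gram--Schmidt. The paper's upgrade is the simplest of your options: approximate a continuous global frame of \(E^\perp\) in the fine \(C^0\) topology by \(C^s\) maps, using \(C^s\) partitions of unity, and note that a close enough approximation is still a frame once you apply your projector \(P\). This avoids the one-derivative loss you flagged in the parallel-transport route.
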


\begin{proof}
Let \(E^\perp\) be the Euclidean orthogonal complement of \(E\) in the trivial bundle \(\mathcal
B\times\mathbb R^d\). It is a \(C^s\) rank-\((d-m)\) subbundle and
\begin{equation*}
  \mathcal B\times\mathbb R^d = E\oplus^\perp E^\perp.
\end{equation*}
Since \(\mathcal B\) is contractible and paracompact, the standard \(C^s\) vector-bundle triviality
theorem over contractible paracompact \(C^s\) bases implies that \(E^\perp\) is \(C^s\)-trivial.
Equivalently, one may first use the usual topological triviality over contractible paracompact
bases, see for example \cite[Ch.~3, Cor.~4.8]{husemoller1994fibre}, and then upgrade the
trivializing bundle maps to class \(C^s\) by the standard smoothing argument: a continuous section
or bundle map is approximated by a \(C^s\) one in the fine \(C^0\) topology using \(C^s\) partitions
of unity, and a sufficiently close approximation remains a trivialization; see
\cite[Ch.~2--4]{hirsch1976differential} for the underlying approximation theorems. Thus \(E^\perp\)
admits a global \(C^s\) frame. Applying Gram--Schmidt inside the fibers of \(E^\perp\) gives a
\(C^s\) orthonormal frame \(n_1,\ldots,n_{d-m}\). Since each \(n_j\) lies in \(E^\perp\), this frame
completes \(q_1,\ldots,q_m\) to an orthonormal frame of the ambient trivial bundle.
\end{proof}

\begin{lemma}[Adapted orthonormal frame and linearized corner data]
  \label{lem:ad_frame_comp_strat}
  Assume \AssChart. One can choose \(C^{r}\) maps
  \begin{equation*}
    \bfS: \Strat_{\mathrm{out}}\to \mathbb R^{d\times(m-c)},\quad
    \bfC: \Strat_{\mathrm{out}}\to \mathbb R^{d\times c},\quad
    \bfN: \Strat_{\mathrm{out}}\to \mathbb R^{d\times k},\quad
    \bfL: \Strat_{\mathrm{out}}\to \mathsf{GL}(m),
  \end{equation*}
  such that, for every \(x=\varphi(\theta)\in\Strat_{\mathrm{out}}\), the following properties hold.
  \begin{enumerate}[itemsep=0pt]
    \item The columns of \(\bfS(x)\),  \(\bfC(x)\) and \( \bfN(x)\) form an orthonormal basis of
    \(\mathcal T_x\Strat\), \(\mathcal T_x\Man\cap(\mathcal T_x\Strat)^\perp\) and \(\mathcal
    N_x\Man=(\mathcal T_x\Man)^\perp\), respectively.

    \item With \(\bfQ_{\Man}(x)=[\,\bfS(x)\ \bfC(x)\,] \in\mathbb R^{d\times m}\) and \(
    \bfQ(x)=[\,\bfS(x)\ \bfC(x)\ \bfN(x)\,] \in\mathsf O(d)\), the matrix \(\bfA_{\mathrm{ch}}(x) =
    \Diff_\xi \Phi(\theta,0)\in\mathbb R^{d\times m}\) satisfies
    \(\bfA_{\mathrm{ch}}(x)=\bfQ_{\Man}(x)\bfL(x)\).

    \item The inward tangent cone is \(\mathcal T_x^+\Man = \bfQ_{\Man}(x)\bfL(x) \mathbb H_c^m\).
    \item The matrix field \(\bfL\) satisfies: \(\sup_{x\in\Strat_{\mathrm{out}}}
    \bigl(\|\bfL(x)\|+\|\bfL(x)^{-1}\| \bigr)<\infty\).
  \end{enumerate}
\end{lemma}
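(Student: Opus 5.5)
The construction works in the stratum coordinate \(\theta\in\Theta_{\mathrm{out}}\), which is a contractible open subset of \(\mathbb R^{m-c}\) and hence a contractible paracompact \(C^\infty\) manifold. All maps are first built as functions of \(\theta\) and then transported to \(\Strat_{\mathrm{out}}\) through the homeomorphism \(\varphi\). Put \(\bfA(\theta)=\Diff_\xi\Phi(\theta,0)\in\mathbb R^{d\times m}\). Since \(\Phi\in C^{r+1}\), this matrix field is \(C^r\) and has rank \(m\) everywhere. Its first \(m-c\) columns, \(\Diff_\theta\varphi(\theta)\), span \(\mathcal T_x\Strat\) (the stratum is locally \(\{\xi_{\mathcal C}=0\}\)), and all \(m\) columns span \(\mathcal T_x\Man\).

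First apply Gram--Schmidt to the columns of \(\bfA(\theta)\) in their natural order. The first \(m-c\) outputs give an orthonormal basis \(\bfS\) of \(\mathcal T_x\Strat\). The last \(c\) outputs are orthogonal to \(\mathcal T_x\Strat\) and lie in \(\mathcal T_x\Man\), so they form an orthonormal basis \(\bfC\) of \(\mathcal C_x\). Gram--Schmidt is a real-analytic map on matrices of full column rank, so \(\bfS\) and \(\bfC\) are \(C^r\). It also yields the QR factorization \(\bfA=[\,\bfS\ \bfC\,]\bfL\) with \(\bfL\) upper triangular with positive diagonal. Hence \(\bfL=[\,\bfS\ \bfC\,]^\top\bfA\) is \(C^r\) and invertible, which gives item 2.

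Next apply \Cref{lem:smooth_orthogonal_completion} with \(\mathcal B=\Theta_{\mathrm{out}}\), \(s=r\), and \(E\) the \(C^r\) rank-\(m\) subbundle spanned by the columns of \([\,\bfS\ \bfC\,]\). This produces a \(C^r\) orthonormal frame \(\bfN\) of \(E^\perp=\mathcal N_x\Man\). Item 1 follows, and \(\bfQ\in\mathsf O(d)\). Item 3 is then immediate from the definition \(\mathcal T_x^+\Man=\Diff\Phi_x(0)\mathbb H_c^m\), since \(\Diff\Phi_x(0)=\bfA(\theta)=\bfQ_{\Man}\bfL\). Regularity in \(x\) is meant through the pullback by \(\varphi\), as stated in the appendix.

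The only delicate point is item 4, the uniform bound over all of \(\Strat_{\mathrm{out}}\) and not just over \(\Compct_{\Strat}\). Use \(\Theta_{\mathrm{out}}\Subset\mathcal V\): the closure \(\overline{\Theta_{\mathrm{out}}}\times\{0\}\) is a compact subset of the chart domain. There \(\bfA\) is continuous and of rank \(m\), so the smallest singular value of \(\bfA\) is bounded below by a positive constant and \(\|\bfA\|\) is bounded above. Because \(\bfQ_{\Man}\) has orthonormal columns, \(\bfL\) has the same singular values as \(\bfA\). Therefore \(\|\bfL\|=\|\bfA\|\) and \(\|\bfL^{-1}\|=1/s_{\min}(\bfA)\) are both bounded on \(\Theta_{\mathrm{out}}\). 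I expect no real obstacle. The only care needed is to perform the Gram--Schmidt step on the whole of \(\mathcal V\) or its closure, so that compactness is available, while running the bundle-completion step on the contractible set \(\Theta_{\mathrm{out}}\).
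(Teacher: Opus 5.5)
Your proof is correct. It differs from the paper's only in how \(\bfS\) and \(\bfC\) are produced.

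The paper notes that \(\mathcal T_x\Strat\) and \(\mathcal C_x\) are \(C^r\) subbundles and pulls them back to \(\Theta_{\mathrm{out}}\). It then invokes vector-bundle triviality over the contractible base to obtain global frames, and only afterwards orthonormalizes them fiberwise.

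You instead run Gram--Schmidt directly on the columns of \(\Diff_\xi\Phi(\theta,0)\). These columns already form global frames of \(\mathcal T\Strat\) (first \(m-c\) columns) and \(\mathcal T\Man\) (all \(m\) columns) along \(\Theta_{\mathrm{out}}\). So for these two bundles the abstract triviality theorem is superfluous. Contractibility is then used only where it is genuinely needed, namely for the normal frame \(\bfN\) via \Cref{lem:smooth_orthogonal_completion}.

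Your route is more explicit and gives a bit of extra structure for free. The matrix \(\bfL\) is the QR factor of the chart differential, upper triangular with positive diagonal, so \(\bfS,\bfC,\bfL\) are determined canonically by the chart. In the paper's route the choice of \(\bfS,\bfC\) is left arbitrary, which is harmless since only existence is claimed.

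The remaining steps coincide with the paper's:
\begin{itemize}
\item the construction of \(\bfN\);
\item the identity \(\bfL=\bfQ_{\Man}^\top\bfA_{\mathrm{ch}}\);
\item the cone identity;
\item the singular-value argument on the compact set \(\overline{\Theta}_{\mathrm{out}}\Subset\mathcal V\) for item 4.
\end{itemize}

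Your closing remark about running Gram--Schmidt on all of \(\mathcal V\) is harmless but unnecessary. Item 4 uses only the singular values of \(\bfA\), and these equal those of \(\bfL\) for any orthonormal choice of \(\bfQ_{\Man}\).
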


\begin{proof}
Put \(x=\varphi(\theta)\) for some \(\theta\in\Theta_{\rm out}\). The first \((m-c)\) chart
derivatives \(\Diff_{\xi_{\mathcal S}} \Phi(\theta,0)\) span \(\mathcal T_x\Strat\), and all \(m\)
chart derivatives \(\Diff_{\xi} \Phi(\theta,0)\) span \(\mathcal T_x\Man\). Hence
\begin{equation*}
  \mathcal T_x\Man = \mathcal T_x\Strat \oplus\mathcal C^{\mathrm{ch}}_x,
  \qquad \mathcal C^{\mathrm{ch}}_x = \operatorname{Im} \Diff_{\xi_{\mathcal C}}\Phi(\theta,0).
\end{equation*}
Let
\begin{equation*}
  \mathcal C_x = \mathcal T_x\Man\cap(\mathcal T_x\Strat)^\perp.
\end{equation*}
Then \(\mathcal T_x\Man=\mathcal T_x\Strat\oplus^\perp \mathcal C_x\). Both \(\mathcal T_x\Strat\)
and \(\mathcal C_x\) are \(C^{r}\) subbundles over \(\Strat_{\mathrm{out}}\). Pulling them back by
the diffeomorphism \(\varphi:\Theta_{\mathrm{out}}\to\Strat_{\mathrm{out}}\) gives \(C^{r}\) vector
bundles over \(\Theta_{\mathrm{out}}\). The latter set is paracompact, being an open subset of
\(\mathbb R^{m-c}\), and is contractible by \Cref{ass:standing}~\ref{ass:standing:chart}. Hence, the
standard smooth vector-bundle triviality theorem over contractible paracompact bases gives global
\(C^{r}\) frames for the pullback bundles. Transporting these frames through \(\varphi\) and
applying Gram--Schmidt fiberwise gives \(C^{r}\) orthonormal frames \(\bfS\) for \(\mathcal
T_x\Strat\) and \(\bfC\) for \(\mathcal C_x\).

Now apply \Cref{lem:smooth_orthogonal_completion} to the rank-\(m\) subbundle \(\mathcal
T_x\Man|_{\Strat_{\mathrm{out}}}\subset \Strat_{\mathrm{out}}\times\mathbb R^d\), equipped with the
orthonormal frame \([\,\bfS(x)\ \bfC(x)\,]\). We obtain a \(C^{r}\) orthonormal frame \(\bfN(x)\) of
its Euclidean orthogonal complement, namely \(\mathcal N_x\Man\). Therefore
\begin{equation*}
  \bfQ_{\Man}(x)=[\,\bfS(x)\ \bfC(x)\,],
  \qquad \bfQ(x)=[\,\bfS(x)\ \bfC(x)\ \bfN(x)\,]\in\mathsf O(d).
\end{equation*}
Recall that
\[
  \bfA_{\mathrm{ch}}(x)=\Diff_\xi\Phi(\theta,0),
  \qquad x=\varphi(\theta).
\]
Its columns form a basis
of \(\mathcal T_x\Man\), whereas the columns of \(\bfQ_{\Man}(x)\) form an orthonormal basis of the
same space. Thus there is a unique matrix \(\bfL(x)\in\mathsf{GL}(m)\) such that
\(\bfA_{\mathrm{ch}}(x)=\bfQ_{\Man}(x)\bfL(x)\). Equivalently, \(\bfL(x)=\bfQ_{\Man}(x)^\top \bfA_{
\mathrm{ch}}(x)\), so \(\bfL\) is \(C^{r}\).

The chart identifies the inward tangent directions at \(x=\Phi(\theta,0)\) with
\(\bfA_{\mathrm{ch}}(x)\mathbb H_c^m\). Hence
\begin{equation*}
  \mathcal T_x^+\Man = \bfQ_{\Man}(x)\bfL(x)\mathbb H_c^m.
\end{equation*}
Finally, since \(\overline{\Theta}_{\mathrm{out}}\Subset\mathcal V\), the set
\(\overline{\Theta}_{\mathrm{out}}\times\{0\}\) remains inside the domain on which \(\Phi\) is a
\(C^{r+1}\) corner chart. Thus the matrix field \(\bfA_{\mathrm{ch}}\) extends continuously to the
compact set \(\overline{\Theta}_{\mathrm{out}}\), and \(\bfA_{\mathrm{ch}}(\varphi(\theta))\) has
rank \(m\) for every \(\theta\in\overline{\Theta}_{\mathrm{out}}\). By compactness, its smallest
singular value is bounded away from zero, and its largest singular value is bounded above. Hence the
singular values of \(\bfA_{\mathrm{ch}}\) are uniformly bounded above and away from zero. Since
\(\bfQ_{\Man}(x)^\top\bfQ_{\Man}(x)=\bfI_m\),
\begin{equation*}
  \bfA_{\mathrm{ch}}(x)^\top \bfA_{\mathrm{ch}}(x) = \bfL(x)^\top\bfL(x).
\end{equation*}
Thus \(\bfA_{\mathrm{ch}}(x)\) and \(\bfL(x)\) have the same singular values, which gives the
uniform bounds for \(\bfL\) and \(\bfL^{-1}\).
\end{proof}

\begin{figure}[ht]
  \centering
  \resizebox{\textwidth}{!}{%
  \begin{tikzpicture}[
  >=Latex,
  font=\small,
  panel/.style={rounded corners=2pt, draw=black!25,
    fill=black!2},
  axis/.style={black!60, -Latex},
  chartfill/.style={fill=blue!7, draw=blue!40},
  manifold/.style={fill=orange!12, draw=orange!55, line
    width=.35pt},
  stratum/.style={draw=black, line width=1.1pt},
  compact/.style={draw=red!70!black, line width=2.1pt},
  vec/.style={-Latex, line width=.9pt},
  dline/.style={dash pattern=on 2.2pt off 1.8pt,
    black!55},
  overbrace/.style={decorate, decoration={brace,
    amplitude=4.5pt}},
  underbrace/.style={decorate, decoration={brace,
    amplitude=4.5pt, mirror}}
]

\begin{scope}[shift={(0,0)}]
  \node[panel, minimum width=10.25cm, minimum
    height=5.2cm,
    anchor=south west] at (-.35,-.95) {};
  \node[anchor=west] at (-.12,3.45)
    {\textbf{Local corner chart}};

  \draw[chartfill] (0,0) rectangle (3.9,2.35);
  \draw[axis] (-.05,0) -- (4.25,0)
    node[below right=-2pt]
      {$\theta\in\Theta_{\mathrm{out}}$};
  \draw[axis] (0.7,-.05) -- (0.7,2.65);
  \node[anchor=west] at (0.78,2.64)
    {$\xi_{\mathcal C}\in[0,\varepsilon)^c$};
  \node at (2.1,2.05)
    {$\Theta_{\mathrm{out}}\times[0,\varepsilon)^c$};

  \draw[underbrace] (0,-.18) -- (3.9,-.18)
    node[midway, below=5.5pt]
    {$\Theta_{\mathrm{out}}\times\{0\}$};

  \draw[compact] (1.15,0) -- (2.75,0);
  \draw[overbrace] (1.15,.16) -- (2.75,.16)
    node[midway, above=5.5pt]
    {$\Theta_{\Compct_{\Strat}}$};

  \draw[->, line width=.7pt] (4.35,1.25) -- (5.7,1.25)
    node[midway, above] {$\Phi$};

  \draw[stratum]
    (6.05,.35)
    .. controls (6.45,.58) and (6.65,.78) .. (6.95,.75)
    .. controls (7.35,.65) and (7.85,.72) .. (8.42,.99)
    .. controls (8.70,.96) and (9.00,.84) .. (9.25,.82);
  \draw[compact, line cap=round]
    (6.95,.75)
    .. controls (7.35,.65) and (7.85,.72) .. (8.42,.99);
  \node[anchor=west] at (6.12,1.22)
    {$\Strat_{\mathrm{out}}=
      \Phi(\Theta_{\mathrm{out}}\times\{0\})$};
  \node[below] at (7.55,.72)
    {$\Compct_{\Strat}\subset\Strat_{\mathrm{out}}$};
\end{scope}
\noindent
\begin{scope}[shift={(10.80,-0.4)}]
  \node[panel, minimum width=10.95cm, minimum
    height=5.2cm,
    anchor=south west] at (-.35,-.55) {};
  \node[anchor=west] at (-.12,4.15)
    {\textbf{Orthogonal tubular coordinates}};

  \path[manifold]
    (0.35,.30)
    .. controls (.85,.65) and (1.30,1.02) .. (1.70,.98)
    .. controls (1.90,.96) and (1.99,.81) .. (2.05,.77)
    .. controls (2.35,.58) and (2.78,.56) .. (3.10,.72)
    .. controls (3.48,.91) and (3.85,.93) .. (4.25,.88)
    -- (4.65,2.35)
    .. controls (3.55,2.16) and (2.45,2.25) .. (1.15,1.95)
    .. controls (.65,1.83) and (.25,1.68) .. (.05,1.55)
    -- cycle;
  \node[orange!70!black] at (4.45,2.33) {$\Man$};

  \draw[stratum]
    (0.35,.30)
    .. controls (.85,.65) and (1.30,1.02) .. (1.70,.98)
    .. controls (1.90,.96) and (1.99,.81) .. (2.05,.77)
    .. controls (2.35,.58) and (2.78,.56) .. (3.10,.72)
    .. controls (3.48,.91) and (3.85,.93) .. (4.25,.88);
  \draw[compact, line cap=round]
    (1.70,.98)
    .. controls (1.90,.96) and (1.99,.81) .. (2.05,.77)
    .. controls (2.35,.58) and (2.78,.56) .. (3.10,.72);
  \node[below right=7pt] at (3.47,.86)
    {$\Strat_{\mathrm{out}}$};
  \node[below] at (2.42,.42) {$\Compct_{\Strat}$};

  \coordinate (x) at (2.05,.77);
  \coordinate (cu) at (2.72,1.83);
  \coordinate (y) at (2.05,2.80);

  \fill (x) circle (1.7pt) node[below left=-1.5pt] at
    (2.2,.72) {$x=\pi(y)$};
  \fill (y) circle (1.7pt) node[above=2pt] {$y$};
  \draw[vec, blue!70!black] (x) -- (cu)
    node[midway, left=2pt] {$\bfC(x)u$};
  \draw[vec, purple!75!black] (cu) -- (y)
    node[midway, right=3pt, fill=white, inner sep=1pt]
      {$\bfN(x)\eta$};
  \draw[dline] (x) -- (y)
    node[midway, above left] {$y-x$};

  \draw[vec, black!70] (x) -- ($(x)+(.82,-.52)$)
    node[below right=1pt, fill=white, inner sep=1pt]
      {$\bfS(x)$};

  \draw[black!55, line width=.45pt]
    ($(x)+(.19,-.12)$) -- ($(x)+(.31,.07)$) --
      ($(x)+(.12,.19)$);

  \node[align=left, anchor=west] at (5.15,2.95)
    {$\mathcal Z(y)=(\pi(y),u(y),\eta(y))$};
  \node[align=left, anchor=west] at (5.15,2.25)
    {$\mathcal T(x,u,\eta)=x+\bfC(x)u+\bfN(x)\eta$};
  \node[align=left, anchor=west] at (5.15,1.35)
    {$\bfQ(x)=[\,\bfS(x)\ \bfC(x)\ \bfN(x)\,]$};
  \node[align=left, anchor=west] at (5.15,.65)
    {$\bfQ(x)^\top(y-x)=[0_{m-c};u;\eta]$};

  \node[align=center, fill=white, draw=black!15, rounded
    corners=2pt,
    inner sep=3pt] at (3.55,3.65)
    {$\|y-\pi(y)\|^2=\|u(y)\|^2+\|\eta(y)\|^2$};
\end{scope}
  \end{tikzpicture}
  }

  \caption{\small Local corner and tubular coordinates near \(\Compct_{\Strat}\). \(\Phi\)
  identifies \(\Theta_{\mathrm{out}}\times\{0\}\) with \(\Strat_{\mathrm{out}}\). Tubular
  coordinates write \(y=x+\bfC(x)u+\bfN(x)\eta\), with \(x=\pi(y)\) and
  \(\bfQ(x)^\top(y-x)=[0_{m-c};u;\eta]\).}
  \label{fig:orth-tub-coord}
\end{figure}

\begin{lemma}[Orthogonal tubular coordinates near \(\Compct_{\Strat}\)]
  \label{lem:orth_tub_coord}
  Assume \AssChart. There are an open neighborhood \(\mathcal{U}\subset\mathbb R^d\) of
  \(\Compct_{\Strat}\), an open neighborhood \(\mathcal W\subset\Strat_{\mathrm{out}}\times \mathbb
  R^c\times\mathbb R^k\) of the zero section over \(\Compct_{\Strat}\), and mutually inverse
  \(C^{r}\) diffeomorphisms
  \begin{equation*}
    \mathcal Z:\mathcal{U}\to\mathcal W,\ y\mapsto (x,u, \eta),
    \quad \text{and}\quad \mathcal T:\mathcal W\to\mathcal{U},\ (x,u,\eta) \mapsto y
  \end{equation*}
  of the form
  \begin{equation*}
    \mathcal Z(y)=\bigl(\pi(y),u(y),\eta(y)\bigr),
    \qquad \mathcal T(x,u,\eta)=x+\bfC(x)u+\bfN(x)\eta.
  \end{equation*}
  Here \(\pi:\mathcal{U}\to\Strat_{\mathrm{out}}\), \(u:\mathcal{U}\to\mathbb R^c\), and
  \(\eta:\mathcal{U}\to\mathbb R^k\) are \(C^{r}\), and \(\pi(y) = \pi_{\mathcal S}(y)\) is the
  nearest-point projection onto \(\Strat\) in this tubular neighborhood.

  Consequently, for every \(y\in\mathcal{U}\),
  \begin{equation*}
    y = \pi(y)+\bfC(\pi(y))u(y)+\bfN(\pi(y))\eta(y), \quad \bfQ(\pi(y))^\top\bigl(y-\pi(y)\bigr)
    = [0_{m-c};u(y);\eta(y)].
  \end{equation*}
  In particular,
  \begin{equation*}
    \|y-\pi(y)\|^2=\|u(y)\|^2+\|\eta(y)\|^2
  \end{equation*}
  implying that \(u(y)=0_c\) and \(\eta(y)=0_k\) for every \(y\in\Strat\cap\mathcal{U}\).
\end{lemma}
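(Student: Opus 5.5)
The plan is to build \(\mathcal T\) explicitly and invert it with the inverse function theorem, then use compactness to get a uniform neighborhood. Define \(\mathcal T(x,u,\eta)=x+\bfC(x)u+\bfN(x)\eta\) on \(\Strat_{\mathrm{out}}\times\mathbb R^c\times\mathbb R^k\), pulled back to \(\Theta_{\mathrm{out}}\times\mathbb R^c\times\mathbb R^k\) via \(x=\varphi(\theta)\). By \Cref{lem:ad_frame_comp_strat}, \(\bfC,\bfN\) are \(C^r\) and \(\varphi\in C^{r+1}\), so \(\mathcal T\) is \(C^r\). At a zero-section point \((\theta,0,0)\) the differential is
\[
  [\,\Diff\varphi(\theta)\ \ \bfC(x)\ \ \bfN(x)\,].
\]
The columns of \(\Diff\varphi(\theta)\) span \(\mathcal T_x\Strat\), and \(\bfQ(x)\in\mathsf O(d)\), so this differential is invertible. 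The inverse function theorem then gives \(C^r\) local inverses near each point of the zero section over \(\Compct_{\Strat}\); this uses \(r\geqslant1\).

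Next comes globalization on a tube, by the standard tubular-neighborhood argument. Suppose injectivity of \(\mathcal T\) failed on every set \(\{(x,u,\eta):\dist(x,\Compct_{\Strat})<\delta,\ \|u\|^2+\|\eta\|^2<\delta^2\}\) as \(\delta\downarrow0\). Then there would be pairs of distinct points with the same image. By compactness of \(\Compct_{\Strat}\) these pairs accumulate at points \((x_1,0,0)\) and \((x_2,0,0)\) with \(x_1=x_2\), because \(\mathcal T(x,0,0)=x\) and \(\varphi\) is injective. This contradicts local injectivity near \((x_1,0,0)\). The argument needs a little care, since \(\Strat_{\mathrm{out}}\) is not compact: I would work with a slightly larger compact set \(\varphi(\overline{\Theta'})\), where \(\Theta_{\Compct_{\Strat}}\Subset\Theta'\Subset\Theta_{\mathrm{out}}\), so that the limit points remain in the chart. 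Taking \(\mathcal W\) to be this tube and \(\mathcal U=\mathcal T(\mathcal W)\), which is open by invariance of domain or by the inverse function theorem, \(\mathcal T\) is a \(C^r\) diffeomorphism with inverse \(\mathcal Z\).

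The identity \(\bfQ(\pi(y))^\top(y-\pi(y))=[0_{m-c};u;\eta]\) is immediate from orthonormality of \(\bfQ\), and the norm identity follows from it. The step I expect to be the main obstacle is identifying \(\pi\) with the nearest-point projection onto \(\Strat\). I would argue as follows. For \(y\in\mathcal U\), shrunk further, the distance \(\|y-\varphi(\theta')\|\) restricted to \(\Strat\) is minimized at some point near \(\Compct_{\Strat}\). This requires shrinking \(\mathcal U\) so that points of \(\Strat\) far from \(\Compct_{\Strat}\) are farther from \(y\) than the tube radius. The possible exception is points of \(\Strat\) outside the chart that come close to \(\Compct_{\Strat}\); these are excluded because \(\Phi\) is a homeomorphism onto a relatively open subset, so \(\Strat_{\mathrm{out}}\) is a neighborhood of \(\Compct_{\Strat}\) in \(\Strat\), and a positive separation distance exists.

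At an interior minimizer, the first-order condition \(\Diff\varphi(\theta')^\top(y-\varphi(\theta'))=0\) says that \(y-x'\perp\mathcal T_{x'}\Strat\), where \(x'=\varphi(\theta')\). Hence \(y=\mathcal T(x',u',\eta')\) for some \((u',\eta')\) of small norm. Injectivity of \(\mathcal T\) then forces \(x'=\pi(y)\). Finally, for \(y\in\Strat\cap\mathcal U\), the representation \(y=\mathcal T(y,0,0)\) and injectivity give \(u(y)=0\) and \(\eta(y)=0\).
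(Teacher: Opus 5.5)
Your argument is correct, but it takes a genuinely different route from the paper's. The paper never inverts \(\mathcal T\) by hand. It factors \(\mathcal T=\mathcal E\circ\mathcal I\), where \(\mathcal I(x,u,\eta)=(x,\bfC(x)u+\bfN(x)\eta)\) is the \(C^{r}\) trivialization of \(\mathcal N\Strat|_{\Strat_{\mathrm{out}}}\) given by the adapted frames, and \(\mathcal E(x,\nu)=x+\nu\) is the normal exponential map. From the finite-regularity nearest-point tubular neighborhood theorem (Foote; Krantz--Parks) it imports both the fact that \(\mathcal E\) is a \(C^r\) diffeomorphism near the zero section and the formula \(\mathcal E^{-1}(y)=(\pi_{\Strat}(y),y-\pi_{\Strat}(y))\). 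The nearest-point property of \(\pi\) then follows by comparing \(\mathcal E^{-1}\) with \(\mathcal I\circ\mathcal Z\). You instead reprove that theorem in the present setting: the inverse function theorem on the zero section, a compactness argument for injectivity on a thin tube, and a first-order condition at a minimizer. The paper's version is shorter. Yours is self-contained and gets the \(C^r\) regularity of \(\pi\) directly from the explicit \(C^r\) map \(\mathcal T\), without any appeal to regularity of distance functions. It also makes explicit the locality issues the paper compresses into ``after shrinking'', notably that \(\Strat\) need not be closed and that \(\Strat\setminus\Strat_{\mathrm{out}}\) must stay at positive distance from \(\Compct_{\Strat}\). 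When you write it out, state three things explicitly. First, the tube is shrunk so that \(\Diff\mathcal T\) is invertible on all of it, not only on the zero section. Second, the minimizer exists because, for small \(\epsilon\), the set \(\Strat\cap\overline{\mathbb B}_{\epsilon}(\Compct_{\Strat})\) coincides with the compact set \(\varphi(\overline{\Theta'})\cap\overline{\mathbb B}_{\epsilon}(\Compct_{\Strat})\). Third, \(\mathcal U\) is the image of a tube of radius \(\delta/3\) inside the injectivity tube of radius \(\delta\), so that the competitor \((x',u',\eta')\) produced by the first-order condition lands where injectivity is known. Finally, your restriction \(r\geqslant1\) is not a loss relative to the paper. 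The cited nearest-point theorem also needs \(\Strat\) of class \(C^2\): the nearest point on the \(C^1\) graph of \(t\mapsto|t|^{3/2}\) is not unique at \((0,h)\) for any small \(h>0\). In any case, all the main theorems assume \(r\geqslant1\).
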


\begin{proof}
For each \(x\in\Strat_{\mathrm{out}}\), the columns of \(\bfC(x)\) and \(\bfN(x)\) form an
orthonormal basis of \(\mathcal N_x\Strat = \mathcal C_x\oplus^\perp \mathcal N_x\Man\). Hence,
\begin{equation*}
  \mathcal I(x,u,\eta) = \bigl(x,\bfC(x)u+\bfN(x)\eta\bigr)
\end{equation*}
defines a \(C^{r}\) vector-bundle trivialization from \(\Strat_{\mathrm{out}}\times\mathbb
R^c\times\mathbb R^k\) onto \(\mathcal N\Strat|_{\Strat_{\mathrm{out}}}\).

Apply the nearest-point tubular neighborhood theorem to the embedded stratum \(\Strat\), near
\(\Compct_{\Strat}\subset\Strat_{\mathrm{out}}\); see
\cite[Theorem~6.24 and Proposition~6.25]{lee2013smooth} for the smooth case. Since \(\Strat\) is only of
class \(C^{r+1}\) here, we use the finite-regularity form: the nearest-point projection onto a
\(C^{r+1}\) submanifold is of class \(C^{r}\) on a tubular neighborhood, and the corresponding
normal-exponential map is a \(C^{r}\) diffeomorphism; see \cite{foote1984regularity} and
\cite[Sec.~3.2]{krantzparks2002implicit}. After shrinking, there are a neighborhood
\(\mathcal W_{\mathcal N\Strat}\subset \mathcal N \Strat |_{\Strat_{\mathrm{out}}}\) of the zero
section over \(\Compct_{\Strat}\) and a neighborhood \(\mathcal{U} \subset\mathbb R^d\) of
\(\Compct_{\Strat}\) such that
\begin{equation*}
  \mathcal E(x,\nu)=x+\nu
\end{equation*}
is a \(C^{r}\) diffeomorphism from \(\mathcal W_{\mathcal N\Strat}\) to \(\mathcal{U}\). Its inverse
is
\begin{equation*}
  \mathcal E^{-1}(y) = \bigl(\pi_{\Strat}(y),y-\pi_{\Strat}(y)\bigr),
\end{equation*}
where \(\pi_{\Strat}(y)\) is the nearest point of \(y\) on \(\Strat\) in this neighborhood. Set
\(\mathcal W=\mathcal I^{-1}(\mathcal W_{\mathcal N\Strat})\) and \(\mathcal T=\mathcal E\circ\mathcal
I\). Then \(\mathcal T:\mathcal W\to\mathcal{U}\) is a \(C^{r}\) diffeomorphism, with
\begin{equation*}
  \mathcal T(x,u,\eta) = x+\bfC(x)u+\bfN(x)\eta.
\end{equation*}
Let \(\mathcal Z=\mathcal T^{-1}\), and write \(\mathcal Z(y)=(\pi(y),u(y),\eta(y))\). Comparing
\(\mathcal E^{-1}(y)\) with \(\mathcal I(\mathcal Z(y))\) gives \(\pi(y)=\pi_{\Strat}(y)\) and
\begin{equation*}
  y-\pi(y) = \bfC(\pi(y))u(y)+\bfN(\pi(y))\eta(y).
\end{equation*}
Finally, \(\bfQ(\pi(y))\) is orthogonal and \(y-\pi(y)\in\mathcal N_{\pi(y)}\Strat\). Therefore
\begin{equation*}
  \bfQ(\pi(y))^\top\bigl(y-\pi(y)\bigr) = [0_{m-c};u(y);\eta(y)],
\end{equation*}
and the Pythagorean theorem gives \(\|y-\pi(y)\|^2=\|u(y)\|^2+\|\eta(y)\|^2\). If
\(y\in\Strat\cap\mathcal{U}\), its nearest point is itself, so \(y-\pi(y)=0\), and the last identity
forces \(u(y)=0\) and \(\eta(y)=0\).
\end{proof}

\begin{lemma}[Uniform corner parametrization near \(\Compct_{\Strat}\)]
  \label{lem:unif_par_comp_strat}
  Assume \AssChart. There are an open set \(\Theta^\circ_{\mathrm{out}}\subset\mathbb
  R^{m-c}\) and numbers \(R>0\), \(\delta_0>0\) such that
  \begin{equation*}
    \Theta_{\Compct_{\Strat}}\subset \Theta^\circ_{\mathrm{out}} \Subset\Theta_{\mathrm{out}},
    \qquad \Theta^\circ_{\mathrm{out}} + \mathbb B_{4R}^{m-c} \subset \Theta_{\rm out}
  \end{equation*}
  and the mapping
  \begin{equation*}
    M: \Theta^\circ_{\mathrm{out}} \times (\mathbb H_c^m\cap \mathbb B_{4R}^m)
    \to \Man,\qquad (\theta,\xi)\mapsto \Phi(\theta+\xi_{\mathcal S},\xi_{\mathcal C})
  \end{equation*}
  is well defined and of class \(C^{r+1}\). In addition, if we set
  \begin{equation*}
    \varphi(\theta)=\Phi(\theta,0), \qquad \bfQ(\theta)=\bfQ(\varphi(\theta)),
    \qquad \bfL(\theta)=\bfL(\varphi(\theta)),
  \end{equation*}
  and define \(\Delta: \Theta^\circ_{\mathrm{out}} \times (\mathbb H_c^m\cap \mathbb B_{4R}^m) \to
  \mathbb R^d = \mathbb R^{m-c}\times\mathbb R^c\times\mathbb R^k\) by
  \begin{equation}\label{eq:Delta}
    \Delta(\theta,\xi) = \bfQ(\theta)^\top\bigl(M(\theta,\xi)-\varphi(\theta) \bigr)
  \end{equation}
  then the following properties hold.

  \begin{enumerate}
    \item \label{item:Cr-Delta}
    The map
    \[
      \Delta:\Theta^\circ_{\mathrm{out}}\times
      (\mathbb H_c^m\cap\mathbb B_{4R}^m)\to\mathbb R^d
    \]
    is \(C^r\). Moreover, all mixed derivatives
    \[
      \partial_\theta^\beta\partial_\xi^\gamma M(\theta,\xi)
    \]
    with \(|\beta|+|\gamma|\leqslant r+1\), and all mixed derivatives
    \[
      \partial_\theta^\beta\partial_\xi^\gamma \Delta(\theta,\xi)
    \]
    with \(|\beta|+|\gamma|\leqslant r\), are uniformly bounded on
    \[
      \Theta^\circ_{\mathrm{out}}\times
      (\mathbb H_c^m\cap\mathbb B_{4R}^m).
    \]
    In addition, \(\Delta\) has the following anisotropic regularity: whenever
    \(|\gamma|\geqslant1\) and \(|\beta|+|\gamma|\leqslant r+1\), the derivative
    \[
      \partial_\theta^\beta\partial_\xi^\gamma\Delta(\theta,\xi)
    \]
    exists and is uniformly bounded on the same set.

    \item \label{item:Delta_at_0} At \(\xi=0\), one has
    \begin{equation*}
      M(\theta,0_m)=\varphi(\theta), \qquad \Delta(\theta,0_m) = 0_d,
      \qquad \Diff_\xi \Delta(\theta, 0)=[\,\bfL(\theta);\ \mathbf 0_{k\times m}\,].
    \end{equation*}

    \item \label{item:Crplusone-corner-chart} For each \(\theta\in\Theta^\circ_{\mathrm{out}}\), the map \( M(\theta,\cdot): \mathbb
    H_c^m\cap\mathbb B_{4R}^m \to\Man\) is a \(C^{r+1}\) corner chart onto a relatively open
    neighborhood of \(\varphi(\theta)\) in \(\Man\).

    \item \label{item:Cr-J} For \(\theta\in\Theta^\circ_{\mathrm{out}}\), define \(J(\theta,\xi) = \det
    \bigl(\Diff_\xi \Delta (\theta,\xi)^\top \Diff_\xi\Delta(\theta,\xi) \bigr)^{1/2}\). Then \(J\)
    is jointly \(C^{r}\) in \((\theta,\xi)\), all mixed derivatives
    \begin{equation*}
      \partial_\theta^\beta\partial_\xi^\gamma J(\theta,\xi), \qquad |\beta|+|\gamma|\leqslant r,
    \end{equation*}
    are uniformly bounded on \( \Theta^\circ_{\mathrm{out}} \times (\mathbb H_c^m\cap\mathbb
    B_{4R}^m)\), and \( J(\theta, 0)=|\det\bfL(\theta)|\).

    \item \label{item:subset_of_M} For every \(x=\varphi(\theta)\in\Compct_{\Strat}\), \(\Man\cap \mathbb
    B_{2\delta_0}(x)\subset M\bigl(\theta, \mathbb H_c^m\cap \mathbb B_{R}^m\bigr)\).
  \end{enumerate}
\end{lemma}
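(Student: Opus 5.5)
The plan is to build every object from the single chart \(\Phi\) of \AssChart, using only compactness and the inverse function theorem. \textbf{Choice of \(\Theta^\circ_{\mathrm{out}}\) and \(R\).} Since \(\Compct_{\Strat}\) is compact and \(\varphi\) is a homeomorphism onto \(\Strat_{\mathrm{out}}\), the set \(\Theta_{\Compct_{\Strat}}\) is compact in \(\Theta_{\mathrm{out}}\). Let \(4R_1=\tfrac12\dist(\Theta_{\Compct_{\Strat}},\partial\Theta_{\mathrm{out}})>0\), and set \(\Theta^\circ_{\mathrm{out}}=\Theta_{\Compct_{\Strat}}+\mathbb B_{4R_1}^{m-c}\). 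Then \(\Theta^\circ_{\mathrm{out}}\Subset\Theta_{\mathrm{out}}\), and for \(R\leqslant R_1\) we also have \(\Theta^\circ_{\mathrm{out}}+\mathbb B_{4R}^{m-c}\subset\Theta_{\mathrm{out}}\). Taking in addition \(4R<\varepsilon\) gives \((\theta+\xi_{\mathcal S},\xi_{\mathcal C})\in\mathcal V\times[0,\varepsilon)^c\), so \(M\) is well defined. It is \(C^{r+1}\) as the composition of \(\Phi\) with an affine map. All of its derivatives up to order \(r+1\) are bounded, because the relevant arguments stay in the compact set \(\overline{\Theta}_{\mathrm{out}}\times[0,4R]^c\) inside the chart domain. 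One can shrink \(\varepsilon\) slightly if needed so that \(\Phi\) is \(C^{r+1}\) up to the closure.

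\textbf{Items \ref{item:Cr-Delta}, \ref{item:Delta_at_0} and \ref{item:Cr-J}.} By \Cref{lem:ad_frame_comp_strat}, \(\bfQ\circ\varphi\) and \(\bfL\circ\varphi\) are \(C^r\) on \(\Theta_{\mathrm{out}}\) and have bounded derivatives on \(\Theta^\circ_{\mathrm{out}}\). Hence \(\Delta=\bfQ^\top(M-\varphi)\) is \(C^r\), with derivatives of order \(\leqslant r\) bounded by the Leibniz rule.

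For the anisotropic statement, note that when \(|\gamma|\geqslant1\) the term \(\varphi(\theta)\) is killed, so \(\partial_\xi^\gamma\Delta=\bfQ(\theta)^\top\partial_\xi^\gamma M\). Here \(\partial_\xi^\gamma M\) costs at least one of the \(r+1\) derivatives, which leaves at most \(r\) derivatives in \(\theta\) to distribute over both factors; each factor can absorb them.

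At \(\xi=0\) we have \(M(\theta,0)=\varphi(\theta)\) and \(\Diff_\xi M(\theta,0)=\bfA_{\mathrm{ch}}=\bfQ_{\Man}\bfL\). Orthogonality of \(\bfQ\) then gives \(\Diff_\xi\Delta(\theta,0)=[\bfL;0]\). Since \(\bfQ\) is orthogonal, \(J\) is exactly the chart Jacobian of \(M(\theta,\cdot)\). The Gram matrix \(\Diff_\xi\Delta^\top\Diff_\xi\Delta\) is \(C^r\) jointly, by the anisotropic bound with \(|\gamma|=1\). It is uniformly positive definite: this holds at \(\xi=0\) by the bounds on \(\bfL^{-1}\) in \Cref{lem:ad_frame_comp_strat}, and for \(\|\xi\|<4R\) by continuity and uniform bounds after shrinking \(R\), or directly from compactness of the rank-\(m\) condition. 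So \(\det^{1/2}\) is a smooth function of the Gram matrix on a compact set of positive definite matrices. This gives \(J\in C^r\) with bounded derivatives and \(J(\theta,0)=|\det\bfL|\).

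\textbf{Item \ref{item:Crplusone-corner-chart}.} \(M(\theta,\cdot)\) is the restriction of \(\Phi\), composed with a translation, to a relatively open subset of the quadrant. Restrictions of homeomorphisms onto open subsets of \(\Man\) preserve being a homeomorphism onto a relatively open set. Therefore it is a \(C^{r+1}\) corner chart, with full rank inherited from \(\Phi\).

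\textbf{Item \ref{item:subset_of_M}: the main obstacle, a uniform \(\delta_0\).} Fix \(R\) as above. For \(\theta\in\Theta_{\Compct_{\Strat}}\), the set \(W_\theta=M(\theta,\mathbb H_c^m\cap\mathbb B_R^m)\) is relatively open in \(\Man\) and contains \(x=\varphi(\theta)\). The task is to make the radius uniform. I would argue by contradiction.

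Suppose there exist \(\theta_n\in\Theta_{\Compct_{\Strat}}\) and \(x_n'\in\Man\) with \(\|x_n'-\varphi(\theta_n)\|\to0\) but \(x_n'\notin W_{\theta_n}\). By compactness, \(\theta_n\to\theta_*\). Then \(x_n'\to x_*=\varphi(\theta_*)\). Since \(W_{\theta_*}'=M(\theta_*,\mathbb H_c^m\cap\mathbb B_{R/2}^m)\) is a relative neighborhood of \(x_*\) in \(\Man\), we can write \(x_n'=\Phi(\theta_*+\xi^n_{\mathcal S},\xi^n_{\mathcal C})\) for large \(n\), with \(\|\xi^n\|<R/2\). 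Moreover \(\xi^n\to0\) because \(\Phi^{-1}\) is continuous.

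Now rewrite this as \(x_n'=M(\theta_n,\xi^n+(\theta_*-\theta_n,0))\). The new argument tends to \(0\), so it lies in \(\mathbb B_R^m\) for large \(n\), which contradicts \(x_n'\notin W_{\theta_n}\).

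The subtle point is that the neighborhood property in \(\Man\) uses that \(\Phi\) is a homeomorphism onto a relatively open subset. That is exactly what makes "close in \(\mathbb R^d\) and on \(\Man\)" imply "close in the chart". This yields some \(\delta>0\), and we set \(\delta_0=\delta/2\).
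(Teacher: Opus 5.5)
Your proposal is correct and, for items 1--4, follows the paper's argument essentially verbatim: the Leibniz rule for the anisotropic bound, orthogonality of \(\bfQ\) for \(\Diff_\xi\Delta(\theta,0)\) and \(J\), and compactness of the rank-\(m\) condition for the Gram matrix. For item 5 you use the same compactness/contradiction argument as the paper, but directly at radius \(R\). The paper instead runs it at radius \(4R\) and then introduces the annular separation constant \(m_R=\min_{R\le\|\xi\|\le 4R}\|M(\theta,\xi)-\varphi(\theta)\|\), positive by injectivity, to shrink the image to \(\mathbb B_R^m\); your version shows that this extra step is unnecessary.
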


\begin{proof}
Choose an open set \(\Theta^\circ_{\mathrm{out}}\) such that \(\Theta_{\Compct_{\Strat}}\subset
\Theta^\circ_{\mathrm{out}} \Subset\Theta_{\mathrm{out}}\). Let
\begin{equation}\label{eq:d*}
  d_*
  = \dist\bigl( \overline{\Theta}^\circ_{\mathrm{out}},
  \mathbb R^{m-c}\setminus\Theta_{\mathrm{out}} \bigr)>0,
\end{equation}
and choose \(R>0\) such that \(4R<\min\{d_*,\varepsilon \}\), where \(\varepsilon>0\) is given by
\Cref{ass:standing}~\ref{ass:standing:chart}. It follows from \cref{eq:d*} that the
\(4R\)-neighborhood of \(\Theta^\circ_{\rm out}\) lies in \(\Theta_{\rm out}\). Therefore, if
\(\xi=(\xi_{\mathcal S},\xi_{\mathcal C})\in \mathbb H_c^m\) satisfies \(\|\xi\|<4R\) and
\(\theta\in \Theta^\circ_{\mathrm{out}}\), we have \(\xi_{\mathcal C} \in[0,\varepsilon)^c\) and
\(\theta+\xi_{\mathcal S}\in \Theta_{\mathrm{out}}\). Thus
\begin{equation*}
  M(\theta,\xi)=\Phi(\theta+\xi_{\mathcal S},\xi_{\mathcal C})
\end{equation*}
is well defined on the stated domain \(\Theta^\circ_{\mathrm{out} } \times (\mathbb H_c^m\cap
\mathbb B_{4R}^m)\).

The compact set of chart arguments
\begin{equation*}
  \Compct^m
  = \{(\theta+\xi_{\mathcal S},\xi_{\mathcal C}) : (\theta,\xi)\in
  \overline{\Theta}^\circ_{\mathrm{out}} \times (\mathbb H_c^m\cap \overline{\mathbb B}_{4R}^m)\}
  \subset\mathbb R^m
\end{equation*}
lies in \(\Theta_{\mathrm{out}}\times[0,\varepsilon)^c\). Since \(\Phi\) is \(C^{r+1}\), all
derivatives of \(\Phi\) up to order \(r+1\) are continuous and uniformly bounded in \(\Compct^m\).
This implies that \(M\) and its derivatives up to order \(r+1\) are continuous and uniformly bounded
in \(\overline{\Theta}^\circ_{\mathrm{out}} \times (\mathbb H_c^m\cap \overline{\mathbb
B}_{4R}^m)\).

On the other hand, by \Cref{lem:ad_frame_comp_strat}, the frame field \(\bfQ\) is \(C^{r}\) on
\(\Strat_{\rm out}\). Since \(\varphi:\Theta^\circ_{\rm out}\to\Strat_{\rm out}\) is \(C^{r+1}\),
the pullback \(\theta\mapsto \bfQ(\varphi(\theta))\) is \(C^r\). Therefore
\[
  \Delta(\theta,\xi)
  =
  \bfQ(\theta)^\top\bigl(M(\theta,\xi)-\varphi(\theta)\bigr)
\]
is \(C^r\), and all its mixed derivatives of total order at most \(r\) are uniformly bounded on the
compact chart region.

It remains to record the extra regularity in the \(\xi\)-directions. Let \(|\gamma|\geqslant1\) and
\(|\beta|+|\gamma|\leqslant r+1\). Since the factor \(\varphi(\theta)\) is independent of \(\xi\),
we have
\[
  \partial_\theta^\beta\partial_\xi^\gamma\Delta(\theta,\xi)
  =
  \sum_{\beta_1+\beta_2=\beta}
  c_{\beta_1,\beta_2}
  \bigl(\partial_\theta^{\beta_1}\bfQ(\theta)^\top\bigr)
  \bigl(\partial_\theta^{\beta_2}\partial_\xi^\gamma M(\theta,\xi)\bigr).
\]
Here \(|\beta_1|\leqslant|\beta|\leqslant r\), so the derivatives of \(\bfQ\) are available, and
\[
  |\beta_2|+|\gamma|\leqslant|\beta|+|\gamma|\leqslant r+1,
\]
so the corresponding derivatives of \(M\) are available. All factors are uniformly bounded on the
compact chart region. This proves both the ordinary \(C^r\) statement and the anisotropic
\(\xi\)-regularity asserted in item~\ref{item:Cr-Delta}.

At \(\xi=0\), \(\Phi(\theta,0)=\varphi(\theta)\), hence \(\Delta(\theta,0_m) = 0_d\). Moreover,
\begin{equation*}
  \Diff_\xi \Delta(\theta,0) = \bfQ(\varphi(\theta))^\top \bfA_{\mathrm{ch}}(\varphi(\theta)).
\end{equation*}
Using \(\bfA_{\mathrm{ch}}=\bfQ_{\Man}\bfL\) and \(\bfQ = [\,\bfQ_{\Man}\ \bfN\,]\), we get
\begin{equation*}
  \Diff_\xi\Delta(\theta,0) = [\,\bfL(\theta);\ \mathbf 0_{k\times m}\,].
\end{equation*}
This proves the second item of the lemma.

For fixed \(\theta\), the map \(\xi = (\xi_{\mathcal S}, \xi_{\mathcal C})\mapsto(\theta +
\xi_{\mathcal S}, \xi_{ \mathcal C})\) is a smooth diffeomorphism from \(\mathbb H_c^m\cap\mathbb
B_{4R}^m\) onto an open subset of the corner chart. Composing with \(\Phi\) shows that \(M(\theta,
\cdot)\) is a \(C^{r+1}\) corner chart onto a relatively open
neighborhood of \(\varphi(\theta)\) in \(\Man\). This proves the third item.

For the Jacobian, set
\begin{equation*}
  G(\theta,\xi) = \Diff_\xi \Delta(\theta,\xi)^\top \Diff_\xi \Delta(\theta,\xi) .
\end{equation*}
Since \(\bfQ(\theta)\) is orthogonal and does not depend on \(\xi\), \(G(\theta,\xi) = \Diff_\xi
M(\theta,\xi)^\top \Diff_\xi M(\theta,\xi)\). The differential of the corner chart has rank \(m\) on
the compact closed region \(\|\xi\|\le4R\). Thus \(G\) is uniformly positive definite there. Hence
\(\sqrt{\det G}\) is jointly \(C^{r}\), and all derivatives up to order \(r\) are uniformly bounded.
At \(\xi=0\), the previous computation gives
\[
  G(\theta,0)=\bfL(\theta)^\top\bfL(\theta),
\]
so
\[
  J(\theta,0)=|\det\bfL(\theta)|.
\]
This proves the fourth item of the lemma.

It remains to choose a uniform inner radius. First, by compactness of \(\Theta_{\Compct_{\Strat}}\)
and continuity of the chart, there is \(\delta'_0>0\) such that, for every
\(\theta\in\Theta_{\Compct_{\Strat}}\),
\begin{equation}\label{eq:intermediate_delta}
  \Man\cap\mathbb B^d_{2\delta'_0}(\varphi(\theta))
  \subset M(\theta, \mathbb H_c^m\cap\mathbb B_{4R}^m).
\end{equation}
Indeed, otherwise one could find \((\theta_n,z_n) \in \Theta_{\Compct_{\Strat}}\times \Man\) such
that \(\theta_n \to\theta_* \in \Theta_{\Compct_{\Strat}}\), \(z_n\to \varphi(\theta_*)\) and
\(z_n\notin M({\theta_n}, \mathbb H_c^m\cap\mathbb B_{4R}^m)\). Writing \(z_n=\Phi(\xi_n)\) for
\(n\) large enough, continuity of \(\Phi^{-1}\) yields \(\xi_n\to(\theta_*,0_c)\). Hence \(\xi_n -
(\theta_n,0_c) \to 0_m\), which is a contradiction.

Next define
\begin{equation*}
  m_{R}
  = \min_{\substack{\theta\in\Theta_{\Compct_{\Strat}}, \xi\in\mathbb H_c^m \\
  R\le\|\xi\|\leqslant 4R}} \|M(\theta,\xi) - \varphi(\theta)\|.
\end{equation*}
The minimum is strictly positive because the set is compact and each \(M(\theta,\cdot)\) is
injective. Choose \(\delta_0 >0\) with \(2\delta_0 < \min\{2\delta'_0,m_{R}\}\). If \(z \in
\Man\,\cap\,\mathbb B_{2\delta_0}(\varphi( \theta))\), then \cref{eq:intermediate_delta} gives
\(z=M(\theta,\xi)\) with \(\|\xi\|<4R\). The definition of \(m_{R}\) excludes \(\|\xi\|\geqslant
R\), and therefore \(\|\xi\|<R\). This proves the final inclusion.
\end{proof}

\paragraph{The reconstructed observation map.}
We now record the simple consequence of the tubular coordinates that is used to pass between ambient
observation points in the conical layer and the rescaled variables \((a,x)\). The point is that,
uniformly for \(x\in\Compct_{\Strat}\) and bounded \(a\), the point obtained by moving a distance
\(\sigma a_{\mathcal C}\) in the \(\mathcal C \)-directions and \(\sigma a_{\mathcal N}\) in the
\(\mathcal N\)-directions remains in the tubular neighborhood \(\mathcal W\) provided by
\Cref{lem:orth_tub_coord} for all \(\sigma\) small enough. In these coordinates the identities are
then immediate from the fact that \(\mathcal Z\) and \(\mathcal T\) are inverse maps.

\begin{lemma}[Rescaled tubular coordinate identities]
  \label{lem:rec_map_prop}
  Assume \AssChart. Fix \(A>0\). There exists \(\sigma_0>0\) such that the map
  \( \mathcal T_{\rm rescaled}:(a,x,\sigma) \longmapsto y_\sigma(a,x)\) defined on
  \(\overline{\mathbb B}_A^{c+k}\times \Compct_{\Strat} \times (0,\sigma_0]\) and taking values in
  \(\mathcal U\) is well defined and of class \(C^{r}\). Moreover,
  \begin{equation*}
    \bfQ(x)^\top\bigl(y_\sigma(a,x)-x\bigr) = (0_{m-c},\sigma a_{\mathcal C},\sigma a_{\mathcal N}),
  \end{equation*}
  and
  \begin{equation}\label{eq:Y_tub_coord}
    \pi\bigl(y_\sigma(a,x)\bigr)=x, \qquad u\bigl(y_\sigma(a,x)\bigr)=\sigma a_{\mathcal C},
    \qquad \eta\bigl(y_\sigma(a,x)\bigr)=\sigma a_{\mathcal N}.
  \end{equation}
  Consequently, for \((a,\sigma)\in\overline{\mathbb B}_A^{c+k}\times (0, \sigma_0]\) and for every
  \((y, \sigma)\in \mathcal Y_{A,\Compct_{\Strat}, \sigma_0}\),
  \begin{equation}\label{eq:Y_resc_param}
    a\bigl(y_\sigma(a,x),\sigma\bigr)=a ,\qquad \text{and}
    \qquad y_\sigma\bigl(a(y,\sigma),\pi(y)\bigr)=y.
  \end{equation}
\end{lemma}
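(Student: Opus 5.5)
The plan is to obtain everything from the tubular package of \Cref{lem:orth_tub_coord}, whose maps \(\mathcal T\) and \(\mathcal Z\) are mutually inverse \(C^{r}\) diffeomorphisms between \(\mathcal W\) and \(\mathcal U\). Observe first that \(y_\sigma(a,x)=\mathcal T(x,\sigma a_{\mathcal C},\sigma a_{\mathcal N})\). Thus \(\mathcal T_{\rm rescaled}\) is the composition of the smooth map \((a,x,\sigma)\mapsto(x,\sigma a_{\mathcal C},\sigma a_{\mathcal N})\) with \(\mathcal T\). It is therefore \(C^{r}\), with \(x\)-derivatives read through \(\theta\) via \(\varphi\), as soon as its argument lies in \(\mathcal W\).

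The one genuinely quantitative step is to choose \(\sigma_0\) so that \((x,\sigma a_{\mathcal C},\sigma a_{\mathcal N})\in\mathcal W\) for all \(x\in\Compct_{\Strat}\), \(\|a\|\leqslant A\) and \(\sigma\leqslant\sigma_0\). Since \(\mathcal W\) is an open neighborhood of the compact set \(\Compct_{\Strat}\times\{0\}\), a tube-lemma argument gives \(\delta>0\) with \(\Compct_{\Strat}\times\mathbb B^{c+k}_{\delta}\subset\mathcal W\). Concretely, cover \(\Compct_{\Strat}\times\{0\}\) by finitely many product neighborhoods \(V_i\times\mathbb B_{\delta_i}\) contained in \(\mathcal W\), and take \(\delta=\min_i\delta_i\). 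Then \(\sigma_0<\delta/A\) works, and I may also shrink \(\sigma_0\) to match the earlier choices, for instance \(\sigma_0<R/A\). This is the main obstacle, but only a mild one: one must make sure that \(\mathcal W\) really contains a uniform product neighborhood of the zero section over \(\Compct_{\Strat}\), which follows from compactness.

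Once \(y=y_\sigma(a,x)\in\mathcal U\), applying \(\mathcal Z=\mathcal T^{-1}\) gives \(\mathcal Z(y)=(x,\sigma a_{\mathcal C},\sigma a_{\mathcal N})\). Reading off the three components gives \eqref{eq:Y_tub_coord}. The frame identity follows from the orthogonality of \(\bfQ(x)=[\,\bfS\ \bfC\ \bfN\,]\), exactly as in the last display of \Cref{lem:orth_tub_coord}. Since \(a(y,\sigma)=\nu(y)/\sigma\) with \(\nu=(u,\eta)\), the first identity in \eqref{eq:Y_resc_param} is immediate.

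For the converse identity, take \((y,\sigma)\in\mathcal Y_{A,\Compct_{\Strat},\sigma_0}\). Then \(y\in\mathcal U\), and \(\mathcal T(\mathcal Z(y))=y\) reads \(y=\pi(y)+\bfC(\pi(y))u(y)+\bfN(\pi(y))\eta(y)\). Moreover \(\|a(y,\sigma)\|=\|y-\pi(y)\|/\sigma\leqslant A\) by the Pythagorean identity, and \(\pi(y)\in\Compct_{\Strat}\). Hence \((a(y,\sigma),\pi(y),\sigma)\) lies in the domain of \(\mathcal T_{\rm rescaled}\), and the identity above is precisely \(y_\sigma(a(y,\sigma),\pi(y))=y\).
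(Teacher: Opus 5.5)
Your proposal is correct and follows essentially the same route as the paper: write \(y_\sigma(a,x)=\mathcal T(x,\sigma a_{\mathcal C},\sigma a_{\mathcal N})\), choose \(\sigma_0\) so that this argument stays in \(\mathcal W\), and read everything off from \(\mathcal Z\circ\mathcal T=\mathrm{id}\) and \(\mathcal T\circ\mathcal Z=\mathrm{id}\). The only differences are that you justify the choice of \(\sigma_0\) by an explicit compactness (tube-lemma) argument and verify that \((a(y,\sigma),\pi(y),\sigma)\) lies in the domain of \(\mathcal T_{\rm rescaled}\); the paper simply asserts both points.
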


\begin{proof}
The maps \(\mathcal Z\) and \(\mathcal T\) are inverse \(C^{r}\) diffeomorphisms between
\(\mathcal{U}\) and \(\mathcal W\). For \(\sigma_0\) small enough, \((x, \sigma a_{\mathcal
C},\sigma a_{\mathcal N})\in \mathcal W\) whenever \(x \in \Compct_{\Strat}\), \(\|a\|\leqslant A\),
and \(0\leqslant \sigma\leqslant \sigma_0\). Thus \(y_\sigma(a,x) = \mathcal T(x,\sigma a_{\mathcal
C}, \sigma a_{\mathcal N})\) is well defined and \(C^{r}\). Since \( y_\sigma(a,x)-x = \sigma
\bfC(x) a_{\mathcal C} + \sigma\bfN(x)a_{\mathcal N}\), orthogonality of \(\bfQ(x)=[\,\bfS(x)\
\bfC(x)\ \bfN(x)\,]\) gives
\begin{equation*}
  \bfQ(x)^\top\bigl(y_\sigma(a,x)-x\bigr)
  = [0_{m-c};\ \sigma a_{\mathcal C}; \sigma a_{\mathcal N}].
\end{equation*}
Also, \(\mathcal Z\bigl(y_\sigma(a,x)\bigr) = \mathcal Z\bigl(\mathcal T(x,\sigma a_{\mathcal
C},\sigma a_{ \mathcal N})\bigr) = (x,\sigma a_{\mathcal C},\sigma a_{\mathcal N})\). This proves
\cref{eq:Y_tub_coord} and, after dividing the last two components by \(\sigma\),
\cref{eq:Y_resc_param}.

Finally, if \((y,\sigma)\in\mathcal Y_{A,\Compct_{\Strat}, \sigma_0}\), then \(u(y)=\sigma
a_{\mathcal C}(y, \sigma)\) and \(\eta(y)=\sigma a_{\mathcal N}(y,\sigma)\). Therefore \(
y_\sigma\bigl(a(y,\sigma),\pi(y)\bigr) = \mathcal T\bigl(\pi(y),u(y),\eta(y)\bigr) = \mathcal
T(\mathcal Z(y)) = y. \)
\end{proof}

\section{Admissible classes and uniform calculus}
\label{app:admissible_classes}

This appendix introduces the admissible classes used to formulate all uniform remainder estimates in
the conical layer. The classes separate three kinds of control: bounded parameter dependence in
\((a,x,\sigma)\), polynomial growth in the rescaled kernel variable \(\zeta\), and Gaussian decay
with respect to the canonical conical model. The closure properties proved below allow the later
appendix estimates to be written in a compact form, without repeating elementary product,
composition, and integration arguments.

\subsection{Definitions}
\label{subsec:admissible_class_definitions}

The scaled kernel variable is \(\zeta=(\zeta_{\mathcal S},\zeta_{\mathcal C})\in \mathbb H_c^m\),
with \(\zeta_{\mathcal S}\in\mathbb R^{m-c}\) and \( \zeta_{\mathcal C} \in [0,\infty)^c\), while
\((a,x,\sigma)\) are regarded as parameters. We write \(a=(a_{\mathcal C},a_{\mathcal N})\) and
denote by \(\Pi_{\mathcal S}\) and \(\Pi_{\mathcal C}\) the projections onto the first \(m-c\) and
last \(c\) tangent coordinates, respectively. For \(c_0>0\), define the canonical conical Gaussian
\(\Gamma_{c_0}(\zeta;a) = \exp\!\left( - c_0\bigl[ \|\zeta_{\mathcal S}\|^2 + \|a_{\mathcal
C}-\zeta_{\mathcal C}\|^2 + \|a_{\mathcal N}\|^2 \bigr] \right)\). For \(A>0\), \(\sigma_0>0\), and
\(R\in(0,\infty]\), set
\begin{equation}\label{eq:mathcal-D}
  \mathcal D_{R,A,\Compct_{\Strat},\sigma_0}
  = \Bigl\{ (\zeta,a,x,\sigma): \zeta\in\mathbb H_c^m, \sigma\|\zeta\|<R,
  \|a\|\leqslant A, x\in\Compct_{\Strat}, 0<\sigma\leqslant\sigma_0 \Bigr\}.
\end{equation}
The corresponding parameter domain is
\begin{equation*}
  \mathcal D_{A,\Compct_{\Strat},\sigma_0}
  = \Bigl\{ (a,x,\sigma): \|a\|\leqslant A, x\in\Compct_{\Strat}, 0<\sigma\leqslant\sigma_0 \Bigr\}.
\end{equation*}
For a multi-index triple \(\gamma=(\alpha,\beta,j)\), where \(\alpha\) differentiates in \(a\),
\(\beta\) differentiates in \(\theta\), and \(j\) differentiates in \(\sigma\), write
\begin{equation*}
  |\gamma|=|\alpha|+|\beta|+j, \qquad \partial_{a,\theta,\sigma}^{\gamma}
  = \partial_a^\alpha\partial_\theta^\beta\partial_\sigma^j.
\end{equation*}

\begin{remark}[Parameter-variable derivatives]
  \label{rem:parameter_variable_derivatives}
  All derivatives in the base variable \(x\in\Compct_{\Strat}\) are understood after pullback by the
  fixed stratum chart \(\varphi:\Theta^\circ_{\mathrm{out}} \supset\Theta_{\Compct_{\Strat}}
  \longrightarrow\Compct_{\Strat}\). Thus \(x\)-derivatives mean ordinary \(\theta\)-derivatives of
  local extensions, restricted back to \(\Theta_{\Compct_{\Strat}}\). The same convention applies
  to the bounded rescaled transverse coordinate \(a \in \overline{\mathbb B}_A\). All estimates
  below are therefore estimates on compact parameter sets, with derivatives computed using
  open-neighborhood extensions.
\end{remark}

\begin{definition}[Admissible classes]
  \label{def:admissible_classes}
  Let \(\ell\in\mathbb N\) and \(\omega>0\). Fix \(A,\sigma_0>0\) and \(R\in(0,\infty]\).

  \begin{enumerate}
    \item A function \(B:\overline{\mathbb B}_A\times \Compct_{\Strat}\times (0,\sigma_0]\to\mathbb
    R\) belongs to \(\mathfrak B_\ell\) if its pullback \( B(a,\varphi (\theta),\sigma)\) is
    \(C^\ell\) in \((a,\theta, \sigma)\) and, for every \(\gamma\) with \(|\gamma| \leqslant \ell\),
    \begin{equation*}
      \sup_{{\mathcal D}_{A,\Compct_{\Strat},\sigma_0}} \bigl| \partial_{a,\theta,\sigma}^{\gamma}
      B(a,\varphi(\theta),\sigma) \bigr|<\infty.
    \end{equation*}

    \item A function \(E:\overline{\mathbb B}_A \times\Compct_{\Strat}\times (0,\sigma_0]\to\mathbb
    R\) belongs to \(\mathfrak E_\ell^\omega\) if its pullback \( E(a, \varphi (\theta),\sigma)\) is
    \(C^\ell\) in \((a, \theta, \sigma)\), and there exists \(c>0\) such that, for every \(\gamma\)
    with \(|\gamma|\leqslant\ell\), there is a constant \(C_\gamma>0\) satisfying
    \begin{equation*}
      \bigl|\partial_{a,\theta,\sigma}^{ \gamma} E(a,\varphi(\theta),\sigma) \bigr|
      \leqslant C_\gamma e^{-c/\sigma^{\omega}},\quad
      \forall (a,\varphi(\theta),\sigma)\in {\mathcal D}_{A,\Compct_{\Strat}, \sigma_0}.
    \end{equation*}

    \item A function \(P: \mathcal D_{R, A, \Compct_{\Strat}, \sigma_0} \to\mathbb R\) belongs to
    \(\mathfrak P_\ell \) if it is measurable in \(\zeta\) and its pullback
    \(P(\zeta;a,\varphi(\theta),\sigma)\) is \(C^\ell\) in the
    parameter variables \((a,\theta, \sigma)\), for every fixed \(\zeta\), and there exists
    \(n\geqslant 0\) such that
    \begin{equation*}
      \bigl| \partial_{a,\theta,\sigma}^{\gamma} P(\zeta; a,\varphi(\theta),\sigma) \bigr|
      \leqslant C_\gamma \bigl(1 + \|\zeta\|^n\bigr)
    \end{equation*}
    for every \(\gamma\) with \(|\gamma|\leqslant\ell\), uniformly on \(\mathcal D_{R, A,
    \Compct_{\Strat}, \sigma_0} \).

    \item A function \(G: \mathcal D_{R, A,\Compct_{\Strat}, \sigma_0} \to\mathbb R\) belongs to
    \(\mathfrak G_\ell\) if it is measurable in \(\zeta\) and its pullback
    \(G(\zeta;a,\varphi(\theta),\sigma)\) is \(C^\ell\) in the
    parameter variables \((a,\theta,\sigma) \), for every fixed \(\zeta\), and there exist \(c_0>0,
    n\geqslant 0\) such that
    \begin{equation*}
      \bigl| \partial_{a,\theta,\sigma}^{\gamma} G(\zeta;a,\varphi(\theta),\sigma) \bigr|
      \leqslant C_\gamma\bigl(1+\|\zeta\|^n\bigr)\Gamma_{c_0}(\zeta;a)
    \end{equation*}
    for every \(\gamma\) with \(|\gamma|\leqslant\ell\), uniformly on \({\mathcal
    D}_{R,A,\Compct_{\Strat},\sigma_0}\).
  \end{enumerate}
\end{definition}

No derivatives in \(\zeta\) are imposed in these definitions, only measurability, which together
with the \(C^\ell\) dependence on \((a,\theta,\sigma)\) guarantees the joint measurability needed
for the integration rule of \Cref{lem:prop_adm_classes}-\ref{item:integr_Gauss_kern} and for
differentiation under the integral sign. This costs nothing here: every kernel arising in the paper
is in fact continuous in \(\zeta\), hence jointly continuous in \((\zeta,a,\theta,\sigma)\). The
open-neighborhood condition
ensures that these are ordinary derivatives of functions defined beyond the boundary faces of the
domains on which the estimates are imposed. Unless stated otherwise, the same notation is used
componentwise for finite-dimensional vector-valued functions.

The next subsection collects the elementary stability properties of these classes.

\subsection{Closure properties}
\label{subsec:admissible_class_closure_properties}

We present the basic rules that make the admissible classes useful in practice. They show that
admissibility is preserved under the operations appearing in the kernel expansion: addition,
multiplication, multiplication by harmless powers of \(\sigma\), smooth composition, and
Gaussian-weighted integration in \(\zeta\). All estimates are uniform over \((a,x,\sigma) \in
\mathcal D_{A,\Compct_{\Strat},\sigma_0}\), and, for kernel-level classes, over the corresponding
truncated kernel domain.

\begin{lemma}[Basic calculus of the admissible classes]
  \label{lem:prop_adm_classes}
  Fix \(A,\sigma_0>0\), and \(R\in(0,\infty]\). Let \(\omega,\omega'>0\) and \(\ell\in\mathbb N\).
  All classes below are taken with respect to the same domain \( \mathcal D_{A, \Compct_{\Strat},
  \sigma_0}\) or \( \mathcal D_{R,A, \Compct_{\Strat}, \sigma_0}\). The following stability
  properties hold.

  \vspace*{-10pt}
  \begin{enumerate}
    \item \label{item:vect_space_and_mon}
    The classes \(\mathfrak B_\ell\), \(\mathfrak E_\ell^\omega\), \(\mathfrak P_\ell\) and
    \(\mathfrak G_\ell\) are vector spaces. Moreover, \(\omega\geqslant\omega'\) entails that
    \(\mathfrak E_\ell^{\omega}\subset\mathfrak E_\ell^{\omega'}\subset \mathfrak B_\ell\).

    \item \label{item:products}
    Let \(B\in\mathfrak B_\ell\), \(E\in\mathfrak E_\ell^\omega\), \(P\in\mathfrak P_\ell\) and
    \(G\in\mathfrak G_\ell\). Then the product functions \(BE,BP\) and \(BG\) satisfy
    \begin{equation*}
      BE\in\mathfrak E_\ell^\omega, \qquad BP\in\mathfrak P_\ell, \qquad BG\in\mathfrak G_\ell.
    \end{equation*}
    If, in addition, for \(i=1,2\), \(E_i\in\mathfrak E_\ell^{\omega_i}\), \(P_i\in\mathfrak P_\ell\),
    \(G_i\in\mathfrak G_\ell\), then, with \(\bar\omega=\min\{\omega_1,\omega_2\}\),
    \begin{equation*}
      E_1E_2 \in\mathfrak E_\ell^{\bar\omega}, \quad P_1P_2 \in\mathfrak P_\ell,
      \quad P_1G_2\in\mathfrak G_\ell, \quad G_1G_2\in\mathfrak G_\ell.
    \end{equation*}

    \item \label{item:inversion_in_B_ell}
    If \(B\in\mathfrak B_\ell\) then \(B^{-1}\in\mathfrak B_\ell\) provided that
    \begin{equation*}
      \inf_{\|a\|\leqslant A,\ x\in\Compct_{\Strat},\ 0<\sigma\leqslant\sigma_0} |B(a,x,\sigma)|>0.
    \end{equation*}
    \item \label{item:insertion_of_the_canonical_Gaussian}
    If \(P\in\mathfrak P_\ell\), then, for every \(c_0>0\), \(\Gamma_{c_0}P \in\mathfrak G_\ell\).

    \item \label{item:integr_Gauss_kern}
    If \(R=\infty\), \(G\in\mathfrak G_\ell\) and \(B(a,x,\sigma) = \int_{\mathbb
    H_c^m}G(\zeta;a,x,\sigma)\,\dd\zeta\), then \(B\in\mathfrak B_\ell\).

    \item \label{item:smooth_composition_on_bounded_ranges}
    Let \(q\ge1\), let \(\Omega\subset\mathbb R^q\) be open, and let \(\psi\in C^\infty(\Omega)\).
    \begin{enumerate}
      \item[(a)] If \(B=(B_1,\dots,B_q)\),  with \(B_i\in
      \mathfrak B_\ell\), and the range of \(B\) on \(\mathcal D_{A,\Compct_{\Strat}, \sigma_0}\) is
      contained in a compact set \(K_\psi \Subset\Omega\), then \(\psi\circ B\in\mathfrak B_\ell\).

      \item[(b)] If \( P=(P_1,\dots,P_q)\) with \(P_i\in
      \mathfrak P_\ell\), and the range of \(P\) on \(\mathcal D_{R,A,\Compct_{\Strat},\sigma_0}\)
      is contained in a compact set \(K_\psi\Subset\Omega\), then \(\psi\circ P\in\mathfrak
      P_\ell\).
    \end{enumerate}
    The same conclusions hold without the compact-range assumption if \(\psi\) and all of its
    derivatives up to order \(\ell\) are globally bounded on \(\Omega\).

    \item
    \label{item:mult_div_power_sigma}
    Let \(B\in\mathfrak B_\ell\), \(E\in\mathfrak E_\ell^\omega\), \(P\in\mathfrak P_\ell\) and
    \(G\in\mathfrak G_\ell\). If \(q\in\mathbb N\), then
    \begin{equation*}
      \sigma^qB\in\mathfrak B_\ell, \qquad \sigma^qE\in\mathfrak E_\ell^\omega,
      \qquad \sigma^qP\in\mathfrak P_\ell, \qquad \sigma^qG\in\mathfrak G_\ell.
    \end{equation*}
    If \(M\in\mathbb N\) and \(E\in\mathfrak E_\ell^\omega\), then \(\sigma^{-M}E \in \mathfrak
    E_\ell^\omega\).
  \end{enumerate}
\end{lemma}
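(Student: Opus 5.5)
The plan is to verify the seven items one at a time, always working with the pullbacks \((a,\theta,\sigma)\mapsto F(a,\varphi(\theta),\sigma)\). Every estimate then reduces to bounding mixed derivatives \(\partial^\gamma_{a,\theta,\sigma}\) with \(|\gamma|\leqslant\ell\) on the compact parameter set, or on its \(\zeta\)-fibred version. Items~\ref{item:vect_space_and_mon} and~\ref{item:mult_div_power_sigma} are essentially bookkeeping. Sums are handled by the triangle inequality; for the \(\mathfrak E\) class we take the minimum of the rates \(c\), and for \(\mathfrak P,\mathfrak G\) the maximum of the exponents \(n\) together with the minimum of the constants \(c_0\). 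The inclusions \(\mathfrak E^\omega_\ell\subset\mathfrak E^{\omega'}_\ell\subset\mathfrak B_\ell\) follow because \(\sigma\leqslant\sigma_0\) gives \(e^{-c/\sigma^{\omega}}\leqslant e^{-c'/\sigma^{\omega'}}\) with \(c'=c\,\sigma_0^{\omega'-\omega}\). For powers of \(\sigma\), the Leibniz rule applied to \(\sigma^q F\) produces only terms \(\sigma^{q-i}\partial^{\gamma'}F\) with \(i\leqslant q\), and these are bounded. For \(\sigma^{-M}E\) the Leibniz terms are of the form \(\sigma^{-M-i}\partial^{\gamma'}E\). Here we use \(\sup_{\sigma\in(0,\sigma_0]}\sigma^{-M-i}e^{-c/(2\sigma^\omega)}<\infty\), so such a term is bounded by \(C e^{-(c/2)/\sigma^\omega}\).

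Products (item~\ref{item:products}) also follow from Leibniz. Each term is a product of derivatives of the factors, and the bounds multiply. Concretely, \((1+\|\zeta\|^{n_1})(1+\|\zeta\|^{n_2})\leqslant 2(1+\|\zeta\|^{n_1+n_2})\), and \(\Gamma_{c_1}\Gamma_{c_2}=\Gamma_{c_1+c_2}\leqslant\Gamma_{\min(c_1,c_2)}\), since the exponent is a nonpositive multiple of the same quadratic form. Item~\ref{item:insertion_of_the_canonical_Gaussian} is not quite as immediate as it looks, because \(\Gamma_{c_0}\) depends on \(a\), so the \(a\)-derivatives also fall on it. Each \(a\)-derivative of \(\Gamma_{c_0}\) is a polynomial in \((\zeta,a)\) times \(\Gamma_{c_0}\). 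Since \(\|a\|\leqslant A\), these polynomials are \(O(1+\|\zeta\|^{n'})\). With Leibniz this gives \(\Gamma_{c_0}P\in\mathfrak G_\ell\) with the same \(c_0\).

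For inversion (item~\ref{item:inversion_in_B_ell}) and composition (item~\ref{item:smooth_composition_on_bounded_ranges}) we apply the Faà di Bruno formula. A derivative of \(\psi\circ B\) of order \(|\gamma|\leqslant\ell\) is a finite sum of terms \((\partial^\kappa\psi)(B)\prod_i\partial^{\gamma_i}B_{j_i}\), where the multi-indices \(\gamma_i\) sum to \(\gamma\). The factors \((\partial^\kappa\psi)(B)\) are bounded: either \(B\) ranges in the compact set \(K_\psi\Subset\Omega\) on which \(\psi\in C^\infty\), or the derivatives of \(\psi\) are globally bounded by assumption. The products are bounded in case (a), and are \(O(1+\|\zeta\|^{\ell n})\) in case (b). Inversion is the special case \(\psi(t)=1/t\). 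Because \(|B|\geqslant\beta>0\) and \(B\) is bounded, the range of \(B\) lies in the compact set \(\{\beta\leqslant|t|\leqslant\sup|B|\}\), on which \(\psi\) is smooth. \(C^\ell\) regularity follows from the chain rule.

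The step I expect to be the main obstacle is item~\ref{item:integr_Gauss_kern}, because it requires differentiating under the integral sign while the dominating Gaussian itself moves with \(a\). The idea is to replace \(\Gamma_{c_0}(\zeta;a)\) by an \(a\)-independent integrable majorant. For \(\|a\|\leqslant A\), the elementary inequality \(\|a_{\mathcal C}-\zeta_{\mathcal C}\|^2\geqslant\tfrac12\|\zeta_{\mathcal C}\|^2-\|a_{\mathcal C}\|^2\) gives
\[
\Gamma_{c_0}(\zeta;a)\leqslant e^{c_0A^2}e^{-(c_0/2)\|\zeta\|^2}.
\]
Hence every \(\partial^\gamma G\) with \(|\gamma|\leqslant\ell\) is dominated by \(C_\gamma e^{c_0A^2}(1+\|\zeta\|^n)e^{-(c_0/2)\|\zeta\|^2}\), which lies in \(L^1(\mathbb H^m_c)\). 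Using measurability in \(\zeta\) and \(C^\ell\) dependence on the parameters, we then apply dominated convergence inductively on \(|\gamma|\). This shows that \(B\) is \(C^\ell\) with \(\partial^\gamma B=\int\partial^\gamma G\,\dd\zeta\), and integrating the majorant gives the uniform bounds. The one point needing care is that the parameter derivatives are taken up to the boundary of the closed ball in \(a\) and at the endpoint \(\sigma\leqslant\sigma_0\). As stipulated in \Cref{rem:parameter_variable_derivatives}, we handle this by working with open-neighbourhood extensions, enlarging \(A\) slightly for the majorant, and restricting back at the end.
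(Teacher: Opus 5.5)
Your proposal is correct and takes essentially the same route as the paper. The paper also uses Leibniz' rule for sums, products and powers of \(\sigma\), absorbing \(\sigma^{-M-i}\) into half the exponential rate, and Fa\`a di Bruno for inversion and composition; it writes the inversion terms \(\prod_\mu\partial^{\gamma_\mu}B/B^{N+1}\) directly instead of specializing to \(\psi(t)=1/t\), which makes no difference. For the canonical Gaussian and for differentiating under the integral sign, it likewise uses the polynomial-times-\(\Gamma_{c_0}\) form of \(a\)-derivatives and an \(a\)-independent majorant from \(\|a_{\mathcal C}-\zeta_{\mathcal C}\|^2\geqslant\tfrac12\|\zeta_{\mathcal C}\|^2-A^2\). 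Your remark about derivatives at the boundary of the closed parameter set is a point the paper passes over silently, but it does not change the argument.
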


\begin{proof}
Constants may change from line to line, but are uniform on the relevant domain.

\medskip \noindent\textbf{(1) Vector spaces and monotonicity.} The vector-space properties follow
directly from the definitions, using the smaller of the two exponential decay constants in the case
of \(\mathfrak E_\ell^\omega\), and the smaller of the two Gaussian constants in the case of
\(\mathfrak G_\ell\). If \(\omega_1\geqslant\omega_2\), then, for \(0<\sigma\leqslant\sigma_0\),
\begin{equation*}
  \sigma^{-\omega_1} = \sigma^{-\omega_2}\sigma^{-(\omega_1-\omega_2)}
  \geqslant \sigma_0^{-(\omega_1-\omega_2)}\sigma^{-\omega_2}.
\end{equation*}
Thus every \(e^{-c\sigma^{-\omega_1}}\)-bound implies an \(e^{-c'\sigma^{-\omega_2}}\)-bound, with
\(c'=c\sigma_0^{-(\omega_1-\omega_2)}.\) Therefore \(\mathfrak E_\ell^{\omega_1}\subset\mathfrak
E_\ell^{\omega_2}\). Finally, since \(e^{-c\sigma^{-\omega}} \leqslant 1\), exponential bounds imply
uniform boundedness, and hence the inclusion \(\mathfrak E_\ell^\omega\subset\mathfrak B_\ell.\)

\medskip \noindent\textbf{(2) Products.} The proof is an immediate consequence of Leibniz' rule. For
\(|\gamma|\leqslant\ell\),
\begin{equation*}
  \partial^\gamma(FH)
  = \sum_{\gamma_1+\gamma_2=\gamma} C_{\gamma_1,\gamma_2}
  (\partial^{\gamma_1}F)(\partial^{\gamma_2}H).
\end{equation*}

If one factor lies in \(\mathfrak B_\ell\), its derivatives are uniformly bounded, so multiplying by
that factor preserves the defining estimates of \(\mathfrak E_\ell^\omega\), \(\mathfrak P_\ell\),
and \(\mathfrak G_\ell\). Hence
\begin{equation*}
  BE\in\mathfrak E_\ell^\omega,\qquad BP\in\mathfrak P_\ell,\qquad BG\in\mathfrak G_\ell.
\end{equation*}
For products of two exponential terms, first use item~\ref{item:vect_space_and_mon} to place both
factors in \(\mathfrak E_\ell^{\bar\omega}\), where \(\bar\omega=\min\{\omega_1,\omega_2\}.\) Then
each term in Leibniz' rule is bounded by \(C
e^{-c_1\sigma^{-\bar\omega}}e^{-c_2\sigma^{-\bar\omega}} \leqslant C e^{-c\sigma^{-\bar\omega}},\)
for some \(c>0\). Thus \(E_1E_2\in\mathfrak E_\ell^{\bar\omega}.\)

For polynomial factors, use \((1+\|\zeta\|^{n_1})(1+\|\zeta\|^{n_2}) \leqslant
C(1+\|\zeta\|^{n_1+n_2}).\) This gives \(P_1P_2\in\mathfrak P_\ell.\) If one factor is Gaussian, the
same polynomial estimate applies and the Gaussian factor is retained, so \(P_1G_2\in\mathfrak
G_\ell.\) Finally, if \(G_i\) has Gaussian constant \(c_i>0\), then
\(\Gamma_{c_1}(\zeta;a)\Gamma_{c_2}(\zeta;a) = \Gamma_{c_1+c_2}(\zeta;a),\) which gives
\(G_1G_2\in\mathfrak G_\ell.\)

\medskip \noindent\textbf{(3) Inversion in \(\mathfrak B_\ell\).} Since \(B\) is uniformly bounded
away from zero, the reciprocal map \(u\mapsto u^{-1}\) is smooth on an open set containing the range
of \(B\). By the Fa\`a di Bruno formula, every derivative \(\partial^\gamma(B^{-1})\),
\(|\gamma|\leqslant\ell\), is a finite sum of terms of the form
\begin{equation*}
  \frac{ \prod_{\mu=1}^N \partial^{\gamma_\mu}B }{ B^{N+1} },
  \qquad \sum_{\mu=1}^N|\gamma_\mu|\le|\gamma|.
\end{equation*}
The numerator is uniformly bounded because \(B\in\mathfrak B_\ell\), and the denominator is
uniformly bounded away from zero. Therefore \(B^{-1}\in\mathfrak B_\ell.\)

\medskip \noindent\textbf{(4) Insertion of the canonical Gaussian.} The Gaussian
\(\Gamma_{c_0}(\zeta;a)\) depends only on \((\zeta,a)\). For every multi-index \(\mu\) in the
\(a\)-variables, there exists a polynomial \(Q_\mu\), of degree at most \(|\mu|\), such that
\(\partial_a^\mu\Gamma_{c_0}(\zeta;a) = Q_\mu(a_{\mathcal C}-\zeta_{\mathcal C},a_{\mathcal
N})\Gamma_{c_0}(\zeta;a).\) Since \(\|a\|\leqslant A\), this polynomial satisfies
\(|Q_\mu(a_{\mathcal C}-\zeta_{\mathcal C},a_{\mathcal N})| \leqslant C_\mu(1+\|\zeta\|^{|\mu|}).\)
Hence, for \(|\mu|\leqslant\ell\), \(|\partial_a^\mu\Gamma_{c_0}(\zeta;a)| \leqslant
C_\mu(1+\|\zeta\|^{|\mu|})\Gamma_{c_0}(\zeta;a).\) This estimate, Leibniz' rule and the defining
bounds for \(P\) imply that there exists \(n\geqslant 0\) such that every admissible derivative of
\(\Gamma_{c_0}P\) is bounded by \(C(1+\|\zeta\|^{n+\ell})\Gamma_{c_0}(\zeta;a). \) Thus
\(\Gamma_{c_0}P \in \mathfrak G_\ell.\)

\medskip \noindent\textbf{(5) Integration of Gaussian kernels.} Let \(G\in\mathfrak G_\ell\). Then,
for some \(c_0>0\),
\begin{equation*}
  |\partial^\gamma G(\zeta;a,\varphi(\theta),\sigma)|
  \leqslant C_\gamma(1+\|\zeta\|^n)\Gamma_{c_0}(\zeta;a), \qquad |\gamma|\leqslant\ell.
\end{equation*}
Because \(\|a\|\leqslant A\), \(\|a_{\mathcal C}-\zeta_{\mathcal C}\|^2 \geqslant
\frac12\|\zeta_{\mathcal C}\|^2-\|a_{\mathcal C}\|^2 \geqslant \frac12\|\zeta_{\mathcal
C}\|^2-A^2.\) Therefore \(\Gamma_{c_0}(\zeta;a) \leqslant e^{c_0A^2} e^{-c_0\|\zeta_{\mathcal
S}\|^2} e^{-(c_0/2)\|\zeta_{\mathcal C}\|^2}.\) The right-hand side is integrable over \(\mathbb
H_c^m=\mathbb R^{m-c}\times[0,\infty)^c\), even after multiplication by \(1+\|\zeta\|^n\). Thus the
derivatives \(\partial^\gamma G\) admit a common integrable majorant, uniformly in
\((a,\theta,\sigma)\). Differentiation under the integral sign gives, for \(|\gamma|\leqslant\ell\),
\begin{equation*}
  \partial^\gamma B(a,\varphi(\theta),\sigma)
  = \int_{\mathbb H_c^m} \partial^\gamma G(\zeta;a,\varphi(\theta),\sigma)\,\dd\zeta,
\end{equation*}
and the right-hand side is uniformly bounded. Hence \(B\in\mathfrak B_\ell.\)

\medskip \noindent\textbf{(6) Smooth compositions.} We use the multivariate Fa\`a di Bruno formula.

For part (a), \(\operatorname{Im}B\subset K_\psi\Subset\Omega\). Hence all derivatives of \(\psi\)
up to order \(\ell\) are bounded on \(K_\psi\). Every derivative \(\partial^\gamma(\psi\circ B)\),
\(|\gamma|\leqslant\ell\), is a finite sum of products of such bounded derivatives of \(\psi\),
evaluated at \(B\), and derivatives of the components \(B_i\) of order at most \(\ell\). Since each
\(B_i\in\mathfrak B_\ell\), all these terms are uniformly bounded. Therefore \(\psi\circ
B\in\mathfrak B_\ell.\)

For part (b), the same formula applies. The derivatives of \(\psi\) are again bounded on the compact
range \(K_\psi\). A derivative of order at most \(\ell\) of \(\psi\circ P\) is a finite sum of terms
containing at most \(\ell\) factors of derivatives of the components \(P_i\). Each such factor is
bounded by \(C(1+\|\zeta\|^n)\), for some \(n\geqslant 0\), and therefore each term is bounded by
\(C (1 + \|\zeta\|^{n\ell} ) \). Thus \(\psi\circ P\in \mathfrak P_\ell.\) If the derivatives of
\(\psi\) up to order \(\ell\) are globally bounded on \(\Omega\), the same argument applies without
assuming compactness of the range.

\medskip \noindent\textbf{(7) Multiplication and division by powers of \(\sigma\).} Let
\(q\in\mathbb N\). For every \(i\leqslant\ell\), the derivative \(\partial_\sigma^i\sigma^q\) is
either zero or a nonnegative power of \(\sigma\), and is therefore uniformly bounded on
\((0,\sigma_0]\). Leibniz' rule then gives
\begin{equation*}
  \sigma^qB\in\mathfrak B_\ell, \qquad \sigma^qE\in\mathfrak E_\ell^\omega,
  \qquad \sigma^qP\in\mathfrak P_\ell, \qquad \sigma^qG\in\mathfrak G_\ell.
\end{equation*}
It remains to consider \(\sigma^{-M}E\), with \(M\in\mathbb N\) and \(E\in\mathfrak
E_\ell^\omega\). For \(|\gamma|=|\alpha|+|\beta|+j\leqslant\ell\), Leibniz' rule gives
\begin{equation*}
  \partial^\gamma(\sigma^{-M}E)
  = \sum_{i=0}^j c_{j,i}\, \sigma^{-M-i}\,
  \partial_a^\alpha\partial_\theta^\beta\partial_\sigma^{j-i}E.
\end{equation*}
Thus it suffices to absorb powers of \(\sigma^{-1}\) into the exponential decay. If \(L\geqslant
0\), then
\begin{equation*}
  \sigma^{-L}e^{-c\sigma^{-\omega}}
  = \bigl(\sigma^{-L}e^{-(c/2)\sigma^{-\omega}}\bigr) e^{-(c/2)\sigma^{-\omega}}.
\end{equation*}
With \(u=\sigma^{-\omega}\), the first factor becomes \(u^{L/\omega}e^{-(c/2)u},\) which is bounded
for \(u\geqslant \sigma_0^{-\omega}\). Hence \(\sigma^{-L}e^{-c\sigma^{-\omega}} \leqslant C_L
e^{-(c/2)\sigma^{-\omega}}, 0<\sigma \leqslant \sigma_0.\) Every admissible derivative of
\(\sigma^{-M} E\) therefore satisfies an exponential bound of the same type. Hence
\(\sigma^{-M}E\in\mathfrak E_\ell^\omega.\)
\end{proof}

\section{Analytic estimates for the local tangent-cone expansion}
\label{app:formal_local_expansion}

This appendix proves the analytic estimates used in the local tangent-cone expansion. The main task
here is to control the exact local kernel after the rescaling \(\xi=\sigma\zeta\). The estimates are
arranged in the order in which they enter the expansion. We first derive the scaled Taylor
expansions of the chart components, the amplitude, and the exponent. We then prove the support,
Gaussian-comparison, and tail estimates needed to turn these local Taylor expansions into a uniform
conical Laplace expansion.

\paragraph{Hypotheses in force.}
The geometric estimates for \(\Delta\), \(\Psi\), \(M\), \(\bfL\), and the conical Gaussian use only
\AssChart. Estimates involving the amplitude \(\mathcal A=\rho J\), the exact scaled kernel
\(\mathscr K_\sigma\), or the local integral \(I_\sigma\) additionally use
\Cref{ass:standing}~\ref{ass:standing:measure} and \ref{ass:standing:density}. The positivity
item~\ref{ass:standing:positivity} is not used in this section.

\subsection{Scaled Taylor expansions of the local factors}
\label{app:proof_of_lem:adm_scaled_exp}

We begin with the Taylor estimates for the local chart factors and for the amplitude. These
estimates are obtained by expanding in the original corner variable \(\xi\), then inserting the
blow-up \(\xi=\sigma\zeta\). The admissible-class bounds follow from the uniform derivative bounds
on the compact chart region. Recall that \(\Delta(\theta, \xi)\) is defined by \eqref{eq:Delta} and
\(\mathcal A(\theta,\xi) =\rho \bigl(M(\theta,\xi)\bigr) \,J(\theta,\xi)\). For
\(x=\varphi(\theta)\), this two-argument form is the stratum-coordinate expression of the base-point
amplitude of \eqref{eq:defAx}: since \(\Phi_x(\xi)=M(\theta,\xi)\) and \(J_x(\xi)=J(\theta,\xi)\),
we have \(\mathcal A_x(\xi)=\rho\bigl(\Phi_x(\xi)\bigr)\,J_x(\xi)=\mathcal A(\theta,\xi)\). As with
\(\bfL\), \(\bfQ\), and \(\Phi_x\), we retain the two-argument notation \(\mathcal A(\theta,\xi)\)
only in the derivative estimates of this subsection, where \(\theta\) is differentiated; elsewhere
we use the frozen-base-point form \(\mathcal A_x\).

\begin{lemma}[Admissible scaled Taylor expansion of the chart displacement]
  \label{lem:adm_scaled_exp_delta}
  Assume \AssChart{} with \(r\geqslant 2\). Let \(A,\sigma_0>0\) and \(R_0\in(0,4R)\). Then there
  exists \(\mathscr R_\Delta\in \mathfrak P_{r-2}\) defined on \(\mathcal
  D_{R_0,A,\Compct_{\Strat},\sigma_0}\), such that
  \begin{equation}\label{eq:exp_Delta}
    \sigma^{-1}\Delta(\theta,\sigma\zeta)
    = \begin{bmatrix} \bfL(\theta)\\ \mathbf 0_{k\times m}
    \end{bmatrix}\,\zeta + \frac\sigma2\Diff_\xi^2 \Delta(\theta, 0)[\zeta,\zeta]
    + \sigma^2 \mathscr R_\Delta(\zeta;a,\varphi(\theta),\sigma).
  \end{equation}
\end{lemma}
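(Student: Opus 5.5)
We define \(\mathscr R_\Delta\) through Taylor's formula with integral remainder in the chart variable, applied to \(\xi\mapsto\Delta(\theta,\xi)\) at \(\xi=0\) and to third order. The domain \(\mathbb H_c^m\cap\mathbb B_{4R}^m\) is convex, and for \((\zeta,a,x,\sigma)\in\mathcal D_{R_0,A,\Compct_{\Strat},\sigma_0}\) the whole segment \(\{t\sigma\zeta: t\in[0,1]\}\) lies in \(\mathbb H_c^m\cap\mathbb B_{R_0}^m\), since \(\sigma\|\zeta\|<R_0<4R\). Lemma~\ref{lem:unif_par_comp_strat}, item~\ref{item:Cr-Delta}, guarantees that \(\partial_\xi^\gamma\Delta\) with \(|\gamma|=3\) exists as soon as \(r+1\geqslant 3\). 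We then set
\[
  \mathscr R_\Delta(\zeta;a,\varphi(\theta),\sigma)
  = \frac12\int_0^1(1-t)^2\,\Diff_\xi^3\Delta(\theta,t\sigma\zeta)[\zeta,\zeta,\zeta]\,\dd t .
\]
With \(\Delta(\theta,0)=0\) and \(\Diff_\xi\Delta(\theta,0)=[\,\bfL(\theta);\ \mathbf 0_{k\times m}\,]\) from item~\ref{item:Delta_at_0}, dividing the third-order Taylor formula by \(\sigma\) gives exactly \eqref{eq:exp_Delta}. The remainder \(\mathscr R_\Delta\) is independent of \(a\) and continuous, hence measurable, in \(\zeta\).

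It remains to show \(\mathscr R_\Delta\in\mathfrak P_{r-2}\), i.e.\ to bound its parameter derivatives \(\partial_\theta^\beta\partial_\sigma^j\) of total order at most \(r-2\). We differentiate under the integral sign, which is legitimate because the integrand's derivatives are continuous on the compact segment region.

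\begin{itemize}
\item A \(\theta\)-derivative falls on \(\partial_\theta^\beta\partial_\xi^3\Delta\). Since \(|\beta|+3\leqslant r+1\), the anisotropic regularity of item~\ref{item:Cr-Delta} makes this a uniformly bounded quantity. This is exactly where the \(\xi\)-surplus of regularity is needed: \(\Delta\) itself is only \(C^r\) jointly.
\item A \(\sigma\)-derivative falls on the argument \(t\sigma\zeta\). Each one produces one extra \(\xi\)-derivative together with a factor \(t\zeta\), so \(\partial_\theta^\beta\partial_\sigma^j\) yields \(t^j\,\partial_\theta^\beta\Diff_\xi^{3+j}\Delta(\theta,t\sigma\zeta)[\zeta^{\otimes(3+j)}]\). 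This needs \(|\beta|+3+j\leqslant r+1\), which is precisely \(|\beta|+j\leqslant r-2\).
\end{itemize}

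By the uniform bounds on the compact chart region, every such derivative is bounded by \(C_\gamma\|\zeta\|^{3+j}\leqslant C_\gamma(1+\|\zeta\|^{r+1})\). This is the required polynomial-growth estimate.

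The main point to watch is the derivative count. Each \(\sigma\)-derivative consumes one \(\xi\)-derivative, so the total budget \(r+1\) of \(\xi\)-regularity (mixed with \(\theta\)) limits the parameter order to \(r-2\). This works only by invoking the anisotropic bound in item~\ref{item:Cr-Delta} rather than the joint \(C^r\) bound, which would lose one order. The pullback convention of Remark~\ref{rem:parameter_variable_derivatives} makes the \(\theta\)-derivatives ordinary ones, and \(\Theta_{\Compct_{\Strat}}\subset\Theta^\circ_{\mathrm{out}}\) ensures that the bounds hold on an open neighbourhood.
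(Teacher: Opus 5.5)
Your proposal is correct and follows the paper's proof: both use the third-order Taylor formula with integral remainder in \(\xi\), the same remainder \(\mathscr R_\Delta=\int_0^1\frac{(1-t)^2}{2}\Diff_\xi^3\Delta(\theta,t\sigma\zeta)[\zeta,\zeta,\zeta]\,\dd t\), and differentiation under the integral sign. Both then use the count \(|\beta|+j+3\leqslant r+1\) together with the anisotropic bounds of \Cref{lem:unif_par_comp_strat}, item~\ref{item:Cr-Delta}, to obtain the polynomial bound \(C(1+\|\zeta\|^{r+1})\) placing \(\mathscr R_\Delta\) in \(\mathfrak P_{r-2}\).
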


\begin{proof}
Fix \(x=\varphi(\theta)\in\Compct_{\Strat}\). Taylor's theorem in the \(\xi\)-variable, with
integral remainder, gives for \(\|\xi\|<4R\),
\begin{align*}
  \Delta(\theta,\xi) &= \Delta(\theta,0) + \Diff_\xi \Delta(\theta,0) \xi
  +\frac12 \Diff^2_\xi\Delta(\theta,0)[\xi,\xi]
  + \int_0^1\frac{(1-t)^2}{2}\,\Diff_\xi^3\Delta (\theta,t\xi)[\xi,\xi,\xi]\,dt, \\
  &= \begin{bmatrix} \bfL(x)\xi \\ 0_k \end{bmatrix} + \frac12\Diff_\xi^2\Delta(\theta,0) [\xi,\xi]
  + \int_0^1\frac{(1-t)^2}{2}\, \Diff_\xi^3 \Delta(\theta, t\xi)[\xi,\xi,\xi]\,dt,
\end{align*}
where in the second line we used item~\ref{item:Delta_at_0} of \Cref{lem:unif_par_comp_strat}.
Substituting \(\xi = \sigma\zeta\) and dividing by \(\sigma\) yields \cref{eq:exp_Delta}, with
\begin{equation}\label{eq:def_Rg_precise}
  \mathscr R_\Delta(\zeta;a,x,\sigma)
  =
  \int_0^1\frac{(1-t)^2}{2}\,
  \Diff_\xi^3 \Delta(\theta,t\sigma\zeta)[\zeta,\zeta,\zeta]\,\dd t .
\end{equation}
It remains to check admissibility. Since \(\sigma\| \zeta\|<R_0<4R\), all points \(t\sigma\zeta\),
\(t\in[0,1]\), remain in the compact \(\xi\)-region \(\{\xi\in\mathbb H_c^m:\ \|\xi\|\leqslant
R_0\}.\) We shall use the anisotropic bounds of \Cref{lem:unif_par_comp_strat}: whenever at least
one \(\xi\)-derivative is present and the total \((\theta,\xi)\)-order is at most \(r+1\), the
corresponding derivative of \(\Delta\) is uniformly bounded on this compact region.

Let \(|\alpha|+|\beta|+j\leqslant r-2\). The remainder \(\mathscr R_\Delta\) is independent of
\(a\), so \(\partial_a^\alpha\mathscr R_\Delta=0\) unless \(\alpha=0\). For \(\alpha=0\),
differentiating under the integral sign gives
\begin{align*}
  \partial_\theta^\beta\partial_\sigma^j
  \mathscr R_\Delta(\zeta;a,\varphi(\theta),\sigma)
  &=
  \int_0^1
  \frac{(1-t)^2}{2}\,t^j\,
  \bigl(\partial_\theta^\beta\Diff_\xi^{j+3}\Delta\bigr)
  (\theta,t\sigma\zeta)
  [\zeta,\ldots,\zeta]\,\dd t .
\end{align*}
The multilinear form is evaluated on \(j+3\) copies of \(\zeta\). Since
\[
  |\beta|+(j+3)\leqslant r+1,
\]
the anisotropic regularity in item~\ref{item:Cr-Delta} of \Cref{lem:unif_par_comp_strat} applies.
Hence
\[
  \bigl|
  \partial_a^\alpha\partial_\theta^\beta\partial_\sigma^j
  \mathscr R_\Delta(\zeta;a,\varphi(\theta),\sigma)
  \bigr|
  \leqslant
  C_{\alpha,\beta,j}(1+\|\zeta\|^{j+3})
  \leqslant
  C'_{\alpha,\beta,j}(1+\|\zeta\|^{r+1}).
\]
Therefore \(\mathscr R_\Delta\in\mathfrak P_{r-2}\).
\end{proof}

\begin{lemma}[Admissible scaled Taylor expansion of the local amplitude]
  \label{lem:adm_scaled_exp_amplitude}
  Assume \ref{ass:standing:measure}, \ref{ass:standing:chart}, and~\ref{ass:standing:density} of
  \Cref{ass:standing} with \(r\geqslant 2\). Let \(A,\sigma_0>0\) and \(R_0\in(0,4R)\). Then there
  exists \(\mathscr R_{\mathcal A}\in \mathfrak P_{r-2}\) defined on \(\mathcal D_{R_0,A,\Compct_{
  \Strat}, \sigma_0}\), such that
  \begin{equation}\label{eq:ampl_exp}
    \mathcal A(\theta,\sigma\zeta) = \rho(\varphi (\theta))J(\theta,0)
    + \sigma \,\nabla_\xi \mathcal A(\theta, 0)^\top \zeta
    + \sigma^2 \mathscr R_{\mathcal A}(\zeta;a,\varphi( \theta),\sigma).
  \end{equation}
\end{lemma}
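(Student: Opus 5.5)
The proof follows that of \Cref{lem:adm_scaled_exp_delta}, with \(\Delta\) replaced by the amplitude \(\mathcal A(\theta,\xi)=\rho(M(\theta,\xi))\,J(\theta,\xi)\). We first establish the regularity of \(\mathcal A\) on the compact chart region. By \Cref{ass:standing}~\ref{ass:standing:density}, \(\rho\circ\Phi\) is \(C^r\) on \(\mathcal V\times[0,\varepsilon)^c\). Since \(\rho(M(\theta,\xi))=(\rho\circ\Phi)(\theta+\xi_{\mathcal S},\xi_{\mathcal C})\) is a composition with an affine map, it is jointly \(C^r\) in \((\theta,\xi)\). All its mixed derivatives up to order \(r\) are uniformly bounded on \(\overline\Theta{}^\circ_{\mathrm{out}}\times(\mathbb H_c^m\cap\overline{\mathbb B}_{R_0})\), because the set of chart arguments is compact, as in the proof of \Cref{lem:unif_par_comp_strat}. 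By item~\ref{item:Cr-J} of \Cref{lem:unif_par_comp_strat}, \(J\) is jointly \(C^r\) with bounded derivatives up to order \(r\). Leibniz' rule then shows that \(\mathcal A\) is jointly \(C^r\) and that all \(\partial_\theta^\beta\partial_\xi^\gamma\mathcal A\) with \(|\beta|+|\gamma|\leqslant r\) are uniformly bounded there.

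Next we Taylor expand in \(\xi\) to second order with integral remainder. This requires only \(r\geqslant2\):
\[
\mathcal A(\theta,\xi)=\mathcal A(\theta,0)+\nabla_\xi\mathcal A(\theta,0)^\top\xi+\int_0^1(1-t)\,\Diff_\xi^2\mathcal A(\theta,t\xi)[\xi,\xi]\,\dd t .
\]
Item~\ref{item:Cr-J} gives \(J(\theta,0)=|\det\bfL(\theta)|\), and \(M(\theta,0)=\varphi(\theta)\), so \(\mathcal A(\theta,0)=\rho(\varphi(\theta))J(\theta,0)\). Substituting \(\xi=\sigma\zeta\) yields \eqref{eq:ampl_exp} with
\[
\mathscr R_{\mathcal A}(\zeta;a,\varphi(\theta),\sigma)=\int_0^1(1-t)\,\Diff_\xi^2\mathcal A(\theta,t\sigma\zeta)[\zeta,\zeta]\,\dd t .
\]
This remainder is independent of \(a\), so all \(a\)-derivatives of nonzero order vanish. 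It is continuous in \(\zeta\), hence measurable.

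Finally we check that \(\mathscr R_{\mathcal A}\in\mathfrak P_{r-2}\). Take \(|\beta|+j\leqslant r-2\). Each \(\sigma\)-derivative falling on \(\Diff^2_\xi\mathcal A(\theta,t\sigma\zeta)\) produces a factor \(t\) and one further \(\xi\)-derivative contracted with \(\zeta\). Differentiation under the integral sign is justified on the compact region \(\|t\sigma\zeta\|\leqslant R_0<4R\), and gives
\[
\partial_\theta^\beta\partial_\sigma^j\mathscr R_{\mathcal A}=\int_0^1(1-t)\,t^j\,\bigl(\partial_\theta^\beta\Diff_\xi^{j+2}\mathcal A\bigr)(\theta,t\sigma\zeta)[\zeta,\ldots,\zeta]\,\dd t .
\]
The total order is \(|\beta|+j+2\leqslant r\), which lies within the available \(C^r\) bounds. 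The integrand is therefore bounded by \(C(1+\|\zeta\|^{j+2})\leqslant C'(1+\|\zeta\|^{r})\), uniformly on \(\mathcal D_{R_0,A,\Compct_{\Strat},\sigma_0}\). This proves the claim.

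The only point that needs care is the derivative count. Unlike \(\Delta\), the amplitude has no anisotropic surplus in \(\xi\): \(\rho\circ\Phi\) is only \(C^r\), and \(J\) already uses one derivative of the \(C^{r+1}\) chart. The loss of two orders, from \(r\) to \(r-2\), is therefore exactly what the bounded joint \(C^r\) regularity of \(\mathcal A\) permits, and no better.
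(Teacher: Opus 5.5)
Your proposal is correct and follows the paper's proof essentially step for step: joint \(C^r\) regularity of \(\mathcal A=\rho(M)\,J\) from the density assumption and the Jacobian item of \Cref{lem:unif_par_comp_strat}, a second-order Taylor expansion in \(\xi\) with integral remainder, the substitution \(\xi=\sigma\zeta\), and differentiation under the integral sign, which produces \(j+2\) copies of \(\zeta\) with total order \(|\beta|+j+2\leqslant r\). Your closing remark is also right: unlike \(\Delta\), the amplitude has no anisotropic surplus in \(\xi\), and this is why exactly two orders are lost.
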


\begin{proof}
Fix \(x=\varphi(\theta)\in\Compct_{\Strat}\) and consider the amplitude \(\mathcal A(\theta,\xi)
=\rho(M(\theta,\xi))\,J(\theta,\xi)\).
\AssDens{} yields \(\rho\circ M \in C^{r}\), and \(J\) is \(C^{r}\) by
\Cref{lem:unif_par_comp_strat}. Hence, the map \((\theta,\xi)\mapsto\mathcal A(\theta, \xi)\) is
jointly \(C^{r}\). Since \(r\geqslant 2\), Taylor's theorem at \(\xi=0\) gives
\begin{equation*}
  \mathcal A(\theta,\xi) = \mathcal A(\theta, 0) + \Diff_\xi\mathcal A(\theta, 0)[\xi]
  + \int_0^1(1-t)\,\Diff_\xi^2\mathcal A(\theta, t\xi) [\xi,\xi]\,dt.
\end{equation*}
With \(\xi=\sigma\zeta\), this becomes \cref{eq:ampl_exp}, where
\begin{equation}\label{eq:def_RA_precise}
  \mathscr R_{\mathcal A}(\zeta;a,x,\sigma)
  =\int_0^1 (1-t)\,\Diff_\xi^2\mathcal A(\theta, t\sigma\zeta) [\zeta,\zeta]\,dt.
\end{equation}
The same compactness argument applies. Derivatives in \(a\) vanish unless \(\alpha=0\). For
\(\alpha=0\), differentiating \cref{eq:def_RA_precise} under the integral sign gives terms
involving
\[
  \partial_\theta^\beta\Diff_\xi^{j+2}\mathcal A(\theta,t\sigma\zeta)[\zeta,\ldots,\zeta],
\]
evaluated on \(j+2\) copies of \(\zeta\). Since \(|\beta|+j+2\leqslant r\), the \(C^r\)-regularity
of \(\mathcal A\) suffices. Therefore
\begin{equation*}
  \bigl| \partial_a^\alpha\partial_\theta^\beta\partial_\sigma^j
  \mathscr R_{\mathcal A}(\zeta;a,\varphi(\theta),\sigma) \bigr|
  \leqslant C_{\alpha,\beta,j}(1+\|\zeta\|^{2+j}) \leqslant C'_{\alpha,\beta,j}(1+\|\zeta\|^{r}),
\end{equation*}
for all \(|\alpha|+|\beta|+j\leqslant r-2\). Hence \(\mathscr R_{\mathcal A}\in \mathfrak P_{r-2}\).
\end{proof}

\subsection{Expansion of the scaled exponent}
\label{app:proof_lem:adm_expon_exp}

We next insert the scaled chart expansions into the squared-distance exponent. The leading term is
the linearized exponent \(\Psi\), the first correction is the cubic term governed by \(\Lambda\),
see \cref{eq:Lambda} below. All remaining terms are collected into an admissible polynomial
remainder. The first variation of the exponent is the polynomial
\(\Lambda_x(\zeta;a)\) defined by
\begin{equation}\label{eq:Lambda}
  \Lambda_x(\zeta;a)= \langle \bfL(\theta)\zeta,\ \Diff^2_\xi \Delta_{1:m}(\theta,0)[\zeta,\zeta]
  \rangle - \langle a ,\ \Diff^2_\xi\Delta_{(m-c+1):d} (\theta,0)[\zeta,\zeta]\rangle,
\end{equation}
where \(a=(a_{\mathcal C},a_{\mathcal N})\in\mathbb R^c \times\mathbb R^k\) and
\(\Diff^2_\xi\Delta_{i:j}\) is the bilinear form obtained by differentiating twice with respect to
\(\xi\) the coordinates included between \(i\) and \(j\) of the vector field \(\Delta(\theta,\xi)\).
The normalization is chosen so that the coefficient of \(\sigma\) in \(\Psi_\sigma\) is
\(\Lambda_x/2\).

\begin{lemma}[Admissible expansion of the exponent]
  \label{lem:admi_expo_exp}
  Assume \AssChart{} with \(r\geqslant 2\). Let \(A,\sigma_0>0\) and \(R_0\in(0,4R)\). Then there
  exists \(\mathscr R_\Psi\in \mathfrak P_{r-2}\) defined on \(\mathcal
  D_{R_0,A,\Compct_{\Strat},\sigma_0}\), such that, for \(x = \varphi(\theta)\),
  \begin{equation}\label{eq:exponent_exp}
    \Psi_\sigma(\zeta;a,x) = \frac{\| y_\sigma(a,x)
    - M(\theta, \sigma\zeta)\|^2}{2\sigma^2} =\Psi(\zeta;a,x) +
    \frac{\sigma}{2}\,\Lambda_x (\zeta;a) + \sigma^2 \mathscr R_\Psi(\zeta;a,x,\sigma).
  \end{equation}
  Moreover, there exists a constant \(C_A>0\) such that, uniformly on \(\mathcal
  D_{R_0,A,\Compct_{\Strat},\sigma_0}\),
  \begin{equation}\label{eq:exponent_remainder_pointwise_degree6}
    |\mathscr R_\Psi(\zeta;a,x,\sigma)| \le C_A(1+\|\zeta\|^6).
  \end{equation}
\end{lemma}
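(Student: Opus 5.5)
We start from the exact algebraic identity already used in the main text. Write \(x=\varphi(\theta)\) and \(\bar a=[\,0_{m-c};\,a\,]\in\mathbb R^d\). By \Cref{lem:rec_map_prop}, \(\bfQ(x)^\top(y_\sigma(a,x)-x)=\sigma\bar a\). By \eqref{eq:Delta}, \(\bfQ(x)^\top(M(\theta,\sigma\zeta)-x)=\Delta(\theta,\sigma\zeta)\). Since \(\bfQ(x)\in\mathsf O(d)\), we get
\[
2\Psi_\sigma(\zeta;a,x)=\bigl\|\bar a-\sigma^{-1}\Delta(\theta,\sigma\zeta)\bigr\|^2 .
\]
We then insert \Cref{lem:adm_scaled_exp_delta}, which gives
\[
\sigma^{-1}\Delta(\theta,\sigma\zeta)=V_0+\sigma V_1+\sigma^2\mathscr R_\Delta ,
\]
with \(V_0=[\,\bfL(\theta);\,\mathbf 0\,]\zeta\), \(V_1=\tfrac12\Diff_\xi^2\Delta(\theta,0)[\zeta,\zeta]\) and \(\mathscr R_\Delta\in\mathfrak P_{r-2}\). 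Writing \(W=\bar a-V_0\) and expanding the square gives
\[
2\Psi_\sigma=\|W\|^2-2\sigma\langle W,V_1\rangle+\sigma^2\Bigl(\|V_1\|^2-2\langle W,\mathscr R_\Delta\rangle+2\sigma\langle V_1,\mathscr R_\Delta\rangle+\sigma^2\|\mathscr R_\Delta\|^2\Bigr).
\]

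Next we identify the first two terms. Decompose \(W\) into blocks \(\mathbb R^{m-c}\times\mathbb R^c\times\mathbb R^k\), namely \(W=(-\bfL_{\mathcal S}\zeta,\ a_{\mathcal C}-\bfL_{\mathcal C}\zeta,\ a_{\mathcal N})\). Then \(\|W\|^2=2\Psi(\zeta;a,x)\) by \eqref{eq:Psi0}. For the linear term,
\[
-2\langle W,V_1\rangle=\langle V_0-\bar a,\ \Diff_\xi^2\Delta(\theta,0)[\zeta,\zeta]\rangle .
\]
Because \(V_0\) has zero normal block, this equals
\[
\langle \bfL\zeta,\ \Diff^2_\xi\Delta_{1:m}(\theta,0)[\zeta,\zeta]\rangle-\langle a,\ \Diff^2_\xi\Delta_{(m-c+1):d}(\theta,0)[\zeta,\zeta]\rangle ,
\]
which is exactly \(\Lambda_x(\zeta;a)\) of \eqref{eq:Lambda}. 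Dividing by \(2\) produces \(\Psi+\tfrac\sigma2\Lambda_x\), and the remainder is
\[
\mathscr R_\Psi=\tfrac12\Bigl(\|V_1\|^2-2\langle W,\mathscr R_\Delta\rangle+2\sigma\langle V_1,\mathscr R_\Delta\rangle+\sigma^2\|\mathscr R_\Delta\|^2\Bigr).
\]

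Admissibility follows from the closure rules of \Cref{lem:prop_adm_classes}. Each of \(W\), \(V_1\) is polynomial in \((\zeta,a)\) and has coefficients \(\bfL(\theta)\), \(\Diff_\xi^2\Delta(\theta,0)\). These coefficients are \(C^r\) and \(C^{r-1}\) in \(\theta\) with bounded derivatives by \Cref{lem:unif_par_comp_strat} (anisotropic bound with \(|\gamma|=2\)). So \(W,V_1\in\mathfrak P_{r-1}\subset\mathfrak P_{r-2}\), with the \(a\)-dependence polynomial and \(\|a\|\leqslant A\). Products of \(\mathfrak P_{r-2}\) functions stay in \(\mathfrak P_{r-2}\), and multiplication by \(\sigma^q\) preserves the class (items~\ref{item:products} and \ref{item:mult_div_power_sigma}). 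Hence \(\mathscr R_\Psi\in\mathfrak P_{r-2}\).

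The only delicate point is the explicit pointwise bound \eqref{eq:exponent_remainder_pointwise_degree6}. Generic \(\mathfrak P\) membership gives an unspecified degree, for instance \(\|\mathscr R_\Delta\|^2\sim\|\zeta\|^6\) but multiplied by \(\sigma^2\). Here we use \(\sigma\|\zeta\|<R_0\) to trade powers of \(\sigma\) for powers of \(\zeta\). The four pieces are bounded as follows:
\[
\|V_1\|^2\lesssim\|\zeta\|^4,\qquad |\langle W,\mathscr R_\Delta\rangle|\lesssim(1+\|\zeta\|)\|\zeta\|^3 ,
\]
using the zeroth-order estimate \(|\mathscr R_\Delta|\lesssim\|\zeta\|^3\) from \eqref{eq:def_Rg_precise}. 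Further,
\[
\sigma|\langle V_1,\mathscr R_\Delta\rangle|\lesssim\sigma\|\zeta\|^5\leqslant R_0\|\zeta\|^4,\qquad \sigma^2\|\mathscr R_\Delta\|^2\lesssim\sigma^2\|\zeta\|^6\leqslant\|\zeta\|^6 .
\]
The last bound is crude but suffices, and could be sharpened with \(R_0\). All constants depend only on \(A\), the uniform bounds on \(\bfL\) and \(\Diff^2_\xi\Delta\), and the bound on \(\Diff^3_\xi\Delta\) on \(\{\|\xi\|\leqslant R_0\}\). This yields \(|\mathscr R_\Psi|\leqslant C_A(1+\|\zeta\|^6)\).
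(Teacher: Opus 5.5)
Your proposal is correct and is essentially the paper's own proof. The paper expands the same square $\|[0_{m-c};a]-\sigma^{-1}\Delta(\theta,\sigma\zeta)\|^2$, only flipping the sign of the tangential block, so that its $Q_0,Q_1,Q_2$ are your $W,-V_1,-\mathscr R_\Delta$ up to that block sign; it identifies $\Psi$ and $\Lambda_x/2$ as you do and obtains the identical remainder. It then proves admissibility and the degree-six bound with the same closure rules and the same termwise estimates $\|W\|\leqslant C_A(1+\|\zeta\|)$, $\|V_1\|\leqslant C\|\zeta\|^2$, $\|\mathscr R_\Delta\|\leqslant C\|\zeta\|^3$. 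One small slip: $\sigma^2\|\zeta\|^6\leqslant\|\zeta\|^6$ only holds for $\sigma_0\leqslant 1$, so absorb $\sigma_0^2$ into $C_A$, as the paper does.
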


\begin{proof}
Define
\begin{align*}
  Q_0(\zeta;a,x) &= \bigl[(\bfL(x)\zeta)_{1:(m-c)}; \,-(\bfL(x)\zeta)_{(m-c+1):m};\, 0_k
  \bigr] + [0_{m-c}; a], \\
  Q_1(\zeta;a,x) &= \frac12\bigl[\Diff^2_\xi \Delta_{1:(m-c)} (\theta,0)[\zeta,\zeta];
  - \Diff^2_\xi\Delta_{(m-c+1):d} (\theta,0)[\zeta,\zeta] \bigr], \\
  Q_2(\zeta;a,x,\sigma) &= \bigl[\mathscr R_\Delta (\zeta; a,x,\sigma)_{1:(m-c)};
  -\mathscr R_\Delta(\zeta; a,x, \sigma)_{(m-c+1):d} \bigr].
\end{align*}
Using the expansion of \(\Delta\) provided by \eqref{eq:exp_Delta}, and the definition of
\(\Psi_\sigma\), we get
\begin{align*}
  2\sigma^2\Psi_\sigma(\zeta;a,x) &= \big\|\bfQ(x)^\top (y_\sigma (a,x)
  - x)-\bfQ(x)^\top(M(\theta,\sigma\zeta)-x) \big\|^2 \\
  &= \sigma^2\big\|[0_{m-c}; a] - \sigma^{-1} \Delta(\theta,\sigma\zeta)\big\|^2 \\
  &=\sigma^2\|Q_0+\sigma Q_1+\sigma^2Q_2\|^2.
\end{align*}
Expanding the square gives
\begin{equation}\label{eq:exponent_exp_via_Q}
  \Psi_\sigma = \frac12\|Q_0\|^2 + \sigma Q_0^\top Q_1 + \sigma^2\left( Q_0^\top Q_2
  + \frac12\|Q_1\|^2 + \sigma Q_1^\top Q_2 + \frac{\sigma^2}{2}\|Q_2\|^2 \right).
\end{equation}
The zeroth-order term \((1/2)\|Q_0\|^2\) is exactly \(\Psi\). For the coefficient of the linear term
in \(\sigma\),
\begin{equation*}
  2Q_0^\top Q_1 = \Lambda_x(\zeta;a).
\end{equation*}
Thus \(Q_0^\top Q_1=\Lambda_x/2\). The desired expansion \eqref{eq:exponent_exp} now follows from
\cref{eq:exponent_exp_via_Q} by setting
\begin{equation}\label{eq:def_RPsi_precise}
  \mathscr R_\Psi = Q_0^\top Q_2 + \frac12\|Q_1\|^2 + \sigma Q_1^\top Q_2
  + \frac{\sigma^2}{2}\|Q_2\|^2.
\end{equation}
We now check admissibility. By construction and
\Cref{lem:ad_frame_comp_strat,lem:unif_par_comp_strat,lem:adm_scaled_exp_delta},
\begin{equation*}
  Q_0\in\mathfrak P_{r}, \qquad Q_1\in\mathfrak P_{r-1}, \qquad Q_2\in\mathfrak P_{r-2}.
\end{equation*}
The closure properties of \Cref{lem:prop_adm_classes} give
\begin{equation*}
  Q_0^\top Q_2\in\mathfrak P_{r-2}, \qquad \|Q_1\|^2\in\mathfrak P_{r-2},
  \qquad Q_1^\top Q_2\in\mathfrak P_{r-2}, \qquad \|Q_2\|^2\in\mathfrak P_{r-2}.
\end{equation*}
Multiplication by the bounded factors \(\sigma\) and \(\sigma^2\) does not change admissibility.
Therefore
\begin{equation*}
  \mathscr R_\Psi\in\mathfrak P_{r-2}.
\end{equation*}
Finally, we prove the sharper pointwise estimate. If \(\|a\|\leqslant A\), then uniformly in
\(x\in\Compct_{\Strat}\), \(\|Q_0(\zeta;a,x)\|\leqslant C_A(1+\|\zeta\|),
\|Q_1(\zeta;a,x)\|\leqslant C\|\zeta\|^2.\) Moreover, the explicit formula \cref{eq:def_Rg_precise},
together with \(\sigma\|\zeta\|<R_0\), imply \(\|Q_2(\zeta;a,x,\sigma)\|\leqslant C\|\zeta\|^3.\)
Substituting these three estimates into \cref{eq:def_RPsi_precise} and using
\(0<\sigma\leqslant\sigma_0\), we obtain
\begin{equation*}
  |\mathscr R_\Psi| \leqslant C_A(1+\|\zeta\|)\|\zeta\|^3 + C\|\zeta\|^4 + C\sigma\|\zeta\|^5
  + C\sigma^2\|\zeta\|^6 \leqslant C_A(1+\|\zeta\|^6),
\end{equation*}
after increasing \(C_A\) if necessary. This proves \cref{eq:exponent_remainder_pointwise_degree6}.
\end{proof}

\begin{lemma}[Polynomial bounds for the exact scaled exponent]
  \label{lem:exact_scaled_exponent_poly_bounds}
  Assume \AssChart. Let \(A,\sigma_0>0\), \(R_0\in(0,4R)\) and \(0\leqslant s\leqslant r\). Then the
  exact scaled
  exponent
  \[
    \Psi_\sigma(\zeta;a,x)
    =
    \frac{\|y_\sigma(a,x)-M(\theta,\sigma\zeta)\|^2}{2\sigma^2},
    \qquad x=\varphi(\theta),
  \]
  belongs to \(\mathfrak P_s\) on
  \(\mathcal D_{R_0,A,\Compct_{\Strat},\sigma_0}\).
\end{lemma}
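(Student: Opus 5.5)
Rather than Taylor expanding the exponent, which would cost derivatives, I would work directly with the exact representation used in the proof of \Cref{lem:admi_expo_exp}:
\[
  \Psi_\sigma(\zeta;a,x)=\tfrac12\bigl\|[0_{m-c};a]-\sigma^{-1}\Delta(\theta,\sigma\zeta)\bigr\|^2 .
\]
This identity only uses the orthogonality of \(\bfQ(x)\) and the relation \(\bfQ(x)^\top(y_\sigma(a,x)-x)=(0_{m-c},\sigma a_{\mathcal C},\sigma a_{\mathcal N})\) from \Cref{lem:rec_map_prop}. The vector \([0_{m-c};a]\) is smooth in \(a\), independent of \((\theta,\sigma)\), and bounded by \(A\), so it lies in \(\mathfrak P_s\) for every \(s\). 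By the product and vector-space rules of \Cref{lem:prop_adm_classes}, it therefore suffices to show that the vector
\[
  V(\zeta;\theta,\sigma)=\sigma^{-1}\Delta(\theta,\sigma\zeta)
\]
belongs to \(\mathfrak P_s\) on \(\mathcal D_{R_0,A,\Compct_{\Strat},\sigma_0}\) for \(0\leqslant s\leqslant r\). The squared norm is then a finite sum of products of \(\mathfrak P_s\) functions.

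To get \(V\in\mathfrak P_s\), I would use the first-order integral Taylor formula with \(\Delta(\theta,0)=0\):
\[
  V(\zeta;\theta,\sigma)=\int_0^1 \Diff_\xi\Delta(\theta,t\sigma\zeta)[\zeta]\,\dd t .
\]
This representation removes the singular factor \(\sigma^{-1}\) and holds on the whole domain, because \(\sigma\|\zeta\|<R_0<4R\) keeps \(t\sigma\zeta\) inside the compact chart region of \Cref{lem:unif_par_comp_strat}. Derivatives in \(a\) vanish. For \(|\beta|+j\leqslant s\), differentiating under the integral sign gives
\[
  \partial_\theta^\beta\partial_\sigma^j V=\int_0^1 t^j\,\bigl(\partial_\theta^\beta \Diff_\xi^{j+1}\Delta\bigr)(\theta,t\sigma\zeta)[\zeta,\ldots,\zeta]\,\dd t,
\]
where the multilinear form takes \(j+1\) copies of \(\zeta\). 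This is the step that needs care. At least one \(\xi\)-derivative is present, and the total order is \(|\beta|+j+1\leqslant s+1\leqslant r+1\). The anisotropic regularity in item~\ref{item:Cr-Delta} of \Cref{lem:unif_par_comp_strat} therefore applies and bounds the derivative uniformly. Using only the plain \(C^r\) bound on \(\Delta\) would force \(s\leqslant r-1\); the extra \(\xi\)-derivative available in the anisotropic statement is what allows \(s=r\). The resulting estimate is \(C(1+\|\zeta\|^{s+1})\).

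Measurability in \(\zeta\) and \(C^s\) dependence on \((\theta,\sigma)\) follow from the continuity of these integrands. Combining with the product rule yields \(\Psi_\sigma\in\mathfrak P_s\), with polynomial degree at most \(2(s+2)\).
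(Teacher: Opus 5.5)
Your proposal is correct and follows the paper's proof step for step: the exact rewriting \(\Psi_\sigma=\tfrac12\|[0_{m-c};a]-\sigma^{-1}\Delta(\theta,\sigma\zeta)\|^2\), the integral representation of \(\sigma^{-1}\Delta(\theta,\sigma\zeta)\) obtained from \(\Delta(\theta,0)=0\), differentiation under the integral sign using the anisotropic \(\xi\)-regularity of \Cref{lem:unif_par_comp_strat} (which is exactly what allows \(s=r\)), and closure of \(\mathfrak P_s\) under sums and products. The degree \(2(s+2)\) you state is looser than the \(2(s+1)\) the argument gives, but as an upper bound it is harmless.
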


\begin{proof}
By the rescaled tubular identity,
\[
  \bfQ(x)^\top(y_\sigma(a,x)-x)
  =
  \sigma[0_{m-c};a_{\mathcal C};a_{\mathcal N}].
\]
Using the definition of \(\Delta\), we therefore have
\[
  \Psi_\sigma(\zeta;a,x)
  =
  \frac12
  \left\|
    [0_{m-c};a_{\mathcal C};a_{\mathcal N}]
    -
    \sigma^{-1}\Delta(\theta,\sigma\zeta)
  \right\|^2 .
\]
Set \( V_\sigma(\zeta;\theta) = \sigma^{-1}\Delta(\theta,\sigma\zeta)\). Since
\(\Delta(\theta,0)=0\), the fundamental theorem of calculus gives the exact identity
\[
  V_\sigma(\zeta;\theta)
  =
  \int_0^1
  \Diff_\xi\Delta(\theta,t\sigma\zeta)[\zeta]\,\dd t .
\]
We first prove that \(V_\sigma\in\mathfrak P_s\). The function \(V_\sigma\) is independent of \(a\),
so all nonzero \(a\)-derivatives are trivial. Let \(|\beta|+j\leqslant s\). Differentiating under
the integral sign gives
\[
  \partial_\theta^\beta\partial_\sigma^j
  V_\sigma(\zeta;\theta)
  =
  \int_0^1
  t^j
  \bigl(\partial_\theta^\beta\Diff_\xi^{j+1}\Delta\bigr)
  (\theta,t\sigma\zeta)
  [\zeta,\ldots,\zeta]\,\dd t ,
\]
where the multilinear form is evaluated on \(j+1\) copies of \(\zeta\). Because
\[
  |\beta|+(j+1)\leqslant s+1\leqslant r+1
\]
and \(j+1\geqslant1\), the anisotropic regularity in \Cref{lem:unif_par_comp_strat} applies. Hence
\[
  \bigl|
  \partial_\theta^\beta\partial_\sigma^j
  V_\sigma(\zeta;\theta)
  \bigr|
  \leqslant
  C_{\beta,j}(1+\|\zeta\|^{j+1})
  \leqslant
  C'_{\beta,j}(1+\|\zeta\|^{s+1}),
\]
uniformly on \(\mathcal D_{R_0,A,\Compct_{\Strat},\sigma_0}\). Thus \(V_\sigma\in\mathfrak P_s\).

The affine vector \( a\mapsto [0_{m-c};a_{\mathcal C};a_{\mathcal N}] \) also belongs to \(\mathfrak
P_s\) on the same domain. Since \(\mathfrak P_s\) is stable under addition and multiplication by
\Cref{lem:prop_adm_classes}, the squared norm
\[
  \frac12\left\|
    [0_{m-c};a_{\mathcal C};a_{\mathcal N}]-V_\sigma(\zeta;\theta)
  \right\|^2
\]
belongs to \(\mathfrak P_s\). This proves the claim.
\end{proof}

\subsection{Support cutoffs and zero extension}
\label{subsec:app_cutoff_extension_tools}

Before proving Gaussian estimates on the full scaled domain, we present two elementary localization
facts: the growing cutoff \(\vartheta_\sigma\), which restricts to \(\|\zeta\|\leqslant
2\sigma^{-1/4}\), and a zero-extension principle for kernels supported away from the truncation
boundary. Let
\[
  \vartheta(w)=\chi(\|w\|^2), \qquad w\in\mathbb R^m,
\]
and define the growing cutoff
\[
  \vartheta_\sigma(\zeta)
  =
  \vartheta(\sigma^{1/4}\zeta)
  =
  \chi(\sigma^{1/2}\|\zeta\|^2).
\]
Then \(\vartheta_\sigma=1\) for \(\|\zeta\|\leqslant\sigma^{-1/4}\) and
\(\vartheta_\sigma=0\) for \(\|\zeta\|\geqslant2\sigma^{-1/4}\).

\begin{lemma}[Admissibility of the growing cutoff]
  \label{lem:admissibility_of_growing_cutoff}
  Let \(A,\sigma_0>0\). Then \(\vartheta_\sigma\) defined on
  \(\mathcal D_{\infty, A,\Compct_{\Strat},\sigma_0}\) belongs to \(\mathfrak P_s\) for every
  \(s\geqslant 0\).
\end{lemma}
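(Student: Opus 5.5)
The plan is to verify the defining estimate of \(\mathfrak P_s\) directly. We need a bound \(|\partial_{a,\theta,\sigma}^\gamma\vartheta_\sigma(\zeta)|\leqslant C_\gamma(1+\|\zeta\|^n)\) for all \(|\gamma|\leqslant s\), uniformly on \(\mathcal D_{\infty,A,\Compct_{\Strat},\sigma_0}\).

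Measurability in \(\zeta\) is clear, since \(\vartheta_\sigma\) is continuous (indeed smooth) in \(\zeta\). The function does not depend on \((a,\theta)\), so every derivative with \(\alpha\neq0\) or \(\beta\neq0\) vanishes. It therefore suffices to treat pure \(\sigma\)-derivatives \(\partial_\sigma^j\vartheta_\sigma(\zeta)\) with \(j\leqslant s\). The case \(j=0\) is immediate, since \(0\leqslant\vartheta_\sigma\leqslant1\).

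For \(j\geqslant1\), write \(\vartheta_\sigma(\zeta)=\chi(t)\) with \(t=\sigma^{1/2}\|\zeta\|^2\). Applying the one-variable Faà di Bruno formula, \(\partial_\sigma^j\vartheta_\sigma\) is a finite sum of terms
\[
\chi^{(p)}(t)\prod_{i=1}^{p}\partial_\sigma^{j_i}t,\qquad 1\leqslant p\leqslant j,\quad \sum_i j_i=j,\ j_i\geqslant1 .
\]
Each factor satisfies \(\partial_\sigma^{j_i}t=c_{j_i}\sigma^{1/2-j_i}\|\zeta\|^2\), which gives the term the form
\[
c\,\chi^{(p)}(t)\,\sigma^{p/2-j}\|\zeta\|^{2p}.
\]

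The one genuine difficulty is the negative power \(\sigma^{p/2-j}\), which blows up as \(\sigma\downarrow0\). The way around it is support localization. Because \(\chi^{(p)}\) with \(p\geqslant1\) is supported in \(t\in[1,4]\), every such term vanishes unless \(1\leqslant\sigma^{1/2}\|\zeta\|^2\leqslant4\). On that set \(\sigma^{-1/2}\leqslant\|\zeta\|^2\), so \(\sigma^{-1}\leqslant\|\zeta\|^4\). Since \(j-p/2\geqslant0\), we can trade the singular power of \(\sigma\) for a power of \(\|\zeta\|\):
\[
\sigma^{p/2-j}=(\sigma^{-1})^{\,j-p/2}\leqslant\|\zeta\|^{4j-2p}.
\]

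Each term is then bounded by \(\|\chi^{(p)}\|_\infty\,|c|\,\|\zeta\|^{4j}\), and \(\|\chi^{(p)}\|_\infty\) is finite because \(\chi\) is smooth and compactly supported. Summing over the finitely many terms gives
\[
|\partial_\sigma^j\vartheta_\sigma(\zeta)|\leqslant C_j(1+\|\zeta\|^{4s}),
\]
uniformly in \(\sigma\in(0,\sigma_0]\) and \(\zeta\in\mathbb H_c^m\), with no restriction on \(\sigma\|\zeta\|\) needed. This holds for every \(s\), which is the assertion \(\vartheta_\sigma\in\mathfrak P_s\).
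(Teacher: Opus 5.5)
Your proof is correct and follows essentially the same route as the paper: reduce to pure \(\sigma\)-derivatives, observe that every term in which the cutoff is actually differentiated is supported where \(\sigma^{1/2}\|\zeta\|^2\geqslant 1\), and use \(\sigma^{-1}\leqslant\|\zeta\|^4\) there to turn the singular powers of \(\sigma\) into the bound \(C_j\|\zeta\|^{4j}\leqslant C_j'(1+\|\zeta\|^{4s})\). The only difference is cosmetic. You apply the one-variable Fa\`a di Bruno formula to the scalar argument \(t=\sigma^{1/2}\|\zeta\|^2\) and keep the sharper factor \(\sigma^{p/2-j}\), while the paper differentiates \(\vartheta(\sigma^{1/4}\zeta)\) and uses the cruder factor \(\sigma^{-j}\); both give the same estimate.
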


\begin{proof}
Since the function \(\vartheta_\sigma\) does not depend on \((a,x)\), only \(\sigma\)-derivatives
have to be estimated. The case \(j=0\) is straightforward from \(|\vartheta_\sigma|\leqslant 1\).
For \(1\leqslant j\leqslant s\), repeated differentiation of \(\vartheta( \sigma^{1/4}\zeta)\)
produces a finite sum of terms of the form
\begin{equation*}
  \sigma^{-j}\, P_{j,\mu}(\sigma^{1/4}\zeta)\, \partial^\mu\vartheta(\sigma^{1/4}\zeta),
  \qquad 1\leqslant |\mu|\leqslant j,
\end{equation*}
where \(P_{j,\mu}\) is a polynomial. Since \(\partial^\mu\vartheta(\sigma^{1/4}\zeta)\) is supported
in \(1\leqslant \|\sigma^{1/4}\zeta\|\leqslant 2\), on the set where it is nonzero
\(P_{j,\mu}(\sigma^{1/4}\zeta)\) is uniformly bounded. Therefore
\begin{equation*}
  |\partial_\sigma^j\vartheta_\sigma(\zeta)|
  \leqslant C_j\sigma^{-j} \mathds 1(1\leqslant \| \sigma^{1/4}\zeta\|) \leqslant C_j\|\zeta\|^{4j}
  \leqslant C_j'(1+\|\zeta\|^{4s}),
\end{equation*}
which is the required admissible estimate.
\end{proof}

\begin{lemma}[Cutoff extension to the global domain]
  \label{lem:zero_extension_cutoff_local_kernels}
  Let \(s\geqslant0\) be an integer, \(R_0\in(0,4R)\), \(A>0\) and \(\sigma_0>0\). Suppose that
  \(F(\zeta;a,x,\sigma)\) defined on \(\mathcal D_{R_0,A,\Compct_{\Strat},\sigma_0} \) belongs to
  \(\mathfrak P_s\) or to \(\mathfrak G_s\), and is supported away from the truncation boundary, in
  the sense that, for some \(R_{\mathrm{supp}}\in(0,R_0)\),
  \begin{equation*}
    F(\zeta;a,x,\sigma)=0
    \qquad\text{whenever}\qquad R_{\mathrm{supp}}\leqslant \sigma\|\zeta\|<R_0 .
  \end{equation*}
  Let \(F_\infty\) be the zero extension of \(F\) to \(\mathcal
  D_{\infty,A,\Compct_{\Strat},\sigma_0}\). Then \(F_\infty\in\mathfrak P_s\) or
  \(F_\infty\in\mathfrak G_s\).
\end{lemma}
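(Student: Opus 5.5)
The zero extension \(F_\infty\) is defined by \(F_\infty=F\) on \(\mathcal D_{R_0,A,\Compct_{\Strat},\sigma_0}\) and \(F_\infty=0\) where \(\sigma\|\zeta\|\geqslant R_0\). The plan is to check the defining requirements of \Cref{def:admissible_classes} one at a time: measurability in \(\zeta\), \(C^s\) dependence on the parameters for fixed \(\zeta\), and the derivative bounds.

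Measurability in \(\zeta\) is immediate, since \(F_\infty\) is \(F\) on a measurable set and \(0\) on its complement.

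The key step is parameter regularity. Fix \(\zeta\). If \(\zeta=0\), every \(\sigma\in(0,\sigma_0]\) lies in the original domain, so there is nothing to prove. If \(\zeta\neq0\), the parameter set splits at \(\sigma_*=R_0/\|\zeta\|\).
\begin{itemize}
\item For \(\sigma<R_{\mathrm{supp}}/\|\zeta\|\), we have \(F_\infty=F\), which is \(C^s\).
\item For \(\sigma\geqslant R_{\mathrm{supp}}/\|\zeta\|\), we have \(F_\infty\equiv0\). On the overlap \(R_{\mathrm{supp}}\leqslant\sigma\|\zeta\|<R_0\) this follows from the support hypothesis; beyond it, from the definition of the extension.
\end{itemize}
These two regions are relatively open in the \(\sigma\)-interval and overlap on \([R_{\mathrm{supp}}/\|\zeta\|,\,R_0/\|\zeta\|)\). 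On that overlap \(F\) vanishes identically in an open set of \((a,\theta,\sigma)\), so all its parameter derivatives vanish there too. Since the conditions do not involve \(a\) or \(\theta\), the splitting is uniform in those variables. A function that is \(C^s\) on each of two open sets covering the domain, and agrees on the overlap, is \(C^s\). Hence \(F_\infty(\zeta;\cdot)\) is \(C^s\) in \((a,\theta,\sigma)\), including near the interface \(\sigma\|\zeta\|=R_0\). The open-neighborhood convention of \Cref{rem:parameter_variable_derivatives} is respected, because the covering is open in the parameters.

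For the bounds, take \(|\gamma|\leqslant s\). On the region where \(F_\infty=F\), the derivative \(\partial^\gamma F_\infty\) equals \(\partial^\gamma F\), which satisfies the \(\mathfrak P_s\) bound \(C_\gamma(1+\|\zeta\|^n)\) or the \(\mathfrak G_s\) bound \(C_\gamma(1+\|\zeta\|^n)\Gamma_{c_0}(\zeta;a)\), with the same constants. On the region where \(F_\infty=0\), the derivative is zero, so the bound holds trivially. Therefore the same \(n\), \(c_0\) and \(C_\gamma\) work on \(\mathcal D_{\infty,A,\Compct_{\Strat},\sigma_0}\), and \(F_\infty\) lies in the same class as \(F\).

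The only delicate point is the smoothness across \(\sigma\|\zeta\|=R_0\). It is resolved by the buffer annulus \(R_{\mathrm{supp}}\leqslant\sigma\|\zeta\|<R_0\), where \(F\) vanishes on an open parameter set.
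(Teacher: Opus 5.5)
Your argument is correct and is the same as the paper's: since \(F\) vanishes on the buffer \(R_{\mathrm{supp}}\leqslant\sigma\|\zeta\|<R_0\), its zero extension is \(C^s\) in the parameters, and the admissible bounds are inherited on the original domain and trivial where \(F_\infty=0\); you simply spell out the gluing that the paper states in one line. One small slip: for your open cover to work as described, the first region should be \(\sigma<R_0/\|\zeta\|\) rather than \(\sigma<R_{\mathrm{supp}}/\|\zeta\|\), and the second \(\sigma>R_{\mathrm{supp}}/\|\zeta\|\), so that both are relatively open and overlap on \((R_{\mathrm{supp}}/\|\zeta\|,\,R_0/\|\zeta\|)\).
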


\begin{proof}
Since \(F\) already vanishes on a neighborhood of the boundary \(\{\sigma\|\zeta\|=R_0\}\), its zero
extension is \(C^s\) in \((a,x,\sigma)\). The admissible bounds are unchanged on the original domain
and are trivial on the complement, where the extension is identically zero.
\end{proof}

\subsection{Gaussian domination estimates}
\label{subsec:app_gaussian_domination_estimates}

We now prove the Gaussian estimates used to control the polynomial remainders after rescaling. Since
the exponent \(\Psi\) is uniformly comparable, for bounded \(a\), to the canonical conical Gaussian,
we get the uniform integrability needed in the scaled variable \(\zeta\). Recall that the linearized
exponent is
\begin{equation}\label{eq:Psi1}
  \Psi(\zeta;a,x) = \frac12 \Bigl[ \|\bfL(x)\zeta - [0_{m-c};\,a_{\mathcal C}]\|^2
  + \|a_{\mathcal N}\|^2 \Bigr].
\end{equation}

\begin{lemma}[Comparison with a canonical conical Gaussian]
  \label{lem:gaus_comp_can}
  Assume \AssChart. Fix \(A>0\). There are constants \(c_0,C>0\) such that
  \begin{equation*}
    e^{-\Psi(\zeta;a,x)} \leqslant C\Gamma_{c_0}(\zeta;a), \quad \text{for all}
    \quad x\in\Compct_{\Strat},\, \|a\|\leqslant A,\, \zeta\in\mathbb H_c^m.
  \end{equation*}
  Consequently, for every \(R\in(0,\infty]\), every \(\sigma_0>0\), every \(0\leqslant s\leqslant
  r\) and every \(P\in\mathfrak P_s\) on \(\mathcal D_{R,A,\Compct_{\Strat},\sigma_0} \), one has
  \(e^{-\Psi}P \in \mathfrak G_s\) on the same domain.
\end{lemma}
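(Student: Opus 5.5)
The plan is a uniform lower bound on the quadratic form \(2\Psi\). Write \(w=[0_{m-c};a_{\mathcal C}]\in\mathbb R^m\). By \Cref{lem:ad_frame_comp_strat}, item 4, there is \(\lambda_*>0\) with \(\|\bfL(x)v\|\geqslant\lambda_*\|v\|\) for all \(v\in\mathbb R^m\) and \(x\in\Strat_{\mathrm{out}}\); explicitly \(\lambda_*=(\sup_x\|\bfL(x)^{-1}\|)^{-1}\), and we also set \(\Lambda^*=\sup_x\|\bfL(x)\|\). The key step is
\[
  \|\bfL(x)\zeta-w\|\geqslant \lambda_*\|\zeta-\bfL(x)^{-1}w\|
  \geqslant \lambda_*\|\zeta\|-\lambda_*\|\bfL(x)^{-1}w\|
  \geqslant \lambda_*\|\zeta\|-A .
\]
Since \(\|\bfL^{-1}\|=\lambda_*^{-1}\), the last term is at most \(\|w\|\leqslant A\). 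Using \((s-A)_+^2\geqslant \tfrac12 s^2-A^2\), this gives \(\|\bfL(x)\zeta-w\|^2\geqslant \tfrac{\lambda_*^2}{2}\|\zeta\|^2-A^2\).

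Next I compare with the canonical exponent. Since \(\|a_{\mathcal C}\|\leqslant A\),
\[
  \|\zeta_{\mathcal S}\|^2+\|a_{\mathcal C}-\zeta_{\mathcal C}\|^2\leqslant 2\|\zeta\|^2+2A^2 .
\]
Choose \(c_0=\min\{\lambda_*^2/8,\tfrac12\}\). Then
\[
  c_0\bigl[\|\zeta_{\mathcal S}\|^2+\|a_{\mathcal C}-\zeta_{\mathcal C}\|^2+\|a_{\mathcal N}\|^2\bigr]
  \leqslant \tfrac{\lambda_*^2}{4}\|\zeta\|^2+\tfrac12\|a_{\mathcal N}\|^2+A^2 .
\]
The first step gives \(\Psi\geqslant \tfrac{\lambda_*^2}{4}\|\zeta\|^2-\tfrac{A^2}{2}+\tfrac12\|a_{\mathcal N}\|^2\). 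Combining the two, \(\Psi\geqslant c_0[\cdots]-\tfrac32A^2\), so \(e^{-\Psi}\leqslant e^{3A^2/2}\Gamma_{c_0}(\zeta;a)\). The constants are uniform in \(x\) because \(\lambda_*\) is. The only point requiring care (the mild obstacle) is keeping the bound uniform; it rests entirely on the uniform invertibility of \(\bfL\) from \Cref{lem:ad_frame_comp_strat}. The cone restriction \(\zeta\in\mathbb H_c^m\) is not even needed.

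For the consequence, I treat \(e^{-\Psi}\) itself through its parameter derivatives. \(\Psi\) is a quadratic polynomial in \((\zeta,a)\) whose coefficients are entries of \(\bfL(\varphi(\theta))\), and these are \(C^r\) in \(\theta\) and uniformly bounded on \(\Theta_{\Compct_{\Strat}}\) (compactness, \(\overline\Theta\) in the chart). Hence \(\Psi\in\mathfrak P_s\) for \(s\leqslant r\): every derivative \(\partial^\gamma_{a,\theta}\Psi\) is bounded by \(C(1+\|\zeta\|^2)\), and \(\Psi\) has no \(\sigma\)-dependence. By Fa\`a di Bruno, \(\partial^\gamma e^{-\Psi}\) is \(e^{-\Psi}\) times a polynomial in the derivatives of \(\Psi\) of order at most \(|\gamma|\), which is bounded by \(C(1+\|\zeta\|^{2|\gamma|})\). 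Multiplying by the first part yields \(|\partial^\gamma e^{-\Psi}|\leqslant C(1+\|\zeta\|^n)\Gamma_{c_0}\). This shows \(e^{-\Psi}\in\mathfrak G_s\).

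Finally, for \(P\in\mathfrak P_s\), the closure rule \(P_1G_2\in\mathfrak G_\ell\) of \Cref{lem:prop_adm_classes}, item \ref{item:products}, gives \(e^{-\Psi}P\in\mathfrak G_s\) on the same domain \(\mathcal D_{R,A,\Compct_{\Strat},\sigma_0}\). Measurability in \(\zeta\) is clear since \(e^{-\Psi}\) is continuous.
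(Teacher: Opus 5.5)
Your proof is correct and follows essentially the paper's route. The comparison step rests on the uniform bounds on $\bfL$ and $\bfL^{-1}$ plus elementary quadratic inequalities; passing through the centered form $\tfrac{\lambda_*^2}{4}\|\zeta\|^2$ instead of bounding directly by a multiple of $\|\zeta_{\mathcal S}\|^2+\|\zeta_{\mathcal C}-a_{\mathcal C}\|^2$ is only cosmetic since $\|a\|\leqslant A$. The consequence then follows, as in the paper, from Fa\`a di Bruno and the product rule.

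One trivial slip: you only have $\|\bfL(x)^{-1}\|\leqslant\lambda_*^{-1}$, not equality, but that is exactly the direction your estimate uses.
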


\begin{proof}
Using \(\|u+v\|^2\geqslant \frac12\|u\|^2-\|v\|^2\) in \eqref{eq:Psi1}, we get
\begin{align*}
  \|\bfL(x)\zeta - [0_{m-c};\,a_{\mathcal C}]\|^2 &\geqslant (1/2)\|\bfL(x)\zeta\|^2
  - \|a_{\mathcal C}\|^2 \\
  &\geqslant (1/4)\|\bfL(x)(\zeta - [0_{m-c};a_{\mathcal C}]) \|^2
  - (1/2)\|\bfL(x)[0_{m-c};a_{\mathcal C}]\|^2 - A^2 \\
  &\geqslant \|4\bfL(x)^{-1}\|^{-2}\|\zeta - [0_{m-c};\, a_{\mathcal C}] \|^2
  - (1/2)\|\bfL(x)\|^2 \| a_{\mathcal C} \|^2 - A^2 \\
  &\geqslant c_\bfL \big\{\|\zeta_{\mathcal S}\|^2 + \|\zeta_{\mathcal C}
  - a_{\mathcal C} \|^2\big\} - C_{\bfL,A},
\end{align*}
for some \(c_\bfL\in(0,1)\), \(C_{\bfL,A}\in (0,\infty)\), where the last line follows from the
uniform boundedness of the norms of \(\bfL (x)\) and \(\bfL(x)^{-1}\) established in
\Cref{lem:ad_frame_comp_strat}. Thus, we obtain
\begin{equation*}
  \Psi(\zeta;a,x) \geqslant c_\bfL \bigl(\|\zeta_{\mathcal S}\|^2
  + \|a_{\mathcal C}-\zeta_{\mathcal C}\|^2 + \|a_{\mathcal N}\|^2 \bigr) - C_{\bfL,A}.
\end{equation*}
Exponentiating, we get
\begin{equation*}
  e^{-\Psi(\zeta;a,x)} \leqslant C \exp\Bigl( -c_\bfL \bigl[\|\zeta_{\mathcal S}\|^2
  + \|a_{\mathcal C} - \zeta_{\mathcal C}\|^2 + \|a_{\mathcal N}\|^2 \bigr] \Bigr),
\end{equation*}
which is exactly the desired comparison with \(\Gamma_{c_0}\) for \(c_0 = c_\bfL\).

We now prove the admissible-class statement. After the pullback \(x=\varphi(\theta)\), the function
\(\Psi(\zeta;a,\varphi(\theta))\) is quadratic in \(\zeta\), with coefficients of class \(C^s\) in
\((a,\theta)\), uniformly for \(\|a\|\leqslant A\) and \(\theta\in\Theta_{\Compct_{\Strat}}\). Since
\(e^{-\Psi}\) is independent of \(\sigma\), every \((a,\theta,\sigma)\)-derivative of \(e^{-\Psi}\)
of order at most \(s\) is a finite sum of terms \(
Q(\zeta;a,\theta)e^{-\Psi(\zeta;a,\varphi(\theta))}, \) where \(Q\) is polynomial in \(\zeta\) of
degree at most \(2s\), with uniformly bounded coefficients. The already proved comparison gives
\begin{equation*}
  \bigl| \partial_a^\alpha\partial_\theta^\beta\partial_\sigma^j e^{-\Psi(\zeta;a,\varphi(\theta))}
  \bigr| \leqslant C_{\alpha,\beta,j} \bigl(1+\|\zeta\|^{2s}\bigr) \Gamma_{c_{\mathbf L}}(\zeta;a)
\end{equation*}
for all \(|\alpha|+|\beta|+j\leqslant s\). Thus \(e^{-\Psi}\in\mathfrak G_s\). If \(P\in\mathfrak
P_s\) on the same domain, the product rule and \Cref{lem:prop_adm_classes} give
\(e^{-\Psi}P\in\mathfrak G_s\).
\end{proof}

\subsection{Inner-kernel expansion}
\label{app:proof_inner_kernel_expansion}

We now expand the scaled kernel
\begin{equation*}
  \mathscr K_\sigma(\zeta;a,x) = e^{-\Psi_\sigma(\zeta;a,x)} \mathcal A_x(\sigma\zeta)
\end{equation*}
on the growing inner region selected by \(\vartheta_\sigma\). On this region, the exponent
perturbation \(\Psi_\sigma-\Psi\) is uniformly small, so the exponential can be expanded to first
order. Multiplication by the amplitude expansion then gives a Gaussian-class remainder.

\begin{lemma}[Inner expansion with Gaussian-class remainder]
  \label{lem:inner_exp_gaussian_rem}
  Assume \ref{ass:standing:measure}, \ref{ass:standing:chart}, and~\ref{ass:standing:density} of
  \Cref{ass:standing} with \(r\geqslant 2\). Fix \(A>0\). For \(x=\varphi(\theta)\), set
  \begin{equation}\label{eq:kappa1}
    \kappa_{1,x}(\zeta;a)
    =
    \nabla_\xi \mathcal A_x(0)^\top \zeta
    - \frac12\mathcal A_x(0)\Lambda_x(\zeta;a).
  \end{equation}
  Then there exist \(\sigma_0>0\) and \(\mathscr R_{\mathrm{ker}}\in\mathfrak G_{r-2}\), defined on
  \(\mathcal D_{\infty,A,\Compct_{\Strat},\sigma_0}\), such that
  \begin{equation}\label{eq:exp_inner}
    \vartheta_\sigma(\zeta)\mathscr K_\sigma(\zeta;a,x)
    =
    \vartheta_\sigma(\zeta)e^{-\Psi(\zeta;a,x)}
    \Bigl[
      \mathcal A_x(0)
      +\sigma\kappa_{1,x}(\zeta;a)
    \Bigr]
    +\sigma^2\mathscr R_{\mathrm{ker}}(\zeta;a,x,\sigma).
  \end{equation}
\end{lemma}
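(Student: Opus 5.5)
The plan is to write the remainder explicitly and check its class membership term by term. We work on the support of \(\vartheta_\sigma\), where \(\|\zeta\|\leqslant 2\sigma^{-1/4}\). By \eqref{eq:supp}, taking \(\sigma_0\) small, this region lies inside \(\{\sigma\|\zeta\|<R_0\}\) for some fixed \(R_0<4R\). On it we may therefore invoke \Cref{lem:admi_expo_exp} and \Cref{lem:adm_scaled_exp_amplitude}, writing
\[
  \Psi_\sigma=\Psi+\sigma\delta_\sigma,
  \qquad
  \delta_\sigma=\tfrac12\Lambda_x+\sigma\mathscr R_\Psi,
  \qquad
  \mathcal A_x(\sigma\zeta)=\mathcal A_x(0)+\sigma\nabla_\xi\mathcal A_x(0)^\top\zeta+\sigma^2\mathscr R_{\mathcal A}.
\]
For the exponential we use Taylor's formula with integral remainder,
\[
  e^{-\sigma\delta_\sigma}=1-\sigma\delta_\sigma+\sigma^2\delta_\sigma^2\int_0^1(1-t)e^{-t\sigma\delta_\sigma}\,\dd t .
\]
Multiplying out, the product \(\vartheta_\sigma\mathscr K_\sigma\) equals the displayed main term plus \(\sigma^2\vartheta_\sigma e^{-\Psi}\) times a finite sum of the following pieces:
\[
  -\mathscr R_\Psi\mathcal A_x(0),\qquad
  -\delta_\sigma\nabla_\xi\mathcal A_x(0)^\top\zeta,\qquad
  (1-\sigma\delta_\sigma)\mathscr R_{\mathcal A},\qquad
  \delta_\sigma^2\Bigl(\int_0^1(1-t)e^{-t\sigma\delta_\sigma}\,\dd t\Bigr)\mathcal A_x(\sigma\zeta).
\]
We define \(\mathscr R_{\mathrm{ker}}\) as \(\vartheta_\sigma e^{-\Psi}\) times this sum, and then extend it by zero to \(\mathcal D_{\infty,A,\Compct_{\Strat},\sigma_0}\) using \Cref{lem:zero_extension_cutoff_local_kernels}. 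The support of \(\vartheta_\sigma\) stays away from \(\sigma\|\zeta\|=R_0\), so the extension is legitimate.

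The polynomial pieces are routine. The functions \(\Lambda_x\), \(\mathscr R_\Psi\), \(\mathscr R_{\mathcal A}\), \(\nabla_\xi\mathcal A_x(0)^\top\zeta\), \(\mathcal A_x(\sigma\zeta)\) and \(\vartheta_\sigma\) all lie in \(\mathfrak P_{r-2}\). This follows from \Cref{lem:adm_scaled_exp_amplitude}, \Cref{lem:admi_expo_exp} and \Cref{lem:admissibility_of_growing_cutoff}, together with the \(C^{r-1}\) regularity of the quadratic chart coefficients. Products of these stay in \(\mathfrak P_{r-2}\) by \Cref{lem:prop_adm_classes}. Multiplying by \(e^{-\Psi}\) then places them in \(\mathfrak G_{r-2}\) by \Cref{lem:gaus_comp_can}.

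The main obstacle is the factor \(e^{-t\sigma\delta_\sigma}\). Since \(\delta_\sigma\) has indefinite sign and grows polynomially in \(\zeta\), this factor is not a priori polynomially bounded. The point of the growing cutoff is that on \(\|\zeta\|\leqslant2\sigma^{-1/4}\) we have
\[
  |\sigma\Lambda_x|\leqslant C\sigma(1+\|\zeta\|^3)\leqslant C\sigma^{1/4},
  \qquad
  |\sigma^2\mathscr R_\Psi|\leqslant C\sigma^2(1+\|\zeta\|^6)\leqslant C\sigma^{1/2},
\]
using \eqref{eq:exponent_remainder_pointwise_degree6} for the second bound. Hence \(|\sigma\delta_\sigma|\leqslant C\sigma_0^{1/4}\) there, which is at most \(1\) after shrinking \(\sigma_0\). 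The function \(s\mapsto\int_0^1(1-t)e^{-ts}\,\dd t\) is smooth with bounded derivatives on \([-1,1]\). Applying \Cref{lem:prop_adm_classes}(6b) to the composition with \(\sigma\delta_\sigma\) would give an element of \(\mathfrak P_{r-2}\), provided \(\sigma\delta_\sigma\) itself is in \(\mathfrak P_{r-2}\) on the support of \(\vartheta_\sigma\). That holds, because \(\sigma\delta_\sigma=\Psi_\sigma-\Psi\) is a difference of elements of \(\mathfrak P_{r-2}\) by \Cref{lem:exact_scaled_exponent_poly_bounds}.

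One technical point needs checking here. The compact-range hypothesis of \Cref{lem:prop_adm_classes}(6b) only holds on the support of \(\vartheta_\sigma\). We handle this by first multiplying by the cutoff \(\tilde\vartheta_\sigma(\zeta)=\chi(\sigma^{1/2}\|\zeta\|^2/4)\), which equals \(1\) on the support of \(\vartheta_\sigma\). We then replace \(s\mapsto\int_0^1(1-t)e^{-ts}\,\dd t\) by a function with globally bounded derivatives that agrees with it on \([-1,1]\), so that the global-bound variant of (6b) applies. With this, every piece lies in \(\mathfrak G_{r-2}\), and so does \(\mathscr R_{\mathrm{ker}}\).
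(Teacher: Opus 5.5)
Your proposal is correct and follows the paper's proof essentially step by step: the same integral-remainder Taylor formula for the exponential, the same use of the degree-six bound \eqref{eq:exponent_remainder_pointwise_degree6} to make the exponent perturbation uniformly small on \(\operatorname{supp}\vartheta_\sigma\), and the same sequence of products, Gaussian comparison via \Cref{lem:gaus_comp_can}, and zero extension. The only difference is cosmetic: the paper bounds the derivatives of \(\int_0^1(1-t)e^{-t\delta_\sigma}\,\dd t\) directly by Fa\`a di Bruno on \(\operatorname{supp}\vartheta_\sigma\), whereas you route it through the closure rule of \Cref{lem:prop_adm_classes} by replacing that factor with a globally bounded extension; that replacement suffices on its own, so the auxiliary cutoff \(\tilde\vartheta_\sigma\) is redundant.
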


\begin{proof}
By \Cref{lem:adm_scaled_exp_delta,lem:adm_scaled_exp_amplitude,lem:admi_expo_exp}, on \(\mathcal
D_{R,A,\Compct_{\Strat},\sigma_0}\) we have
\[
  \Psi_\sigma
  =
  \Psi+\frac{\sigma}{2}\Lambda_x+\sigma^2\mathscr R_\Psi,
  \qquad
  \mathcal A_x(\sigma\zeta)
  =
  \mathcal A_x(0)
  +\sigma\nabla_\xi\mathcal A_x(0)^\top\zeta
  +\sigma^2\mathscr R_{\mathcal A},
\]
with \(\mathscr R_\Psi,\mathscr R_{\mathcal A}\in\mathfrak P_{r-2}\). Set
\[
  \delta_\sigma
  =
  \Psi_\sigma-\Psi
  =
  \frac{\sigma}{2}\Lambda_x+\sigma^2\mathscr R_\Psi .
\]
After decreasing \(\sigma_0\), the support property of \(\vartheta_\sigma\) and the pointwise
estimate in \Cref{lem:admi_expo_exp} imply
\[
  \operatorname{supp}\vartheta_\sigma
  \subset \{\sigma\|\zeta\|\le R/2\},
  \qquad
  |\delta_\sigma|\le \frac12
  \quad\text{on }\operatorname{supp}\vartheta_\sigma .
\]
Taylor's formula with integral remainder gives, on \(\operatorname{supp}\vartheta_\sigma\),
\[
  e^{-\delta_\sigma}
  =
  1-\delta_\sigma
  +\delta_\sigma^2
  \int_0^1(1-t)e^{-t\delta_\sigma}\,\dd t .
\]
Since
\[
  \delta_\sigma
  =
  \frac{\sigma}{2}\Lambda_x+\sigma^2\mathscr R_\Psi,
\]
we obtain
\[
  \vartheta_\sigma e^{-\delta_\sigma}
  =
  \vartheta_\sigma
  \left(
    1-\frac{\sigma}{2}\Lambda_x
  \right)
  +\sigma^2\mathscr R_{\exp},
\]
where, collecting the \(\sigma^2\) terms and using \(\delta_\sigma^2=\sigma^2\bigl(\tfrac12\Lambda_x
+\sigma\mathscr R_\Psi\bigr)^2\),
\begin{equation*}
  \mathscr R_{\exp}
  =
  \vartheta_\sigma
  \left[
    -\,\mathscr R_\Psi
    +\Bigl(\tfrac12\Lambda_x+\sigma\mathscr R_\Psi\Bigr)^2
    \int_0^1(1-t)e^{-t\delta_\sigma}\,\dd t
  \right].
\end{equation*}
We claim \(\mathscr R_{\exp}\in\mathfrak P_{r-2}\) on \(\mathcal D_{R,A,\Compct_{\Strat},\sigma_0}\),
with support contained in \(\operatorname{supp}\vartheta_\sigma\); its zero extension then
belongs to \(\mathfrak P_{r-2}\) on the full scaled domain.

To see the claim, we bound one admissible derivative; the rest are identical. Fix a multi-index
\(\gamma\) in \((a,\theta,\sigma)\) with \(|\gamma|\leqslant r-2\). By
\Cref{lem:adm_scaled_exp_delta,lem:admi_expo_exp}, \(\mathscr R_\Psi\in\mathfrak P_{r-2}\) and
\(\Lambda_x\in\mathfrak P_{r-2}\), so the factor \(-\mathscr R_\Psi\) and the polynomial factor
\(\bigl(\tfrac12\Lambda_x+\sigma\mathscr R_\Psi\bigr)^2\) have derivatives bounded by
\(C_\gamma(1+\|\zeta\|^{n})\) for some \(n\), by the product and scalar-multiplication rules of
\Cref{lem:prop_adm_classes}. For the integral factor, write \(\Theta(\delta_\sigma)=\int_0^1(1-t)
e^{-t\delta_\sigma}\,\dd t\). Since \(|\delta_\sigma|\leqslant\tfrac12\) on
\(\operatorname{supp}\vartheta_\sigma\), Leibniz and Fa\`a di Bruno give
\(\partial^{\gamma}\Theta(\delta_\sigma)\) as a finite sum of terms
\(\bigl(\int_0^1(1-t)(-t)^{j}e^{-t\delta_\sigma}\,\dd t\bigr)\prod_{i}\partial^{\gamma_i}
\delta_\sigma\) with \(\sum_i\gamma_i=\gamma\) and \(j\leqslant|\gamma|\); each \(t\)-integral is
bounded by \(\int_0^1(1-t)e^{t/2}\,\dd t<\infty\), and each \(\partial^{\gamma_i}\delta_\sigma\),
with \(\delta_\sigma=\tfrac{\sigma}{2}\Lambda_x+\sigma^2\mathscr R_\Psi\), grows at most
polynomially in \(\zeta\) because \(\Lambda_x,\mathscr R_\Psi\in\mathfrak P_{r-2}\). Finally, the
derivatives that fall on \(\vartheta_\sigma\) contribute the admissible factors of the growing
cutoff, which are \(O(1)\) times powers \(\|\zeta\|^{4j}\) supported on
\(\operatorname{supp}\vartheta_\sigma\) (see \Cref{lem:admissibility_of_growing_cutoff}); these are
absorbed into the same polynomial bound. Collecting the factors, \(|\partial^\gamma\mathscr
R_{\exp}|\leqslant C_\gamma(1+\|\zeta\|^{n'})\) uniformly on \(\mathcal
D_{R,A,\Compct_{\Strat},\sigma_0}\), which is the defining estimate of \(\mathfrak P_{r-2}\).

Multiplying by the amplitude expansion gives
\begin{equation*}
  \vartheta_\sigma e^{-\Psi_\sigma}
  \mathcal A_x(\sigma\zeta)
  =
  \vartheta_\sigma e^{-\Psi}
  \Bigl[
    \mathcal A_x(0)
    +\sigma\Bigl(
      \nabla_\xi\mathcal A_x(0)^\top\zeta
      -\frac12\mathcal A_x(0)\Lambda_x(\zeta;a)
    \Bigr)
  \Bigr]
  +\sigma^2 e^{-\Psi}\mathscr P_{\mathrm{in}},
\end{equation*}
where, on \(\mathcal D_{R,A,\Compct_{\Strat},\sigma_0}\), \(\mathscr P_{\mathrm{in}}\in\mathfrak
P_{r-2}\), and its support is contained in \(\operatorname{supp}\vartheta_\sigma\). Since
\(\operatorname{supp}\vartheta_\sigma\subset\{\sigma\|\zeta\|\le R/2\}\), this support is separated
from the truncation boundary \(\{\sigma\|\zeta\|=R\}\). Hence, by
\Cref{lem:zero_extension_cutoff_local_kernels}, the zero extension of \(\mathscr P_{\mathrm{in}}\)
belongs to \(\mathfrak P_{r-2}\) on \(\mathcal D_{\infty,A,\Compct_{\Strat},\sigma_0}\).

Finally, \Cref{lem:gaus_comp_can} gives \(e^{-\Psi}\mathscr P_{\mathrm{in}}\in\mathfrak G_{r-2}\).
Thus the desired expansion holds with \(\mathscr R_{\mathrm{ker}} = e^{-\Psi}\mathscr
P_{\mathrm{in}}\).
\end{proof}

\section{Integrated local expansion and coefficient estimates}
\label{app:integration_coefficients_tails}

This section completes the passage from the kernel-level expansion to the integrated local
expansion. We identify the first two coefficients, and show that all errors left outside the growing
inner region are exponentially small.

The argument has three parts. We first prove the regularity and positivity properties of the
coefficient functions \(\mathsf C_0\), \(\mathsf C_1\), and of the logarithmic coefficients
\(\mathsf L_0\), \(\mathsf L_1\). We then integrate the inner Gaussian-class remainder. Finally, we
compare the truncated inner integrals with the full-cone coefficients and absorb the resulting model
and exact tails into a \(\sigma^2\)-remainder.

\paragraph{Hypotheses in force.}
This section uses \Cref{ass:standing}~\ref{ass:standing:measure}, \ref{ass:standing:chart} and
\ref{ass:standing:density}. The only result in this section that additionally uses the positivity
item \ref{ass:standing:positivity} is \Cref{lem:uniform_positivity_of_C0}.

\subsection{Regularity of the coefficient integrals}
\label{app:proof_of_lem_coefficient_kernels_in_Br}

The coefficients \(\mathsf C_0\) and \(\mathsf C_1\) are obtained by integrating Gaussian-class
kernels over the tangent cone. Their regularity therefore follows from the Gaussian comparison
estimates and from the integration rule for admissible classes.

\begin{lemma}[Regularity of coefficient kernels]
  \label{lem:coeff_kernels_in_Br}
  Assume \ref{ass:standing:measure}, \ref{ass:standing:chart}, and~\ref{ass:standing:density}
  of \Cref{ass:standing}. Let \(A, \sigma_0>0\). If \(r\geqslant0\), then \(\mathsf C_0\in\mathfrak
  B_{r}\) on \(\mathcal
  D_{A,\Compct_{\Strat},\sigma_0}\). If \(r\geqslant1\), then \(\mathsf C_1\in\mathfrak B_{r-1}\) on
  the same domain.
\end{lemma}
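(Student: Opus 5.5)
The plan is to write each coefficient as the integral over \(\mathbb H_c^m\) of a kernel in the Gaussian class \(\mathfrak G_\ell\) on \(\mathcal D_{\infty,A,\Compct_{\Strat},\sigma_0}\), and then apply the integration rule \Cref{lem:prop_adm_classes}~\ref{item:integr_Gauss_kern}. The kernels do not depend on \(\sigma\), so \(\sigma\)-derivatives vanish trivially. Only the \((a,\theta)\)-regularity has to be tracked.

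For \(\mathsf C_0\), write \(\mathsf C_0(a,x)=\int_{\mathbb H_c^m}\mathcal A_x(0)e^{-\Psi(\zeta;a,x)}\,\dd\zeta\), where \(\mathcal A_x(0)=\rho(\varphi(\theta))|\det\bfL(\theta)|\).

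1. By \AssDens, \(\theta\mapsto\rho(\varphi(\theta))\) is \(C^r\). By \Cref{lem:ad_frame_comp_strat}, \(\bfL\) is \(C^r\) with \(\bfL^{-1}\) uniformly bounded.
2. Hence \(|\det\bfL|\) is \(C^r\) with bounded derivatives. The determinant stays away from zero, so the absolute value is smooth on its range; this is \Cref{lem:prop_adm_classes}~\ref{item:smooth_composition_on_bounded_ranges}.
3. Boundedness of these derivatives on \(\Theta_{\Compct_{\Strat}}\) follows from compactness, or from the uniform bounds of \Cref{lem:unif_par_comp_strat} on \(\overline{\Theta}^\circ_{\mathrm{out}}\). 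So \(\mathcal A_x(0)\in\mathfrak B_r\).
4. \Cref{lem:gaus_comp_can} with \(s=r\) and \(P\equiv1\) gives \(e^{-\Psi}\in\mathfrak G_r\).
5. The product rule gives \(\mathcal A_x(0)e^{-\Psi}\in\mathfrak G_r\), and integration gives \(\mathsf C_0\in\mathfrak B_r\).

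For \(\mathsf C_1\) (with \(r\geqslant1\)), the integrand is \(e^{-\Psi}\kappa_{1,x}\), with \(\kappa_{1,x}\) as in \eqref{eq:kappa1}. It suffices to show \(\kappa_{1,x}\in\mathfrak P_{r-1}\), after which \Cref{lem:gaus_comp_can} with \(s=r-1\) and the integration rule finish the proof.

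1. \(\nabla_\xi\mathcal A(\theta,0)\) is \(C^{r-1}\) in \(\theta\) with bounded derivatives, since \(\mathcal A=\rho\circ M\cdot J\) is jointly \(C^r\) by \AssDens{} and \Cref{lem:unif_par_comp_strat}~\ref{item:Cr-J}. Its pairing with \(\zeta\) is therefore linear in \(\zeta\) with \(\mathfrak B_{r-1}\) coefficients.
2. \(\Lambda_x(\zeta;a)\), as given in \eqref{eq:Lambda}, is a cubic polynomial in \(\zeta\), affine in \(a\). Its coefficients are \(\bfL(\theta)\), which is \(C^r\), and \(\Diff_\xi^2\Delta(\theta,0)\).
3. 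The anisotropic bound of \Cref{lem:unif_par_comp_strat}~\ref{item:Cr-Delta}, with \(|\gamma|=2\), shows that \(\partial_\theta^\beta\Diff_\xi^2\Delta(\theta,0)\) exists and is bounded for \(|\beta|\leqslant r-1\). So these coefficients are in \(\mathfrak B_{r-1}\), and hence \(\Lambda_x\in\mathfrak P_{r-1}\) with degree \(3\).
4. Multiplying by \(\mathcal A_x(0)\in\mathfrak B_r\subset\mathfrak B_{r-1}\) keeps the term in \(\mathfrak P_{r-1}\).

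The main point requiring care is the regularity count for \(\Diff^2_\xi\Delta(\theta,0)\). The naive bound from \(\Delta\in C^r\) only gives \(r-2\) derivatives in \(\theta\). One must instead use the anisotropic \(\xi\)-regularity: \(\bfQ\) is \(C^r\) and \(M\) is \(C^{r+1}\), and the Leibniz expansion in the proof of \Cref{lem:unif_par_comp_strat} gives the extra derivative. A secondary point is the justification of differentiation under the integral sign. This is already built into \Cref{lem:prop_adm_classes}~\ref{item:integr_Gauss_kern}, via the majorant \(e^{c_0A^2}e^{-c_0\|\zeta_{\mathcal S}\|^2-(c_0/2)\|\zeta_{\mathcal C}\|^2}(1+\|\zeta\|^n)\).

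Finally, since the coefficients are constant in \(\sigma\), membership in \(\mathfrak B_\ell\) on \(\mathcal D_{A,\Compct_{\Strat},\sigma_0}\) holds for every \(\sigma_0>0\).
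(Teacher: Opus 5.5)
Your proposal is correct and follows the paper's proof essentially step for step. Like the paper, you obtain $\mathcal A_x(0)e^{-\Psi}\in\mathfrak G_r$ and $e^{-\Psi}\kappa_{1,x}\in\mathfrak G_{r-1}$ from \Cref{lem:gaus_comp_can} and the product rule, and then conclude with the integration rule \Cref{lem:prop_adm_classes}~\ref{item:integr_Gauss_kern}. Your explicit use of the anisotropic $\xi$-regularity of $\Delta$ from \Cref{lem:unif_par_comp_strat}, which gives $r-1$ bounded $\theta$-derivatives of $\Diff_\xi^2\Delta(\theta,0)$, is exactly the justification that the paper's brief phrase ``using the regularity of $\nabla_\xi\mathcal A_x(0)$ and of $\Lambda_x$'' leaves implicit.
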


\begin{proof}
By \Cref{lem:gaus_comp_can}, applied with \(P\equiv1\), we have \( e^{-\Psi}\in \mathfrak G_{r}\) on
\(\mathcal D_{\infty,A, \Compct_{\Strat}, \sigma_0}\). Since \(x\mapsto \mathcal A_x(0)\) is in
\(\mathfrak B_{r}\) on \(\mathcal D_{A,\Compct_{\Strat}, \sigma_0}\), the product rule in
\Cref{lem:prop_adm_classes} gives
\begin{equation*}
  \mathcal A_x(0) e^{-\Psi(\zeta;a,x)} \in \mathfrak G_{r}\quad\text{on}
  \quad \mathcal D_{\infty,A,\Compct_{\Strat},\sigma_0}.
\end{equation*}
Integrating in \(\zeta\) and using item~\ref{item:integr_Gauss_kern} of \Cref{lem:prop_adm_classes},
gives \(\mathsf C_0\in\mathfrak B_{r}\). On the other hand, in view of \cref{eq:kappa1}, using the
regularity of \(\nabla_\xi\mathcal A_x(0)\) and of \(\Lambda_x\), we get \(\kappa_{1,x}\in\mathfrak
P_{r-1}\). By \Cref{lem:gaus_comp_can}, this yields \(e^{-\Psi} \kappa_{1,x} \in \mathfrak G_{r-1}\)
on the domain \(\mathcal D_{\infty,A,\Compct_{\Strat},\sigma_0}\). Another application of
item~\ref{item:integr_Gauss_kern} of \Cref{lem:prop_adm_classes} yields \(\mathsf C_1\in\mathfrak
B_{r-1}\).
\end{proof}

We next record the lower bound for the leading coefficient. This is the only point at which
positivity of the density on the compact stratum set is used.

\begin{lemma}[Uniform positivity of \(\mathsf C_0\)]
  \label{lem:uniform_positivity_of_C0}
  Assume \ref{ass:standing:measure}, \ref{ass:standing:chart}, \ref{ass:standing:density}
  and~\ref{ass:standing:positivity} of \Cref{ass:standing}. For every \(A>0\),
  \begin{equation}\label{eq:lower_bound_C0}
    \inf_{x\in\Compct_{\Strat}, \|a\|\leqslant A}\mathsf C_0(a,x)>0.
  \end{equation}
  Consequently, \(\mathsf C_0^{-1}\in \mathfrak B_{r}\) on the domain \(\mathcal
  D_{A,\Compct_{\Strat}, \sigma_0}\). Furthermore, \(\mathsf L_0= \log \mathsf C_0\in \mathfrak
  B_{r}\) and \(\mathsf L_1 = \mathsf C_1/ \mathsf C_0 \in \mathfrak B_{r-1}\), provided that \(
  r\geqslant 1\).
\end{lemma}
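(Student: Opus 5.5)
The plan is to bound the Gaussian cone integral from below directly, and then obtain the consequences from the closure rules of \Cref{lem:prop_adm_classes}. By \eqref{eq:C0},
\[
\mathsf C_0(a,x)=\rho(x)\,|\det\bfL(x)|\int_{\mathbb H_c^m}e^{-\Psi(\zeta;a,x)}\,\dd\zeta .
\]
The lower bound \eqref{eq:lower_bound_C0} follows once each of the three factors is uniformly positive.

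\emph{Step 1: the density factor.} Item~\ref{ass:standing:positivity} of \Cref{ass:standing} gives \(\rho(x)\geqslant\rho_*\) for \(x\in\Compct_{\Strat}\), with the value understood as the trace of the \(C^r\) representative.

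\emph{Step 2: the Jacobian factor.} By \Cref{lem:ad_frame_comp_strat}, \(\sup_x\|\bfL(x)^{-1}\|<\infty\). Hence \(|\det\bfL(x)|\geqslant\|\bfL(x)^{-1}\|^{-m}\) is bounded below uniformly in \(x\).

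\emph{Step 3: the cone integral.} Here I need an upper bound on \(\Psi\), whereas \Cref{lem:gaus_comp_can} only gave a lower bound. From \eqref{eq:Psi1} and \(\|u-v\|^2\leqslant2\|u\|^2+2\|v\|^2\),
\[
\Psi(\zeta;a,x)\leqslant \|\bfL(x)\|^2\|\zeta\|^2+\|a\|^2+\tfrac12\|a_{\mathcal N}\|^2\leqslant C_{\bfL}\|\zeta\|^2+\tfrac32A^2 ,
\]
with \(C_{\bfL}=\sup_x\|\bfL(x)\|^2<\infty\), again by \Cref{lem:ad_frame_comp_strat}. Therefore
\[
\int_{\mathbb H_c^m}e^{-\Psi}\,\dd\zeta\;\geqslant\; e^{-3A^2/2}\int_{\mathbb H_c^m}e^{-C_{\bfL}\|\zeta\|^2}\,\dd\zeta \;=\; 2^{-c}e^{-3A^2/2}(\pi/C_{\bfL})^{m/2}>0 .
\]
Multiplying the three bounds gives \eqref{eq:lower_bound_C0}. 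Alternatively, one could argue by continuity of \(\mathsf C_0\) and compactness of \(\Compct_{\Strat}\times\overline{\mathbb B}_A\), together with positivity of the integrand, but the explicit bound avoids having to justify continuity separately.

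\emph{Step 4: the consequences.} By \Cref{lem:coeff_kernels_in_Br}, \(\mathsf C_0\in\mathfrak B_r\). Combined with the uniform lower bound, item~\ref{item:inversion_in_B_ell} of \Cref{lem:prop_adm_classes} yields \(\mathsf C_0^{-1}\in\mathfrak B_r\).

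For the logarithm, \(\mathsf C_0\) is bounded above, since it lies in \(\mathfrak B_r\), and bounded below. Its range is therefore contained in a compact interval \(K\Subset(0,\infty)\). Item~\ref{item:smooth_composition_on_bounded_ranges}(a) with \(\psi=\log\) then gives \(\mathsf L_0\in\mathfrak B_r\).

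Finally, when \(r\geqslant1\), \Cref{lem:coeff_kernels_in_Br} gives \(\mathsf C_1\in\mathfrak B_{r-1}\). Since \(\mathfrak B_r\subset\mathfrak B_{r-1}\), the product rule of item~\ref{item:products} yields \(\mathsf L_1=\mathsf C_1\cdot\mathsf C_0^{-1}\in\mathfrak B_{r-1}\).

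The only step that is not purely bookkeeping is the upper bound on \(\Psi\) in Step 3. It is routine given the uniform bound on \(\|\bfL\|\), so I do not expect a real obstacle.
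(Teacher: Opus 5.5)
Your proof is correct. Step 4 is the paper's argument: inversion in $\mathfrak B_r$, composition with $\log$ on a compact range, and the product $\mathsf C_1\mathsf C_0^{-1}$. You reach the key inequality \eqref{eq:lower_bound_C0} by a different route, however.

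The paper argues qualitatively. From $\rho(x)\geqslant\rho_*$, $|\det\bfL(x)|>0$ and the strictly positive integrand it gets $\mathsf C_0(a,x)>0$ pointwise. It then uses continuity of $\mathsf C_0$, inherited from $\mathsf C_0\in\mathfrak B_r$ in \Cref{lem:coeff_kernels_in_Br}, and compactness of $\Compct_{\Strat}\times\overline{\mathbb B}_A$ to conclude that $\mathsf C_0$ attains a positive minimum.

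You instead produce the explicit constant $\rho_*\,(\sup_x\|\bfL(x)^{-1}\|)^{-m}\,2^{-c}e^{-3A^2/2}(\pi/C_{\bfL})^{m/2}$. It follows from the uniform bounds on $\bfL$ and $\bfL^{-1}$ in item~4 of \Cref{lem:ad_frame_comp_strat}, together with your elementary upper bound on $\Psi$.

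What each route buys:
\begin{itemize}
\item Your version gives quantitative information: the lower bound depends explicitly on $A$, $\rho_*$ and the frame constants.
\item For the inequality itself, your version uses only the pointwise bound $\rho\geqslant\rho_*$ and the frame bounds. It never appeals to continuity of $\mathsf C_0$ in $(a,x)$, and hence never to continuity of the boundary trace of $\rho$.
\item The paper's version is shorter and needs no upper estimate on $\Psi$. In exchange it relies on the regularity of $\mathsf C_0$ already established in \Cref{lem:coeff_kernels_in_Br}.
\end{itemize}
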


\begin{proof}
By definition,
\begin{equation*}
  \mathsf C_0(a,x) = \mathcal A_x(0) \int_{\mathbb H_c^m} e^{-\Psi(\zeta;a,x)}\,\dd\zeta.
\end{equation*}
By item~\ref{ass:standing:positivity} of \Cref{ass:standing}, \(\rho(x)\geqslant\rho_*\) for all
\(x\in\Compct_{\Strat}\). Moreover, \(\bfL(x)\in\mathsf{GL}(m)\), so \(|\det\bfL(x)|>0\). Hence
\[
  \mathcal A_x(0)
  =
  \rho(x)|\det\bfL(x)|>0.
\]
The integrand \(e^{-\Psi(\zeta;a,x)}\) is strictly positive on \(\mathbb H_c^m\). Hence \(\mathsf
C_0(a,x)>0\) for all \(x\in\Compct_{\Strat}\) and \(\|a\|\leqslant A\). By
\Cref{lem:coeff_kernels_in_Br}, \(\mathsf C_0\in \mathfrak B_{r}\) on \(\mathcal D_{A,
\Compct_{\Strat},\sigma_0}\). In particular, since \(r \geqslant 0\), \(\mathsf C_0\) is continuous
in \((a,x)\). Since the domain \(\Compct_{\Strat}\times\overline{\mathbb B}_A^{c+k}\) is compact,
the positive continuous function \(\mathsf C_0\) attains a positive minimum on this domain. This
proves \cref{eq:lower_bound_C0}. The inversion statement follows from the inversion property stated
in \Cref{lem:prop_adm_classes}. Since \(\mathsf C_1\in\mathfrak B_{r-1}\) on \(\mathcal D_{A,
\Compct_{\Strat}, \sigma_0}\) by \Cref{lem:coeff_kernels_in_Br}, the product rule yields
\begin{equation*}
  \mathsf L_1=\mathsf C_1\mathsf C_0^{-1}
  \in \mathfrak B_{r-1}\quad\text{on}\quad \mathcal D_{A,\Compct_{\Strat},\sigma_0}.
\end{equation*}
Finally, since we already proved that the range of \(\mathsf C_0\) is contained in a compact
interval \([m_A,M_A]\Subset(0,\infty)\) and \(\log\in C^\infty (0,\infty)\), the smooth-composition
property in \Cref{lem:prop_adm_classes} gives \(\mathsf L_0 = \log \mathsf C_0 \in \mathfrak B_{r}
\).
\end{proof}

\begin{remark}[Invariance of the coefficients]
  \label{rem:invariance}
  Although \(\mathsf C_0\) and \(\mathsf C_1\) are written through the chart datum \(\bfL(x)\) and
  the adapted frames \(\bfC(x),\bfN(x)\), the functions they define are intrinsic: independent of
  the adapted corner chart and equivariant under rotations of the adapted frames. This is the
  property invoked by the globalization argument of \Cref{rem:globalization}.

  For \(\mathsf C_0\) this is seen directly from \eqref{eq:C0}. An adapted transition \(\tau\) with
  \(\Diff\tau(0)\,\mathbb H_c^m=\mathbb H_c^m\) replaces \(\bfL\) by \(\bfL\,\Diff\tau(0)\), and the
  substitution \(\zeta\mapsto\Diff\tau(0)\zeta\) in the integral cancels the change in
  \(|\det\bfL|\) through the Jacobian \(|\det\Diff\tau(0)|\); a rotation of the adapted frames,
  \(a\mapsto (O_{\mathcal C}a_{\mathcal C},O_{\mathcal N}a_{\mathcal N})\) with \(O_{\mathcal
  C}\in\mathsf O(c)\), \(O_{\mathcal N}\in\mathsf O(k)\), leaves the integral unchanged. Hence
  \(\mathsf C_0\bigl(a(y,\sigma),\pi(y)\bigr)\) --- and every logarithmic quantity derived from it
  --- is a well-defined function of \((y,\sigma)\).

  For \(\mathsf C_1\) the same substitution does not settle the matter, since its expression
  \eqref{eq:def_C1} mixes \(\nabla_\xi\mathcal A_x(0)\) and \(\Lambda_x\), each of which is
  individually chart-dependent. Instead we argue through the expansion itself. The quantity
  \((2\pi)^{d/2}\sigma^{k}p_\sigma\bigl(y_\sigma(a,x)\bigr)\) is intrinsic, being built from the
  density \(p_\sigma\) and the tubular point \(y_\sigma(a,x)\) of \eqref{eq:ysigma}, and
  \Cref{thm:2nd_coef_kernel} provides, in every admissible corner chart, the two-term expansion
  \[
    (2\pi)^{d/2}\sigma^{k}p_\sigma\bigl(y_\sigma(a,x)\bigr)
    = \mathsf C_0(a,x) + \sigma\,\mathsf C_1(a,x) + O(\sigma^2), \qquad \sigma\downarrow 0.
  \]
  By uniqueness of asymptotic expansions the coefficients are determined by the left-hand side
  alone; since \(\mathsf C_0>0\) (established above) is already chart-free, so are
  \[
    \mathsf C_0(a,x) = \lim_{\sigma\downarrow 0}(2\pi)^{d/2}\sigma^{k}p_\sigma\bigl(y_\sigma(a,x)\bigr),
    \qquad
    \mathsf C_1(a,x) = \lim_{\sigma\downarrow 0}\sigma^{-1}
      \Bigl[(2\pi)^{d/2}\sigma^{k}p_\sigma\bigl(y_\sigma(a,x)\bigr) - \mathsf C_0(a,x)\Bigr],
  \]
  whose right-hand sides involve only intrinsic objects.
\end{remark}

\subsection{Gaussian tails and domination of the exact cutoff
kernel}
\label{app:gauss_tails_kernel_domin}

We now control the part of the integral outside the growing window selected by \(\vartheta_\sigma\).
We first prove an abstract tail estimate for Gaussian-class kernels. We then verify that the exact
cutoff kernel \(\bar\chi_R(\sigma\zeta) \mathscr K_\sigma\), where \(\bar\chi_R(\xi) =
\chi(\|\xi\|^2 /R^2)\), belongs to such a Gaussian class after zero extension. This requires one
additional geometric estimate: points represented by the corner chart separate uniformly from the
base point at least linearly in the chart variable.

\begin{lemma}[Gaussian tail estimate]
  \label{lem:gaus_tail_beyond_window}
  Let \(F\in\mathfrak G_s\) on \(\mathcal D_{\infty,A, \Compct_{\Strat},\sigma_0}\) for an integer
  \(s\geqslant0\), and define
  \begin{equation*}
    \mathbb T[F](a,x,\sigma)
    = \int_{\mathbb H_c^m} (1-\vartheta_\sigma(\zeta)) F(\zeta;a,x,\sigma) \,\dd\zeta.
  \end{equation*}
  Then, for every \(M\in\mathbb N\), \((a,x,\sigma)\mapsto \sigma^{-M}\mathbb T[F]\) is in
  \(\mathfrak E_s^{1/2}\) on \(\mathcal D_{A,\Compct_{\Strat},\sigma_0}\).
\end{lemma}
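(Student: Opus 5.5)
The plan is to differentiate under the integral sign and then bound each resulting term by exploiting the fact that $1-\vartheta_\sigma$ vanishes on $\{\|\zeta\|\leqslant\sigma^{-1/4}\}$. Fix a multi-index $\gamma=(\alpha,\beta,j)$ with $|\gamma|\leqslant s$. By Leibniz' rule, $\partial^\gamma_{a,\theta,\sigma}\bigl[(1-\vartheta_\sigma)F\bigr]$ is a finite sum of terms $\partial_\sigma^{i}(1-\vartheta_\sigma)\,\partial_a^\alpha\partial_\theta^\beta\partial_\sigma^{j-i}F$. Here $1-\vartheta_\sigma$ depends only on $(\zeta,\sigma)$.

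\emph{Bounding the cutoff factors.} For $i=0$ the factor is bounded by $\mathds 1(\|\zeta\|\geqslant\sigma^{-1/4})$. For $i\geqslant1$, the computation in the proof of \Cref{lem:admissibility_of_growing_cutoff} gives $|\partial_\sigma^i\vartheta_\sigma|\leqslant C_i\sigma^{-i}\mathds 1(\|\zeta\|\geqslant\sigma^{-1/4})$. Combined with $F\in\mathfrak G_s$, every term in the Leibniz sum is therefore bounded by
\[
C_\gamma\,\sigma^{-s}(1+\|\zeta\|^n)\,\Gamma_{c_0}(\zeta;a)\,\mathds 1(\|\zeta\|\geqslant\sigma^{-1/4}).
\]
This is a common integrable majorant that is uniform in the parameters, so differentiation under the integral sign is justified and we can write $\partial^\gamma\mathbb T[F]=\int\partial^\gamma[(1-\vartheta_\sigma)F]\,\dd\zeta$.

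\emph{The Gaussian tail estimate.} The key step is to show
\[
\int_{\|\zeta\|\geqslant\sigma^{-1/4}}(1+\|\zeta\|^n)\Gamma_{c_0}(\zeta;a)\,\dd\zeta\leqslant Ce^{-c\sigma^{-1/2}}.
\]
As in item~\ref{item:integr_Gauss_kern} of \Cref{lem:prop_adm_classes}, $\|a\|\leqslant A$ gives $\Gamma_{c_0}(\zeta;a)\leqslant e^{c_0A^2}e^{-(c_0/2)\|\zeta\|^2}$. Splitting this as $e^{-(c_0/4)\|\zeta\|^2}\cdot e^{-(c_0/4)\|\zeta\|^2}$ and bounding the first factor by $e^{-(c_0/4)\sigma^{-1/2}}$ on the integration region, we are left with an integral of $(1+\|\zeta\|^n)e^{-(c_0/4)\|\zeta\|^2}$ over $\mathbb H_c^m$, which is finite and independent of the parameters. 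Hence each derivative satisfies $|\partial^\gamma\mathbb T[F]|\leqslant C\sigma^{-s}e^{-c\sigma^{-1/2}}$ with $c=c_0/4$.

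\emph{Absorbing powers of $\sigma$.} Powers of $\sigma^{-1}$ are absorbed exactly as in item~\ref{item:mult_div_power_sigma} of \Cref{lem:prop_adm_classes}: $\sigma^{-L}e^{-c\sigma^{-1/2}}\leqslant C_Le^{-(c/2)\sigma^{-1/2}}$. This shows $\mathbb T[F]\in\mathfrak E_s^{1/2}$, since $C^s$ regularity follows from the same dominated differentiation. Applying the $\sigma^{-M}E$ rule of that item then gives $\sigma^{-M}\mathbb T[F]\in\mathfrak E_s^{1/2}$.

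The main obstacle is the tail estimate in the second step, and in particular making sure that the $\sigma^{-i}$ factors produced by differentiating the cutoff in $\sigma$ are supported only where $\|\zeta\|\geqslant\sigma^{-1/4}$. The polynomial bound $C\|\zeta\|^{4i}$ from \Cref{lem:admissibility_of_growing_cutoff} would also suffice, but the explicit support restriction is what delivers the exponential smallness.
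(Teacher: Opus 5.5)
Your proof is correct and follows the paper's argument step by step: Leibniz' rule, the observation that every cutoff factor is supported in $\{\sigma\|\zeta\|^4\geqslant 1\}$, domination of $\Gamma_{c_0}$ by a centered Gaussian for $\|a\|\leqslant A$, a tail integral of order $e^{-c\sigma^{-1/2}}$, and item~\ref{item:mult_div_power_sigma} of \Cref{lem:prop_adm_classes} for the powers of $\sigma^{-1}$. The only cosmetic difference is that the paper immediately turns $\sigma^{-i}\mathds 1(\|\zeta\|\geqslant\sigma^{-1/4})$ into $\|\zeta\|^{4i}$, which makes the majorant used for differentiation under the integral sign literally uniform in $\sigma$. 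You instead keep the $\sigma^{-s}$ factor and absorb it at the end; your majorant is only locally uniform in $\sigma$, but the same inequality fixes this at once.
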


\begin{proof}
Let \(n\in\mathbb N\) be such that \(F\in \mathfrak G_s\). Choose \(c_0>0\) such that, for every
\(|\alpha| + |\beta| + j\leqslant s\),
\begin{equation*}
  \bigl| \partial_a^\alpha\partial_\theta^\beta
  \partial_\sigma^j F(\zeta;a,\varphi(\theta),\sigma) \bigr|
  \leqslant C_{\alpha,\beta,j} (1+\|\zeta\|^n) \Gamma_{c_0}(\zeta;a).
\end{equation*}
By \Cref{lem:admissibility_of_growing_cutoff}, derivatives of \(1-\vartheta_\sigma\) satisfy
polynomial bounds. More precisely, every term produced by Leibniz' rule in an admissible derivative
of \((1-\vartheta_\sigma(\zeta))F(\zeta;a,\varphi(\theta),\sigma)\) is bounded by
\begin{equation*}
  C_{\alpha,\beta,j} (1+\|\zeta\|^{n+4s}) \Gamma_{c_0}
  (\zeta;a) \mathbf 1_{\{\sigma\|\zeta\|^4\geqslant 1\}}(\zeta).
\end{equation*}
Here we used that \(1-\vartheta_\sigma=0\) when \(\sigma\|\zeta\|^4\leqslant 1\), and that all
\(\sigma\)-derivatives of \(\vartheta_\sigma\) are supported where \(\sigma\|\zeta\|^4\geqslant 1\).

Since \(\|a\|\leqslant A\), the canonical conical Gaussian is dominated by an ordinary centered
Gaussian: there exist constants \(c_A,C_A>0\) such that \(\Gamma_{c_0} (\zeta;a) \leqslant C_A
e^{-c_A\|\zeta\|^2}\) uniformly for \(\|a\| \leqslant A\). Therefore differentiation under the
integral sign is justified, and each admissible derivative of \(\mathbb T[F]\) is bounded by
\begin{equation*}
  C \int_{\sigma\|\zeta\|^4\geqslant 1} (1+\|\zeta\|^{n+4s}) e^{-c_A\|\zeta\|^2} \,\dd\zeta
  \leqslant C' e^{-c'\sigma^{-1/2}}.
\end{equation*}
This yields
\begin{equation*}
  \bigl| \partial_a^\alpha\partial_\theta^\beta\partial_\sigma^j
  \,\mathbb T[F](a,\varphi(\theta),\sigma) \bigr| \leqslant C_{\alpha,\beta,j}e^{-c\sigma^{-1/2}},
\end{equation*}
for all \(|\alpha|+|\beta|+j\leqslant s\). Hence \(\mathbb T[F]\in\mathfrak E_s^{1/2}\). The final
assertion follows from item~\ref{item:mult_div_power_sigma} of \Cref{lem:prop_adm_classes}.
\end{proof}

Recall that we use the notation \(\Phi_x(\xi)\) for \(M(\theta,\xi) = \Phi(\theta + \xi_{\mathcal
S}, \xi_{\mathcal C}) \) for \(\theta\in \Theta_{\rm out}\) such that \(x = \Phi(\theta,0)\).

\begin{lemma}[Uniform chart separation]
  \label{lem:chart_sep_exact_tail}
  Assume \AssChart. There exists \(c_{\mathrm{ch}}>0\) such that, for every \(x\in\Compct_{\Strat}\)
  and every
  \(\xi\in\mathbb H_c^m\) with \(\|\xi\|\leqslant 2R\),
  \begin{equation}\label{eq:chart_sep_tail}
    \|\Phi_x(\xi)-x\| \geqslant c_{\mathrm{ch}}\|\xi\|.
  \end{equation}
\end{lemma}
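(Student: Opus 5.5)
The plan is to combine a local linear lower bound near $\xi=0$, coming from the uniform invertibility of $\bfL$, with a compactness argument on the annulus $\{\delta\leqslant\|\xi\|\leqslant 2R\}$ that uses injectivity of the chart. Both pieces are uniform in $x\in\Compct_{\Strat}$.

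\textbf{Local step.} Write $x=\varphi(\theta)$ with $\theta\in\Theta_{\Compct_{\Strat}}$. By \Cref{lem:unif_par_comp_strat}, $\|\Phi_x(\xi)-x\|=\|\Delta(\theta,\xi)\|$ because $\bfQ(\theta)$ is orthogonal. The same lemma gives $\Diff_\xi\Delta(\theta,0)=[\,\bfL(\theta);\ \mathbf 0_{k\times m}\,]$. Taylor's formula, with second $\xi$-derivatives of $\Delta$ uniformly bounded by some constant $C_2$ on the compact chart region, then yields
\[
  \|\Delta(\theta,\xi)\|\geqslant \|\bfL(\theta)\xi\|-\tfrac{C_2}{2}\|\xi\|^2\geqslant \bigl(\lambda_*-\tfrac{C_2}{2}\|\xi\|\bigr)\|\xi\|.
\]
Here $\lambda_*=\inf_\theta\|\bfL(\theta)^{-1}\|^{-1}>0$ by \Cref{lem:ad_frame_comp_strat}. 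Choosing $\delta=\min\{\lambda_*/C_2,2R\}$ gives $\|\Phi_x(\xi)-x\|\geqslant(\lambda_*/2)\|\xi\|$ whenever $\|\xi\|\leqslant\delta$.

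\textbf{Annulus step.} Consider the set $K=\Theta_{\Compct_{\Strat}}\times\{\xi\in\mathbb H_c^m:\delta\leqslant\|\xi\|\leqslant 2R\}$. It is compact, since $\Theta_{\Compct_{\Strat}}=\varphi^{-1}(\Compct_{\Strat})$ is compact: $\varphi$ is a homeomorphism onto its image. The function $(\theta,\xi)\mapsto\|M(\theta,\xi)-\varphi(\theta)\|$ is continuous on $K$. It is strictly positive there, because $M(\theta,\cdot)=\Phi(\theta+\xi_{\mathcal S},\xi_{\mathcal C})$ is injective and $(\theta+\xi_{\mathcal S},\xi_{\mathcal C})\neq(\theta,0)$ when $\xi\neq0$. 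Its minimum $m_\delta$ is therefore positive, exactly as in the definition of $m_R$ in the proof of \Cref{lem:unif_par_comp_strat}. For $\xi$ in the annulus this gives $\|\Phi_x(\xi)-x\|\geqslant m_\delta\geqslant (m_\delta/(2R))\|\xi\|$.

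\textbf{Conclusion.} Taking $c_{\mathrm{ch}}=\min\{\lambda_*/2,\ m_\delta/(2R)\}$ covers both regimes.

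The only delicate point is the annulus step. There, uniformity requires the minimization to run over a compact set of base points on which $M$ is defined and injective. This is guaranteed because $\Theta_{\Compct_{\Strat}}\subset\Theta^\circ_{\mathrm{out}}$ and $2R<4R$ keeps every argument inside the domain of the corner chart $\Phi$, which is a homeomorphism.
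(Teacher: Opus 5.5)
Your argument is the same as the paper's. You get a uniform linear lower bound near \(\xi=0\) from the uniform invertibility of \(\Diff_\xi M(\theta,0)\) (equivalently of \(\bfL\)), and a positive minimum on the compact annulus \(\{\delta\leqslant\|\xi\|\leqslant 2R\}\) from injectivity of the chart. The paper minimizes the ratio \(\|M(\theta,\xi)-M(\theta,0)\|/\|\xi\|\) there instead of dividing by \(2R\), which makes no difference.

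The one thing to adjust is the local step. A uniform bound on second \(\xi\)-derivatives of \(\Delta\) needs \(\Phi\in C^2\), i.e.\ \(r\geqslant1\), but the standing assumption allows \(r=0\), where the chart is only \(C^1\). The paper avoids this by using uniform continuity of \(\Diff_\xi M\) on the compact chart region to get \(\|\Diff_\xi M(\theta,\xi)-\Diff_\xi M(\theta,0)\|\leqslant\lambda_*/2\) for small \(\|\xi\|\), and then the fundamental theorem of calculus. That version works at \(C^1\) regularity, and you should use it.
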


\begin{proof}
The definition of \(\Phi_x\) implies that the inequality we aim to prove is equivalent to
\begin{equation*}
  \|M(\theta, \xi) - M(\theta,0)\| \geqslant c_{\rm ch}\|\xi\|, \qquad \forall \theta
  \in \Theta_{\Compct_{\Strat}}.
\end{equation*}
Since \(\Diff_\xi M(\theta,0)=\bfA_{\mathrm{ch}} (\varphi(\theta))\) has rank \(m\), compactness of
\(\Theta_{\Compct_{\Strat}}\) gives
\begin{equation*}
  \lambda_* = \inf_{\theta\in\Theta_{\Compct_{\Strat}},\,\|v\|=1} \|\Diff_\xi M(\theta,0)v\| >0.
\end{equation*}
By uniform continuity on compact sets of \(\Diff_\xi M\), there is a radius \(r_*>0\) such that \(\|
\Diff_\xi M(\theta,\xi)-\Diff_\xi M(\theta,0)\| \leqslant {\lambda_*}/{2}\) whenever \(\theta \in
\Theta_{\Compct_{\Strat}}\) and \(\|\xi\|\leqslant r_*\). Hence the fundamental theorem of calculus
gives
\begin{equation*}
  \|M(\theta,\xi) - M(\theta,0)\| \geqslant ({\lambda_*}/{2})\|\xi\|, \qquad \|\xi\|\leqslant r_*.
\end{equation*}
On the compact set
\[
  \bigl\{(\theta,\xi):
  \theta\in\Theta_{\Compct_{\Strat}},\
  \xi\in\mathbb H_c^m,\
  r_*\leqslant\|\xi\|\leqslant2R
  \bigr\},
\]
the function
\[
  (\theta,\xi)\mapsto
  \frac{\|M(\theta,\xi)-M(\theta,0)\|}{\|\xi\|}
\]
is continuous and strictly positive, because each mapping \(M(\theta,\cdot)\) is injective. Its
minimum on this compact set is therefore positive. Combining the small-\(\xi\) and annular
estimates gives \cref{eq:chart_sep_tail}.
\end{proof}

\begin{lemma}[Exact cutoff kernel belongs to a Gaussian class]
  \label{lem:scaled_kern_gaus_class}
  Assume \ref{ass:standing:measure}, \ref{ass:standing:chart}, and~\ref{ass:standing:density}
  of \Cref{ass:standing}. Let \(A>0\), and let \(0\leqslant s\leqslant r\). There exists
  \(\sigma_0>0\) such that the zero
  extension of \(\bar\chi_R(\sigma\zeta)\mathscr K_\sigma(\zeta;a,x)\) to
  \(\mathcal D_{\infty,A,\Compct_{\Strat},\sigma_0}\) belongs to \(\mathfrak G_s\).
\end{lemma}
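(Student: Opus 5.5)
The plan is to dominate the exact cutoff kernel, and each of its parameter derivatives, by a polynomial times a canonical conical Gaussian. The linearized exponent \(\Psi\) does not control \(\Psi_\sigma\) on the whole region \(\sigma\|\zeta\|\leqslant 2R\), so I would work directly with the exact exponent and use the chart separation of \Cref{lem:chart_sep_exact_tail}. The kernel \(\bar\chi_R(\sigma\zeta)\mathscr K_\sigma\) vanishes for \(\sigma\|\zeta\|\geqslant 2R\). With \(R_0\in(2R,4R)\), it is therefore supported away from the truncation boundary of \(\mathcal D_{R_0,A,\Compct_{\Strat},\sigma_0}\). By \Cref{lem:zero_extension_cutoff_local_kernels}, it then suffices to prove membership in \(\mathfrak G_s\) on \(\mathcal D_{R_0,A,\Compct_{\Strat},\sigma_0}\).

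\emph{Step 1 (lower bound on the exact exponent).} On the support, \(\|\xi\|=\sigma\|\zeta\|\leqslant 2R\), and \Cref{lem:chart_sep_exact_tail} gives \(\|\Phi_x(\sigma\zeta)-x\|\geqslant c_{\mathrm{ch}}\sigma\|\zeta\|\). Since \(\|y_\sigma(a,x)-x\|=\sigma\|a\|\leqslant\sigma A\), the inequality \(\|u-v\|^2\geqslant\frac12\|u\|^2-\|v\|^2\) yields
\[
\Psi_\sigma(\zeta;a,x)\geqslant \tfrac{c_{\mathrm{ch}}^2}{4}\|\zeta\|^2-\tfrac{A^2}{2}.
\]
For bounded \(a\), we have \(\Gamma_{c_0}(\zeta;a)\geqslant e^{-c_0(2\|\zeta\|^2+2A^2+A^2)}\) with the constant depending only on \(A\). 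Taking \(c_0\) small, say \(c_0\leqslant c_{\mathrm{ch}}^2/16\), this gives \(e^{-\Psi_\sigma}\leqslant C_A\,e^{-(c_{\mathrm{ch}}^2/8)\|\zeta\|^2}\leqslant C_A'\,\Gamma_{c_0}(\zeta;a)\). More precisely, I would keep a factor \(e^{-\frac12\Psi_\sigma}\) in reserve and absorb polynomial growth into the rest.

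\emph{Step 2 (derivatives).} By Leibniz and Fa\`a di Bruno, any \(\partial^\gamma_{a,\theta,\sigma}\) with \(|\gamma|\leqslant s\) of \(\bar\chi_R(\sigma\zeta)e^{-\Psi_\sigma}\mathcal A_x(\sigma\zeta)\) is a finite sum of terms. Each term is \(e^{-\Psi_\sigma}\) times products of derivatives of \(\Psi_\sigma\), of \(\bar\chi_R(\sigma\zeta)\) and of \(\mathcal A(\theta,\sigma\zeta)\). For these factors I would use the following bounds.
\begin{itemize}
\item \Cref{lem:exact_scaled_exponent_poly_bounds} gives \(\Psi_\sigma\in\mathfrak P_s\), so its derivatives grow polynomially in \(\zeta\).
\item \(\partial_\sigma^j\bar\chi_R(\sigma\zeta)\) is a sum of terms \(\|\zeta\|^{i}\) times bounded functions, hence polynomial.
\item Derivatives of \(\mathcal A(\theta,\sigma\zeta)\) up to order \(s\leqslant r\) produce factors \(\zeta^{\otimes j}\) times derivatives of the jointly \(C^r\) amplitude on the compact chart region. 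These are polynomially bounded.
\end{itemize}
Each derivative is therefore bounded by \(C_\gamma(1+\|\zeta\|^n)e^{-\Psi_\sigma}\leqslant C_\gamma(1+\|\zeta\|^n)\Gamma_{c_0}(\zeta;a)\), uniformly on the domain, which is the defining estimate of \(\mathfrak G_s\). Measurability and \(C^s\)-dependence on the parameters are clear, since the kernel is jointly continuous and composed of \(C^s\) pieces.

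\emph{Main obstacle.} The delicate point is Step 1. The canonical Gaussian is centered at \(a_{\mathcal C}\) in the \(\zeta_{\mathcal C}\)-directions and carries the factor \(e^{-c_0\|a_{\mathcal N}\|^2}\). For bounded \(a\), however, these shifts cost only a multiplicative constant \(e^{c_0\cdot 3A^2}\), so domination by \(e^{-c\|\zeta\|^2}\) suffices. The choice of \(\sigma_0\) only needs to keep \(y_\sigma(a,x)\) inside the tubular neighborhood, as in \Cref{lem:rec_map_prop}, so that \(y_\sigma\) and \(\Psi_\sigma\) are well defined.
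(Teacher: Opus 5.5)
Your proposal is correct and follows essentially the same route as the paper. Chart separation (\Cref{lem:chart_sep_exact_tail}) gives the Gaussian lower bound on \(\Psi_\sigma\) on the cutoff support. \Cref{lem:exact_scaled_exponent_poly_bounds}, together with Leibniz/Fa\`a di Bruno and polynomial bounds on the cutoff and amplitude derivatives, controls the parameter derivatives. \Cref{lem:zero_extension_cutoff_local_kernels} then handles the zero extension; the paper takes \(R_0=3R\) and \(R_{\mathrm{supp}}=2R\).

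The only superfluous point is your remark about keeping a factor \(e^{-\frac12\Psi_\sigma}\) in reserve. Polynomial growth is already permitted in \(\mathfrak G_s\), so nothing needs to be absorbed.
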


\begin{proof}
\Cref{lem:rec_map_prop} implies that there exists \(\sigma_0 > 0\) such that \(y_\sigma\) given by
\eqref{eq:ysigma} is in \(\mathfrak B_{r}\) on \(\mathcal D_{A,\Compct_{\Strat},\sigma_0}\), and, in
view of \cref{eq:scaled_exp}, the identity
\begin{equation*}
  \Psi_\sigma(\zeta;a,x) = \frac{\|y_\sigma(a,x) - \Phi_x(\sigma\zeta)\|^2}{2\sigma^2}
\end{equation*}
holds for \(\sigma\|\zeta\|<4R\). We shall prove the required estimates first on the local scaled
domain \( \mathcal D_{3R,A,\Compct_{\Strat}, \sigma_0}\) given by \eqref{eq:mathcal-D}. On this
domain the chart quantities \(\Delta(\theta,\sigma\zeta),\Psi_\sigma(\zeta;a,x)\), and \(\mathcal
A_x(\sigma\zeta)\) are well defined, because \(3R<4R\). We first prove Gaussian domination on the
support of the cutoff.
If \(\bar\chi_R(\sigma\zeta)\neq0\), then \(\sigma\|\zeta\| \leqslant 2R\). Hence
\Cref{lem:chart_sep_exact_tail} gives \(\|\Phi_x(\sigma\zeta)-x\| \geqslant c_{\mathrm{ch}} \sigma
\|\zeta\|\). On the other hand, by the definition of \(y_\sigma\) and the orthonormality of the
columns of \(\bfC(x)\) and \(\bfN(x)\), \(\|y_\sigma(a,x)-x\| = \sigma\|a\|\leqslant A\sigma\).
Using the elementary inequality \(\|u-v\|^2\geqslant \frac12\|u\|^2-\|v\|^2,\) with
\(u=\Phi_x(\sigma\zeta)-x, v=y_\sigma(a,x)-x,\) we obtain
\begin{equation*}
  \|y_\sigma(a,x)-\Phi_x(\sigma\zeta)\|^2 \geqslant \frac12 c_{\mathrm{ch}}^2\sigma^2\|\zeta\|^2
  - A^2\sigma^2.
\end{equation*}
Therefore, by \cref{eq:scaled_exp}, on the support of \(\bar\chi_R(\sigma\zeta)\),
\begin{equation*}
  \Psi_\sigma(\zeta;a,x) \geqslant \frac{c_{\mathrm{ch}}^2}{4}\|\zeta\|^2 - \frac{A^2}{2}
  \quad\Longrightarrow\quad e^{-\Psi_\sigma(\zeta;a,x)} \leqslant C e^{-c\|\zeta\|^2}
\end{equation*}
with \(c = c_{\rm ch}^2/4>0\) and \(C = e^{A^2/2} >0\). We next compare this centered Gaussian with
the canonical conical Gaussian. Since \(\|a\|\leqslant A\), there exist constants \(c_A,C_A>0\) such
that
\begin{equation*}
  e^{-c\|\zeta\|^2} \leqslant C_A \exp\left(
  - c_A\bigl[\|\zeta_{\mathcal S}\|^2+\|a_{\mathcal C}-\zeta_{\mathcal C}\|^2
  + \|a_{\mathcal N}\|^2 \bigr] \right) = C_A\Gamma_{c_A}(\zeta;a),
\end{equation*}
where \(\zeta=(\zeta_{\mathcal S},\zeta_{\mathcal C})\). Hence, after changing the constants,
\begin{equation}\label{eq:exp_domin_can_in_proof}
  e^{-\Psi_\sigma(\zeta;a,x)} \leqslant C_A\Gamma_{c_A}(\zeta;a)
\end{equation}
uniformly over \(\|a\|\leqslant A\), \(x\in \Compct_{\Strat}\), \(0<\sigma\leqslant\sigma_0\), and
\(\bar\chi_R(\sigma\zeta)\neq0\).

By \Cref{lem:exact_scaled_exponent_poly_bounds}, applied with \(R_0=3R\), the exact exponent
\(\Psi_\sigma\) belongs to \(\mathfrak P_s\) on
\(\mathcal D_{3R,A,\Compct_{\Strat},\sigma_0}\). Therefore, by repeated use of the chain rule, every
admissible derivative of \(e^{-\Psi_\sigma}\) of order at most \(s\) is a finite sum of terms of the
form
\[
  P_{\gamma}(\zeta;a,x,\sigma)e^{-\Psi_\sigma(\zeta;a,x)},
  \qquad
  P_{\gamma}\in\mathfrak P_s.
\]
Consequently, there exists \(N_1\in\mathbb N\) such that
\begin{equation*}
  \left|
  \partial_a^\alpha\partial_\theta^\beta\partial_\sigma^j
  e^{-\Psi_\sigma(\zeta;a,\varphi(\theta))}
  \right|
  \leqslant
  C_{\alpha,\beta,j}(1+\|\zeta\|^{N_1})
  e^{-\Psi_\sigma(\zeta;a,\varphi(\theta))}
\end{equation*}
for all \(|\alpha|+|\beta|+j\leqslant s\) on
\(\mathcal D_{3R,A,\Compct_{\Strat},\sigma_0}\). In
every term of Leibniz' rule for
\begin{equation}\label{eq:partial_derivs}
  \partial_a^\alpha\partial_\theta^\beta\partial_\sigma^j \left[
  \bar\chi_R(\sigma\zeta)e^{-\Psi_\sigma(\zeta;a,\varphi(\theta))} \right],
\end{equation}
the cutoff factor or one of its \(\sigma\)-derivatives is present. Since \(\chi\) is identically
zero for \(\sigma\|\zeta\|\ge2R\), all the terms of form \eqref{eq:partial_derivs} vanish unless
\(\sigma \|\zeta\|\le2R.\) Thus the domination \eqref{eq:exp_domin_can_in_proof} applies to every
nonzero term. Moreover,
\begin{equation*}
  \left|\partial_\sigma^j\bar\chi_R(\sigma\zeta)\right| \leqslant C_j(1+\|\zeta\|^j).
\end{equation*}
Therefore, with a suitably adjusted \(N_1\),
\begin{equation*}
  \left| \partial_a^\alpha\partial_\theta^\beta\partial_\sigma^j \left[
  \bar\chi_R(\sigma\zeta)e^{-\Psi_\sigma(\zeta;a,\varphi(\theta))} \right] \right|
  \leqslant C_{\alpha,\beta,j} (1+\|\zeta\|^{N_1}) \Gamma_{c_A}(\zeta;a)
\end{equation*}
on \(\mathcal D_{3R,A,\Compct_{\Strat},\sigma_0}\). It remains to include the amplitude. Since
\(\mathcal A(\theta,\xi)\) is \(C^{r}\) on the compact chart region, the ordinary chain rule applied
to \((\theta,\zeta,\sigma)\mapsto\mathcal A(\theta,\sigma\zeta)\) gives \(\mathcal
A_x(\sigma\zeta)\in\mathfrak P_s\) on \(\mathcal D_{3R,A,\Compct_{\Strat},\sigma_0}\). Using that \(
\mathscr K_\sigma(\zeta;a,x) = e^{-\Psi_\sigma(\zeta;a,x)} \mathcal A_x(\sigma\zeta)\), and applying
Leibniz' rule once more, we infer the existence of constants \(N\in \mathbb N\), \(c_A>0\), and
\(C_{\alpha,\beta,j}>0\) such that
\begin{equation*}
  \left| \partial_a^\alpha\partial_\theta^\beta
  \partial_\sigma^j \left[ \bar\chi_R(\sigma\zeta)\mathscr
  K_\sigma(\zeta;a,\varphi(\theta)) \right] \right|
  \leqslant C_{\alpha,\beta,j} (1+\|\zeta\|^N) \Gamma_{c_A}(\zeta;a)
\end{equation*}
for all \(|\alpha|+|\beta|+j\leqslant s\), uniformly on \(\mathcal
D_{3R,A,\Compct_{\Strat},\sigma_0}\).

Finally, since \(\bar\chi_R(\sigma\zeta)=0 \text{ whenever } \sigma\|\zeta\|\geqslant 2R,\) the
product \(\bar\chi_R(\sigma\zeta)\mathscr K_\sigma(\zeta;a,x)\) and all of its admissible
derivatives vanish on the annular set \(2R\leqslant \sigma\|\zeta\|<3R\). Thus
\Cref{lem:zero_extension_cutoff_local_kernels}, applied with \(R_{\mathrm{supp}}=2R, R_0=3R,\) shows
that the zero extension of \(\bar\chi_R(\sigma\zeta) \mathscr K_\sigma(\zeta;a,x)\) to the global
scaled domain \(\mathcal D_{\infty,A,\Compct_{\Strat},\sigma_0}\) belongs to \(\mathfrak G_s\).
\end{proof}

\subsection{Exact decomposition and absorption of the tails}
\label{app:decomposition_and_tail_absorption}

We can now compare the integrated inner expansion with the full scaled local integral, defined by
\begin{equation}\label{eq:I_sigma}
  I_\sigma(a,x)
  = \int_{\mathbb H_c^m} \bar\chi_R (\sigma\zeta)\, \mathscr K_\sigma(\zeta;a,x) \,\dd\zeta.
\end{equation}
We introduce the following three tails:
\begin{align}
  \operatorname{Tail}_{\mathrm{ex}}(a,x,\sigma)
  &= \int_{\mathbb H_c^m} (1-\vartheta_\sigma(\zeta)) \bar\chi_R(\sigma\zeta)
  \mathscr K_\sigma(\zeta;a,x) \,\dd\zeta, \label{eq:def_Tail_ex} \\
  \operatorname{Tail}_{0}(a,x,\sigma)
  &= \mathcal A_x(0) \int_{\mathbb H_c^m} (1-\vartheta_\sigma(\zeta)) e^{-\Psi(\zeta;a,x)}
  \,\dd\zeta, \label{eq:def_Tail_0} \\
  \operatorname{Tail}_{1}(a,x,\sigma)
  &= \int_{\mathbb H_c^m} (1-\vartheta_\sigma(\zeta)) e^{-\Psi(\zeta;a,x)} \kappa_{1,x}(\zeta;a)
  \,\dd\zeta. \label{eq:def_Tail_1}
\end{align}
The first is the exact contribution outside the growing inner region. The other two are model tails,
arising because the coefficients \(\mathsf C_0\) and \(\mathsf C_1\) are defined by full-cone
integrals, whereas the inner expansion initially contains the truncated factor \(\vartheta_\sigma\).
The three integrals are absolutely convergent by the Gaussian-class bounds proved above.

\begin{lemma}[One-term local expansion]
  \label{lem:one_term_local_expansion}
  Assume \ref{ass:standing:measure}, \ref{ass:standing:chart}, and~\ref{ass:standing:density} of
  \Cref{ass:standing} with \(r\geqslant1\). Let \(A>0\). There exists \(\sigma_0>0\) and a function
  \(\mathscr E_{0,\mathrm{loc}}\in \mathfrak B_{r-1}\) on \(\mathcal D_{A,\Compct_{\Strat},
  \sigma_0}\) such that
  \begin{equation*}
    I_\sigma(a,x) = \mathsf C_0(a,x) + \sigma\mathscr E_{0,\mathrm{loc}}(a,x,\sigma).
  \end{equation*}
\end{lemma}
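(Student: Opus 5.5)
The plan is to copy the structure of the two-term argument, but stop the Taylor expansion one order earlier, since only \(r\geqslant1\) is available. The decomposition is
\begin{equation*}
  I_\sigma(a,x) = \int_{\mathbb H_c^m}\vartheta_\sigma\,\bar\chi_R(\sigma\zeta)\mathscr K_\sigma\,\dd\zeta
  + \operatorname{Tail}_{\mathrm{ex}}(a,x,\sigma).
\end{equation*}
By \eqref{eq:supp}, after shrinking \(\sigma_0\), the factor \(\bar\chi_R(\sigma\zeta)\) equals one on \(\operatorname{supp}\vartheta_\sigma\), so the first integral is \(\int\vartheta_\sigma\mathscr K_\sigma\,\dd\zeta\).

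\Cref{lem:scaled_kern_gaus_class} with \(s=r-1\) puts the zero extension of \(\bar\chi_R(\sigma\zeta)\mathscr K_\sigma\) in \(\mathfrak G_{r-1}\). \Cref{lem:gaus_tail_beyond_window} with \(M=1\) then gives \(\sigma^{-1}\operatorname{Tail}_{\mathrm{ex}}\in\mathfrak E^{1/2}_{r-1}\subset\mathfrak B_{r-1}\). In the same way, \(\mathcal A_x(0)e^{-\Psi}\in\mathfrak G_r\) by \Cref{lem:gaus_comp_can}, so \(\sigma^{-1}\operatorname{Tail}_0\in\mathfrak B_{r-1}\). Since the inner integral of the model kernel equals \(\mathsf C_0-\operatorname{Tail}_0\), it remains to write
\begin{equation*}
  \vartheta_\sigma\bigl(\mathscr K_\sigma - \mathcal A_x(0)e^{-\Psi}\bigr) = \sigma\,\mathscr R
\end{equation*}
with \(\mathscr R\in\mathfrak G_{r-1}\) on \(\mathcal D_{\infty,A,\Compct_{\Strat},\sigma_0}\). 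Integrating via \Cref{lem:prop_adm_classes}~\ref{item:integr_Gauss_kern} then yields \(\mathscr E_{0,\mathrm{loc}}\in\mathfrak B_{r-1}\).

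This one-term inner expansion is the main step. The earlier lemmas assume \(r\geqslant2\), so they must be redone with first-order remainders.

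\textbf{Exponent.} Write \(\sigma^{-1}\Delta(\theta,\sigma\zeta)=[\bfL(\theta);0]\zeta+\sigma\mathscr R^{(1)}_\Delta\) with
\begin{equation*}
  \mathscr R^{(1)}_\Delta=\int_0^1(1-t)\,\Diff_\xi^2\Delta(\theta,t\sigma\zeta)[\zeta,\zeta]\,\dd t.
\end{equation*}
Arguing as in \Cref{lem:adm_scaled_exp_delta}, the anisotropic bounds of \Cref{lem:unif_par_comp_strat} (\(|\beta|+j+2\leqslant r+1\)) give \(\mathscr R^{(1)}_\Delta\in\mathfrak P_{r-1}\) with \(\|\mathscr R^{(1)}_\Delta\|\leqslant C\|\zeta\|^2\). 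Expanding the square as in \Cref{lem:admi_expo_exp} gives \(\Psi_\sigma=\Psi+\sigma\mathscr R^{(1)}_\Psi\), where \(\mathscr R^{(1)}_\Psi\in\mathfrak P_{r-1}\) and \(|\mathscr R^{(1)}_\Psi|\leqslant C_A(1+\|\zeta\|^4)\).

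\textbf{Amplitude.} Since \(\mathcal A\in C^r\), the integral-remainder form gives \(\mathcal A_x(\sigma\zeta)=\mathcal A_x(0)+\sigma\mathscr R^{(1)}_{\mathcal A}\), where \(\mathscr R^{(1)}_{\mathcal A}=\int_0^1\Diff_\xi\mathcal A(\theta,t\sigma\zeta)[\zeta]\,\dd t\in\mathfrak P_{r-1}\).

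\textbf{Exponential factor.} On \(\operatorname{supp}\vartheta_\sigma\) we have \(\|\zeta\|\leqslant2\sigma^{-1/4}\), so \(|\delta_\sigma|=\sigma|\mathscr R^{(1)}_\Psi|\leqslant C\sigma^{0}\) only. I expect this to be the obstacle: the quartic bound is too weak to keep \(\delta_\sigma\) small. I would first bypass it with the mean-value form
\begin{equation*}
  e^{-\Psi_\sigma}-e^{-\Psi}=-\sigma\mathscr R^{(1)}_\Psi\int_0^1e^{-\Psi-s\sigma\mathscr R^{(1)}_\Psi}\,\dd s.
\end{equation*}
The exponent \(\Psi+s\sigma\mathscr R^{(1)}_\Psi\) is a convex combination of \(\Psi\) and \(\Psi_\sigma\). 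Both are bounded below by \(c\|\zeta\|^2-C\): for \(\Psi_\sigma\) this is the chart-separation bound in the proof of \Cref{lem:scaled_kern_gaus_class}, and for \(\Psi\) it is \Cref{lem:gaus_comp_can}. Hence the integrand is dominated by \(C_A\Gamma_{c_A}\) with no smallness of \(\delta_\sigma\) required.

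Admissible derivatives of the \(s\)-integrand are polynomial factors in \(\mathfrak P_{r-1}\) times the same exponential, by the chain rule as in \Cref{lem:scaled_kern_gaus_class}. The derivatives of \(\vartheta_\sigma\) are handled by \Cref{lem:admissibility_of_growing_cutoff}. Together these put \(\mathscr R\) in \(\mathfrak G_{r-1}\) after the zero extension of \Cref{lem:zero_extension_cutoff_local_kernels}.
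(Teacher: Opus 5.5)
Your proof is correct. Its skeleton is exactly the paper's: you split off \(\operatorname{Tail}_{\mathrm{ex}}\), control \(\sigma^{-1}\operatorname{Tail}_{\mathrm{ex}}\) and \(\sigma^{-1}\operatorname{Tail}_0\) through \Cref{lem:scaled_kern_gaus_class,lem:gaus_comp_can,lem:gaus_tail_beyond_window}, and take first-order Taylor remainders for \(\Delta\) and \(\mathcal A\) in \(\mathfrak P_{r-1}\).

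You diverge only in how you handle the exponential factor, and there the obstacle you anticipate is not real. Your bound \(|\mathscr R^{(1)}_\Psi|\leqslant C_A(1+\|\zeta\|^4)\) throws away a factor of \(\sigma\) on the quartic part. Indeed,
\[
  \Psi_\sigma-\Psi
  = -\sigma\bigl\langle [0_{m-c};a]-[\bfL\zeta;0_k],\,\mathscr R^{(1)}_\Delta\bigr\rangle
  + \tfrac{\sigma^2}{2}\|\mathscr R^{(1)}_\Delta\|^2 .
\]
Hence \(|\Psi_\sigma-\Psi|\leqslant C_A(\sigma\|\zeta\|^2+\sigma\|\zeta\|^3+\sigma^2\|\zeta\|^4)\leqslant C_A\sigma^{1/4}\) on \(\{\|\zeta\|\leqslant 2\sigma^{-1/4}\}\). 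This is what justifies the paper's assertion that \(|\Psi_\sigma-\Psi|\leqslant 1/2\) on \(\operatorname{supp}\vartheta_\sigma\). After that, the paper simply reuses the integral-remainder formula for \(e^{-u}\), as in \Cref{lem:inner_exp_gaussian_rem}.

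Your mean-value alternative is nonetheless valid. The convex combination \((1-s)\Psi+s\Psi_\sigma\) is bounded below by \(c\|\zeta\|^2-C\) because both endpoints are: for \(\Psi\) by \Cref{lem:gaus_comp_can}, and for \(\Psi_\sigma\) by the chart-separation estimate of \Cref{lem:chart_sep_exact_tail}. The polynomial prefactors come from \(\Psi\in\mathfrak P_r\) and \(\Psi_\sigma\in\mathfrak P_{r-1}\) (\Cref{lem:exact_scaled_exponent_poly_bounds}).

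The trade-off is as follows. Your route needs the lower bound on the exact exponent inside the inner expansion, whereas the paper uses it only for the exact tail. In exchange, your route requires no smallness of the perturbation at all. It would therefore work equally well on the whole window \(\{\sigma\|\zeta\|\leqslant 2R\}\), making the growing cutoff \(\vartheta_\sigma\) dispensable for this lemma. The model tail \(\int(1-\bar\chi_R(\sigma\zeta))\,\mathcal A_x(0)e^{-\Psi}\,\dd\zeta\) would then be of order \(e^{-c/\sigma^2}\).
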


\begin{proof}
Taylor's theorem applied directly to \(\Delta\) and \(\mathcal A\), with one fewer term than in the
two-term expansion, gives on \(\mathcal D_{R_0,A,\Compct_{\Strat},\sigma_0}\)
\begin{equation*}
  \sigma^{-1}\Delta(\theta,\sigma\zeta)
  = \begin{bmatrix} \bfL(\theta)\\ \mathbf 0_{k\times m} \end{bmatrix}\zeta
  + \sigma\widetilde{\mathscr R}_\Delta(\zeta;a,x,\sigma), \qquad \mathcal A(\theta,\sigma\zeta)
  = \mathcal A(\theta,0) + \sigma\widetilde{\mathscr R}_{\mathcal A}(\zeta;a,x,\sigma),
\end{equation*}
with \(\widetilde{\mathscr R}_\Delta\in\mathfrak P_{r-1}\) and \(\widetilde{\mathscr R}_{\mathcal A}\in\mathfrak
P_{r-1}\). As in \Cref{lem:adm_scaled_exp_delta}, the remainders \(\widetilde{\mathscr R}_\Delta\) and
\(\widetilde{\mathscr R}_{\mathcal A}\) do not depend on \(a\); the argument \(a\) is retained only
to record the domain on which the admissible estimates are taken. Consequently,
\(\Psi_\sigma=\Psi+\sigma\widetilde{\mathscr R}_\Psi\) with \(\widetilde{\mathscr
R}_\Psi\in\mathfrak P_{r-1}\). On the support of \(\vartheta_\sigma\), after decreasing \(\sigma_0\)
if necessary, \(|\Psi_\sigma-\Psi|\leqslant1/2\). The integral remainder formula for \(e^{-u}\), the
closure properties of \Cref{lem:prop_adm_classes}, and the Gaussian comparison in
\Cref{lem:gaus_comp_can} therefore give, after zero extension to the full scaled domain,
\begin{equation*}
  \vartheta_\sigma(\zeta)\mathscr K_\sigma(\zeta;a,x)
  = \vartheta_\sigma(\zeta)e^{-\Psi(\zeta;a,x)} \mathcal A_x(0)
  + \sigma\mathscr R_{0,\mathrm{ker}}(\zeta;a,x,\sigma), \qquad \mathscr R_{0,\mathrm{ker}}\in\mathfrak G_{r-1}.
\end{equation*}
Integrating this identity yields
\begin{equation*}
  I_\sigma(a,x) = \mathsf C_0(a,x)
  + \sigma\int_{\mathbb H_c^m} \mathscr R_{0,\mathrm{ker}}(\zeta;a,x,\sigma)\,\dd\zeta
  + \operatorname{Tail}_{\mathrm{ex}}(a,x,\sigma) - \operatorname{Tail}_{0}(a,x,\sigma).
\end{equation*}
The integral of \(\mathscr R_{0,\mathrm{ker}}\) belongs to \(\mathfrak B_{r-1}\). For the exact tail,
\Cref{lem:scaled_kern_gaus_class} applied with \(s=r-1\), followed by
\Cref{lem:gaus_tail_beyond_window}, gives \(\sigma^{-1}\operatorname{Tail}_{\mathrm{ex}}\in
\mathfrak E_{r-1}^{1/2}\). For the model tail, the same tail estimate applied to \(\mathcal
A_x(0)e^{-\Psi}\in\mathfrak G_{r-1}\) gives \(\sigma^{-1}\operatorname{Tail}_{0}\in \mathfrak
E_{r-1}^{1/2}\). Since \(\mathfrak E_{r-1}^{1/2}\subset\mathfrak B_{r-1}\), both tails have the
required regularity after division by \(\sigma\). Absorbing these two tails into the coefficient of
\(\sigma\) gives the claim.
\end{proof}

\begin{lemma}[Exact decomposition of the scaled integral]
  \label{lem:decomp_loc_integral}
  Assume \ref{ass:standing:measure}, \ref{ass:standing:chart}, and~\ref{ass:standing:density} of
  \Cref{ass:standing} with \(r\geqslant 2\). Let \(A>0\). There exists \(\sigma_0>0\) and a function
  \(\mathscr E_{\mathrm{in}}\) from the class \(\mathfrak B_{r-2}\) on \(\mathcal D_{A,\Compct_{
  \Strat}, \sigma_0}\) such that
  \begin{align}
    I_\sigma(a,x) &= \mathsf C_0(a,x) + \sigma\mathsf C_1(a,x)
    + \sigma^2\mathscr E_{\mathrm{in}}(a,x,\sigma) \notag \\
    &\qquad + \operatorname{Tail}_{\mathrm{ex}}(a,x,\sigma) - \operatorname{Tail}_{0}(a,x,\sigma)
    - \sigma\operatorname{Tail}_{1}(a,x,\sigma). \label{eq:decomposition_of_scaled_local_integral}
  \end{align}
\end{lemma}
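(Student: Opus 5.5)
The identity is essentially algebraic once the inner expansion of \Cref{lem:inner_exp_gaussian_rem} is available. The plan is to split the cutoff kernel in the definition \eqref{eq:I_sigma} of \(I_\sigma\) as
\[
  \bar\chi_R(\sigma\zeta)\mathscr K_\sigma
  = \vartheta_\sigma\bar\chi_R(\sigma\zeta)\mathscr K_\sigma
  + (1-\vartheta_\sigma)\bar\chi_R(\sigma\zeta)\mathscr K_\sigma .
\]
The second piece integrates exactly to \(\operatorname{Tail}_{\mathrm{ex}}\) by \eqref{eq:def_Tail_ex}. It is absolutely integrable because, after zero extension, \(\bar\chi_R(\sigma\zeta)\mathscr K_\sigma\) lies in \(\mathfrak G_0\) by \Cref{lem:scaled_kern_gaus_class}.

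For the first piece, I first choose \(\sigma_0\) so that \eqref{eq:supp} holds. Then \(\bar\chi_R(\sigma\zeta)=1\) on \(\operatorname{supp}\vartheta_\sigma\), since \(2\sigma^{-1/4}\le R/\sigma\) as soon as \(2\sigma^{3/4}\le R\). With this choice, \(\vartheta_\sigma\bar\chi_R(\sigma\zeta)\mathscr K_\sigma=\vartheta_\sigma\mathscr K_\sigma\), and \Cref{lem:inner_exp_gaussian_rem} applies, possibly after shrinking \(\sigma_0\) further. It gives
\[
  \vartheta_\sigma\mathscr K_\sigma
  = \vartheta_\sigma e^{-\Psi}\bigl[\mathcal A_x(0)+\sigma\kappa_{1,x}\bigr]
  + \sigma^2\mathscr R_{\mathrm{ker}},
  \qquad \mathscr R_{\mathrm{ker}}\in\mathfrak G_{r-2}.
\]

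Next I integrate each term over \(\mathbb H_c^m\). Writing \(\vartheta_\sigma=1-(1-\vartheta_\sigma)\) in the model terms gives
\[
  \mathcal A_x(0)\int e^{-\Psi}\,\dd\zeta-\operatorname{Tail}_0
  = \mathsf C_0-\operatorname{Tail}_0
\]
by \eqref{eq:C0} and \eqref{eq:def_Tail_0}. The same manipulation gives
\[
  \sigma\int e^{-\Psi}\kappa_{1,x}\,\dd\zeta-\sigma\operatorname{Tail}_1
  = \sigma\mathsf C_1-\sigma\operatorname{Tail}_1 .
\]
This second identity rests on checking that \(\int e^{-\Psi}\kappa_{1,x}\,\dd\zeta\) coincides with \eqref{eq:def_C1}. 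That follows from \eqref{eq:kappa1} together with \(\mathcal A_x(0)=\rho(x)|\det\bfL(x)|\), and from the observation that \eqref{eq:Lambda} and \eqref{eq:def_Lambda} describe the same polynomial. To see the latter, decompose \(\Diff_\xi^2\Delta(\theta,0)=[\mathfrak g_{2,x};\mathfrak h_{2,x}]\) and note that the coordinates \(m-c+1,\dots,d\) of \(\Delta\) are \(\Pi_{\mathcal C}g_x\) followed by \(h_x\). All three model integrals converge absolutely. The full-cone ones do so by \Cref{lem:gaus_comp_can}, since \(\kappa_{1,x}\) is polynomial in \(\zeta\) with bounded coefficients, and the tail ones because \(0\le1-\vartheta_\sigma\le1\).

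Finally, I set \(\mathscr E_{\mathrm{in}}(a,x,\sigma)=\int_{\mathbb H_c^m}\mathscr R_{\mathrm{ker}}(\zeta;a,x,\sigma)\,\dd\zeta\). By item~\ref{item:integr_Gauss_kern} of \Cref{lem:prop_adm_classes}, which requires \(R=\infty\) and is available here because \(\mathscr R_{\mathrm{ker}}\) is defined on \(\mathcal D_{\infty,A,\Compct_{\Strat},\sigma_0}\), this gives \(\mathscr E_{\mathrm{in}}\in\mathfrak B_{r-2}\). Summing the three integrated contributions and adding \(\operatorname{Tail}_{\mathrm{ex}}\) yields \eqref{eq:decomposition_of_scaled_local_integral}. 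The only delicate point is the choice of \(\sigma_0\): it must make the support inclusion \eqref{eq:supp} hold and at the same time satisfy the smallness requirement in \Cref{lem:inner_exp_gaussian_rem}, so I take the minimum of the two thresholds. Beyond that, the argument reduces to the bookkeeping that identifies \(\int e^{-\Psi}\kappa_{1,x}\) with \(\mathsf C_1\).
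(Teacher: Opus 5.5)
Your proposal is correct and follows the paper's proof essentially step for step. The paper likewise uses \(\vartheta_\sigma=\vartheta_\sigma\bar\chi_R(\sigma\zeta)\) for small \(\sigma_0\) to split off \(\operatorname{Tail}_{\mathrm{ex}}\), integrates the inner expansion of \Cref{lem:inner_exp_gaussian_rem}, sets \(\mathscr E_{\mathrm{in}}=\int\mathscr R_{\mathrm{ker}}\,\dd\zeta\in\mathfrak B_{r-2}\) via the integration rule, and rewrites the truncated model integrals as \(\mathsf C_0-\operatorname{Tail}_0\) and \(\mathsf C_1-\operatorname{Tail}_1\); your explicit check that \eqref{eq:Lambda} and \eqref{eq:def_Lambda} define the same polynomial, so that \(\int e^{-\Psi}\kappa_{1,x}\,\dd\zeta=\mathsf C_1\), is correct and makes precise a step the paper leaves implicit.
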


\begin{proof}
For \(\sigma_0\) sufficiently small, \(\operatorname{supp}\vartheta_\sigma \subset
\{\sigma\|\zeta\|\leqslant R\}\). Since \(\bar\chi_R(\xi) = 1\) when \(\|\xi\|\leqslant R\), we have
\(\bar\chi_R(\sigma\zeta) = 1\) on \(\operatorname{supp}\vartheta_\sigma\). This implies that
\(\vartheta_\sigma(\zeta) = \vartheta_\sigma(\zeta)\bar\chi_R(\sigma\zeta)\). Therefore
\begin{align*}
  I_\sigma(a,x) &= \int_{\mathbb H_c^m} \bar\chi_R (\sigma\zeta)\, \mathscr K_\sigma(\zeta;a,x)
  \,\dd\zeta \\
  &= \int_{\mathbb H_c^m} \vartheta_\sigma(\zeta) \mathscr K_\sigma(\zeta;a,x) \,\dd\zeta
  +\int_{\mathbb H_c^m} (\bar\chi_R (\sigma\zeta)
  - \vartheta_\sigma(\zeta)) \mathscr K_\sigma(\zeta;a,x) \,\dd\zeta \\
  &= \int_{\mathbb H_c^m} \vartheta_\sigma(\zeta)\mathscr K_\sigma(\zeta;a,x) \,\dd\zeta
  + \operatorname{Tail}_{\mathrm{ex}}(a,x,\sigma).
\end{align*}
Integrating \cref{eq:exp_inner} gives
\begin{align*}
  \int_{\mathbb H_c^m} \vartheta_\sigma\mathscr K_\sigma \,\dd\zeta
  &= \mathcal A_x(0)\int_{\mathbb H_c^m} \vartheta_\sigma
  (\zeta) e^{-\Psi(\zeta;a,x)} \,\dd\zeta
  + \sigma \int_{\mathbb H_c^m}
  \vartheta_\sigma(\zeta) e^{-\Psi(\zeta;a,x)} \kappa_{1,x}(\zeta;a) \,\dd\zeta \\
  &\qquad + \sigma^2\int_{\mathbb H_c^m} \mathscr R_{\mathrm{ker}}(\zeta;a,x,\sigma)\,\dd\zeta.
\end{align*}
Let us set \(\mathscr E_{\mathrm{in}}(a,x,\sigma) = \int_{ \mathbb H_c^m} \mathscr
R_{\mathrm{ker}}(\zeta;a,x,\sigma) \, \dd\zeta\). On the one hand, since \(\mathscr
R_{\mathrm{ker}}\in \mathfrak G_{r-2}\) on \(\mathcal D_{\infty,A, \Compct_{\Strat},\sigma_0}\), the
integration rule in \Cref{lem:prop_adm_classes} yields \(\mathscr E_{\mathrm{in}} \in\mathfrak
B_{r-2}\). On the other hand, by \cref{eq:C0} and \cref{eq:def_Tail_0},
\begin{equation*}
  \mathcal A_x(0) \int_{\mathbb H_c^m} \vartheta_\sigma e^{-\Psi} \,\dd\zeta
  = \mathsf C_0(a,x)-\operatorname{Tail}_0(a,x,\sigma).
\end{equation*}
Likewise, by \cref{eq:def_C1} and \cref{eq:def_Tail_1}, \(\int_{\mathbb H_c^m} \vartheta_\sigma
e^{-\Psi}\kappa_{1,x} \,\dd\zeta = \mathsf C_1(a,x)-\operatorname{Tail}_1(a,x,\sigma)\).
Substituting these two identities into the previous expression gives
\cref{eq:decomposition_of_scaled_local_integral}.
\end{proof}

We now show that the tails in \Cref{lem:decomp_loc_integral} are exponentially small at the order
needed for the integrated expansion.

\begin{lemma}[Negligibility of the tails]
  \label{lem:model_tails_exp_small}
  Assume \ref{ass:standing:measure}, \ref{ass:standing:chart}, and~\ref{ass:standing:density} of
  \Cref{ass:standing} with \(r\geqslant 2\). Let \(A>0\). There exists \(\sigma_0>0\) such that
  \(\sigma^{-2}\operatorname{Tail}_{\mathrm{ex}},\, \sigma^{-2}\operatorname{Tail}_{0} \in \mathfrak
  E_{r}^{1/2}\) and \(\sigma^{-1}\operatorname{Tail}_{1}\in \mathfrak E_{r-1}^{1/2}\) on \(\mathcal
  D_{A,\Compct_{\Strat}, \sigma_0}\).
\end{lemma}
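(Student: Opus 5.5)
The plan is to reduce all three claims to the abstract tail estimate of \Cref{lem:gaus_tail_beyond_window}. Each tail has the form \(\mathbb T[F]\) for a Gaussian-class kernel \(F\) on \(\mathcal D_{\infty,A,\Compct_{\Strat},\sigma_0}\). That lemma gives \(\sigma^{-M}\mathbb T[F]\in\mathfrak E_s^{1/2}\) for every \(M\in\mathbb N\), as soon as \(F\in\mathfrak G_s\). So the work consists of identifying \(F\) and the regularity index \(s\) in each case.

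For the exact tail, I first note that \(\operatorname{Tail}_{\mathrm{ex}}=\mathbb T[F_{\mathrm{ex}}]\), where \(F_{\mathrm{ex}}\) is the zero extension of \(\bar\chi_R(\sigma\zeta)\mathscr K_\sigma(\zeta;a,x)\). \Cref{lem:scaled_kern_gaus_class} applies for any \(0\leqslant s\leqslant r\); taking \(s=r\) and shrinking \(\sigma_0\) as that lemma requires gives \(F_{\mathrm{ex}}\in\mathfrak G_r\). \Cref{lem:gaus_tail_beyond_window} with \(M=2\) then yields \(\sigma^{-2}\operatorname{Tail}_{\mathrm{ex}}\in\mathfrak E_r^{1/2}\). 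The point needing care is that the growing cutoff \(1-\vartheta_\sigma\) and the fixed cutoff \(\bar\chi_R(\sigma\zeta)\) are both \(\sigma\)-dependent. The first is already handled inside \Cref{lem:gaus_tail_beyond_window}, and the second inside \Cref{lem:scaled_kern_gaus_class}, so no further estimate should be needed.

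For the model tail \(\operatorname{Tail}_0\), I take \(F_0=\mathcal A_x(0)e^{-\Psi}\). By \Cref{lem:gaus_comp_can} with \(P\equiv1\) and \(s=r\), we have \(e^{-\Psi}\in\mathfrak G_r\). The factor \(x\mapsto\mathcal A_x(0)=\rho(x)|\det\bfL(x)|\) lies in \(\mathfrak B_r\), since \(\rho\circ\varphi\) is \(C^r\) by \AssDens{} and \(\bfL\) is \(C^r\) by \Cref{lem:ad_frame_comp_strat}. The product rule of \Cref{lem:prop_adm_classes} then gives \(F_0\in\mathfrak G_r\), and hence \(\sigma^{-2}\operatorname{Tail}_0\in\mathfrak E_r^{1/2}\).

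For \(\operatorname{Tail}_1\), I take \(F_1=e^{-\Psi}\kappa_{1,x}\). As in the proof of \Cref{lem:coeff_kernels_in_Br}, \(\kappa_{1,x}\in\mathfrak P_{r-1}\): it is a polynomial of degree at most three in \(\zeta\), and its coefficients \(\nabla_\xi\mathcal A_x(0)\), \(\mathcal A_x(0)\) and \(\Diff_\xi^2\Delta(\theta,0)\) are \(C^{r-1}\) in \(\theta\). The first follows from the \(C^r\) regularity of \(\mathcal A\). The last follows from the anisotropic bounds of \Cref{lem:unif_par_comp_strat}, which give \(|\beta|+2\leqslant r+1\). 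The \(a\)-dependence is affine. \Cref{lem:gaus_comp_can} then gives \(F_1\in\mathfrak G_{r-1}\), and \Cref{lem:gaus_tail_beyond_window} with \(M=1\) gives \(\sigma^{-1}\operatorname{Tail}_1\in\mathfrak E_{r-1}^{1/2}\).

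The only real obstacle is bookkeeping. We must check that a single \(\sigma_0\) works for all three tails, which holds by taking the minimum of the finitely many \(\sigma_0\)'s. We must also check that the regularity index is not degraded by the cutoffs. I expect the exact tail to be the delicate case, because \(r\) derivatives of the exact kernel must be controlled. This is exactly what \Cref{lem:scaled_kern_gaus_class} provides through \Cref{lem:exact_scaled_exponent_poly_bounds}, and that lemma uses only \(s\leqslant r\) together with the anisotropic \(\xi\)-regularity of \(\Delta\).
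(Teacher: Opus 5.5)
Your proposal is correct and matches the paper's proof: it uses the same three kernels, namely \(\mathcal A_x(0)e^{-\Psi}\in\mathfrak G_r\), \(e^{-\Psi}\kappa_{1,x}\in\mathfrak G_{r-1}\), and the zero extension of \(\bar\chi_R(\sigma\zeta)\mathscr K_\sigma\), which lies in \(\mathfrak G_r\) by \Cref{lem:scaled_kern_gaus_class} with \(s=r\), and feeds each into \Cref{lem:gaus_tail_beyond_window}. The only cosmetic difference is that you use the \(\sigma^{-M}\) clause of that lemma directly, whereas the paper first obtains the tails in \(\mathfrak E^{1/2}\) and then divides by \(\sigma\) via item~\ref{item:mult_div_power_sigma} of \Cref{lem:prop_adm_classes}; your extra justifications that \(\mathcal A_x(0)\in\mathfrak B_r\) and \(\kappa_{1,x}\in\mathfrak P_{r-1}\) are sound.
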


\begin{proof}
First consider the model tails \(\operatorname{Tail}_0\) and \(\operatorname{Tail}_{1}\). By
\Cref{lem:gaus_comp_can},
\begin{equation*}
  \mathcal A_x(0)e^{-\Psi(\zeta;a,x)} \in \mathfrak G_{r} \qquad \text{on}
  \qquad \mathcal D_{\infty,A,\Compct_{\Strat},\sigma_0},
\end{equation*}
and, using \(\kappa_{1,x}\in\mathfrak P_{r-1}\), \(e^{-\Psi}\kappa_{1,x} \in \mathfrak G_{r-1}\) on
\( \mathcal D_{\infty,A,\Compct_{\Strat},\sigma_0}\). Thus \(\operatorname{Tail}_0\) and
\(\operatorname{Tail}_1\) are precisely the abstract tails \(\mathbb T_\sigma[F]\) from
\Cref{lem:gaus_tail_beyond_window}, with admissible orders \(r\) and \(r-1\), respectively. Hence
\begin{equation*}
  \operatorname{Tail}_0\in\mathfrak E_{r}^{1/2} \quad\text{and} \quad \operatorname{Tail}_1
  \in\mathfrak E_{r-1}^{1/2}.
\end{equation*}
For the exact tail, let us set
\begin{equation*}
  F_{\mathrm{ex}}(\zeta;a,x,\sigma) = \bar\chi_R(\sigma\zeta)\mathscr K_\sigma(\zeta;a,x),
\end{equation*}
and apply the zero extension from \Cref{lem:scaled_kern_gaus_class} with \(s=r\). We arrive at \(
F_{\mathrm{ex}} \in \mathfrak G_{r} \) on the domain \( \mathcal D_{\infty,A,
\Compct_{\Strat},\sigma_0}\). Since \(\operatorname{Tail}_{ \mathrm{ex}} = \mathbb
T_\sigma[F_{\mathrm{ex}}],\) another application of \Cref{lem:gaus_tail_beyond_window} yields
\begin{equation*}
  \operatorname{Tail}_{\mathrm{ex}} \in \mathfrak E_{r}^{1/2}.
\end{equation*}
The final statement follows from item~\ref{item:mult_div_power_sigma} of
\Cref{lem:prop_adm_classes} applied with \(M=1\) and \(M=2\).
\end{proof}

We finally absorb the exponentially small tails into the \(\sigma^2\)-remainder. This provides the
two-term integrated expansion in the parameter class available under the present assumptions.

\begin{corollary}[Absorption of the tails]
  \label{cor:tails_absorbed_into_sigma2_remainder}
  Assume \ref{ass:standing:measure}, \ref{ass:standing:chart}, and~\ref{ass:standing:density} of
  \Cref{ass:standing} with \(r\geqslant 2\). Let \(A>0\). There exists \(\sigma_0>0\) such that the
  function
  \begin{equation*}
    \mathscr E_{\mathrm{tail}}(a,x,\sigma)
    = \sigma^{-2}\operatorname{Tail}_{\mathrm{ex}}(a,x,\sigma)
    - \sigma^{-2}\operatorname{Tail}_{0}(a,x,\sigma)
    - \sigma^{-1}\operatorname{Tail}_{1}(a,x,\sigma)
  \end{equation*}
  belongs to \(\mathfrak E_{r-1}^{1/2}\), and hence to \(\mathfrak B_{r-1}\), on \(\mathcal
  D_{A,\Compct_{\Strat}, \sigma_0}\). Consequently,
  \begin{equation*}
    I_\sigma(a,x) = \mathsf C_0(a,x) + \sigma\mathsf C_1(a,x)
    + \sigma^2\mathscr E_{\mathrm{loc}}(a,x,\sigma),
  \end{equation*}
  where \(\mathscr E_{\mathrm{loc}} = \mathscr E_{ \mathrm{in}} + \mathscr E_{\mathrm{tail}} \in
  \mathfrak B_{r-2}\) on \(\mathcal D_{A, \Compct_{\Strat}, \sigma_0}\).
\end{corollary}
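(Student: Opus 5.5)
The corollary is a bookkeeping consequence of \Cref{lem:model_tails_exp_small} and \Cref{lem:decomp_loc_integral}. I would first establish the membership of \(\mathscr E_{\mathrm{tail}}\) and then substitute it into the exact decomposition.

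\emph{Step 1: the tail remainder.} Fix \(A>0\). I take \(\sigma_0\) to be the minimum of the values provided by \Cref{lem:decomp_loc_integral} and \Cref{lem:model_tails_exp_small}, so that both results hold on the common parameter domain \(\mathcal D_{A,\Compct_{\Strat},\sigma_0}\). By \Cref{lem:model_tails_exp_small},
\[
\sigma^{-2}\operatorname{Tail}_{\mathrm{ex}},\ \sigma^{-2}\operatorname{Tail}_0\in\mathfrak E_r^{1/2},
\qquad
\sigma^{-1}\operatorname{Tail}_1\in\mathfrak E_{r-1}^{1/2}.
\]
Having derivative control up to order \(r\) trivially implies control up to order \(r-1\), so \(\mathfrak E_r^{1/2}\subset\mathfrak E_{r-1}^{1/2}\). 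Since \(\mathfrak E_{r-1}^{1/2}\) is a vector space by item~\ref{item:vect_space_and_mon} of \Cref{lem:prop_adm_classes}, the linear combination \(\mathscr E_{\mathrm{tail}}\) lies in \(\mathfrak E_{r-1}^{1/2}\). The same item gives \(\mathfrak E_{r-1}^{1/2}\subset\mathfrak B_{r-1}\).

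\emph{Step 2: the expansion.} Dividing the three tail terms in \eqref{eq:decomposition_of_scaled_local_integral} by \(\sigma^2\) shows they equal exactly \(\sigma^2\mathscr E_{\mathrm{tail}}\). Hence \eqref{eq:decomposition_of_scaled_local_integral} becomes
\[
I_\sigma=\mathsf C_0+\sigma\mathsf C_1+\sigma^2(\mathscr E_{\mathrm{in}}+\mathscr E_{\mathrm{tail}}).
\]
We have \(\mathscr E_{\mathrm{in}}\in\mathfrak B_{r-2}\) and \(\mathscr E_{\mathrm{tail}}\in\mathfrak B_{r-1}\subset\mathfrak B_{r-2}\). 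Their sum is therefore in \(\mathfrak B_{r-2}\), again by the vector-space property.

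There is no real obstacle. The only points needing care are the monotonicity of the classes in the derivative order \(\ell\), which follows directly from the definitions, and the use of a common \(\sigma_0\) for both lemmas.
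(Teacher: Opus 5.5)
Your proposal is correct and follows essentially the same route as the paper: it combines \Cref{lem:model_tails_exp_small} with the vector-space and inclusion properties of \Cref{lem:prop_adm_classes}, then substitutes into \eqref{eq:decomposition_of_scaled_local_integral}. You are slightly more explicit than the paper about three points it leaves implicit: the inclusions $\mathfrak E_r^{1/2}\subset\mathfrak E_{r-1}^{1/2}$ and $\mathfrak B_{r-1}\subset\mathfrak B_{r-2}$, and the choice of a common $\sigma_0$.
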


\begin{proof}
The first item follows from \Cref{lem:model_tails_exp_small} and the vector-space property of
\(\mathfrak E_{r-1}^{1/2}\). Since \(\mathfrak E_{r-1}^{1/2}\subset \mathfrak B_{r-1}\) by
\Cref{lem:prop_adm_classes}, and \(\mathscr E_{\mathrm{in}}\in\mathfrak B_{r-2}\) by
\Cref{lem:decomp_loc_integral}, the total remainder \(\mathscr E_{\mathrm{loc}} = \mathscr
E_{\mathrm{in}} + \mathscr E_{\mathrm{tail}}\) belongs to \(\mathfrak B_{r-2}\).
\end{proof}

\section{Far-field control of the global remainder}
\label{app:far_field}

To show that the contribution of the far-field integral to the regularized heat kernel is
negligible, we first establish the desired properties of smoothness and separation of the
transported cutoff function.

\paragraph{Hypotheses in force.}
This section uses the measure item~\ref{ass:standing:measure} and the chart item
\ref{ass:standing:chart}. It does not use the density-regularity item~\ref{ass:standing:density} or
the positivity item~\ref{ass:standing:positivity}. Here, the only requirement is that the far-field
integral is taken with respect to the probability measure \(q\).

Recall that
\begin{equation}\label{eq:pfar2}
  p_{\textrm{far},\sigma}(y) = \int_{\Man} \big( 1
  - \chi_{\pi(y)} (x')\big)\, \phi_\sigma(y-x')\, q(\dd x').
\end{equation}

\begin{lemma}[Regularity and support separation]
  \label{lem:transp_cutoff_reg_supp}
  Assume \AssChart. For \(x=\varphi(\theta)\in\Compct_{\Strat}\), let \(\chi_x\) be defined
  \cref{eq:chi_x}. Then the
  following statements hold.
  \vspace*{-10pt}

  \begin{enumerate}\itemsep=0pt
    \item The function
    \(\chi_x\) is \(C^{r+1}\) on \(\Man\).

    \item The mapping \((\theta,x')\mapsto\chi_{\varphi
    (\theta)}(x')\) has uniformly bounded \(\theta\)-derivatives up to order \(r+1\).

    \item For every \(x'\in\Man\),
    \(1-\chi_x(x')\neq 0\) implies that \( \|x'-x\|\geqslant 2\delta_0\).

    \item For every multi-index \(\beta\) satisfying
    \(1\leqslant |\beta| \leqslant r+1\),
    \begin{equation*}
      \partial_\theta^\beta \chi_{\varphi(\theta)}(x')\neq0\quad\Longrightarrow
      \quad \|x'-\varphi(\theta)\|\geqslant 2\delta_0 .
    \end{equation*}
  \end{enumerate}
\end{lemma}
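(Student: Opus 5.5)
The plan is to write the transported cutoff as a composition of $\chi$ with the inverse of the uniform corner parametrization, and to read all four items off this formula. Fix $x=\varphi(\theta)\in\Compct_{\Strat}$. By \Cref{lem:unif_par_comp_strat}, item~\ref{item:Crplusone-corner-chart}, the map $M(\theta,\cdot)$ is a $C^{r+1}$ corner chart from $\mathbb H_c^m\cap\mathbb B_{4R}^m$ onto a relatively open set $\mathcal O_\theta\subset\Man$. On $\mathcal O_\theta$ we have
\[
\chi_x(x')=\chi\bigl(\|M(\theta,\cdot)^{-1}(x')\|^2/R^2\bigr),
\]
and $\chi_x=0$ on the rest of $\Man$.

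For item~1, the inverse chart $M(\theta,\cdot)^{-1}$ is $C^{r+1}$ as a map on the embedded manifold with corners. This follows from the inverse function theorem applied to a Euclidean extension of the chart, using that $\Diff_\xi M$ has rank $m$. Since $\chi$ is smooth, $\chi_x$ is $C^{r+1}$ on $\mathcal O_\theta$. The support of $\chi_x$ is contained in $M(\theta,\overline{\mathbb H_c^m\cap\mathbb B_{2R}^m})$, which is compact and lies inside $\mathcal O_\theta$. A function that is $C^{r+1}$ on an open set and vanishes outside a compact subset of it is $C^{r+1}$ on all of $\Man$.

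For item~3, suppose $\|x'-x\|<2\delta_0$. Item~\ref{item:subset_of_M} of \Cref{lem:unif_par_comp_strat} gives $x'=M(\theta,\xi)$ with $\|\xi\|<R$. Then $\|\xi\|^2/R^2<1$, so $\chi_x(x')=1$. Taking the contrapositive gives item~3.

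Items~2 and~4 concern joint regularity in $\theta$ and are the main obstacle. The domain of $M(\theta,\cdot)^{-1}$ moves with $\theta$, so I would work in the fixed chart $\Phi$. For $x'=\Phi(w)$ with $w=(w_{\mathcal S},w_{\mathcal C})$, the translated structure $M(\theta,\xi)=\Phi(\theta+\xi_{\mathcal S},\xi_{\mathcal C})$ gives
\[
M(\theta,\cdot)^{-1}(x')=(w_{\mathcal S}-\theta,\,w_{\mathcal C}).
\]
Hence, wherever $x'$ lies in the chart image,
\[
\chi_{\varphi(\theta)}(x')=\chi\bigl((\|w_{\mathcal S}-\theta\|^2+\|w_{\mathcal C}\|^2)/R^2\bigr),\qquad w=\Phi^{-1}(x').
\]
This is $C^\infty$ in $\theta$. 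Its $\theta$-derivatives of order at most $r+1$ are polynomials in $w_{\mathcal S}-\theta$ times derivatives of $\chi$ evaluated where the argument lies in $[1,4]$, so they are uniformly bounded. Points $x'$ outside the chart image have $\chi_{\varphi(\theta)}(x')=0$ for all nearby $\theta$. To see this, note that the support of $\chi$ is compactly inside the $4R$-ball, and by continuity this persists under small changes of $\theta$. So all $\theta$-derivatives vanish there, which proves item~2.

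For item~4, take $|\beta|\geqslant1$. Every term of $\partial_\theta^\beta\chi_{\varphi(\theta)}(x')$ contains a derivative $\chi^{(j)}$ with $j\geqslant1$, evaluated at $\|\xi\|^2/R^2$ with $\xi=M(\theta,\cdot)^{-1}(x')$. Such a derivative vanishes unless $\|\xi\|\geqslant R$. Item~\ref{item:subset_of_M} of \Cref{lem:unif_par_comp_strat} then forces $\|x'-\varphi(\theta)\|\geqslant2\delta_0$, exactly as in the proof of item~3.

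One point needs care: the set $\Theta_{\Compct_{\Strat}}$ may not be open, so $\theta$-derivatives must be taken on a neighborhood of it. Following \Cref{rem:parameter_variable_derivatives}, I would take them on a neighbourhood inside $\Theta^\circ_{\mathrm{out}}$ where the inclusion of item~\ref{item:subset_of_M} still holds. Such a neighbourhood exists after slightly shrinking $\delta_0$, by the same compactness argument that produced $\delta_0$ in \Cref{lem:unif_par_comp_strat}.
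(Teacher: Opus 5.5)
Your proposal is correct and follows essentially the same route as the paper. Item~1 comes from the $C^{r+1}$ inverse corner chart together with compact support inside the chart image. Item~2 comes from the explicit formula $\chi_{\varphi(\theta)}(x')=\chi\bigl((\|w_{\mathcal S}-\theta\|^2+\|w_{\mathcal C}\|^2)/R^2\bigr)$ in the fixed chart $\Phi$. The separation statements come from item~\ref{item:subset_of_M} of \Cref{lem:unif_par_comp_strat}.

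The only variation is in item~4, where you use the vanishing of $\chi^{(j)}$, $j\geqslant1$, on $[0,1]$ instead of the paper's local-constancy argument in $\theta$. Your final caveat about shrinking $\delta_0$ is unnecessary: the formula is valid on the open set $\Theta^\circ_{\mathrm{out}}$, and the separation is only needed at the evaluation point $\theta\in\Theta_{\Compct_{\Strat}}$.
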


\begin{proof}
Let us write \(M_\theta(\cdot) = M(\theta,\cdot)\). By definition, for every \(x'\in
M_\theta(\mathbb H_c^m \cap \mathbb B_{4R}^m)=\mathcal U_\theta\), the transported cutoff is
\begin{equation*}
  \chi_{\varphi(\theta)}\bigl(x'\bigr) = \chi(\|M_\theta^{-1}(x')\|^2/R^2).
\end{equation*}
According to item~\ref{item:Crplusone-corner-chart} of \Cref{lem:unif_par_comp_strat}, the mapping
\(M_\theta^{-1}\) is \(C^{r+1}\) on \(\mathcal U_\theta\). Since the functions \(v\mapsto \|v \|^2\)
and \(\chi\) are \(C^\infty\) everywhere, we conclude that \(\chi_{\varphi(\theta)}\) is \(C^{r+1}\)
on the open set \(\mathcal U_\theta\). On the other hand, this function vanishes outside the set
\(M_\theta(\mathbb H_c^m \cap {\mathbb B}_{2R}^m)\Subset \mathcal U_\theta\). Thus extending it by
zero outside \(\mathcal U_\theta\) does not cause any loss of regularity, and
\(\chi_{\varphi(\theta)}\) is \(C^{r+1}\) on \(\Man\).

Note also that if \(x'=\Phi(\xi_{\mathcal S},\xi_{\mathcal C})\) is written in the original local
corner chart, then \(x'=\Phi(\xi_{\mathcal S},\xi_{\mathcal C}) = \Phi(\theta+\xi_{\mathcal
S}-\theta,\xi_{\mathcal C}) = M(\theta,(\xi_{\mathcal S}-\theta,\xi_{\mathcal C}))\) and, therefore,
\begin{equation*}
  \chi_{\varphi(\theta)}(x') = \chi\Big(\big\{\|\xi_{\mathcal S} - \theta\|^2
  + \|\xi_{\mathcal C}\|^2\big\}/R^2\Big).
\end{equation*}
This formula yields the uniform boundedness of derivatives of any order with respect to \(\theta\).

It remains to prove the support separation. By \Cref{lem:unif_par_comp_strat},
\begin{equation*}
  \Man\cap\mathbb B_{2\delta_0}^d(x) \subset M_\theta\bigl(\mathbb H_c^m\cap\mathbb B_{R}^m\bigr).
\end{equation*}
Thus, if \(x'\in\Man\) satisfies \(\|x'-x\|<2\delta_0\), then \(x'=M_\theta(\xi)\) for some \(\xi
\in \mathbb H_c^m\) with \(\|\xi\|<R\). Since \(\chi=1\) on \([0,1]\), we get
\(\chi_x(x')=\chi(\|\xi\|^2/R^2)=1\). Hence \(1-\chi_x(x') =0\) whenever \(\|x'-x\|<2\delta_0\),
which proves the third item.

The derivative statement follows from the same observation. If \(\|x'-\varphi(\theta)\|<2\delta_0\),
then \(x'=M_\theta(\xi)\) for some \(\|\xi\|<R\). For \(\theta'\) sufficiently close to \(\theta\),
the coordinate of the fixed point \(x'\) in the \(M_{\theta'}\)-chart still lies in \(\mathbb
B_{R}^m\). Therefore \(\chi_{\varphi(\theta')}(x')=1\) for all \(\theta'\) in a neighborhood of
\(\theta\). Thus the map \(\theta'\mapsto\chi_{\varphi(\theta')}(x')\) is locally constant near
\(\theta\), and every positive-order \(\theta\)-derivative vanishes at \(\theta\). This proves the
last item.
\end{proof}

We now apply the support separation to the far integral itself. Let us introduce the set
\begin{equation}\label{eq:Compct_A}
  \Compct_A
  =
  \left\{
    y_\tau(a,x):
    \|a\|\leqslant A,\ x\in\Compct_{\Strat},\ 0\leqslant\tau\leqslant\sigma_0
  \right\}
  \subset\mathbb R^d .
\end{equation}
This set is compact after decreasing \(\sigma_0\) so that the reconstructed points remain in the
fixed tubular neighborhood.

\begin{lemma}[Exponential smallness of the far field integral]
  \label{lem:uniform_exponentially_small_far_term}
  Assume \ref{ass:standing:measure} and~\ref{ass:standing:chart} of \Cref{ass:standing}.
  For every \(A>0\) there exist \(\sigma_0>0\) and \(c>0\) such that, for every multi-index
  \(\beta\) and every \(j\in\mathbb N\) with \(|\beta|+j\leqslant r\), there is a constant
  \(C_{\beta,j}>0\) satisfying
  \begin{equation*}
    \sup_{y\in \Compct_A}\left| \partial_y^\beta\partial_\sigma^j p_{{\mathrm{far}},\sigma}(y)
    \right| \leqslant C_{\beta,j}e^{-c/\sigma^2}.
  \end{equation*}
\end{lemma}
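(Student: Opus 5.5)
The plan is to differentiate \eqref{eq:pfar2} under the integral sign and bound each resulting term using the support separation of \Cref{lem:transp_cutoff_reg_supp}. The integrand depends on \(y\) in two ways: through the Gaussian \(\phi_\sigma(y-x')\), and through the base point \(\pi(y)\) in the cutoff. Writing \(\theta(y)=\varphi^{-1}(\pi(y))\), which is \(C^r\) on \(\mathcal U\) by \Cref{lem:orth_tub_coord}, the cutoff factor is \(1-\chi_{\varphi(\theta(y))}(x')\).

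By Leibniz' rule and Fa\`a di Bruno, \(\partial_y^\beta\partial_\sigma^j\) of the integrand is a finite sum of products of two factors. The first factor is \(\partial_y^{\beta_1}\) of \(1-\chi_{\varphi(\theta(y))}(x')\). It expands as a sum of \(\theta\)-derivatives \(\partial_\theta^{\mu}\chi_{\varphi(\theta)}(x')\), with \(|\mu|\leqslant|\beta_1|\leqslant r\), multiplied by derivatives of \(\theta(y)\) of order at most \(r\). Both kinds of derivative are uniformly bounded on the compact set \(\Compct_A\), by item 2 of \Cref{lem:transp_cutoff_reg_supp} and the \(C^r\) regularity of \(\theta\). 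The second factor is \(\partial_y^{\beta_2}\partial_\sigma^j\phi_\sigma(y-x')\), which equals \(\phi_\sigma(y-x')\) times a polynomial in \((y-x')/\sigma\) and \(\sigma^{-1}\). Hence it is bounded by \(C\sigma^{-N}(1+\|y-x'\|^{N})\phi_\sigma(y-x')\) for some \(N\) depending only on \(|\beta|+j\).

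The key step is to show that every term vanishes unless \(\|x'-\pi(y)\|\geqslant2\delta_0\). If no derivative falls on the cutoff, this follows from item 3 of \Cref{lem:transp_cutoff_reg_supp}. If \(|\mu|\geqslant1\), it follows from item 4 of the same lemma, applied at \(\theta=\theta(y)\). For \(y=y_\tau(a,x)\in\Compct_A\) we have \(\|y-\pi(y)\|\leqslant A\sigma_0\). Decreasing \(\sigma_0\) so that \(A\sigma_0\leqslant\delta_0\), the triangle inequality then gives \(\|y-x'\|\geqslant\delta_0\) on the support of every term.

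On this region,
\[
\sigma^{-N}(1+\|y-x'\|^N)\phi_\sigma(y-x')\leqslant C\sigma^{-N-d}e^{-\delta_0^2/(4\sigma^2)}\,\sup_{t\geqslant0}(1+t^N)e^{-t^2/(4\sigma^2)}.
\]
This splits \(\|y-x'\|^2/(2\sigma^2)\) into two halves: one half yields \(e^{-\delta_0^2/(4\sigma^2)}\), and the other half controls the polynomial factor uniformly for \(\sigma\leqslant\sigma_0\). The same domination by a constant justifies differentiating under the integral sign. Integrating against the probability measure \(q\), using item~\ref{ass:standing:measure}, gives the bound \(C\sigma^{-N-d}e^{-\delta_0^2/(4\sigma^2)}\). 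Finally, absorbing the power of \(\sigma\) as in item~\ref{item:mult_div_power_sigma} of \Cref{lem:prop_adm_classes} yields \(C_{\beta,j}e^{-c/\sigma^2}\) with \(c=\delta_0^2/8\).

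The main obstacle is the terms where derivatives fall on the cutoff through \(\pi(y)\). These require the locally-constant argument of item 4 of \Cref{lem:transp_cutoff_reg_supp}, and they require the chain rule to be valid up to order \(r\), which is where the \(C^r\) regularity of \(\theta(y)\) on \(\mathcal U\) enters. This regularity is exactly what limits the statement to \(|\beta|+j\leqslant r\).
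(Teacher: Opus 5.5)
Your proposal is correct and follows essentially the same route as the paper. Items 3 and 4 of \Cref{lem:transp_cutoff_reg_supp} (item 4 covering the terms where derivatives fall on the base point \(\pi(y)\)), combined with \(\|y-\pi(y)\|\leqslant A\sigma_0\leqslant\delta_0\), force \(\|y-x'\|\geqslant\delta_0\) on the support of every differentiated term; the probability normalization of \(q\) and absorption of the \(\sigma^{-N}\) prefactor then give the \(e^{-c/\sigma^2}\) bound, and you make the Fa\`a di Bruno expansion through \(\theta(y)\) and the splitting of the Gaussian exponent more explicit than the paper does.
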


\begin{proof}
Since \(y\in\Compct_A\), there exist
\[
  a_0\in\overline{\mathbb B}_A^{c+k},\qquad
  x_0\in\Compct_{\Strat},\qquad
  \tau\in[0,\sigma_0]
\]
such that \(y=y_\tau(a_0,x_0)\). By \Cref{lem:rec_map_prop}, \(\pi(y)=x_0\) and
\[
  \|y-x_0\|=\tau\|a_0\|\leqslant A\sigma_0.
\]
Choose \(\sigma_0\leqslant\delta_0/A\), so that \(y\in\mathbb B_{\delta_0}(x_0)\). Since
\(x_0\in\Compct_{\Strat}\), the transported cutoff \(\chi_{x_0}\) has the support-separation
property proved in \Cref{lem:transp_cutoff_reg_supp}. This implies that whenever the integrand in
\eqref{eq:pfar2} is nonzero, \(\|x'-x_0\|\geqslant2\delta_0\), and therefore
\begin{equation*}
  \|y-x'\| \geqslant \|x'-x_0\|-\|y-x_0\| \geqslant \delta_0 .
\end{equation*}
We now differentiate \cref{eq:pfar2}. The map \(y\mapsto \pi(y)\) is \(C^{r}\), and all its
derivatives up to order \(r\) are bounded on the compact subset \(\Compct_A\) of \(\mathbb R^d\).
The family \(x \mapsto\chi_x\) has uniformly bounded base-point derivatives for
\(x\in\Compct_{\Strat}\). Hence, after differentiating, every term is bounded by a finite sum of
integrals of the form
\begin{equation*}
  C_{\beta,j}\sigma^{-N_{\beta,j}} \int_{\Man \setminus \mathbb B_{\delta_0}(y)}
  \exp\Bigl(-\frac{\|y-x'\|^2}{4\sigma^2}\Bigr) q(\dd x').
\end{equation*}
Here the integer \(N_{\beta,j}\) accounts for derivatives of the Gaussian kernel with respect to
\(y\) and \(\sigma\). Since \(q\) is a probability measure (\AssMeas), this gives
\begin{equation*}
  \left| \partial_y^\beta\partial_\sigma^j p_{{\mathrm{far}},\sigma}(y) \right|
  \leqslant C_{\beta,j}\sigma^{-N_{\beta,j}} \exp\left(-{\delta_0^2}/({4\sigma^2})\right).
\end{equation*}
Decreasing the exponential rate absorbs the polynomial factor \(\sigma^{-N_{\beta,j}}\), and the
asserted estimate follows.
\end{proof}

It remains to express the ambient far-field estimate in the variables used in the local expansion.
For bounded \(a\), the reconstructed points \(y_\sigma(a,x)\) form a compact subset of the tubular
neighborhood, and their projections are the corresponding base points \(x\). Therefore the ambient
estimate applies uniformly to this family. The resulting contribution is exponentially small, and
remains so after multiplication by any power of \(\sigma\).

\begin{lemma}[Far term in rescaled variables]
  \label{lem:far_term_bound_layer}
  Assume \ref{ass:standing:measure} and~\ref{ass:standing:chart} of \Cref{ass:standing}.
  Fix \(A>0\). There exists \(\sigma_0>0\) such that
  \begin{equation*}
    \widetilde p_{{\mathrm{far}},\sigma}(a,x) = p_{{\mathrm{far}},\sigma}\bigl(y_\sigma(a,x)\bigr)
  \end{equation*}
  is well defined and satisfies \(\widetilde p_{{\mathrm{far}}, \sigma}\in \mathfrak E_r^2\) on
  \(\mathcal D_{A, \Compct_{\Strat},\sigma_0}\). Consequently, the normalized remainder
  \begin{equation*}
    \mathscr E_{\mathrm{far}}(a,x,\sigma)
    = (2\pi)^{d/2} \sigma^{k-2} \widetilde p_{{\mathrm{far}},\sigma}(a,x)
  \end{equation*}
  belongs to \(\mathfrak E_r^2\), and hence to \(\mathfrak B_r\), on the same parameter domain.
\end{lemma}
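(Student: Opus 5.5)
The plan is to deduce the statement from \Cref{lem:uniform_exponentially_small_far_term} through the chain rule, with the reconstructed observation map \(\mathcal T_{\rm rescaled}:(a,x,\sigma)\mapsto y_\sigma(a,x)\) of \Cref{lem:rec_map_prop}. The first step is to pick \(\sigma_0\) no larger than the thresholds in \Cref{lem:rec_map_prop} and \Cref{lem:uniform_exponentially_small_far_term}, and also \(\sigma_0\leqslant\delta_0/A\). With this choice, for every \((a,x,\sigma)\in\mathcal D_{A,\Compct_{\Strat},\sigma_0}\) the point \(y_\sigma(a,x)\) lies in the compact set \(\Compct_A\) of \eqref{eq:Compct_A} and \(\pi(y_\sigma(a,x))=x\). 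It follows that \(\widetilde p_{{\rm far},\sigma}(a,x)\) is well defined. It is also \(C^r\) in \((a,\theta,\sigma)\), because \(p_{{\rm far},\sigma}(y)\) is jointly \(C^r\) in \((y,\sigma)\) (its \(y\)-dependence enters through the Gaussian and through \(\chi_{\pi(y)}\), where \(\pi\in C^r\)) and \(\mathcal T_{\rm rescaled}\) is \(C^r\).

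Next I fix \(\gamma=(\alpha,\beta,j)\) with \(|\gamma|\leqslant r\) and expand \(\partial^\gamma_{a,\theta,\sigma}\) of the composition \((a,\theta,\sigma)\mapsto p_{{\rm far},\sigma}(y_\sigma(a,\varphi(\theta)))\) by Fa\`a di Bruno, treating \(\sigma\) both as an explicit variable and through \(y_\sigma\). Each resulting term has the form
\[
  (\partial_y^{\beta'}\partial_\sigma^{j'}p_{{\rm far},\sigma})(y_\sigma(a,x))\cdot\prod_i \partial^{\gamma_i}_{a,\theta,\sigma}\,y_\sigma(a,\varphi(\theta)),
  \qquad |\beta'|+j'\leqslant|\gamma|\leqslant r .
\]
The inner derivatives are uniformly bounded, since \(y_\sigma(a,\varphi(\theta))=\mathcal T(\varphi(\theta),\sigma a_{\mathcal C},\sigma a_{\mathcal N})\) is a \(C^r\) map on the compact set \(\overline{\mathbb B}_A\times\overline{\Theta}_{\Compct_{\Strat}}\times[0,\sigma_0]\); note that \(\mathcal T\) extends to \(\sigma=0\), so no negative powers of \(\sigma\) appear. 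The outer factor is at most \(C_{\beta',j'}e^{-c/\sigma^2}\) by \Cref{lem:uniform_exponentially_small_far_term}. This gives \(\widetilde p_{{\rm far},\sigma}\in\mathfrak E_r^2\).

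For the normalized remainder, the case \(k\geqslant2\) is immediate: \(\sigma^{k-2}\) is a nonnegative power of \(\sigma\), so item~\ref{item:mult_div_power_sigma} of \Cref{lem:prop_adm_classes} applies. When \(k-2<0\), the same item applied with \(M=2-k\) gives \(\sigma^{-M}\widetilde p_{{\rm far},\sigma}\in\mathfrak E_r^2\). The inclusion \(\mathfrak E_r^2\subset\mathfrak B_r\) from item~\ref{item:vect_space_and_mon} then completes the argument.

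I expect the main obstacle to be the regularity bookkeeping. One has to make sure that the joint derivatives \(\partial_y^{\beta'}\partial_\sigma^{j'}\) of \(p_{{\rm far},\sigma}\) really exist up to total order \(r\), because the cutoff \(\chi_{\pi(y)}\) is only \(C^r\) through \(\pi\), and that the constants stay uniform. If needed, I would write \(\chi_{\pi(y)}(x')=\chi_{\varphi(\theta(y))}(x')\) and use the bounded \(\theta\)-derivatives from \Cref{lem:transp_cutoff_reg_supp} together with \(\theta(\cdot)\in C^r\).
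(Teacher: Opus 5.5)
Your proposal is correct and follows essentially the same route as the paper: compose the ambient exponential bound of \Cref{lem:uniform_exponentially_small_far_term} with the $C^r$ reconstructed map $(a,x,\sigma)\mapsto y_\sigma(a,x)$ from \Cref{lem:rec_map_prop} via the chain rule, then absorb $\sigma^{k-2}$ using item~\ref{item:mult_div_power_sigma} of \Cref{lem:prop_adm_classes}. Your explicit case split $k\geqslant 2$ versus $k<2$ (with $M=2-k$) just spells out what the paper states in one line.
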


\begin{proof}
By \Cref{lem:rec_map_prop}, suitable choice of \(\sigma_0\) guarantees that the map
\((a,x,\sigma)\mapsto y_\sigma(a,x) \) is \(C^{r}\) on \(\mathcal D_{A, \Compct_{\Strat},\sigma_0}\)
and satisfies
\begin{equation*}
  \pi\bigl(y_\sigma(a,x)\bigr)=x, \qquad \left\|y_\sigma(a,x)-x\right\| = \sigma\|a\|.
\end{equation*}
\Cref{lem:uniform_exponentially_small_far_term} implies that for all \(|\mu|+\lambda\leqslant r\),
\begin{equation*}
  \sup_{y\in \Compct_A}\left| \partial_y^\mu\partial_\sigma^\lambda p_{{\mathrm{far}},\sigma}(y)
  \right| \leqslant C_{\mu,\lambda}e^{-c/\sigma^2}.
\end{equation*}
Since the parameter derivatives of \(y_\sigma(a,x)\) are uniformly bounded, repeated use of the
chain rule gives
\begin{equation*}
  \left| \partial_a^\alpha\partial_\theta^\beta\partial_\sigma^j p_{{\mathrm{far}},\sigma}
  \bigl(y_\sigma(a,\varphi(\theta))\bigr) \right| \leqslant C_{\alpha,\beta,j}e^{-c'/\sigma^2}
\end{equation*}
for all \(|\alpha|+|\beta|+j\leqslant r\). Hence \(\widetilde p_{{\mathrm{far}},\sigma}\in\mathfrak
E_r^2\) on \(\mathcal D_{A,\Compct_{\Strat},\sigma_0}\). Finally, by the multiplication and division
rule for powers of \(\sigma\), the factor \(\sigma^{k-2}\) preserves the class \(\mathfrak E_r^2\).
Hence \(\mathscr E_{\mathrm{far}}\in\mathfrak E_r^2\). Since \(\mathfrak E_r^2\subset\mathfrak
B_r\), the proof is complete.
\end{proof}

\section{Logarithmic stability and conical-layer
  differentiation}
\label{app:logarithmic_differentiation}

This section collects the analytic tools that allow us to pass from the density expansion to
expansions of its logarithm and of its first- and second-order derivatives.

\paragraph{Hypotheses in force.}
The chain-rule and reconstructed-coefficient estimates use only the
geometric item~\ref{ass:standing:chart}. When this section is applied to \(\log p_\sigma\), the
assumptions needed for the density expansion have already entered through
\Cref{thm:first_coef_kernel,thm:2nd_coef_kernel}.

\begin{lemma}[Logarithm of a unit perturbation]
  \label{lem:logarithm_of_unit_perturbation}
  Let \(s\geqslant0\) be an integer and let \(B\in\mathfrak B_s\) on \(\mathcal
  D_{A,\Compct_{\Strat}, \sigma_0}\). There is \(\sigma'_0>0\) such that
  \begin{equation}\label{eq:log_of_exp}
    \log\Bigl( 1+\sigma B(a,x,\sigma)\Bigr) = \sigma B(a,x,\sigma)
    + \sigma^2\mathscr R_{\log}(a,x,\sigma)
  \end{equation}
  for some \(\mathscr R_{\log}\in\mathfrak B_s\) on \(\mathcal D_{A,\Compct_{\Strat}, \sigma'_0}\).
\end{lemma}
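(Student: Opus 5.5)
The plan is to write the remainder explicitly through the integral form of Taylor's theorem for \(t\mapsto\log(1+t)\) at \(t=0\), and then to verify membership in \(\mathfrak B_s\) using the closure rules of \Cref{lem:prop_adm_classes}.

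\emph{Keeping \(1+\sigma B\) away from zero.} Since \(B\in\mathfrak B_s\), it is uniformly bounded; write \(\sup|B|\leqslant M_B\). Choose \(\sigma_0'=\min\{\sigma_0,1/(2M_B+1)\}\). Then \(|\sigma B|\leqslant 1/2\) on \(\mathcal D_{A,\Compct_{\Strat},\sigma'_0}\). Restricting to this smaller domain does not affect any of the admissible bounds, since all suprema are taken over a subset.

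\emph{Explicit remainder.} For \(|t|\leqslant 1/2\) we have the identity
\begin{equation*}
  \log(1+t)=t-t^2\,\psi(t),
  \qquad
  \psi(t)=\int_0^1\frac{1-u}{(1+ut)^2}\,\dd u .
\end{equation*}
The function \(\psi\) is \(C^\infty\) on the open interval \((-1,1)\). Substituting \(t=\sigma B\) gives \eqref{eq:log_of_exp} with
\begin{equation*}
  \mathscr R_{\log}=-B^2\,\psi(\sigma B).
\end{equation*}

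\emph{Admissibility.} By item~\ref{item:mult_div_power_sigma} of \Cref{lem:prop_adm_classes}, \(\sigma B\in\mathfrak B_s\). Its range lies in the compact interval \([-1/2,1/2]\Subset(-1,1)\), so the smooth-composition rule (item~\ref{item:smooth_composition_on_bounded_ranges}(a)) gives \(\psi(\sigma B)\in\mathfrak B_s\). The product rule (item~\ref{item:products}) then gives \(B^2\psi(\sigma B)\in\mathfrak B_s\), and hence \(\mathscr R_{\log}\in\mathfrak B_s\).

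The argument loses no derivatives. The only delicate point is the \(\sigma\)-dependence of the argument \(\sigma B\). This is already handled by the closure of \(\mathfrak B_s\) under multiplication by \(\sigma\), because every \(\sigma\)-derivative of \(\sigma\) is bounded on \((0,\sigma_0']\). I expect no real obstacle beyond choosing \(\sigma_0'\) so that the range condition holds.
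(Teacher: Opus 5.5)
Your proposal is correct and follows essentially the same route as the paper. Your $-\psi$ is exactly the paper's $\Xi_{\log}(u)=(\log(1+u)-u)/u^2$, written in integral Taylor form, and both arguments conclude by smooth composition on the compact range $[-1/2,1/2]$ followed by the product rule. Your explicit restriction to $\mathcal D_{A,\Compct_{\Strat},\sigma_0'}$ when choosing $\sigma_0'$ is in fact slightly more careful than the paper's text, which states the bound $|\sigma B|\leqslant 1/2$ on the domain with $\sigma_0$ rather than $\sigma_0'$.
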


\begin{proof}
Since \(B\in\mathfrak B_s\), it is uniformly bounded. Thus, there is \(\sigma'_0\in(0,\sigma_0]\),
such that
\begin{equation*}
  \sup_{\mathcal D_{A,\Compct_{\Strat},\sigma_0}}\left| \sigma B(a,x,\sigma) \right|
  \leqslant \frac12.
\end{equation*}
Hence, the logarithm is well defined. Define
\begin{equation*}
  \Xi_{\log}(u) = \begin{cases} \dfrac{\log(1+u)-u}{u^2}, & u\neq 0, \\[1.2ex]
    -\frac12, & u=0. \end{cases}
\end{equation*}
Then \(\Xi_{\log}\in C^\infty((-1,\infty))\), and \(\log(1+u)=u+u^2\Xi_{\log}(u)\). Since the range
of \(\sigma B\) is contained in \([-1/2,1/2]\Subset(-1, \infty)\), the smooth-composition property
gives \(\Xi_{\log}\circ (\sigma B)\in\mathfrak B_s\). Therefore,
\begin{equation*}
  \log(1 + \sigma B) = \sigma B + \sigma^2\underbrace{ B^2\,\Xi_{\log}
  \circ(\sigma B)}_{\mathscr R_{\log}}.
\end{equation*}
Thus \cref{eq:log_of_exp} holds with \(\mathscr R_{\log}\in \mathfrak B_s\), by the product and
composition properties of \(\mathfrak B_s\).
\end{proof}

We recall that \(\theta(y)=\varphi^{-1}(\pi(y))\) and \(\nu(y) = [\,\bfC(x)\
\bfN(x)\,]^\top(y-\pi(y))\), see \eqref{eq:theta-nu-def}. Furthermore, we have used the first-order
differentials of these mappings:
\begin{equation*}
  J_\theta(y)=\Diff_y\theta(y), \qquad J_\nu(y)=\Diff_y\nu(y).
\end{equation*}
In the second-order expansions, we also need the second-order differentials of the mappings
\(\theta\) and \(\nu\). For \(v\in\mathbb R^{c+k}\) and \(w\in\mathbb R^{m-c}\), define the
symmetric matrices
\begin{equation*}
  \mathsf Q^\nu_y[v] = \sum_{\mu=1}^{c+k} v_\mu\nabla_y^2\nu_\mu(y), \qquad \mathsf Q^\theta_y[w]
  = \sum_{i=1}^{m-c} w_i\nabla_y^2\theta_i(y),
\end{equation*}
where \(\nu_\mu(y)\) and \(\theta_i(y)\) refer to the corresponding coordinate of the vectors
\(\nu(y)\) and \(\theta(y)\), respectively. Let \(F=F(a,x,\sigma)\) be scalar-valued and \(C^1\) in
\((a,x)\), with \(x\)-derivatives understood after the pullback \(x=\varphi(\theta)\). If we define
\begin{equation*}
  F^\sharp(y,\sigma) = F\bigl(a(y,\sigma),\pi(y),\sigma\bigr)
\end{equation*}
then the chain rule readily yields
\begin{equation}\label{eq:app_chain_rule_gradient}
  \nabla_yF^\sharp = J_\theta(y)^\top\nabla_\theta F + \frac1\sigma J_\nu(y)^\top\nabla_aF.
\end{equation}
If, in addition, \(F\) is \(C^2\) in \((a,x)\), then
\begin{align}
  \label{eq:app_chain_rule_hessian} \nabla_y^2F^\sharp
  &= J_\theta(y)^\top \Diff_\theta^2F\,J_\theta(y) + \mathsf Q^\theta_y[\nabla_\theta F] \\
  &\quad + \frac1\sigma \Bigl[ J_\theta(y)^\top \Diff_{\theta a}^2F\,J_\nu(y)
  + J_\nu(y)^\top \Diff_{a\theta}^2F\,J_\theta(y) + \mathsf Q^\nu_y[\nabla_aF] \Bigr] \notag \\
  &\quad + \frac1{\sigma^2} J_\nu(y)^\top \Diff_a^2F\,J_\nu(y). \notag
\end{align}
All derivatives of \(F\) on the right-hand sides of the above formulae are evaluated at \((a, x,
\sigma) = \bigl( a(y,\sigma),\pi(y),\sigma\bigr)\). These chain rules will be applied after pulling
the observation point back to the reconstructed form \(y=y_\sigma(a,x)\). We therefore need the
corresponding differential coefficients, evaluated along this reconstructed family, to be admissible
functions of \((a,x,\sigma)\). This follows from the smoothness of the tubular coordinates on
\(\mathcal U\), the compactness of the reconstructed observation set, and the uniform smoothness of
\(y_\sigma\).

\begin{lemma}[Admissibility of reconstructed differential coefficients]
  \label{lem:reconstructed_differential_coefficients_in_Br}
  Assume \AssChart{} with \(r\geqslant1\). There exists \( \sigma_0 > 0\) such that the coefficient
  fields \(\mathbf J_\theta(a,x,\sigma) = J_\theta\bigl(y_\sigma (a,x)\bigr)\) and \(\mathbf
  J_\nu(a,x,\sigma) = J_\nu\bigl( y_\sigma(a,x)\bigr)\) belong to \(\mathfrak B_{r-1}\) on
  \(\mathcal D_{A,\Compct_{\Strat},\sigma_0}\). If, furthermore, \(r\geqslant2\), the coefficients
  of the linear maps \(\mathbf Q_\theta(a,x,\sigma)[w] = \mathsf Q^\theta_{y_\sigma(a,x)}[w]\) and
  \(\mathbf Q_\nu (a,x,\sigma)[v] = \mathsf Q^\nu_{y_\sigma(a,x)}[v]\) belong to \(\mathfrak
  B_{r-2}\) on the same domain.
\end{lemma}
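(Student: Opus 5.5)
The proof is a composition argument. We regard the coefficient fields as pullbacks of ambient fields on a compact set by the $C^r$ reconstruction map, and count derivatives.

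\emph{Regularity of the ambient fields.} By \Cref{lem:orth_tub_coord}, the maps $\pi,u,\eta$ are $C^r$ on the tubular neighbourhood $\mathcal U$. Hence $\theta(y)=\varphi^{-1}(\pi(y))$ is $C^r$, since $\varphi^{-1}$ is $C^{r+1}$ on $\Strat_{\mathrm{out}}$ as the inverse of a $C^{r+1}$ embedding. Likewise $\nu(y)=(u(y),\eta(y))$ is $C^r$. Therefore $J_\theta=\Diff_y\theta$ and $J_\nu=\Diff_y\nu$ are $C^{r-1}$ on $\mathcal U$. When $r\geqslant2$, the Hessians $\nabla_y^2\theta_i$ and $\nabla_y^2\nu_\mu$ are $C^{r-2}$. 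Since $\mathsf Q^\theta_y[w]$ and $\mathsf Q^\nu_y[v]$ are linear in $w,v$, their coefficients are exactly these Hessian entries.

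\emph{Uniform bounds on a compact set.} We choose $\sigma_0$ as in \Cref{lem:rec_map_prop}, decreasing it if needed, so that the compact set $\Compct_A$ of \eqref{eq:Compct_A} (with $\tau\in[0,\sigma_0]$) lies in a compact subset of $\mathcal U$. On a slightly larger compact neighbourhood of $\Compct_A$ inside $\mathcal U$, every derivative of $J_\theta,J_\nu$ up to order $r-1$, and of the Hessian entries up to order $r-2$, is continuous and hence bounded.

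\emph{Composition.} The map $(a,\theta,\sigma)\mapsto y_\sigma(a,\varphi(\theta))=\varphi(\theta)+\sigma\bfC(\varphi(\theta))a_{\mathcal C}+\sigma\bfN(\varphi(\theta))a_{\mathcal N}$ is $C^r$ with uniformly bounded derivatives up to order $r$. This holds because $\bfC\circ\varphi$ and $\bfN\circ\varphi$ are $C^r$ with bounded derivatives on $\Theta_{\Compct_{\Strat}}$ (which sits compactly in $\Theta^\circ_{\mathrm{out}}$), the map is affine in $a$, and it is polynomial of degree one in $\sigma$ on $[0,\sigma_0]$. Its values lie in $\Compct_A$. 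By Fa\`a di Bruno's formula, exactly as in the proof of item~(6) of \Cref{lem:prop_adm_classes}, any derivative of order $\ell\leqslant r-1$ of $J_\theta\circ y_\sigma$ is a finite sum of products. Each product contains one derivative of $J_\theta$ of order at most $\ell$, evaluated in $\Compct_A$, times derivatives of $y_\sigma$ of order at most $\ell\leqslant r$. All these factors are bounded, so $\mathbf J_\theta,\mathbf J_\nu\in\mathfrak B_{r-1}$. The same argument with $\ell\leqslant r-2$ gives $\mathbf Q_\theta,\mathbf Q_\nu\in\mathfrak B_{r-2}$. In this argument, $a$ ranges over an open neighbourhood of the closed ball, as \Cref{rem:parameter_variable_derivatives} permits, and $\theta$ over a neighbourhood of $\Theta_{\Compct_{\Strat}}$.

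The main obstacle is the loss of derivatives. The tubular coordinates are only $C^r$, because the stratum is $C^{r+1}$ and the frames are $C^r$. We must therefore confirm that $\theta(\cdot)$ and $\nu(\cdot)$ are indeed $C^r$, which gives exactly $r-1$ and $r-2$ derivatives after one and two $y$-differentiations, and that composing with $y_\sigma$ costs no further derivatives. The only other point is that $\sigma_0$ must be chosen small enough to keep all reconstructed points, for $a$ in a neighbourhood of the ball, inside a fixed compact subset of $\mathcal U$.
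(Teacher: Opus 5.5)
Your proposal is correct and follows essentially the same route as the paper. The $C^r$ regularity of $\theta(\cdot)$ and $\nu(\cdot)$ from the tubular coordinates gives $C^{r-1}$ Jacobians and $C^{r-2}$ Hessian coefficients, which are bounded on the compact reconstructed set $\Compct_A$, and composing with the $C^r$ map $(a,\theta,\sigma)\mapsto y_\sigma(a,\varphi(\theta))$ via the chain rule yields membership in $\mathfrak B_{r-1}$ and $\mathfrak B_{r-2}$; your extra care in enlarging the $a$- and $\theta$-ranges to open neighbourhoods, kept inside a fixed compact subset of $\mathcal U$, makes explicit a point the paper leaves implicit.
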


\begin{proof}
Choose \(\sigma_0>0\) so that \(y_\sigma(a,x)\in\mathcal U\) for all \(\|a\|\leqslant A\), all
\(x\in\Compct_{\Strat}\) and all \(0\leqslant\sigma\leqslant\sigma_0\). Then the reconstructed
observation set \(\Compct_A\) of \eqref{eq:Compct_A}
is a compact subset of \(\mathcal U\). According to \Cref{lem:orth_tub_coord}, the maps \(\theta\)
and \(\nu\) are \(C^{r}\) on \(\mathcal U\). Hence \(J_\theta\) and \(J_\nu\) are \(C^{r-1}\),
provided that \(r\geqslant1\), while \(\mathsf Q^\theta\) and \(\mathsf Q^\nu\) have \(C^{r-2}\)
coefficients, provided that \(r\geqslant2\). Their derivatives up to the required order are
therefore uniformly bounded on \(\Compct_A\).

The reconstructed map \((a,x,\sigma)\mapsto y_\sigma(a,x)\) is \(C^{r}\) after the pullback \( x =
\varphi (\theta)\) in view of \Cref{lem:orth_tub_coord}. Composing the coefficient fields above with
this map and applying the ordinary chain rule gives uniform bounds for all
\((a,\theta,\sigma)\)-derivatives up to the required order. This is exactly the defining estimate
for membership in \(\mathfrak B_s\) for \(s = r-1\) or \(s = r-2\), depending on the considered
quantity.
\end{proof}

\section{Proofs of the main theorems}

In this section we present the proofs of the main theorems, which are based on all the technical
results established in the preceding sections of this appendix. We give the proof of
\Cref{thm:first_coef_kernel} separately because its endpoint case \(r=1\) uses the one-term local
expansion \Cref{lem:one_term_local_expansion}. We then prove \Cref{thm:2nd_coef_kernel} and
\Cref{thm:2nd-order-score}; the proof of \Cref{thm:first-order-score} is obtained by the same
chain-rule argument with the first-order logarithmic expansion.

\begin{proof}[Proof of \Cref{thm:first_coef_kernel}]
Fix \(A>0\). We choose \(\sigma_0\) small enough for the preceding appendix results to hold, and
shrink it below without changing notation. Let \((y,\sigma)\in\mathcal
Y_{A,\Compct_{\Strat},\sigma_0}\), set \(x=\pi(y)\) and \(a=a(y,\sigma)\). By
\Cref{lem:rec_map_prop}, \(y=y_\sigma(a,x)\).

Decompose \(p_\sigma=p_{{\rm loc},\sigma}+p_{{\rm far},\sigma}\) as in
\cref{eq:p_sigma_decomposition}. The local change of variables gives
\begin{equation*}
  p_{{\rm loc},\sigma}\bigl(y_\sigma(a,x)\bigr) = \sigma^{-k}(2\pi)^{-d/2}I_\sigma(a,x).
\end{equation*}
By \Cref{lem:one_term_local_expansion},
\begin{equation*}
  I_\sigma(a,x) = \mathsf C_0(a,x) + \sigma\mathscr E_{0,\mathrm{loc}}(a,x,\sigma),
  \qquad \mathscr E_{0,\mathrm{loc}}\in\mathfrak B_{r-1}.
\end{equation*}
The far term is exponentially small in rescaled variables: \Cref{lem:far_term_bound_layer} implies
that
\begin{equation*}
  \mathscr E_{0,\mathrm{far}}(a,x,\sigma)
  = (2\pi)^{d/2}\sigma^{k-1} p_{{\rm far},\sigma}\bigl(y_\sigma(a,x)\bigr)
\end{equation*}
belongs to \(\mathfrak E_r^2\subset\mathfrak B_r\), hence to \(\mathfrak B_{r-1}\). Thus, with
\(\mathscr E_0=\mathscr E_{0,\mathrm{loc}}+ \mathscr E_{0,\mathrm{far}}\), the density expansion has
the stated form and \(\mathscr E_0\in\mathfrak B_{r-1}\).

For the logarithm, \Cref{lem:uniform_positivity_of_C0} gives \(\mathsf C_0^{-1}\in\mathfrak B_r\)
and \(\mathsf L_0=\log\mathsf C_0\in\mathfrak B_r\). Hence \(B=\mathsf C_0^{-1}\mathscr
E_0\in\mathfrak B_{r-1}\). After decreasing \(\sigma_0\), \Cref{lem:logarithm_of_unit_perturbation}
shows that the logarithm of \(1+\sigma B\) can be written as \(\sigma\mathscr E_{0,\log}\) with
\(\mathscr E_{0,\log}\in\mathfrak B_{r-1}\). This proves the logarithmic expansion.
\end{proof}

\begin{proof}[Proof of \Cref{thm:2nd_coef_kernel}]
Fix \(A>0\). We choose \(\sigma_0\) small enough for all the preceding appendix results to hold, and
shrink it below without changing notation. Let \((y,\sigma)\in\mathcal Y_{A,\Compct_{\Strat},
\sigma_0}\), set \(x=\pi(y)\) and \(a=a(y,\sigma)\). By \Cref{lem:rec_map_prop},
\(y=y_\sigma(a,x)\).

Decompose the heat kernel by means of the transported cutoff,
\begin{equation}
  \label{eq:p_sigma_decomposition}
  p_\sigma(y)=p_{{\rm loc},\sigma}(y)+p_{{\rm far},\sigma}(y),
\end{equation}
where
\begin{align*}
  p_{{\rm loc},\sigma}(y)
  &= \int_\Man \chi_{\pi(y)}(x')\phi_\sigma(y-x')\rho(x')\,\dd\vol_\Man(x'), \\
  p_{{\rm far},\sigma}(y)
  &= \int_\Man \bigl(1-\chi_{\pi(y)}(x')\bigr) \phi_\sigma(y-x')\rho(x')\,\dd\vol_\Man(x').
\end{align*}
Write \(x=\varphi(\theta)\). In the local term, the change of variables
\(x'=M(\theta,\sigma\zeta)\), together with the definition of the amplitude \(\mathcal A\) and of
the scaled exponent \(\Psi_\sigma\), gives
\begin{equation*}
  p_{{\rm loc},\sigma}\bigl(y_\sigma(a,x)\bigr) = \sigma^{-k}(2\pi)^{-d/2} I_\sigma(a,x),
\end{equation*}
where \(I_\sigma\) is the scaled local integral defined in \eqref{eq:I_sigma}. By
\Cref{cor:tails_absorbed_into_sigma2_remainder},
\begin{equation*}
  I_\sigma(a,x) = \mathsf C_0(a,x)+\sigma\mathsf C_1(a,x) +\sigma^2\mathscr E_{\rm loc}(a,x,\sigma),
  \qquad \mathscr E_{\rm loc}\in\mathfrak B_{r-2}.
\end{equation*}
Thus the local part has the required two-term expansion.

The far part is absorbed into the same order. Indeed, \Cref{lem:far_term_bound_layer} gives
\begin{equation*}
  \mathscr E_{\rm far}(a,x,\sigma)
  = (2\pi)^{d/2}\sigma^{k-2} p_{{\rm far},\sigma}\bigl(y_\sigma(a,x)\bigr)
  \in \mathfrak E_r^2\subset \mathfrak B_r .
\end{equation*}
Hence, with \(\mathscr E_1=\mathscr E_{\rm loc}+ \mathscr E_{\rm far}\in\mathfrak B_{r-2}\),
\begin{equation*}
  p_\sigma(y) = \sigma^{-k}(2\pi)^{-d/2} \left[ \mathsf C_0(a,x)+\sigma\mathsf C_1(a,x)
  +\sigma^2\mathscr E_1(a,x,\sigma) \right].
\end{equation*}

It remains to pass to the logarithm. By \Cref{lem:uniform_positivity_of_C0}, \(\mathsf C_0\) is
uniformly bounded away from zero on \(\Compct_{\Strat}\times\overline{\mathbb B}_A\), \(\mathsf
C_0^{-1}\in\mathfrak B_r\), \(\mathsf L_0=\log\mathsf C_0\in\mathfrak B_r\), and \(\mathsf
L_1=\mathsf C_1/\mathsf C_0\in\mathfrak B_{r-1}\). Therefore
\begin{equation*}
  p_\sigma(y) = \sigma^{-k}(2\pi)^{-d/2}\mathsf C_0(a,x) \left[1+\sigma B(a,x,\sigma)\right],
\end{equation*}
with \(B(a,x,\sigma) = \mathsf L_1(a,x) +\sigma\mathsf C_0(a,x)^{-1}\mathscr E_1(a,x,\sigma)
\in\mathfrak B_{r-2}\), where we used the closure properties in \Cref{lem:prop_adm_classes}. After
possibly decreasing \(\sigma_0\), \Cref{lem:logarithm_of_unit_perturbation} applies to \(B\) with
\(s=r-2\), and yields
\begin{equation*}
  \log\bigl(1+\sigma B(a,x,\sigma)\bigr) = \sigma\mathsf L_1(a,x)
  +\sigma^2\mathscr E_{1,\log}(a,x,\sigma), \qquad \mathscr E_{1,\log}\in\mathfrak B_{r-2}.
\end{equation*}
Combining this identity with the prefactor and with \(\mathsf L_0=\log\mathsf C_0\) proves the
logarithmic expansion and the stated uniform regularity of both remainders.
\end{proof}

\begin{proof}[Proof of \Cref{thm:2nd-order-score}]
We use the logarithmic expansion from \Cref{thm:2nd_coef_kernel}. For \((y,\sigma)\in \mathcal
Y_{A,\Compct_{\Strat},\sigma_0}\), write \(x=\pi(y)\), \(a=a(y,\sigma)\), and, by
\Cref{lem:rec_map_prop}, \(y=y_\sigma(a,x)\). Set
\begin{equation*}
  F(a,x,\sigma) = \mathsf L_0(a,x)+\sigma\mathsf L_1(a,x) +\sigma^2E(a,x,\sigma),
  \qquad E=\mathscr E_{1,\log}.
\end{equation*}
The two explicit terms \(-k\log\sigma\) and \(-(d/2)\log(2\pi)\) in the logarithmic expansion do not
depend on \(y\), so they disappear after spatial differentiation. All the differential coefficient
fields below are evaluated at \((a,x,\sigma)\), that is along \(y_\sigma(a,x)\).

First define, for a scalar function \(U(a,x,\sigma)\),
\begin{equation*}
  \mathcal G_1[U] = \mathbf J_\nu^\top\nabla_a U, \qquad \mathcal G_0[U]
  = \mathbf J_\theta^\top\nabla_\theta U.
\end{equation*}
The gradient chain rule \eqref{eq:app_chain_rule_gradient} gives
\begin{equation*}
  \score(y) = \sigma^{-1}\mathcal G_1[F]+ \mathcal G_0[F] = \sigma^{-1}\mathsf S_0(a,x,\sigma)
  +\mathsf S_1(a,x,\sigma) +\sigma\mathscr R_{1,\score}(a,x,\sigma),
\end{equation*}
where
\begin{equation*}
  \mathsf S_0=\mathcal G_1[\mathsf L_0], \quad \mathsf S_1=\mathcal G_0[\mathsf L_0]
  +\mathcal G_1[\mathsf L_1], \quad \mathscr R_{1,\score}
  = \mathcal G_1[E]+ \mathcal G_0[\mathsf L_1] +\sigma\mathcal G_0[E].
\end{equation*}
This is the asserted score expansion.

For the Hessian, introduce the three operators appearing in the second chain rule:
\begin{align*}
  \mathcal H_2[U] &= \mathbf J_\nu^\top \Diff_a^2U\,\mathbf J_\nu, \\
  \mathcal H_1[U] &= \mathbf J_\theta^\top \Diff_{\theta a}^2U\,\mathbf J_\nu
  +\mathbf J_\nu^\top \Diff_{a\theta}^2U\,\mathbf J_\theta +\mathbf Q_\nu[\nabla_aU], \\
  \mathcal H_0[U] &= \mathbf J_\theta^\top \Diff_\theta^2U\,\mathbf J_\theta
  +\mathbf Q_\theta[\nabla_\theta U].
\end{align*}
By \cref{eq:app_chain_rule_hessian},
\begin{equation*}
  \Hess(y) = \sigma^{-2}\mathcal H_2[F] +\sigma^{-1}\mathcal H_1[F] +\mathcal H_0[F].
\end{equation*}
Expanding \(F=\mathsf L_0+\sigma\mathsf L_1+ \sigma^2E\) in this identity gives
\begin{equation*}
  \Hess(y) = \sigma^{-2} \left( \mathsf H_0(a,x,\sigma) +\sigma\mathsf H_1(a,x,\sigma)
  +\sigma^2\mathscr R_{1,\Hess}(a,x,\sigma) \right),
\end{equation*}
with \(\mathsf H_0=\mathcal H_2[\mathsf L_0]\), \(\mathsf H_1=\mathcal H_1[\mathsf L_0] +\mathcal
H_2[\mathsf L_1]\) and
\begin{equation*}
  \mathscr R_{1,\Hess} = \mathcal H_2[E] +\mathcal H_1[\mathsf L_1] +\sigma\mathcal H_1[E]
  +\mathcal H_0[\mathsf L_0] +\sigma\mathcal H_0[\mathsf L_1] +\sigma^2\mathcal H_0[E].
\end{equation*}
The displayed formula for \(\mathsf H_1\) is exactly the one stated in the theorem.

It remains to differentiate the score with respect to the scale at fixed \(y\). Since \(x=\pi(y)\)
and \(\nu(y)\) do not depend on \(\sigma\), \(\partial_\sigma a(y,\sigma)=-\sigma^{-1}a(y,\sigma)\).
Differentiating \(\score(y)=\sigma^{-1}\mathsf S_0+ \mathsf S_1+\sigma\mathscr R_{1,\score}\), with
this rule for the composed \(a\)-variable, gives
\begin{align*}
  \Vel(y) &= \sigma^{-2} \bigl[-\mathsf S_0-(\Diff_a\mathsf S_0)[a]\bigr]
  +\sigma^{-1} \bigl[\partial_\sigma\mathsf S_0 -(\Diff_a\mathsf S_1)[a]\bigr] \\
  &\qquad + \partial_\sigma\mathsf S_1 +\mathscr R_{1,\score} -(\Diff_a\mathscr R_{1,\score})[a]
  +\sigma\partial_\sigma\mathscr R_{1,\score}.
\end{align*}
Thus
\begin{equation*}
  \Vel(y) = \sigma^{-2} \left( \dot{\mathsf S}_0(a,x,\sigma) +\sigma\dot{\mathsf S}_1(a,x,\sigma)
  +\sigma^2\mathscr R_{1,\Vel}(a,x,\sigma) \right),
\end{equation*}
where \(\dot{\mathsf S}_0 = -\mathsf S_0 - (\Diff_a\mathsf S_0)[a]\), \(\dot{\mathsf S}_1 =
\partial_\sigma\mathsf S_0-(\Diff_a\mathsf S_1)[a]\) and
\begin{equation*}
  \mathscr R_{1,\Vel} = \partial_\sigma\mathsf S_1 +\mathscr R_{1,\score}
  -(\Diff_a\mathscr R_{1,\score})[a] +\sigma\partial_\sigma\mathscr R_{1,\score}.
\end{equation*}
This proves the asserted formula for the scale derivative.

We finally check the regularity claims. By \Cref{lem:uniform_positivity_of_C0}, \(\mathsf
L_0\in\mathfrak B_r\) and \(\mathsf L_1\in\mathfrak B_{r-1}\), while \Cref{thm:2nd_coef_kernel}
gives \(E\in\mathfrak B_{r-2}\). By \Cref{lem:reconstructed_differential_coefficients_in_Br},
\(\mathbf J_\theta\) and \(\mathbf J_\nu\) belong to \(\mathfrak B_{r-1}\), and the coefficients of
\(\mathbf Q_\theta\) and \(\mathbf Q_\nu\) belong to \(\mathfrak B_{r-2}\). Each differentiation in
\(a\) or \(\theta\) lowers the admissible order by one, and products, sums, and harmless powers of
\(\sigma\) are controlled by \Cref{lem:prop_adm_classes}. Consequently, for the score expansion
\((r_0=3)\),
\begin{equation*}
  \mathsf S_0,\mathsf S_1\in\mathfrak B_{r-2}, \qquad \mathscr R_{1,\score}\in\mathfrak B_{r-3},
\end{equation*}
and, for the Hessian and scale derivative expansions \((r_0=4)\),
\begin{equation*}
  \mathsf H_0,\mathsf H_1,\dot{\mathsf S}_0, \dot{\mathsf S}_1\in\mathfrak B_{r-3},
  \qquad \mathscr R_{1,\Hess},\mathscr R_{1,\Vel} \in\mathfrak B_{r-4}.
\end{equation*}
These are precisely the uniform mixed-derivative bounds stated in the theorem, with derivatives in
\(x\) understood in the stratum coordinate \(\theta\).
\end{proof}